\documentclass{amsart}
\usepackage{amssymb,amsmath,amsthm}
\usepackage{amsaddr}
\usepackage{amsfonts}
\usepackage{graphicx}
\usepackage{epstopdf}
\usepackage{caption}
\usepackage{subcaption}
\usepackage{hyperref}
\usepackage{color}
\usepackage{marginnote}
\usepackage{float}
\usepackage{graphicx}
\usepackage{comment}
\usepackage{tikz-cd}
\usetikzlibrary{arrows}
\usetikzlibrary{intersections,positioning}
\tikzset{>=latex}
\usepackage{lipsum}%
\usepackage{a4wide}
\allowdisplaybreaks[4]

\newcommand{\C}{\mathbb{C}}

\newcommand{\R}{\mathbb{R}}
\newcommand{\Z}{\mathbb{Z}}

\newcommand{\N}{\mathbb{N}}

\newcounter{newcounter}[section]
\numberwithin{equation}{section}
\numberwithin{newcounter}{section}
\numberwithin{figure}{section}
\numberwithin{footnote}{section}

\newcommand{\authorfootnotes}{\renewcommand\thefootnote{\@fnsymbol\c@footnote}}%

\newtheorem{thm}[newcounter]{Theorem}
\newtheorem{defi}[newcounter]{Definition}
\newtheorem*{defiun}{Definition}
\newtheorem{prop}[newcounter]{Proposition}
\newtheorem{lem}[newcounter]{Lemma}
\newtheorem{cor}[newcounter]{Corollary}
\newtheorem{rem}[newcounter]{Remark}
\newtheorem{exam}[newcounter]{Example}

\title{Obstructions to Symplectic Embeddings between Toric Domains}

\author{Marcelo Miranda}
\address{Instituto de Matem\'atica e Estat\'istica, Universidade de S\~ao Paulo}
\email{marcelo.miranda@ime.usp.br}

\author{Pedro A. S. Salom\~ao}
\address{Shenzhen International Center for Mathematics, SUSTech}
\email{psalomao@sustech.edu.cn}

\author{J. Trejos}
\address{Shenzhen International Center for Mathematics, SUSTech}
\email{yostrejos@sustech.edu.cn}

\begin{document}

\begin{abstract}
We study symplectic embeddings between four-dimensional toric domains using embedded contact homology. Extending Hutchings' criterion for embeddings between convex toric domains, we obtain obstructions for embeddings from convex toric domains into concave toric domains, from semi-weakly convex toric domains into convex toric domains, and from concave toric domains into concave toric domains. The obstructions are formulated in terms of factorizations of convex, concave, and semi-weakly convex generators together with combinatorial constraints relating their ECH indices and actions. As applications, we recover the sharp obstruction for symplectic embeddings of a polydisk into a union of cylinders, and obtain sharp results for embeddings of certain quadrilateral toric domains into balls and ellipsoids. We also show that these obstructions are strictly stronger than those coming only from ECH capacities.
\end{abstract}

\maketitle

\tableofcontents
 
\section{Introduction}

A basic problem in symplectic geometry is to determine when one
symplectic manifold admits a symplectic embedding into another. In
dimension four, this problem exhibits strong rigidity phenomena, and
symplectic embeddings are often constrained by invariants arising from
pseudoholomorphic curve theory.

For toric domains in $\C^2$, embedded contact homology (ECH) provides
some of the strongest known obstructions. The most familiar examples are
the ECH capacities introduced by Hutchings, which give sharp results for
several embedding problems involving ellipsoids and polydisks. In the
toric setting, these capacities admit combinatorial descriptions in terms
of lattice paths associated to the moment region; see, for instance,
\cite{ConcaveCapacities,hutchings2011quantitative,
miranda2025embedded,trejos2024symplecticembeddingstoricdomains}.

Beyond ECH capacities, Hutchings \cite{hutchings2016beyond} introduced
finer obstructions for symplectic embeddings between convex toric domains
using convex generators and their factorizations. These obstructions
detect phenomena which are invisible to ECH capacities alone and can
therefore produce strictly stronger embedding restrictions.

In this paper, we extend Hutchings' approach to embeddings involving
concave and semi-weakly convex toric domains. More precisely, we establish
obstruction criteria for symplectic embeddings from convex toric domains
into concave toric domains, from semi-weakly convex toric domains into
convex toric domains, and between concave toric domains. The resulting
obstructions are expressed in terms of factorizations of convex, concave,
and semi-weakly convex generators, together with combinatorial conditions
relating their ECH indices and actions.

The concave case introduces several new difficulties. The combinatorics
of the generators changes, the inequalities relating factorizations are
no longer symmetric, and additional restrictions arise when a concave
domain occurs as the target. In the semi-weakly convex case, adapted
generators arise naturally from the geometry near the boundary of the
moment region. We also obtain a description of the ECH capacities of a
family of semi-weakly convex toric domains in terms of these adapted
generators.

The proofs of Theorems~\ref{thm: criterion_convex_concave} and
\ref{thm:criterion_convex_convex} are independent of the unpublished
\cite{UserGuide}. By contrast,
Theorem~\ref{thm:criterion_concave_concave} uses one finite filtered
statement from \cite{UserGuide} concerning the sum of elliptic generators of the ECH
chain complex of a concave toric boundary. We state this input explicitly
immediately before the proof of the theorem and use it as a black box; no
part of the proofs of the first two obstruction theorems depends on
\cite{UserGuide}.

As applications, we recover the sharp obstruction for symplectic
embeddings of a polydisk into a union of cylinders previously proved by
Gutt and Hutchings \cite{Gutt_2018}. We also obtain sharp embedding
results for certain quadrilateral toric domains into balls and
ellipsoids. These examples show that the factorization obstructions
developed here are strictly stronger than the obstructions obtained from
ECH capacities alone.

The paper is organized as follows. In Section~2, we review toric domains,
generators, and ECH capacities. Section~3 contains the main obstruction
theorems and their applications. In Section~4, we recall the necessary
background on embedded contact homology and ECH cobordism maps. Section~5
constructs the Morse--Bott approximations and $L$-nice perturbations used
throughout the paper. In Section~6, we study the ECH differential for
these perturbations and establish the required behavior of the
hyperbolic labels. The proofs of the main obstruction theorems are given
in Section~7.

\section{Toric Domains}

Consider the space $\mathbb{C}^2$ with coordinates $(z_1,z_2)=(x_1+iy_1,x_2+iy_2)$ and standard symplectic form
$
\omega_{\mathrm{std}}:=dx_1\wedge dy_1+dx_2\wedge dy_2.
$
Given a domain $\Omega\subset\mathbb{R}_{\geq0}^2$, we define the associated toric domain by
$$
X_\Omega=\left\{(z_1,z_2)\in\mathbb{C}^2:(\pi|z_1|^2,\pi|z_2|^2)\in\Omega\right\}.
$$

Toric domains play a central role in four-dimensional symplectic embedding problems. In this section we review the combinatorial description of ECH capacities and generators associated to toric domains.

\subsection{Examples of toric domains}

Several important examples of toric domains have been extensively studied. Let $a,b,c,d>0$.

\begin{enumerate}

\item\label{exa:ellipsoid} The \emph{ellipsoid}
$$
E(a,b):=\left\{(z_1,z_2)\in\mathbb{C}^2:\frac{\pi|z_1|^2}{a}+\frac{\pi|z_2|^2}{b}\leq1\right\},
$$
corresponding to the moment region
$$
\Omega=\Omega_{a,b}=\left\{(x,y)\in\mathbb{R}_{\geq0}^2:\frac{x}{a}+\frac{y}{b}\leq1\right\}.
$$
The special case $B(a):=E(a,a)$ is the four-dimensional ball.

\item\label{exa:polydisk} The \emph{polydisk}
$$
P(a,b):=\left\{(z_1,z_2)\in\mathbb{C}^2:\pi|z_1|^2\leq a,\ \pi|z_2|^2\leq b\right\}.
$$

\item\label{exa:unionofcylinders} The \emph{non-disjoint union of cylinders}
$$
Z(a,b):=\left\{\pi|z_1|^2\leq a\right\}\cup\left\{\pi|z_2|^2\leq b\right\}.
$$

\item\label{exa:quadrilateral} \emph{Convex quadrilateral toric domains.} Denote by $\Omega_{a,b,c,d}$ the convex hull of the points
$
(0,0), (a,0), (c,d)$ and $(0,b)
$
in $\mathbb{R}_{\geq0}^2$, and assume
$
0<c\leq a
\qquad\text{and}\qquad
bc+ad>ab.
$
The second inequality means that $(c,d)$ lies strictly above the
segment joining $(a,0)$ to $(0,b)$, so that the convex hull has the
stated quadrilateral upper boundary.
The associated toric domain
$
Q_{a,b,c,d}:=X_{\Omega_{a,b,c,d}}
$
will be called a \emph{convex quadrilateral toric domain}.

\end{enumerate}

\subsection{Convex, concave and semi-weakly convex toric domains}

We now introduce the classes of toric domains considered in this paper.

\begin{defi}\label{defi_convex_concave_toric_domain}

Let
$
\Omega=\left\{(x,y)\in\mathbb{R}_{\geq0}^2:0\leq x\leq a,\ 0\leq y\leq f(x)\right\},
$
where
$
f:[0,a]\to[0,+\infty)
$
is continuous and satisfies $f(0)=b>0$. We say that the toric domain $X_\Omega$ is

\begin{itemize}

\item[(1)] {\bf convex} if $f$ is decreasing and concave;

\item[(2)] {\bf concave} if $f$ is strictly decreasing, convex and satisfies $f(a)=0$;

\item[(3)] {\bf semi-weakly convex} if $f$ is concave, not necessarily decreasing.

\end{itemize}

We denote by $\partial^+\Omega\subset\mathbb{R}_{\geq0}^2$ the union of the graph of $f$ with the segment
$$
\{a\}\times[0,f(a)].
$$

\end{defi}

The polydisk is convex, while the ellipsoid is both convex and concave. The quadrilateral toric domain $Q_{2,1,1,2}$ is semi-weakly convex.

\subsection{ECH capacities}

Embedded contact homology provides a sequence of symplectic capacities
$$
0=c_0(X,\omega)\leq c_1(X,\omega)\leq c_2(X,\omega)\leq\cdots\leq\infty
$$
associated to a symplectic four-manifold $(X,\omega)$.

These capacities satisfy the following properties:

\begin{enumerate}

\item {\bf Monotonicity:} if there exists a symplectic embedding
$
(X,\omega)\hookrightarrow(X',\omega'),
$
then
$
c_k(X,\omega)\leq c_k(X',\omega')
$
for every $k\geq0$;

\item {\bf Conformality:} for every $r>0$, we have 
$
c_k(X,r\omega)=r\,c_k(X,\omega);
$

\item {\bf Disjoint union:}
$$
c_k\left(\bigsqcup_{i=1}^n(X_i,\omega_i)\right)
=
\max_{k_1+\cdots+k_n=k,k_i\geq 0}\sum_{i=1}^n c_{k_i}(X_i,\omega_i).
$$

\end{enumerate}

For ellipsoids,
$
c_k(E(a,b))=N(a,b)_k,
$
where $N(a,b)$ denotes the sequence of nonnegative integer linear combinations of $a$ and $b$, arranged in nondecreasing order.

For polydisks,
\begin{equation}
\label{eq:cap_poli}
c_k(P(a,b))
=
\min\{am+bn:(m,n)\in\N^2,\ (m+1)(n+1)\geq k+1\}.
\end{equation}

ECH capacities of toric domains admit a combinatorial description in terms of generators, which we now recall.

\subsection{Integral paths and generators}

\begin{defi}[Integral paths]\label{def_integral_path_generators}
An {\bf integral path} $\Lambda\hookrightarrow\mathbb{R}^2$ is an injective piecewise linear path formed by line segments joining points in the lattice $\mathbb{Z}^2$, starting at $(x(\Lambda),0)$ and ending at $(0,y(\Lambda))$.

A vertex of $\Lambda$ is either an endpoint of $\Lambda$ or an interior point where $\Lambda$ changes direction. An edge of $\Lambda$ is a non-trivial line segment joining consecutive vertices.

The integral path consisting only of the point $(0,0)$ is called trivial. Otherwise it is called non-trivial.

A non-trivial integral path is oriented from $(x(\Lambda),0)$ to $(0,y(\Lambda))$. This orientation induces an ordering and an orientation of the edges
$
v_1,\dots,v_{k(\Lambda)}.
$
\end{defi}

We say that an integral path $\Lambda$ is

\begin{itemize}

\item[(1)] {\bf convex} if

\begin{itemize}

\item $x(\Lambda),y(\Lambda)\geq0$;

\item $\Lambda$ is contained in the rectangle
$
[0,x(\Lambda)]\times[0,y(\Lambda)];
$

\item
$
v_i\times v_{i+1}>0, i=1,\dots,k(\Lambda)-1;
$

\item the compact region enclosed by $\Lambda$ and the coordinate axes is convex.

\end{itemize}

\item[(2)] {\bf concave} if

\begin{itemize}

\item $x(\Lambda),y(\Lambda)\geq0$;

\item $\Lambda$ is contained in the rectangle
$
[0,x(\Lambda)]\times[0,y(\Lambda)];
$

\item no edge is parallel to a coordinate axis.

\item
$
v_i\times v_{i+1}<0, i=1,\dots,k(\Lambda)-1;
$

\item the unbounded closed region bounded by $\Lambda$ and the coordinate axes is convex;

\end{itemize}

\item[(3)] {\bf semi-weakly convex} if

\begin{itemize}

\item $x(\Lambda)\geq0$;

\item $\Lambda$ is contained in the strip
$
0\leq x\leq x(\Lambda);
$

\item
$
v_i\times v_{i+1}>0, i=1,\dots,k(\Lambda)-1;
$

\item unless $\Lambda$ consists of a single vertical edge, the last edge of $\Lambda$ is not parallel to the $y$-axis.

\end{itemize}

\item[(4)] $\Lambda$ is {\bf horizontally $n$-adapted} if
$
(-n,1)\times v\geq0
$
for every edge $v$ of $\Lambda$.

\item[(5)] $\Lambda$ is {\bf vertically $m$-adapted} if
$
v\times(-1,m)\geq0
$
for every edge $v$ of $\Lambda$.

\item[(6)] $\Lambda$ is {\bf $(n,m)$-adapted} if it is both horizontally $n$-adapted and vertically $m$-adapted.

\end{itemize}

\begin{defi}[Multiplicity of an edge]

Let $v$ be an edge of an integral path $\Lambda$. The {\bf multiplicity} of $v$ is the largest positive integer $m(v)$ such that
$
v=m(v)v'
$
for some primitive vector
$
v'\in\mathbb{Z}^2\setminus\{0\}.
$
\end{defi}

\begin{defi}[Generators]

A {\bf generator} is an integral path whose edges are labeled by either `e' or `h'. We say that a generator is

\begin{itemize}

\item[(1)] {\bf convex} if the underlying integral path is convex and every horizontal or vertical edge is labeled `e';

\item[(2)] {\bf concave} if the underlying integral path is concave;

\item[(3)] {\bf semi-weakly convex} if the underlying integral path is semi-weakly convex;

\item[(4)] {\bf $(n,m)$-adapted} if the underlying integral path is $(n,m)$-adapted and every edge parallel to $(-n,1)$ or $(-1,m)$ is labeled `e';

\item[(5)] {\bf elliptic} if every edge is labeled `e'.

\end{itemize}

\end{defi}

\begin{defi}

Let $\Lambda$ be a generator and let $v$ be an edge of $\Lambda$ with multiplicity $m(v)$.

\begin{enumerate}

\item If $v$ is labeled `e', define
$
e(v):=m(v)$ and $h(v):=0.
$

\item If $v$ is labeled `h', define
$
e(v):=m(v)-1$ and $h(v):=1.
$

\end{enumerate}

We define
$$
e(\Lambda):=\sum_{i=1}^{k(\Lambda)}e(v_i),\qquad h(\Lambda):=\sum_{i=1}^{k(\Lambda)} h(v_i).
$$
Finally, we define the multiplicity of $\Lambda$ by
$$
m(\Lambda) := \sum_{i=1}^{k(\Lambda)}m(v_i)=e(\Lambda) + h(\Lambda).
$$
\end{defi}

\begin{defi}
\label{def_integral_paths_semi_weakly_convex}

Let $\Lambda\hookrightarrow\mathbb{R}^2$ be an integral path.

\begin{itemize}

\item[(1)] If $\Lambda$ is convex, we define $B_\Lambda$ as the compact region enclosed by $\Lambda$ and the coordinate axes.

\item[(2)] If $\Lambda$ is concave, let $B$ be the compact region enclosed by $\Lambda$ and the coordinate axes, and define
$$
B_\Lambda:=B\setminus\Lambda.
$$

\item[(3)] Suppose that $\Lambda$ is semi-weakly convex and let
$$
x_0:=\min\{x\in[0,x(\Lambda)]:\exists\, y\geq 0 \mbox{ such that}\, (x,y)\in \Lambda)\}.
$$ 
We define $B_\Lambda^\pm$ as follows:
\begin{itemize}
\item If $x_0=0$, define $B_\Lambda^+$ as the compact region enclosed by $\Lambda$ and the coordinate axes, and set
$
B_\Lambda^-:=\emptyset.
$

\item If $0<x_0<x(\Lambda)$, write
$
\Lambda^+=\Lambda\cap\{x_0\leq x\leq x(\Lambda),\ y\geq0\},
$
and
$
\Lambda^-=\Lambda\cap\{0\leq x\leq x_0,\ y\leq0\}.
$

Define $B_\Lambda^+$ as the compact region enclosed by $\Lambda^+$ and the segment
$
[x_0,x(\Lambda)]\times\{0\},
$
and define
$
B_\Lambda^-
=
B\setminus\left(\Lambda^-\cup([0,x_0]\times\{0\})\right),
$
where $B$ is the compact region enclosed by $\Lambda^-$ and the coordinate axes.

\item If $0<x_0=x(\Lambda)$, write
$
\Lambda^-=\Lambda\cap\{0\leq x\leq x_0,\ y\leq0\}.
$
Define
$
B_\Lambda^+:=\{(x(\Lambda),0)\},
$
and
$
B_\Lambda^-
=
B\setminus\left(\Lambda^-\cup([0,x(\Lambda)]\times\{0\})\right),
$
where $B$ is the compact region enclosed by $\Lambda^-$ and the coordinate axes.
\end{itemize}
\end{itemize}

\end{defi}

\subsection{ECH index and actions}

\begin{defi}[Length]

Let $X_\Omega$ be a convex, concave, or semi-weakly convex toric domain, and let $\Lambda$ be an integral path of the corresponding type.
For each edge $v_i$ of $\Lambda$, choose a point
$
p_{\Omega,i}\in\partial^+\Omega
$
such that:

\begin{enumerate}

\item If $X_\Omega$ is convex or semi-weakly convex, then $\partial^+\Omega$ is contained in the closed half-plane to the left of the oriented line through $p_{\Omega,i}$ parallel to $v_i$;

\item If $X_\Omega$ is concave, then $\partial^+\Omega$ is contained in the closed half-plane to the right of the oriented line through $p_{\Omega,i}$ parallel to $v_i$.

\end{enumerate}

The $\Omega$-length (or $\Omega$-action) of $\Lambda$ is defined by
$$
\ell_\Omega(\Lambda)
=
\sum_{i=1}^{k(\Lambda)}
p_{\Omega,i}\times v_i.
$$

This quantity does not depend on the choice of the points $p_{\Omega,i}$.

\end{defi}

\begin{exam}
Let $X_\Omega=E(a,b)$ and let $\Lambda$ be a convex integral path. There is a vertex
$p_n$ of $\Lambda$ such that the edges before and after $p_n$ have slopes respectively greater and smaller than $-b/a$.
Choosing $p_{\Omega,i}=(a,0)$ before $p_n$ and $p_{\Omega,i}=(0,b)$ after $p_n$, we obtain
\[
\ell_\Omega(\Lambda)=a\,y(p_n)+b\,x(p_n).
\]
Thus $\ell_\Omega(\Lambda)=c$, where $L=\{bx+ay=c\}$ is the supporting line of $\Lambda$ at $p_n$, oriented by $(-a,b)$.
\end{exam}

\begin{exam}

Let $X_\Omega=P(a,b)$ be a polydisk, and let $\Lambda$ be a convex integral path. With a similar computation as in the case of the ellipsoid, we obtain that
$$
\ell_\Omega(\Lambda)
=
b\,x(\Lambda)+a\,y(\Lambda).
$$

\end{exam}

\begin{defi}[ECH index]

Let $B\subset\R^2$ be bounded and denote by
$$
\mathcal L(B):=|B\cap\Z^2|
$$
the number of lattice points in $B$.

\begin{itemize}

\item[(1)] If $\Lambda$ is a convex generator, define
$$
I(\Lambda)
:=
2(\mathcal L(B_\Lambda)-1)-h(\Lambda).
$$

\item[(2)] If $\Lambda$ is a concave generator, define
$$
I(\Lambda)
:=
2\mathcal L(B_\Lambda)+h(\Lambda).
$$

\item[(3)] If $\Lambda$ is a semi-weakly convex generator, define
$$
I(\Lambda)
=
2\left(\mathcal L(B_\Lambda^+)-\mathcal L(B_\Lambda^-)-1\right)-h(\Lambda).
$$

\end{itemize}

\end{defi}

\begin{defi}[Pick's formula] Let $\Lambda\subset \R^2_{\geq 0}$ be a convex or a concave integral path. Denote by $L(K_\Lambda)$ the number of lattice points in the compact region $K_\Lambda\subset \R_{\geq 0}^2$ enclosed by $\Lambda$ and the axes, including the lattice points in the boundary of $K_\Lambda$. Let $A(\Lambda)$ be the area of $K_\Lambda$. Then
$$
2A(\Lambda) = 2L(K_\Lambda) - m(\Lambda)-x(\Lambda) - y(\Lambda) -2.
$$
We conclude from Pick's formula that if $\Lambda$ is a convex generator, then
$$
I(\Lambda) = 2A(\Lambda) +x(\Lambda)+y(\Lambda) +e(\Lambda),
$$
and if $\Lambda$ is a concave generator, then
$$
I(\Lambda) = 2A(\Lambda) + x(\Lambda) + y(\Lambda) -e(\Lambda).
$$
In particular, if $\Lambda$ is an elliptic convex or concave generator and $d\in \Z_{>0}$, then
$$
I(\Lambda^d) = 2(d^2-d)A(\Lambda) + dI(\Lambda).
$$
\end{defi}

\begin{lem}[Signed Pick's formula]
\label{lem:signed-pick}
Let $\Lambda$ be an $(n,m)$-adapted semi-weakly convex integral path. Then
$$
2\left(
A(B_\Lambda^+)
-
A(B_\Lambda^-)
\right)
+
x(\Lambda)
+
y(\Lambda)
+
m(\Lambda)
=
2\left(
\mathcal L(B_\Lambda^+)
-
\mathcal L(B_\Lambda^-)
-
1
\right).
$$
\end{lem}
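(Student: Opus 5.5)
The plan is to reduce the statement to the classical Pick theorem applied separately to the two pieces $B_\Lambda^+$ and $B_\Lambda^-$, and then to keep track of how the boundary lattice points are shared among $\Lambda^+$, $\Lambda^-$ and the coordinate axes. The three cases of Definition~\ref{def_integral_paths_semi_weakly_convex} will be treated separately.

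\textbf{Case $x_0=0$.} Here $\Lambda$ lies in the closed first quadrant. It is a convex path in the sense of the Pick's formula paragraph, except that its first edges may point up-right; for this reason we argue directly. The region $B_\Lambda^+$ is a lattice polygon whose boundary consists of $\Lambda$, the segment $[0,x(\Lambda)]\times\{0\}$ and the segment $\{0\}\times[0,y(\Lambda)]$. The number of boundary lattice points is $m(\Lambda)+x(\Lambda)+y(\Lambda)$. Pick's theorem gives
\[
2A=2\mathcal L-(\#\text{boundary})-2 .
\]
Since $B_\Lambda^-=\emptyset$, this is exactly the claim. Degenerate subcases, in which the polygon collapses (for example when $x(\Lambda)=0$ or $\Lambda$ is a single vertical edge), will be checked by hand.

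\textbf{Case $0<x_0<x(\Lambda)$.} The vertex of $\Lambda$ closest to the $y$-axis lies at $x=x_0$. Concavity ($v_i\times v_{i+1}>0$), together with the condition that the last edge is not vertical, forces the following shape: $\Lambda$ goes up from $(x(\Lambda),0)$, turns left, comes down to a lowest-left point, and then continues to $(0,y(\Lambda))$ with $y(\Lambda)\le 0$. We will verify that $\Lambda^+$ and $\Lambda^-$ meet the $x$-axis at a lattice point $(x_0,0)$. I expect this to be the main obstacle, and the $(n,m)$-adaptedness hypothesis is what should guarantee it. Adaptedness bounds the slopes of the edges by $(-n,1)$ and $(-1,m)$. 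This should force the path to cross $y=0$ exactly at a vertex where it meets the line $x=x_0$, or else to have a vertical edge there. I would first try to show that the lowest-left vertex is $(x_0,0)$ itself, by combining convexity of the turning with the strip condition.

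Granting that, apply Pick to the closed polygon $K^+=B_\Lambda^+$, whose boundary is $\Lambda^+$ together with $[x_0,x(\Lambda)]\times\{0\}$:
\[
2A^+=2\mathcal L^+-m(\Lambda^+)-(x(\Lambda)-x_0)-2 .
\]
Next apply Pick to the closed polygon $K^-$ bounded by $\Lambda^-$, the segment $[0,x_0]\times\{0\}$ and the segment $\{0\}\times[y(\Lambda),0]$:
\[
2A^-=2L(K^-)-m(\Lambda^-)-x_0-|y(\Lambda)|-2 .
\]
By definition, $B_\Lambda^-$ is $K^-$ with the points of $\Lambda^-$ and of $[0,x_0]\times\{0\}$ removed. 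The number of removed lattice points is $m(\Lambda^-)+x_0+1$, counted with the shared endpoints handled carefully. Hence
\[
\mathcal L(B_\Lambda^-)=L(K^-)-m(\Lambda^-)-x_0-1 .
\]
Subtracting the two Pick identities, using $m(\Lambda)=m(\Lambda^+)+m(\Lambda^-)$ and $y(\Lambda)=-|y(\Lambda)|$, the $x_0$ terms should cancel and the constants should combine to give the stated identity. I will do this bookkeeping carefully, since the sign of $y$ and the endpoint counts are where errors creep in.

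\textbf{Case $x_0=x(\Lambda)$.} This is the previous computation with $B_\Lambda^+$ reduced to a single point. In that case $\mathcal L(B_\Lambda^+)=1$, $A(B_\Lambda^+)=0$ and $m(\Lambda^+)=0$, so the same subtraction goes through.
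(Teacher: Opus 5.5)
Your subtraction is correct whenever $(x_0,0)$ is a lattice point, so your first and third cases go through. The step you single out as the main obstacle, however, is false, and $(n,m)$-adaptedness does not rescue it.

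The number $x_0$ is where $\Lambda$ crosses the $x$-axis. In general that is an interior point of an edge, not a vertex; the vertex of $\Lambda$ closest to the $y$-axis is the endpoint $(0,y(\Lambda))$. For example, take the path $(3,0)\to(3,1)\to(0,-1)$, with edges $(0,1)$ and $(-3,-2)$. It is a $(0,-1)$-adapted semi-weakly convex path, which is exactly the case $m=-1$ the paper uses for $Q_{1,1,1,2}$. It crosses the axis at $(3/2,0)$.

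Here $B^+_\Lambda$ is the triangle with vertices $(3,0),(3,1),(3/2,0)$, and your $K^-$ is the triangle with vertices $(0,0),(3/2,0),(0,-1)$. Both have area $3/4$. So neither is a lattice polygon, and no identity of the form $2A=2\mathcal L-b-2$ with integer $b$ can hold for either piece. Applying Pick separately to the two pieces therefore cannot work in general. The lemma itself still holds in this example: both sides equal $4$.

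The missing idea is to avoid cutting along the $x$-axis at all. The paper translates $\Lambda$ upward by a large integer $N$. It then applies Pick once, to the genuine lattice polygon $K_N$ bounded by $\Lambda+(0,N)$, the vertical segments over $x=0$ and $x=x(\Lambda)$, and $[0,x(\Lambda)]\times\{0\}$, using $A(K_N)=Nx(\Lambda)+A(B^+_\Lambda)-A(B^-_\Lambda)$.

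It then counts lattice points column by column. On each line $\{j\}\times\mathbb Z$ with $0\le j\le x(\Lambda)$, the count in $K_N$ equals $N$ plus the count in $B^+_\Lambda$ minus the count in $B^-_\Lambda$. The non-lattice crossing point plays no role there, because only the position of lattice points relative to $\Lambda$ and the axis matters. The boundary conventions in the definition of $B^\pm_\Lambda$ are what make this column identity exact.
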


\begin{proof}
Write
$
x=x(\Lambda),
y=y(\Lambda),
m_\Lambda=m(\Lambda).
$

We first treat the degenerate case $x=0$. Then $\Lambda$ is a single
vertical segment. By the convention in
Definition~\ref{def_integral_paths_semi_weakly_convex},
$
B_\Lambda^-=\emptyset
$
and $B_\Lambda^+$ is the segment on the $y$-axis bounded by $\Lambda$
and the origin. Hence
$
A(B_\Lambda^+)=0,
$
$
\mathcal L(B_\Lambda^+)=y+1,
$
and
$
m_\Lambda=y.
$
Therefore
\[
2\bigl(A(B_\Lambda^+)-A(B_\Lambda^-)\bigr)
+x+y+m_\Lambda
=
2y
=
2\bigl(\mathcal L(B_\Lambda^+)-\mathcal L(B_\Lambda^-)-1\bigr),
\]
so the formula holds.

Henceforth assume $x>0$. Choose an integer $N>0$ sufficiently large so that the vertical translate
$
\Lambda_N
=
\Lambda+(0,N)
$
is contained in the closed upper half-plane and $N+y\geq0$.

Let $K_N$ be the compact lattice polygon bounded by $\Lambda_N$, the
vertical segment joining $(0,0)$ to $(0,N+y)$, the horizontal segment
joining $(0,0)$ to $(x,0)$, and the vertical segment joining $(x,0)$ to
$(x,N)$.

The signed area enclosed by $\Lambda$ and the coordinate axes is
$
A(B_\Lambda^+)
-
A(B_\Lambda^-).
$
Translating $\Lambda$ vertically by $N$ adds the rectangle
$[0,x]\times[0,N]$. Therefore,
$$
A(K_N)
=
Nx
+
A(B_\Lambda^+)
-
A(B_\Lambda^-).
$$

We next compute the number of lattice points on the boundary of $K_N$.
The contribution from the edges of $\Lambda_N$ is $m_\Lambda$. The two
vertical segments have lattice lengths $N$ and $N+y$, while the horizontal
segment has lattice length $x$. Hence,
$$
\#\left(\partial K_N\cap\mathbb Z^2\right)
=
m_\Lambda+N+(N+y)+x
=
2N+x+y+m_\Lambda.
$$

We now compare the lattice points of $K_N$ with those of
$B_\Lambda^+$ and $B_\Lambda^-$. For $j\in\{0,\ldots,x\}$, write
$V_j=\{j\}\times\mathbb Z$ and set $k_j=\#(K_N\cap V_j)$,
$k_j^+=\#(B_\Lambda^+\cap V_j)$, and
$k_j^-=\#(B_\Lambda^-\cap V_j)$.

By the definitions of $B_\Lambda^+$ and $B_\Lambda^-$, vertical translation
by $N$ gives
$
k_j
=
N+k_j^+-k_j^-.
$

The boundary conventions in Definition~\ref{def_integral_paths_semi_weakly_convex} ensure that this identity also
holds when the vertical line contains a lattice point on $\Lambda$ or on
the $x$-axis. Summing over $j=0,\ldots,x$, we obtain
$$
\mathcal L(K_N)
=
N(x+1)
+
\mathcal L(B_\Lambda^+)
-
\mathcal L(B_\Lambda^-).
$$

Applying Pick's formula to the lattice polygon $K_N$ gives
$$
2A(K_N)
=
2\mathcal L(K_N)
-
\#\left(\partial K_N\cap\mathbb Z^2\right)
-
2.
$$

Substituting the formulas above, we obtain
$$
\begin{aligned}
&
2Nx
+
2\left(
A(B_\Lambda^+)
-
A(B_\Lambda^-)
\right)
\\
&\qquad =
2N(x+1)
+
2\mathcal L(B_\Lambda^+)
-
2\mathcal L(B_\Lambda^-)
-
\left(
2N+x+y+m_\Lambda
\right)
-
2.
\end{aligned}
$$

Cancelling the term $2Nx$ from both sides gives
$$
2\left(
A(B_\Lambda^+)
-
A(B_\Lambda^-)
\right)
+
x+y+m_\Lambda
=
2\left(
\mathcal L(B_\Lambda^+)
-
\mathcal L(B_\Lambda^-)
-
1
\right).
$$

This proves the lemma. \end{proof}

With these definitions, we can describe the ECH capacities of toric domains combinatorially.

\begin{thm}[Choi, Cristofaro-Gardiner, Frenkel, Hutchings, Ramos {\cite[Theorem 1.21]{ConcaveCapacities}}]
\label{thm:concave_capacities}

Suppose that $X_\Omega$ is a concave toric domain. Then
$
c_k(X_\Omega)
=
\max\{l_\Omega(\Lambda):\mathcal L(B_\Lambda)=k\},
$
where $\Lambda$ runs over all concave integral paths $\Lambda$.

\end{thm}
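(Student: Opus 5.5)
This is the cited theorem of Choi, Cristofaro-Gardiner, Frenkel, Hutchings and Ramos, and I would prove it in two halves: a lower bound obtained by packing balls into $X_\Omega$, and an upper bound obtained by embedding $X_\Omega$ into a suitable target whose capacities can be computed combinatorially.

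\textbf{Lower bound.} Decompose the concave region $\Omega$ by the weight expansion. Cut off the largest triangle $\{x+y\le w_1\}$ contained in $\Omega$. The two remaining regions are, after an $SL_2(\mathbb Z)$ change of coordinates and a translation, again concave moment regions. Iterating gives a weight sequence $(w_1,w_2,\dots)$ and a symplectic embedding
$$
\bigsqcup_i \operatorname{int} B(w_i)\hookrightarrow X_\Omega.
$$
Monotonicity, the disjoint union axiom and $c_k(B(w))=N(w,w)_k$ then give
$$
c_k(X_\Omega)\ge \sup\max_{\sum k_i=k}\sum_i c_{k_i}(B(w_i)).
$$
Next I would show combinatorially that this ball-packing quantity equals $\max\{\ell_\Omega(\Lambda):\mathcal L(B_\Lambda)=k\}$.

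In one direction, a concave path $\Lambda$ with $\mathcal L(B_\Lambda)=k$ restricts to the triangles of the decomposition. After the affine change of coordinates, each piece is a path for a ball, and the lattice counts and $\Omega$-lengths add up. In the other direction, optimal ball paths glue into a concave path. Here the affine invariance of $\ell$ and of lattice-point counts under $SL_2(\mathbb Z)$ is essential, as is the fact that the length is computed using supporting points on $\partial^+\Omega$.

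\textbf{Upper bound.} This is the main obstacle. I would compute $c_k$ directly using ECH of the boundary. First perturb $\partial X_\Omega$ so that it becomes the nondegenerate $L$-nice contact form, whose Reeb orbits correspond to lattice directions along the curve. Its ECH generators are exactly the concave generators $\Lambda$, with action approximately $\ell_\Omega(\Lambda)$ and ECH index $I(\Lambda)=2\mathcal L(B_\Lambda)+h(\Lambda)$, computed via Pick's formula as in the definitions above.

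Since $c_k$ is the minimal action of a class representing $U^k$ of the generator in degree $2k$, it is at most the largest action among index-$2k$ cycles that can represent the nonzero class. The key point is that every index-$2k$ class must be represented by a combination of generators of action at most $\max\{\ell_\Omega(\Lambda)\}$ over elliptic concave $\Lambda$ with $\mathcal L(B_\Lambda)=k$. Hyperbolic generators of the same index have strictly smaller lattice count, so their actions are dominated.

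Alternatively, and more simply, one can avoid computing the differential. Embed $X_\Omega$ into the complement picture, a ball $B(R)$ minus a disjoint union of balls coming from the weight expansion of $\Delta_R\setminus\Omega$. Then use the ECH cobordism map together with the "subtraction" inequality
$$
c_k(X_\Omega)+\sum_i c_{k_i}(B(w'_i))\le c_{k+\sum k_i}(B(R)),
$$
which is valid because the ECH cobordism map respects the $U$-maps. Optimizing over $k_i$ yields a combinatorial upper bound. The remaining step is to show that this upper bound matches the lattice-path maximum, by the same lattice-point and length additivity used in the lower bound.

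The delicate points I expect are twofold. The first is ensuring that concavity yields the correct closed/open convention $B_\Lambda=B\setminus\Lambda$ in the lattice counts. The second is handling the limiting argument when $\Omega$ is not rational, which I would treat by approximating by rational concave domains and using continuity of $c_k$.
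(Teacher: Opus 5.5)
Your architecture matches the proof of Choi--Cristofaro-Gardiner--Frenkel--Hutchings--Ramos; the present paper only cites their theorem and gives no proof. The weight-expansion packing $\bigsqcup_i\operatorname{int}B(w_i)\hookrightarrow X_\Omega$ (which relies on Traynor's trick for the triangles not at the origin), combined with restricting a concave path to the pieces of the weight expansion, gives $c_k(X_\Omega)\geq\max\{\ell_\Omega(\Lambda):\mathcal L(B_\Lambda)=k\}$. The chain-level argument on the $L$-nice concave model of Section~\ref{sec:concavepert} needs no knowledge of the differential and gives $c_k(X_\Omega)\leq\max\{\ell_\Omega(\Lambda):\Lambda\text{ a concave generator with }I(\Lambda)=2k\}$.

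The gap is the step joining these two maxima, namely your sentence that hyperbolic generators are ``dominated''. A generator with $I(\Lambda)=2k$ and $h(\Lambda)>0$ has $\mathcal L(B_\Lambda)=k-h(\Lambda)/2<k$. Having fewer lattice points does not by itself bound its length by that of some path with exactly $k$ lattice points. As written, you only obtain $c_k(X_\Omega)\leq\max\{\ell_\Omega(\Lambda):\mathcal L(B_\Lambda)\leq k\}$.

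What is missing is monotonicity of $j\mapsto\max\{\ell_\Omega(\Lambda):\mathcal L(B_\Lambda)=j\}$. One way to prove it is a concave analogue of corner rounding. Let $U$ be the closed unbounded convex region bounded by $\Lambda$ and the axes, and let $P$ be a vertex of $\Lambda$. Replace $U$ by the convex hull of $(U\cap\mathbb Z^2)\setminus\{P\}$. Then:
\begin{enumerate}
\item this adds exactly the point $P$ to $B_\Lambda$;
\item the new edges lie in the cone spanned by the two primitive edge vectors at $P$ (at an endpoint, use $(-1,0)$ or $(0,1)$, both of which have $\Omega$-length zero);
\item the map $v\mapsto\min_{z\in\partial^+\Omega}z\times v$ is a minimum of linear functions, hence positively homogeneous and superadditive, so $\ell_\Omega$ does not decrease.
\end{enumerate}
With this lemma, the two bounds close up.

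Two further remarks. First, only the restriction direction of your combinatorial comparison is used. The gluing direction is superfluous, and as stated it is not automatic. Ball-optimal paths with a non-triangular number of lattice points are not diagonal. Also, the pieces coming from the two residual regions fit against the first ball's diagonal only if that diagonal is long enough. So lattice counts do not simply add.

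Second, you should drop the ``simpler'' subtraction alternative. For concave $\Omega\subset\triangle_R$, the set $\triangle_R\setminus\Omega$ is convex: it is cut out of the epigraph of $f$ (extended by $0$ past $x=a$) by the hypotenuse. It is not a concave moment region, so the weight-expansion machinery does not apply and there is no evident ball packing of $B(R)\setminus X_\Omega$. That device is the one used for convex domains, where the complement in a triangle splits into two concave corner regions. Even granting some packing, you would still have to show that the resulting infimum equals the lattice-path maximum, which your sketch does not address. The subtraction inequality itself follows from monotonicity and the disjoint union axiom.
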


The convex case is due to Hutchings.

\begin{thm}[Hutchings {\cite[Proposition 5.6]{hutchings2011quantitative}}]
\label{thm:convex_capacities}

Suppose that $X_\Omega$ is a convex toric domain. Then
$
c_k(X_\Omega)
=
\min\{l_\Omega(\Lambda):\mathcal L(B_\Lambda)=k+1\},
$
where $\Lambda$ runs over all convex integral paths $\Lambda$.

\end{thm}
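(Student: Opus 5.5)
The plan is to compute the ECH capacities directly from the definition,
\[
c_k(X_\Omega)=\inf\{L : \text{the class } \sigma_k\in ECH_{2k}(\partial X_\Omega) \text{ lies in the image of } ECH^L\},
\]
where $\sigma_k$ is the unique nonzero class of grading $2k$ (recall $ECH(\partial X_\Omega)\cong ECH(S^3)$, which is $\mathbb{Z}/2$ in each even degree $2k\ge 0$). The boundary $\partial X_\Omega$ is first smoothed so that $\partial^+\Omega$ is a smooth convex curve meeting the axes perpendicularly to the direction of the adjacent edge. Then we use the standard toric model for its Reeb dynamics. The Reeb flow is tangent to the Lagrangian tori over points of $\partial^+\Omega$. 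Over a point where the outward normal to $\partial^+\Omega$ has rational direction, the torus is foliated by a Morse--Bott $S^1$-family of Reeb orbits, and this family is labelled by the corresponding primitive edge direction $v$. The orbits over the two axes are elliptic, and the action of a covered orbit in direction $v$ is $p\times v$, where $p\in\partial^+\Omega$ is the tangency point. After an $L$-nice Morse--Bott perturbation, each torus splits into one elliptic and one hyperbolic orbit. Orbit sets of action $<L$ then correspond exactly to convex generators with $\ell_\Omega<L$ (up to an error tending to $0$ as the perturbation shrinks): a hyperbolic orbit may only appear with multiplicity one, which is why an edge labelled `h' contributes $e(v)=m(v)-1$, and edges on the axes are forced to be `e'.

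Next I would verify that the ECH index of the orbit set equals $I(\Lambda)=2(\mathcal L(B_\Lambda)-1)-h(\Lambda)$, with the empty set corresponding to the trivial path. Relative to the empty set, the index is $c_\tau+Q_\tau+CZ^I_\tau$. In the trivialization induced by $\mathbb{C}^2$, the sum $c_\tau+Q_\tau$ computes $2A(\Lambda)+x(\Lambda)+y(\Lambda)$. For the Conley--Zehnder terms, an elliptic orbit slightly rotated contributes one extra unit per cover and a hyperbolic orbit contributes none. This yields $2A+x+y+e(\Lambda)$, which equals $2(\mathcal L(B_\Lambda)-1)-h(\Lambda)$ by Pick's formula as recorded above. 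In particular, generators of index $2k$ are exactly elliptic generators with $\mathcal L(B_\Lambda)=k+1$, together with hyperbolic ones with $\mathcal L(B_\Lambda)=k+1+h/2$.

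The core step, and the main obstacle, is to show that the minimal action needed to represent $\sigma_k$ equals the minimum of $\ell_\Omega$ over elliptic convex generators $\Lambda$ with $\mathcal L(B_\Lambda)=k+1$. For the upper bound, I would show that the differential on elliptic generators vanishes modulo terms of lower action. For this I would use the Morse--Bott description of the differential: the only low-energy curves are cylinders inside each torus, running from the elliptic orbit to the hyperbolic orbit of the same family and appearing in pairs. Consequently $\partial$ only changes labels $e\leftrightarrow h$ (``rounding corners'' is excluded by the index/action count when the perturbation is small). It follows that the sum of all index-$2k$ elliptic generators is a cycle, and its class is $\sigma_k$, which would be checked by comparing with $U$-map computations on an ellipsoid. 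I would then refine this to a single minimizer, or use that any cycle representing $\sigma_k$ must have a nonzero component on an elliptic generator. For the lower bound, I would argue that a hyperbolic generator of index $2k$ cannot be needed: pairing with the $U$-map, or noting that $\partial(\text{elliptic with one }e\to h)$ contains it, shows that the homology in degree $2k$ is detected by the elliptic part. Moreover, a cycle representing $\sigma_k$ must contain some elliptic generator $\Lambda$ with $\mathcal L(B_\Lambda)=k+1$, hence has action $\ge\min \ell_\Omega(\Lambda)$. Finally, letting the perturbation and smoothing tend to zero and using continuity of ECH capacities under $C^0$-approximation of $\Omega$ gives the stated formula. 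If the direct $U$-map identification proves awkward, my fallback is to reduce the convex case to ellipsoids, for which the capacities $N(a,b)_k$ are known, via Cristofaro-Gardiner's weight decomposition together with monotonicity; the combinatorial minimum should match this reduction by a lattice-path inequality.
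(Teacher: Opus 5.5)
The paper does not prove this statement. It quotes it from Hutchings, and its general form is Corollary~A.12 of \cite{cristofaro2019symplectic}, which the paper uses to prove Theorem~\ref{thm:semiweak_capacities}. Measured against the standard argument, your setup, the identification $I(\alpha_\Lambda)=I(\Lambda)$, and the final limiting step are fine. The lower bound is easier than you make it. An index-$2k$ generator has $\mathcal L(B_\Lambda)=k+1+h(\Lambda)/2\geq k+1$, and corner rounding removes one lattice point at a time without increasing $\ell_\Omega$. So every generator of degree $2k$, hence every cycle representing $\sigma_k$, has action at least $\min\{\ell_\Omega(\Lambda):\mathcal L(B_\Lambda)=k+1\}$ up to the perturbation error; no claim about elliptic components of cycles is needed.

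The genuine gap is the upper bound, and the mechanism you propose for it is wrong. At a convex Morse--Bott torus the two index-one cylinders from $e_{-p,q}$ to $h_{-p,q}$ cancel mod $2$. What survives is exactly corner rounding, which removes one lattice point and one $h$-label. It therefore changes $I=2(\mathcal L-1)-h$ by exactly $-1$, and its action drop $\ell_\Omega(\Lambda_+)-\ell_\Omega(\Lambda_-)$ does not shrink with the perturbation, so neither index nor action excludes it. With your description the homology would be too large. For $k=1$ the only index-$2$ generators are the exceptional orbits $e_{0,1}$ and $e_{-1,0}$, and the only index-$3$ generator is $h_{-1,1}$. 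Nothing would kill either exceptional orbit, contradicting $ECH_2\cong\mathbb{Z}/2$.

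The same example refutes your candidate cycle. The claim that ``the sum of all elliptic index-$2k$ generators represents $\sigma_k$'' is the concave-side statement, where $\partial$ raises $h$; compare Proposition~\ref{prop:concave-elliptic-sector}. In the convex case $\partial$ lowers $h$ by exactly one, so each elliptic generator is separately a cycle. Moreover $\partial h_{-1,1}=e_{0,1}+e_{-1,0}$, obtained by rounding either end of the diagonal edge. So both exceptional orbits represent $\sigma_1$, and your sum is null-homologous.

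What the upper bound actually needs is that the particular minimizer $\alpha_{\Lambda_{\min}}$ is not a boundary in the full complex. Your phrase ``refine this to a single minimizer'' names exactly this missing step without supplying it. Hutchings' lemma on minimal generators, which the paper uses elsewhere, cannot help, because minimality already presupposes $\ell_{\Omega}(\Lambda)=c_k$. You need one of two inputs:
\begin{itemize}
\item the full corner-rounding computation of the differential on the convex toric boundary, together with a combinatorial argument that no single elliptic generator lies in the image of the one-$h$ sector; or
\item the route your fallback only gestures at. Embed (slightly shrunk) $X_\Omega\sqcup\bigsqcup_iB(a_i)$ into $B(a)$ via the weight decomposition, and deduce $c_k(X_\Omega)\leq c_{k+k'}(B(a))-c_{k'}(\bigsqcup_iB(a_i))$ for all $k'$ from Monotonicity and Disjoint Union. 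Then prove the lattice-path identity showing that the infimum over $k'$ does not exceed the combinatorial minimum.
\end{itemize}
That identity is where the content of the second route lies, and it is absent from your proposal.
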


\begin{defiun}[Minimal convex generator]
Following \cite[Definition~1.15]{hutchings2016beyond}, let $X_{\Omega'}$
be a convex toric domain and let $\Lambda'$ be a convex generator with
$I(\Lambda')=2k$. We say that $\Lambda'$ is \emph{minimal for
$X_{\Omega'}$} if the following three conditions hold:
\begin{enumerate}
\item[(a)] every edge of $\Lambda'$ is labeled $e$;
\item[(b)] $\ell_{\Omega'}(\Lambda')=c_k(X_{\Omega'})$;
\item[(c)] $\Lambda'$ uniquely minimizes $\ell_{\Omega'}$ among all
convex generators of ECH index $2k$.
\end{enumerate}
\end{defiun}

The next theorem gives an analogous formula for semi-weakly convex toric domains.

\begin{thm}
\label{thm:semiweak_capacities}

Let $X_\Omega$ be a semi-weakly convex toric domain with defining function
$
f:[0,a]\to[0,+\infty)
$
as in Definition \ref{defi_convex_concave_toric_domain}. The following statements hold:

\begin{itemize}

\item[(i)]
Suppose that $f$ is differentiable at $0$ and $a$, satisfies
$
f(a)=0,
$

$
f'(0)\leq -m,$ and $nf'(a)\geq -1
$
for some
$
m\in\Z,
n\in\Z_{\geq0}.
$ If $n>0$ we require $mn<1$.

Then
$$
c_k(X_\Omega)
=
\min\left\{
l_\Omega(\Lambda):
\mathcal L(B_\Lambda^+)=k+1
\right\},
\qquad \forall k\geq0,
$$
where $\Lambda$ runs over all $(n,m)$-adapted semi-weakly convex integral paths $\Lambda$ satisfying
$
y(\Lambda)\geq0.
$

\item[(ii)]
Suppose that $f$ is differentiable at $0$, satisfies
$
f(a)>0,$ and $
f'(0)\leq -m
$
for some
$
m\in\Z.
$
Then the same formula holds, where $\Lambda$ runs over all $(0,m)$-adapted semi-weakly convex integral paths satisfying
$
y(\Lambda)\geq0.
$

\end{itemize}

\end{thm}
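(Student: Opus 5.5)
The plan is to compute $c_k$ from the ECH of the boundary $\partial X_\Omega$, following the proof of Theorem~\ref{thm:convex_capacities} in \cite{hutchings2011quantitative}. The inputs are the standard Morse--Bott description of the Reeb dynamics of a toric boundary and the characterization
\[
c_k(X_\Omega)=\inf\{L : \text{the class of degree } 2k \text{ survives in } ECH^L(\partial X_\Omega)\},
\]
where $ECH(\partial X_\Omega)$ is the ECH of $S^3$: one copy of $\mathbb{Z}_2$ in each even degree, with a $U$-map that is an isomorphism onto the previous degree.

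First, approximate $\Omega$ by a smooth region and perturb $\partial X_\Omega$ in Morse--Bott fashion, as in Section~5. The Reeb orbits of $\partial X_\Omega$ are:
\begin{itemize}
\item the two orbits over the axes, namely $e_{1,0}$ over $(a,0)$, and $e_{0,1}$ over $(0,f(0))$ on the $y$-axis;
\item the tori over the points of $\partial^+\Omega$ where the outward normal is rational, each of which splits into an elliptic and a hyperbolic orbit.
\end{itemize}
The crucial point where semi-weak convexity enters is the slope at the endpoints. At $x=0$, the condition $f'(0)\le -m$ means that the rotation number of $e_{0,1}$ is controlled by $m$. At $x=a$ in case (i), the condition $nf'(a)\ge -1$ together with $f(a)=0$ controls the rotation number of $e_{1,0}$ through $n$.

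After an $L$-nice perturbation, orbit sets of action $<L$ then correspond bijectively to $(n,m)$-adapted semi-weakly convex generators:
\begin{itemize}
\item an edge $v$ corresponds to the torus whose normal is the rotation of $v$ by $90^\circ$;
\item edges parallel to $(-n,1)$ and $(-1,m)$ correspond to covers of the axis orbits $e_{1,0}$ and $e_{0,1}$, which are elliptic, whence the forced label `e';
\item adaptedness encodes exactly which iterates of the axis orbits are allowed as edges rather than being absorbed into the rotation bound.
\end{itemize}
Next, verify two formulas for such a generator $\Lambda$. The symplectic action is $\ell_\Omega(\Lambda)$, by the same supporting-line computation as in the convex case. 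The ECH index is $I(\Lambda)=2(\mathcal L(B^+_\Lambda)-\mathcal L(B^-_\Lambda)-1)-h(\Lambda)$; to get this, compute $c_\tau+Q_\tau+CZ^I_\tau$ in the toric trivialization and convert the area term via Lemma~\ref{lem:signed-pick}. Negative $y(\Lambda)$, and hence $B^-_\Lambda$, arises when $f$ is increasing near $0$. Case (ii), where $f(a)>0$, is the same with $n=0$, because the vertical side $\{a\}\times[0,f(a)]$ contributes normal $(1,0)$ and no horizontal-adapted constraint survives.

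Then run Hutchings' argument:
\begin{itemize}
\item Elliptic generators have even index, and every hyperbolic edge can be replaced by an `e' edge with smaller or equal action.
\item The ECH differential preserves the underlying path up to rounding corners, so, as in \cite{hutchings2011quantitative}, the homology in degree $2k$ below $L$ is detected by elliptic generators.
\item One therefore gets $c_k=\min\{\ell_\Omega(\Lambda): I(\Lambda)=2k,\ \Lambda \text{ elliptic}\}$, and for elliptic generators $I=2k$ is equivalent to $\mathcal L(B^+_\Lambda)-\mathcal L(B^-_\Lambda)=k+1$.
\end{itemize}
The final step is to reduce to paths with $y(\Lambda)\ge0$ and $\mathcal L(B^-_\Lambda)=0$. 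A minimizer having $B^-\neq\emptyset$ can be modified by cutting off the part below the axis and shearing, which does not increase action while keeping the lattice count. Concavity of $f$ ensures the action along the dropped portion was nonnegative.

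The main obstacle is the identification of the minimal-action cycle: showing that the sum (or the appropriate elliptic generator) is a cycle which is not a boundary, and that its action filtration is realized by the minimizer. To handle this, I would first use a direct limit over nice perturbations. I would then compare with the convex case via a cobordism: the region $\Omega$ contains and is contained in convex toric domains agreeing near the minimizing supporting lines, and monotonicity of the ECH cobordism maps together with the filtered isomorphism gives the two inequalities.
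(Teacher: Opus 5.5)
There is a genuine gap, and it is the step you flag yourself as the main obstacle. Your ECH set-up directly gives only a lower bound of the form $c_k\ge\min\{\ell_\Omega(\Lambda): I(\Lambda)=2k\}$. Equality requires showing that a minimizing elliptic adapted generator is a cycle representing the nonzero class in degree $2k$, and you do not supply this.

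Your proposed remedy is to sandwich $X_\Omega$ between convex toric domains that agree with $\Omega$ near the minimizing supporting lines. This cannot work once $f$ is not monotone, because the relevant support points lie on the increasing part of $\partial^+\Omega$. Take $Q_{1,1,1,2}$, with $f(x)=1+x$ and $m=-1$; every $(0,-1)$-adapted edge is supported at $(1,2)$.
\begin{itemize}
\item A convex toric domain (decreasing profile) inside $\Omega$ lies in $P(1,1)$, and $c_3(P(1,1))=2<3=c_3(Q_{1,1,1,2})$.
\item A convex toric domain containing $\Omega$ contains $P(1,2)$, and $c_4(P(1,2))=4>3=c_4(Q_{1,1,1,2})$.
\end{itemize}
So neither side of the sandwich is sharp. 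If you instead allow non-monotone convex $\Omega$ in the sandwich, the argument becomes circular.

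The lower-bound half is also incomplete. The adapted complex contains generators with $B^-_\Lambda\neq\emptyset$, even in negative degree: for $Q_{1,1,1,2}$, the one-edge path $2(-1,-1)$, i.e.\ $\gamma_2^2$, has $I=-2$. So ``detected by elliptic generators as in Hutchings'' would need a new Hutchings--Sullivan-type analysis at this boundary, with the shifted trivializations at the exceptional orbits. The paper's Section~6 only treats convex and concave boundaries. Your cut-and-shear step controls action, which is automatic anyway since every primitive edge has $\ell_\Omega>0$ because $(0,f(0))\in\Omega$. It does not control the lattice count, which is the quantity that matters.

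The missing idea is that no ECH computation is needed. Since $\Omega$ is the hypograph of a concave function, it is convex, and Corollary~A.12 of \cite{cristofaro2019symplectic} already gives $c_k(X_\Omega)=\min\{\ell_\Omega(\Lambda):\mathcal L(B_\Lambda)=k+1\}$ over convex lattice paths in the first quadrant. In that formula $y(\Lambda)\ge0$ holds and $B^-$ never appears. The theorem then becomes purely combinatorial:
\begin{itemize}
\item Adapted paths with $y(\Lambda)\ge0$ are such lattice paths, which gives one inequality.
\item For the other, replace the initial block $(-P_+,Q_+)$ of non-horizontally-$n$-adapted edges by $Q_+(-n,1)$, and the final block $(-P_-,Q_-)$ of non-vertically-$m$-adapted edges by $P_-(-1,m)$.
\item The hypotheses $nf'(a)\ge-1$ and $f'(0)\le-m$ are used exactly to show that $(a,0)$, respectively $(0,f(0))$, is a common support point of all discarded directions and of the new direction. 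Hence $\ell_\Omega$ is unchanged while the enclosed region grows.
\item Corner rounding then brings the lattice count back to $k+1$ without increasing length. It preserves adaptedness because the admissible directions form a convex cone.
\end{itemize}
In your plan the endpoint-slope hypotheses enter as rotation-number bounds; in the paper they enter as support-point statements.
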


The proof of Theorem \ref{thm:semiweak_capacities} is given at the end of Section \ref{sec:convexpert}.

\subsection{Products and factorizations of generators}

We now introduce the product operation for generators and the corresponding notion of factorization used in the obstruction theorems.

\begin{defi}[Product of generators]

Let $\Lambda'$ and $\Lambda''$ be convex, concave or semi-weakly convex generators of the same type. Assume that $\Lambda'$ and $\Lambda''$ do not contain parallel edges both labeled `h'.
Reordering the edges of $\Lambda'$ and $\Lambda''$, we obtain a convex, concave or semi-weakly convex generator, respectively, denoted by
$
\Lambda'\cdot\Lambda''.
$

If $v'$ and $v''$ are parallel edges of $\Lambda'$ and $\Lambda''$, respectively, then the corresponding edge
$
v=v'+v''
$
of $\Lambda'\cdot\Lambda''$ is labeled `e' if both $v'$ and $v''$ are labeled `e'. Otherwise, $v$ is labeled `h'.

Edges belonging to only one factor inherit their labels from the corresponding factor.

\end{defi}

\begin{rem}

The product operation is associative and commutative.

\end{rem}

\begin{defi}

Let $\Lambda$ be a convex, concave or semi-weakly convex generator.

\begin{itemize}

\item[(1)] If $\Lambda$ is the product of $k$ copies of a generator $\Lambda'$, we write
$
\Lambda=(\Lambda')^k.
$

\item[(2)] Suppose
$
\Lambda=\Lambda_1^{d_1}\cdots\Lambda_r^{d_r}.
$
We say that this is a {\bf disjoint factorization} if the following condition holds: If $v_i$ and $v_j$ are parallel edges of $\Lambda_i$ and $\Lambda_j$, respectively, satisfying
$
e(v_i)>0
$ and $
e(v_j)>0,
$
then
$
i=j.
$

\end{itemize}

\end{defi}

\begin{defi}[Notation for generators]

Let $\Lambda$ be a convex, concave or semi-weakly convex generator and let $v$ be an edge of $\Lambda$ parallel to an irreducible vector $(-v_1,v_2)$ with $v_1>0$.

\begin{enumerate}

\item We write
$
v=e_{-v_1,v_2}^m
$
if
$
e(v)=m,$ and $
h(v)=0.
$

\item We write
$
v=h_{-v_1,v_2}^m
$
if
$
e(v)=m-1$ and $
h(v)=1.
$
\end{enumerate}

\end{defi}

\section{Main results}
\label{sec:Obstructions for Embeddings}

We provide obstructions for three different kinds of symplectic embedding problems involving toric domains. The main results concern symplectic embeddings from a convex toric domain into a concave toric domain, from a semi-weakly convex toric domain into a convex toric domain, and from a concave toric domain into a concave toric domain.  We order the statements in this way because the first two proofs are independent of the unpublished User Guide, while the third uses one finite filtered input from it. 

\begin{thm}[Convex into concave] \label{thm: criterion_convex_concave}
Suppose that there exists a symplectic embedding $X_{\Omega} \rightarrow X_{\Omega'}$ from a convex toric domain $X_{\Omega}$ into a concave toric domain $X_{\Omega'}$. For every $k\in \Z_{>0}$, there exist an elliptic concave generator $\Lambda'$ and an elliptic convex generator $\Lambda$, with respective disjoint factorizations $\Lambda' = \Lambda'^{d_1}_{1} \cdots \Lambda'^{d_r}_{r}$ and $\Lambda = \Lambda^{d_1}_{1} \cdots \Lambda^{d_r}_{r}$, so that
\begin{enumerate}
\item[(i)] $I(\Lambda) =  I(\Lambda') = 2k$,

\item[(ii)] $l_\Omega(\Lambda_i) \leq l_{\Omega'}(\Lambda'_i)$ for every $i$,

\item[(iii)] For every $S\subset\{1,\ldots,r\}$ and every
$0\leq d_i'\leq d_i$ for $i\in S$, one has
\[
I\left(\prod_{i\in S}(\Lambda_i)^{d_i'}\right)
=
I\left(\prod_{i\in S}(\Lambda_i')^{d_i'}\right).
\]

\item[(iv)] $x(\Lambda'_i) + y(\Lambda'_i) - 1 \leq x(\Lambda_i) + y(\Lambda_i)$ for every $i$.
    \end{enumerate}
\end{thm}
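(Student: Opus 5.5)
We follow Hutchings' strategy from \cite{hutchings2016beyond} for embeddings between convex toric domains, replacing the target's convex combinatorics by concave combinatorics. First, shrink slightly and perturb so that the embedding $\varphi: X_\Omega\to X_{\Omega'}$ has image in the interior and both boundaries are nondegenerate. We use the $L$-nice perturbations of Section~5. Up to action $L$, the ECH generators of $\partial X_\Omega$ then correspond to convex generators, those of $\partial X_{\Omega'}$ to concave generators, and the ECH index, action $\ell$, and $e,h$ labels agree with the combinatorial ones up to small error.

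The cobordism $W=X_{\Omega'}\setminus\varphi(\mathrm{int}\,X_\Omega)$ is a weakly exact symplectic cobordism. It induces a filtered map $\Phi: ECH^L(\partial X_{\Omega'})\to ECH^L(\partial X_\Omega)$ which respects the $U$-map and sends the generator $\sigma_k$ of degree $2k$ (defined via $U^k\sigma_k=[\emptyset]$) to $\sigma_k$. On the target side, the class $\sigma_k'$ is represented by a sum of elliptic concave generators of index $2k$, or at least one cycle containing such a generator. By holomorphic curve axioms, the cobordism map is induced by broken $J$-holomorphic currents. Hence there exist generators $\alpha'$ (in the representative of $\sigma'_k$) and $\alpha$ (in the image, a convex generator of index $2k$) connected by a $J$-holomorphic current $\mathcal C$ in $\overline W$ of ECH index $0$. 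Using the $U$-map compatibility and the Section~6 analysis of hyperbolic labels (the differential kills hyperbolic edges in pairs), we arrange both $\alpha$ and $\alpha'$ to be elliptic. This gives $\Lambda$ and $\Lambda'$ with (i).

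Next, decompose $\mathcal C=\bigcup_i d_iC_i$ into irreducible components, where $C_i$ goes from $\Lambda_i'$ to $\Lambda_i$. Positivity of $\omega$-energy on each component gives $\ell_\Omega(\Lambda_i)\le\ell_{\Omega'}(\Lambda_i')$, which is (ii). Disjointness of the factorizations comes from the partition conditions: since all orbits are elliptic with elliptic labels, distinct components cannot share an elliptic orbit of the same direction with positive multiplicity without contradicting the index/partition analysis. This is Hutchings' argument and carries over. For (iii), we use that any subcurve $\bigcup_{i\in S}d_i'C_i$ has $I\ge0$ by the index inequality, and that the total current has $I=0$. Additivity $I(A\cup B)=I(A)+I(B)+2A\cdot B$ with nonnegative intersections forces each sub-union to have $I=0$. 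We then convert this to equality of the combinatorial indices, using that the relative first Chern class term vanishes in these toric trivializations.

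Condition (iv) comes from the Fredholm index. Each $C_i$ that is not a trivial cylinder has $\operatorname{ind}(C_i)\ge0$ by genericity (for multiply covered components we use the ECH index bound). We compute $\operatorname{ind}$ via the toric trivialization. The Conley--Zehnder contributions give $\operatorname{ind}(C_i)=-\chi(C_i)+2c_\tau(C_i)+CZ$ terms, which in terms of edge data reduces to roughly $2g-2+\#\text{ends}+\ldots$. Combined with $I(C_i)=0$ and the relative adjunction formula, this yields $2g(C_i)-2+(x+y)(\Lambda_i)-(x+y)(\Lambda_i')+\text{(ends correction)}\ge -2$. This inequality is the source of the asymmetric "$-1$" in (iv), replacing the symmetric bound of the convex--convex case. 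We expect this step to be the main obstacle: the concave writhe and Conley--Zehnder bounds on the negative side have opposite signs from the convex case. Our first attempt would be to compute $\operatorname{ind}$ directly, using the explicit rotation numbers of the $L$-nice perturbation for each primitive edge direction.
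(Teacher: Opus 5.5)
Your outline has the right architecture: a filtered cobordism map, an index-zero current, its decomposition into components, and Stokes for (ii). But the steps that carry the weight are either missing or justified incorrectly.

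\textbf{First gap: $L$-tameness.} You never show that the cobordism is $L$-tame, and nothing after ``connected by a current of ECH index $0$'' works without it.
\begin{itemize}
\item Theorem~\ref{CoborMap}(iii) only gives a \emph{broken} current with $I(B)=0$. To discard the symplectization levels you need $I\ge 0$ for the cobordism level. In a cobordism this fails in general, because $I(dC)$ can be negative for a multiply covered component.
\item The same issue breaks your argument for (iii). The index inequality controls somewhere injective curves, not subcurrents $\sum_{i\in S}d_i'C_i$ with $d_i'>1$, so you cannot assert that every subcurrent has $I\ge 0$.
\item Disjointness is not a consequence of the partition conditions, which describe the ends of a single curve at a single orbit. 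It is the part of Proposition~\ref{L-tameJcurves}(ii) forbidding two distinct components from sharing positive ends at an $L$-negative orbit, or negative ends at an $L$-positive orbit.
\end{itemize}
Everything you need follows from that proposition: $I(\mathcal C)\ge 0$, $I(C_i)=0$, disjointness at both ends, and $I(\sum_{i\in S}d_i'C_i)=0$. Its hypothesis is easy to check here. If $2g-2+\operatorname{ind}+h+2e_L<0$, then parity forces $g=\operatorname{ind}=h=e_L=0$. Every elliptic orbit at the concave positive end is $L$-negative and every one at the convex negative end is $L$-positive, so $e_L$ counts all elliptic ends. Hence both orbit sets are empty and $C$ would be a closed curve in an exact cobordism, which is impossible.

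Finally, turning $I(\sum_{i\in S}d_i'C_i)=0$ into (iii) uses $H_2(X)=0$: the relative index then equals the difference of the absolute gradings. It does not use any vanishing of $c_\tau$. In fact $c_\tau(C_i)=x(\Lambda_i')+y(\Lambda_i')-x(\Lambda_i)-y(\Lambda_i)$, which is exactly the quantity (iv) bounds and is in general nonzero.

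\textbf{Second gap: ellipticity of both $\Lambda'$ and $\Lambda$.} This is new relative to Hutchings' convex--convex theorem, and ``we arrange both to be elliptic'' does not establish it. The mechanism you cite is wrong. The differential does not remove $h$-labels in pairs: by Proposition~\ref{prop:hyperbolic-label-differential} it changes $h$ by exactly one, raising it at a concave end and lowering it at a convex end. The $U$-map plays no role. What is needed is the following.
\begin{itemize}
\item At the concave end the differential preserves $I+h$. So the $h=0$ part $x_+$ of a degree-$2k$ cycle $z_+$ is itself a cycle. It represents the nonzero class because $e_{-1,k}$ occurs in every filtered representative of that class; otherwise $z_++x_+$ would be a representative avoiding it.
\item At the convex end the differential preserves $I-h$, $ECH^L_{2k}\cong\mathbb F_2$, and $e_{-1,k-1}$ is an elliptic cycle with nonzero class. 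Hence $\phi^L(x_+)+\alpha_{e_{-1,k-1}}=\partial w$. Comparing $h=0$ parts shows $\phi^L(x_+)$ must contain an elliptic term; otherwise $\alpha_{e_{-1,k-1}}$ would be the boundary of the $h=1$ part of $w$.
\item Expanding $x_+$ then gives elliptic $\Lambda'$ and $\Lambda$ with $\langle\phi^L\alpha_{\Lambda'},\alpha_\Lambda\rangle=1$.
\end{itemize}

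\textbf{On (iv).} You leave this open, but your Fredholm-index idea does work once the inequality points the right way. With the Morse--Bott trivializations, every positive end has $\mathrm{CZ}=-1$ and every negative end has $\mathrm{CZ}=+1$. The end terms therefore cancel against $-\chi$, giving $\operatorname{ind}(C_i)=2g(C_i)-2+2c_\tau(C_i)$. The inequality you need is $\operatorname{ind}(C_i)\le I(C_i)=0$; using $\operatorname{ind}\ge 0$ gives the opposite bound. This yields $c_\tau(C_i)\le 1-g(C_i)\le 1$, which is (iv). The paper reaches the same inequality through Hutchings' $J_0$ bound and the partition conditions. Either way you first need $I(C_i)=0$, hence $L$-tameness.
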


The following criterion generalizes \cite[Theorem~1.20]{hutchings2016beyond}
to symplectic embeddings of a semi-weakly convex toric domain into a
convex toric domain.

\begin{thm}[Semi-weakly convex into convex]\label{thm:criterion_convex_convex}
Let $X_{\Omega}$ be a semi-weakly convex toric domain with defining function $f:[0,a]\rightarrow [0,+\infty)$ as in Definition \ref{defi_convex_concave_toric_domain}. Let $X_{\Omega'}$ be a convex toric domain and assume that there exists a symplectic embedding $X_{\Omega}\rightarrow X_{\Omega'}$. The following statements hold:
\begin{enumerate}
\item If $f$ is differentiable at $0$ and at $a$, $f(a)=0$, $f'(0)\leq -m,$ and $n f'(a)\geq -1$ for some $m\in \Z$ and $n\in \Z_{\geq 0}$ with $mn<1$, then for every $k\in\mathbb{Z}_{\geq 0}$ and every convex generator $\Lambda'$ with $I(\Lambda')=2k$ which is minimal for $X_{\Omega'}$, there exist an $(n,m)$-adapted semi-weakly convex generator $\Lambda$, a disjoint  factorization $\Lambda=\Lambda_1^{d_1}\cdots\Lambda_r^{d_r}$ and a factorization $\Lambda'=\Lambda'^{d_1}_1\cdots\Lambda'^{d_r}_r$, such that   
\begin{enumerate}
\item[(i)] $I(\Lambda)=I(\Lambda')=2k$,

\item[(ii)] $l_{\Omega}(\Lambda_i)\leq l_{\Omega'}(\Lambda_i')$ for all $i$,

\item[(iii)] For every $S\subset\{1,\ldots,r\}$ and every
$0\leq d_i'\leq d_i$ for $i\in S$, one has
\[
I\left(\prod_{i\in S}(\Lambda_i)^{d_i'}\right)
=
I\left(\prod_{i\in S}(\Lambda_i')^{d_i'}\right).
\]

\item[(iv)] $x(\Lambda_i)+y(\Lambda_i)-h(\Lambda_i)/2\geq x(\Lambda_i')+y(\Lambda_i')+m(\Lambda_i')-1$.
\end{enumerate}

\item If $f$ is differentiable at $0$, $f(a)>0$, and $f'(0)\leq -m$ for some $m\in \Z$, then the same conclusions in part (1) hold after replacing $(n,m)$-adapted semi-weakly convex generator $\Lambda$ with $(0,m)$-adapted semi-weakly convex generator $\Lambda$. 
\end{enumerate}
\end{thm}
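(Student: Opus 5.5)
We will follow Hutchings' proof of \cite[Theorem~1.20]{hutchings2016beyond}, replacing the convex domain $X_\Omega$ by a suitably perturbed semi-weakly convex domain. First shrink $X_\Omega$ slightly, so that the embedding maps into the interior of $X_{\Omega'}$ and we obtain an exact symplectic cobordism $W=X_{\Omega'}\setminus \varphi(\mathrm{int}\,X_\Omega)$. Replace $\partial X_\Omega$ and $\partial X_{\Omega'}$ by $L$-nice perturbations as in Section~5.

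\emph{Step 1: the chain complexes.}
\begin{itemize}
\item On the convex target, the ECH generators of action less than $L$ correspond to convex generators, exactly as in Hutchings' work.
\item On the semi-weakly convex source, the hypotheses $f'(0)\le -m$ and $nf'(a)\ge -1$ (or $f(a)>0$ in case (2)) mean that the Reeb orbits near the two axes are governed by the slopes $(-1,m)$ and $(-n,1)$. The ECH generators of action less than $L$ therefore correspond to $(n,m)$-adapted (resp.\ $(0,m)$-adapted) semi-weakly convex generators with $y(\Lambda)\ge 0$.
\item Action is $\ell_\Omega$, and the ECH index relative to the empty set is the semi-weakly convex $I$. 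The index identity comes from the signed Pick formula, Lemma~\ref{lem:signed-pick}, combined with the Conley--Zehnder computation at the adapted boundary slopes.
\end{itemize}
Using the differential analysis of Section~6, the sum of elliptic generators and the hyperbolic labels behave as in the convex case.

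\emph{Step 2: existence of a curve.}
\begin{itemize}
\item Since $\Lambda'$ is minimal for $X_{\Omega'}$, it is a cycle and it represents the nonzero class in $ECH_{2k}$ of the target realizing $c_k$. Uniqueness of the minimizer makes this work at the chain level, as in Hutchings' argument.
\item The ECH cobordism map induced by $W$ is an isomorphism on $ECH_*(S^3)$ and respects the filtration.
\item Holomorphic curves axiom: since $\Lambda'$ is a cycle of minimal action representing the class, there is a $J$-holomorphic current in $\overline W$ from $\Lambda'$ to some generator $\Lambda$ of the source. This current has $I=0$, and its action satisfies $\ell_\Omega(\Lambda)\le \ell_{\Omega'}(\Lambda')$.
\item Hence $\Lambda$ is an adapted semi-weakly convex generator with $I(\Lambda)=2k$, which gives (i).
\end{itemize}

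\emph{Step 3: factorization.}
\begin{itemize}
\item Decompose the current into components $C_i$ with multiplicities $d_i$. Each $C_i$ has positive end $\Lambda_i'$ and negative end $\Lambda_i$.
\item Grouping elliptic ends by parallel edge classes yields the disjoint factorization on the source side. The target side factorization need only be a factorization, since all edges of $\Lambda'$ are labeled `e'.
\item Each somewhere-injective component has nonnegative symplectic area, which gives (ii).
\item For (iii), use that $I\ge 0$ for every subcurrent together with additivity: any subcurrent $\sum_{i\in S} d_i' C_i$ has $I\ge 0$. The complementary subcurrent also has $I\ge0$ and the total index is $0$, so $I$ of every subcurrent vanishes. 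Since ECH index is computed from the ends, this gives (iii).
\item For (iv), apply the index inequality (Fredholm index $\ge 0$ for the underlying somewhere-injective curves) together with the relation $\mathrm{ind}(C_i)\le I(C_i)-2\delta$. Express $\mathrm{ind}$ via genus, Euler characteristic and Conley--Zehnder terms: on the convex side these are the $m(\Lambda_i')$ ends with rotation near $0$, and on the source side the count gives $x+y-h/2$ from the adapted slopes. With $I(C_i)=0$, this rearranges to (iv), as in Hutchings.
\end{itemize}

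The main obstacle is Step 1 for the source: we must show that the adapted perturbation really produces generators whose index and Conley--Zehnder data are given by the semi-weakly convex formulas. This is delicate when the curve $f$ increases near $0$, which produces the $B^-_\Lambda$ region and negative $y$-coordinates of vertices. We must also verify that ends at the orbits on the axes contribute as claimed in (iv). There we will use the adaptedness conditions, which force edges parallel to $(-n,1)$ and $(-1,m)$ to be elliptic, to control the Conley--Zehnder indices.
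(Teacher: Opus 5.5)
Your outline has the right architecture: perturb both boundaries, start from the cycle $\alpha_{\Lambda'}$ supplied by minimality via \cite[Lemma~5.5]{hutchings2016beyond}, push it through the filtered cobordism map, and decompose the resulting current. However, it skips the step on which everything after your Step~2 depends. You never prove that the cobordism is $L$-tame, so you cannot invoke Proposition~\ref{L-tameJcurves}.

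In a cobordism that is not a symplectization, the ECH index of a current need not be nonnegative: the self-pairing governing a multiple cover can be negative, so $I(dC)<0$ is possible. Moreover, $I$ is not additive under sums of currents; the defect $I(\mathcal C_1+\mathcal C_2)-I(\mathcal C_1)-I(\mathcal C_2)$ is twice an intersection pairing. This breaks four of your steps.
\begin{enumerate}
\item The holomorphic curves axiom only gives a \emph{broken} current $B$ with $I(B)=0$. Collapsing it to a single level with $I=0$ requires $I(\mathcal C_0)\ge0$ for the cobordism level.
\item Your argument for (iii), ``$I\ge0$ for every subcurrent together with additivity'', rests on two statements that are false in general.
\item ``Grouping elliptic ends by parallel edge classes'' does not produce a disjoint factorization. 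The factors must be the ends of the actual components $C_i$ with their multiplicities $d_i$; merging two components that share a negative elliptic orbit destroys the form $\Lambda_i^{d_i}$ and the factorwise statements (ii)--(iv).
\item In (iv) you use that every positive end is a simple cover, i.e.\ the partition conditions. These need $\operatorname{ind}(C_i)=I(C_i)$, hence $I(C_i)=0$ for each component, which you have not established.
\end{enumerate}

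The missing step is short, but it uses a property of the source perturbation that your Step~1 does not isolate: every elliptic orbit at the negative end, including both exceptional orbits, is $L$-positive. This is what the endpoint conventions of Section~\ref{sec:convexpert} provide: $f_l'(0)<-m$ with $f_l'(0)\to-m$, and $f_l'(a)>-1/n$ with $f_l'(a)\to-1/n$ (or $f_l'(a)\to-\infty$ when $n=0$).

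To prove $L$-tameness, suppose an embedded irreducible $C$ with $\mathcal A(\alpha_\pm)<L/d$ and $I(dC)\le0$ violated $2g-2+\operatorname{ind}+h+2e_L\ge0$. Genericity and parity (all hyperbolic orbits are positive hyperbolic) force $g=\operatorname{ind}=h=e_L=0$. $L$-positivity then gives $\alpha_-=\emptyset$, so $I(dC)=I(\alpha_+^d)>0$ unless $\alpha_+=\emptyset$. That last case is excluded by exactness, once the primitive is chosen to restrict to $\lambda_\pm$ on the boundary.

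With $L$-tameness, Proposition~\ref{L-tameJcurves} yields the unbroken current, $I(C_i)=0$, disjointness at the $L$-positive negative orbits, and $I(\sum_{i\in S}d_i'C_i)=0$; this gives (i) and (iii). After that, your Fredholm-index route to (iv) is fine and equivalent to the paper's $J_0$ argument. Use $c_\tau(C_i)=x(\Lambda_i')+y(\Lambda_i')-x(\Lambda_i)-y(\Lambda_i)$, $m(\Lambda_i')$ simple positive ends, and $\nu_e(\Lambda_i)+h(\Lambda_i)$ negative ends. Then $\operatorname{ind}(C_i)=0$ reads $2g-2+2m(\Lambda_i')+h(\Lambda_i)+2c_\tau(C_i)=0$, which gives (iv).

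A smaller inaccuracy: the source generators are all $(n,m)$-adapted semi-weakly convex generators, not only those with $y(\Lambda)\ge0$. For $m<0$, the exceptional orbit $\gamma_2$ alone corresponds to the edge $(-1,m)$, with $y=m<0$. The condition $y\ge0$ belongs only to the capacity formula.
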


For symplectic embeddings between concave toric domains, we have the following criterion.

\begin{thm}[Concave into concave] \label{thm:criterion_concave_concave}
Suppose that there exists a symplectic embedding $X_{\Omega} \rightarrow X_{\Omega'}$ from a concave toric domain $X_\Omega$ into a concave toric domain $X_{\Omega'}$. For every $k\in \Z_{>0}$ and every elliptic concave generator $\Lambda$ with $I(\Lambda) = 2k$, there exist a factorization $\Lambda = \Lambda^{d_1}_1\cdots \Lambda^{d_r}_r$ of $\Lambda$, and an elliptic concave generator $\Lambda'$, with disjoint factorization $\Lambda' = \Lambda'^{d_1} _1 \cdots \Lambda'^{d_r} _r$, such that:
\begin{enumerate}
\item[(i)] $I(\Lambda) = I(\Lambda')=2k$,

\item[(ii)] $l_\Omega(\Lambda_i) \leq l_{\Omega'}(\Lambda'_i)$ for every $i$,

\item[(iii)] For every $S\subset\{1,\ldots,r\}$ and every
$0\leq d_i'\leq d_i$ for $i\in S$, one has
\[
I\left(\prod_{i\in S}(\Lambda_i)^{d_i'}\right)
=
I\left(\prod_{i\in S}(\Lambda_i')^{d_i'}\right).
\]

\item[(iv)] $x(\Lambda'_i) + y(\Lambda'_i) - 1 \leq x(\Lambda_i) + y(\Lambda_i) - m(\Lambda_i) $ for every $i$.
\end{enumerate} 
\end{thm}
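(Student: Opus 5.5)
The plan is to follow Hutchings' argument from \cite{hutchings2016beyond}, transplanted to the concave/concave setting. Shrinking $X_\Omega$ slightly and perturbing, we obtain an exact symplectic cobordism $W=X_{\Omega'}\setminus \mathrm{int}(\varphi(X_\Omega))$ between the perturbed concave toric boundaries $\partial X_{\Omega'}$ and $\partial X_\Omega$. For each action bound $L$ we pick $L$-nice perturbations as in Section~5, so that ECH generators of action below $L$ correspond to concave generators and the actions approximate $\ell_\Omega$, $\ell_{\Omega'}$. Here an edge $e^m$ or $h^m$ of a concave generator records the elliptic or hyperbolic orbits coming from the Morse--Bott circle of the corresponding slope. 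The filtered ECH cobordism map
\[
\Phi^L\colon ECC^L(\partial X_{\Omega'})\to ECC^L(\partial X_\Omega)
\]
is induced by Seiberg--Witten theory. It preserves the $U$-map and the grading, and it is an isomorphism on the degree-$2k$ class once $L$ is large, because both ECH groups are $\mathbb{Z}_2$ in each even degree and the composite with the map from the empty set/ball is nonzero.

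The key input is the black-box statement from \cite{UserGuide}: for a concave toric boundary, the sum of all elliptic generators of index $2k$ and action below $L$ is a cycle representing the nonzero class in degree $2k$. In particular, the elliptic generator $\Lambda$ we are given appears in a cycle $\sigma$ representing the generator of $ECH_{2k}(\partial X_\Omega)$. Since the cobordism map is an isomorphism, there is a cycle on the $\Omega'$ side mapping to a class containing $\sigma$. Using the chain-level structure (Hutchings' holomorphic-curve axioms: $\Phi$ is counted by possibly broken $J$-holomorphic currents, and $\Phi$ is nonzero on the relevant pair), we get an elliptic concave generator $\Lambda'$ of index $2k$ and a broken current $\mathcal C$ from $\Lambda'$ to $\Lambda$ with $I(\mathcal C)=0$. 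For this step we need the Section~6 result that the differential does not hit elliptic generators in a way that cancels them, so that the hyperbolic labels of $\Lambda'$ can be taken to be `e'.

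Next, decompose $\mathcal C$ into its irreducible components $C_i$, with multiplicity $d_i$. Each $C_i$ has positive end $\Lambda_i'$ and negative end $\Lambda_i$. This gives the factorizations $\Lambda=\prod\Lambda_i^{d_i}$ and $\Lambda'=\prod \Lambda_i'^{d_i}$. Since distinct somewhere-injective components with positive ends at the same elliptic orbit would produce intersections contradicting the partition conditions, the factorization of $\Lambda'$ is disjoint. Property (ii) follows from positivity of $\omega$-energy on each $C_i$, in the limit $L\to\infty$. Properties (i) and (iii) come from the index computation. We use the ECH index inequality $I(C)\ge \mathrm{ind}(C)+2\delta(C)$, the fact that each nontrivial $C_i$ in an exact cobordism has $\mathrm{ind}\ge 0$, and additivity $I(\sum C)=\sum I(C)+2\sum C\cdot C'$ with nonnegative intersections. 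Together with $I(\mathcal C)=0$, these force $I$ to vanish on every subcurrent $\sum_{i\in S}d_i'C_i$. Translating via the relative index $I=I(\text{top})-I(\text{bottom})$ gives (iii).

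Property (iv) is a Fredholm index bound. For an irreducible $C_i$ of genus $g$ with ends at elliptic orbits, the Fredholm index in terms of the concave combinatorics is
\[
\mathrm{ind}(C_i)=2g-2+2\bigl(x(\Lambda_i')+y(\Lambda_i')\bigr)-\bigl(\text{negative ends term}\bigr)+\cdots .
\]
Here the negative ends contribute at most $2(x(\Lambda_i)+y(\Lambda_i)-m(\Lambda_i))$ after using that the negative end count is bounded by $m(\Lambda_i)$ and that the Conley--Zehnder indices of the concave perturbation's elliptic orbits are $\pm$ floor terms. Combining this with $\mathrm{ind}(C_i)\le I(C_i)=0$ and $g\ge0$ should yield $x(\Lambda_i')+y(\Lambda_i')-1\le x(\Lambda_i)+y(\Lambda_i)-m(\Lambda_i)$.

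I expect this last computation to be the main obstacle. The concave case has the asymmetric Conley--Zehnder indices and the "$-e(\Lambda)$" term in the index formula, and the bound must be made uniform as $L\to\infty$. I would first verify it for $C_i$ with a single positive end and then handle several ends by counting them with $m(\Lambda_i)$.
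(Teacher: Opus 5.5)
There is a genuine gap at the index step. You never prove that the cobordism is $L$-tame, and the substitute argument you give does not work. The components of $\mathcal C$ carry multiplicities $d_i$, and for a repeated component the additivity formula involves the self-pairing $C_i\cdot C_i$, which is not sign-definite in a cobordism. For example, the double cover of an embedded index-zero plane whose positive end is an $L$-positive elliptic orbit has $I=-2$, since $C\cdot C=-1$ there. This is exactly what Hutchings' $L$-tame condition $2g-2+\operatorname{ind}+h+2e_L\geq 0$ controls.

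Because of this, your argument gives none of the following:
\begin{enumerate}
\item[(a)] $I(\mathcal C_0)\ge0$ for the cobordism level. Without it you cannot discard the symplectization levels of the broken current, yet you go on to decompose it as if it were unbroken.
\item[(b)] $I(C_i)=0$ for each component.
\item[(c)] Item~(iii).
\item[(d)] Disjointness of the factorization of $\Lambda'$.
\end{enumerate}
In the paper all of these are outputs of Proposition~\ref{L-tameJcurves}.

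The missing step is the paper's Step~2. Choose the endpoint slopes so that every elliptic orbit of action below $L$, including both exceptional orbits, is $L$-negative. Then a curve violating the $L$-tame inequality has $g=\operatorname{ind}=h=e_L=0$, hence no positive ends at all. Stokes' theorem with a primitive normalized by $\eta|_{Y_\pm}=\lambda_\pm$ then gives $0\le\int_C\omega=-\mathcal A(\alpha_-)$. This forces $C$ to be a closed curve in an exact cobordism, which is impossible. The Stokes argument is needed because $L$-negative orbits at the negative end do not contribute to $e_L$.

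For (iv), a Fredholm-index argument is a legitimate alternative to the paper's $J_0$ computation, but your end count goes the wrong way. With the Morse--Bott trivializations every relevant elliptic iterate has Conley--Zehnder index $-1$. If $C_i$ has $n^-$ negative ends, the positive-end contributions cancel and one finds
\[
\operatorname{ind}(C_i)=2g(C_i)-2+2n^-+2\bigl(x(\Lambda_i')+y(\Lambda_i')-x(\Lambda_i)-y(\Lambda_i)\bigr).
\]
The bound $n^-\le m(\Lambda_i)$ is useless here. You need $n^-=m(\Lambda_i)$, i.e.\ all negative ends simple. That is the ECH partition condition at $L$-negative orbits on the negative side, and it holds only once $0\le\operatorname{ind}(C_i)\le I(C_i)=0$ forces equality in the index inequality. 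So it again rests on $L$-tameness. With it, $\operatorname{ind}(C_i)\le0$ and $g\ge0$ give (iv) at once. The paper instead feeds the same partition data into the $J_0$ bound of Proposition~\ref{pro:topcomp} and gets the identical inequality.

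Finally, ellipticity of $\Lambda'$ does not come from Section~6. You must start from the elliptic cycle $x_k^{\Omega'}$ supplied by the black box at the positive end. Section~6 is what you need at the negative end, to see that $\phi^L(x_k^{\Omega'})=x_k^{\Omega}+\partial w$ still contains $\alpha_\Lambda$. Each term of $w$ has odd degree, hence $h\ge1$, and the concave differential raises $h$, so $\partial w$ has no elliptic terms.
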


The proofs of these theorems are left to Section \ref{sec:proofs}.

\subsection{Applications.} 
We start considering symplectic embeddings of the polydisk into the non-disjoint unions of two cylinders, recovering the $4$-dimensional version of a theorem of Gutt-Hutchings \cite[Prop.~1.20]{Gutt_2018}. 
\begin{prop}
        \label{app}
            Suppose that there exists a symplectic embedding of the polydisk $P(a,a)$ into $Z(b,b)$. Then $a\leq b$. 
\end{prop}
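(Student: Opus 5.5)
The plan is to reduce to Theorem~\ref{thm: criterion_convex_concave} by approximating $Z(b,b)$ from inside by concave toric domains, and then to extract $a\leq b$ from the combinatorial conditions (i)--(iv) in the limit of large index. The quantitative step at the end is not settled; I flag it below.

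\textbf{Step 1: approximation.} $Z(b,b)$ is neither bounded nor concave. However, the image of $P(a,a)$ under a symplectic embedding is compact, so it lies in $Z(b,b)\cap\{\pi|z_1|^2+\pi|z_2|^2\leq R\}$ for some $R$. I would take, for each $\varepsilon>0$, a concave toric domain $X_{\Omega'_\varepsilon}$ with $Z(b,b)\cap\{\pi|z_1|^2+\pi|z_2|^2\leq R\}\subset X_{\Omega'_\varepsilon}$. Its defining function $f_\varepsilon$ should be strictly decreasing and convex, with $f_\varepsilon(a')=0$. Its graph should run from $(0,R')$ almost vertically down to a point within $\varepsilon$ of $(b,b)$, and from there almost horizontally to $(R',0)$. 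Replacing $\Omega'$ by $\Omega'_\varepsilon$ in what follows, it suffices to prove $a\leq b+O(\varepsilon)$ whenever $P(a,a)\hookrightarrow X_{\Omega'_\varepsilon}$.

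\textbf{Step 2: actions.} For the polydisk, the example after the definition of length gives $\ell_{P(a,a)}(\Lambda_i)=a\,(x(\Lambda_i)+y(\Lambda_i))$. For $\Omega'_\varepsilon$, take an edge direction $(-v_1,v_2)$ with $v_1,v_2>0$ whose slope stays in a compact range. As $\varepsilon\to0$ and the two boundary pieces become vertical and horizontal, the tangency point required by the concave length convention converges to the corner $(b,b)$. Hence $\ell_{\Omega'_\varepsilon}(\Lambda'_i)\to b\,(x(\Lambda'_i)+y(\Lambda'_i))$, with error controlled by $\varepsilon$ times the multiplicity. Concave paths have no edges parallel to the axes, so for a fixed index $2k$ only finitely many $\Lambda'$ occur. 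The limit is therefore uniform once $k$ is fixed.

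\textbf{Step 3: the obstruction.} Apply Theorem~\ref{thm: criterion_convex_concave} with index $2k$. Summing (ii) with weights $d_i$ gives
\[
a\sum_i d_i\bigl(x(\Lambda_i)+y(\Lambda_i)\bigr)\leq b\sum_i d_i\bigl(x(\Lambda'_i)+y(\Lambda'_i)\bigr)+O(\varepsilon).
\]
Condition (iv) bounds the right-hand side by $b\sum_i d_i(x(\Lambda_i)+y(\Lambda_i)+1)$. This alone gives only $a\leq 2b$ when all factors are small; for example, $k=1$ forces $\Lambda'=e_{-1,1}$ and gives just $a\leq 2b$.

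\textbf{Step 4: the main obstacle.} One must show that for suitable large $k$ (I would first try $k$ equal to the index of the square convex path $x=y=N$ with $N\to\infty$), the factorization cannot consist mainly of factors with $x(\Lambda_i)+y(\Lambda_i)$ bounded. Equivalently, the additive error $\sum_i d_i$ must be $o\bigl(\sum_i d_i(x(\Lambda_i)+y(\Lambda_i))\bigr)$. For this I would use (iii) for $S=\{i\}$ and for pairs $S=\{i,j\}$, together with the two Pick identities $I=2A+x+y+m$ (convex, elliptic) and $I=2A'+x'+y'-m'$ (concave). These make the quadratic area terms of the products agree, $A(\Lambda_i\Lambda_j)-A(\Lambda_i)-A(\Lambda_j)$ in the convex case against its concave analogue. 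For concave paths this mixed term is at most $x'_iy'_j+x'_jy'_i$, with a deficit, while for convex paths it is at least that. The aim is to show that many small factors would force a deficit in $I(\Lambda')$ that cannot be recovered, since the concave index of $\Lambda'$ of size $x'+y'$ is at most of order $(x'+y')^2/2$ minus multiplicity terms. This would make $x(\Lambda')+y(\Lambda')\leq (1+o(1))(x(\Lambda)+y(\Lambda))$ as $k\to\infty$. Dividing and letting $k\to\infty$, then $\varepsilon\to0$, would give $a\leq b$.
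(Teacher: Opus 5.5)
Your Steps 1--3 are fine. Approximating the truncated $Z(b,b)$ from outside works as well as the paper's inner domains $X_n$ with corner $(b,b)$. In fact, since $\partial^+\Omega'_\varepsilon$ passes through a point within $\varepsilon$ of $(b,b)$, the concave length convention gives $\ell_{\Omega'_\varepsilon}(\Gamma)\le(b+\varepsilon)\bigl(x(\Gamma)+y(\Gamma)\bigr)$ for every concave $\Gamma$, uniformly in $k$.

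The gap is Step~4, which is the whole content of the proposition and is not carried out. Comparing mixed area terms of pairs of distinct factors via (iii) never touches the multiplicities $d_i$. Nothing in your sketch rules out a single small factor repeated many times, and that is exactly what would make the additive error $\sum_i d_i$ comparable to $\sum_i d_i\bigl(x(\Lambda_i)+y(\Lambda_i)\bigr)$. As written, the argument yields only $a\le 2b$.

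The missing idea is to argue by contradiction and use (ii) and (iv) factor by factor, before summing. If $a>b$, fix $\varepsilon$ with $b+\varepsilon<a$. Then for all $k$ and $i$
\[
a\bigl(x(\Lambda_i')+y(\Lambda_i')-1\bigr)\le a\bigl(x(\Lambda_i)+y(\Lambda_i)\bigr)\le (b+\varepsilon)\bigl(x(\Lambda_i')+y(\Lambda_i')\bigr),
\]
so $x(\Lambda_i')+y(\Lambda_i')\le a/(a-b-\varepsilon)$, and all factors lie in fixed finite sets.

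In a disjoint factorization into nontrivial elliptic factors, distinct factors have no parallel edges, so the number $r$ of factors is also bounded. Since $I(\Lambda)=2k\to\infty$, some $d_i\ge2$ for large $k$.

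Now apply (iii) with $S=\{i\}$ and $d_i'\in\{1,d_i\}$. Together with $I(\Gamma^d)=2(d^2-d)A(\Gamma)+dI(\Gamma)$ for elliptic $\Gamma$, this gives $I(\Lambda_i)=I(\Lambda_i')$ and $A(\Lambda_i)=A(\Lambda_i')$. The convex and concave Pick formulas then give $x(\Lambda_i')+y(\Lambda_i')-e(\Lambda_i')=x(\Lambda_i)+y(\Lambda_i)+e(\Lambda_i)$. Condition (iv) turns this into $e(\Lambda_i)+e(\Lambda_i')\le1$, which is impossible for nontrivial elliptic generators. This is the paper's proof.

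The last computation does not use $a>b$: it shows every $d_i=1$. Hence $\sum_i d_i=r$ is at most the number of edges of $\Lambda$. That number is $o\bigl(x(\Lambda)+y(\Lambda)\bigr)$, because the edges have distinct primitive directions $(-p,q)$ with $\sum(p+q)\le x(\Lambda)+y(\Lambda)$. So this would also close your summed inequality. But the decisive input is the single-factor power identity, not pairwise mixed areas.
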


\begin{rem} The ECH capacities are not enough to prove Proposition \ref{app}.  Indeed, it follows from Theorem \ref{thm:concave_capacities} and the identity \eqref{eq:cap_poli} that 
$$
\begin{aligned}
c_k(Z(b,b)) & =b(0,2,3,4,5,6,7,8,9,10,11\dots),\\
c_k(P(a,a)) & =a(0,1,2,2,3,3,3,4,4,4,4\dots).
\end{aligned}
$$
Therefore, by comparing the ECH capacities of $P(a,a)$ and $Z(b,b)$, we can only achieve the weaker inequality $2a\leq 3b$.
\end{rem}

\begin{proof}[Proof of Proposition \ref{app}.] Suppose by contradiction that $0<b<a$ and there exists a symplectic embedding of $P(a,a)$ into $Z(b,b)$. We may take $a$ slightly smaller, still satisfying $a>b$, so that $P(a,a)$ symplectically embeds in the interior of $Z(b,b)$.  
For every $n$ sufficiently large, denote by $X_n=X_{\Omega_n}\subset Z(b,b)$ the concave toric domain associated with the region $\Omega_n\subset \R^2_{\geq0}$ bounded by the two line segments $(0,n) \to (b,b) \to (n,0)$ and the axes. It follows that there exists a symplectic embedding $P(a,a) \to X_n$ for every $n\geq N_0$ sufficiently large. 

Let $k\in \Z_{> 0}$. Since the condition $I(\Lambda')\leq 2k$ has finitely many solutions in the space of concave generators, there exists $n_k \geq N_0$ such that if  $\Lambda'$ is a concave generator satisfying $I(\Lambda') \leq 2k$, then 
$$
l_{\Omega_n}(\Lambda')=b(x(\Lambda')+y(\Lambda')), \quad \forall n\geq n_k.
$$

  Applying Theorem \ref{thm: criterion_convex_concave} to the embedding
$P(a,a)\to X_{n_k}$ for each $k\in\mathbb Z_{>0}$, we obtain a sequence of elliptic concave generators $\Lambda'_k$ with $I(\Lambda'_k)=2k$ and a sequence of convex generators $\Lambda_k$ with $I(\Lambda_k)=2k$, and disjoint factorizations 
  \begin{equation}
  \label{eq:decompositions}  \Lambda'_k=(\Lambda'_{k,1})^{d_{k,1}}\cdots (\Lambda'_{k,r_k})^{d_{k,r_k}} 
  \quad\mbox{ and } \quad \Lambda_k=(\Lambda_{k,1})^{d_{k,1}}\cdots (\Lambda_{k,r_k})^{d_{k,r_k}}
  \end{equation}  
such that conditions (i)-(iv) in Theorem \ref{thm: criterion_convex_concave} hold for every $k$. Indeed, condition (ii) is independent of $n_k$ and reads as
$$
a(x(\Lambda_{k,j}) + y(\Lambda_{k,j})) \leq b(x(\Lambda'_{k,j}) + y(\Lambda'_{k,j})) \quad \forall j=1,\ldots r_k.
$$

Conditions (ii) and (iv) give
$$
a\left(1-\frac{1}{x(\Lambda'_{k,j})+y(\Lambda'_{k,j})}\right)\leq b,
$$
for every $k\in \Z_{> 0}$ and every $j=1,\ldots, r_k$. The inequality above and the assumption $a>b$ imply that 
$\{x(\Lambda'_{k,j})+y(\Lambda'_{k,j})\}_{k,j}$ is uniformly bounded in $k$ and $j$, which in turn implies that $\{I(\Lambda'_{k,j})\}$ and $\{I(\Lambda_{k,j})\}$ are also uniformly bounded. Therefore, there exists a finite collection $\mathcal F'$ of concave generators and a finite collection $\mathcal F$ of convex generators such that every disjoint factorization of $\Lambda'_k$ and $\Lambda_k$ in \eqref{eq:decompositions} only includes products of generators in $\mathcal F'$ and $\mathcal F$, respectively.

Since $I(\Lambda_k)=I(\Lambda'_k)=2k\to+\infty$ as
$k\to+\infty$, while all factors belong to the finite collections
$\mathcal F$ and $\mathcal F'$, for every sufficiently large $k$ there
exist non-trivial generators
$
\widetilde\Lambda'=\Lambda'_{k,j}\in\mathcal F',
\qquad
\widetilde\Lambda=\Lambda_{k,j}\in\mathcal F,
$
such that $d_{k,j}\ge2$ for some $j$. Since the total generators
$\Lambda'_k$ and $\Lambda_k$ are elliptic, all their factors are
elliptic. Hence $\widetilde\Lambda'$ and $\widetilde\Lambda$ are
non-trivial elliptic generators, and therefore
$
e(\widetilde\Lambda')+e(\widetilde\Lambda)\ge2.
$

    Notice that for every elliptic convex or concave generator $\Lambda$, we have   
\begin{equation} \label{multiplicities}
    I(\Lambda^d)=2(d^2-d)A(\Lambda)+d I(\Lambda), \quad \forall d\in \Z_{>0},  
\end{equation}
    where $A(\Lambda)$ is the area enclosed by $\Lambda$ and the axes. 
    
    It follows from condition (iii) that $I(\tilde \Lambda')=I(\tilde \Lambda)$ and $A(\tilde\Lambda')=A(\tilde\Lambda)$. From $I(\tilde \Lambda')=I(\tilde \Lambda)$ and Pick's formula we obtain
    $$
    2A(\tilde \Lambda') + x(\tilde \Lambda') + y(\tilde\Lambda') -e(\tilde\Lambda')=2A(\tilde\Lambda) + x(\tilde \Lambda) + y(\tilde \Lambda) +e(\tilde \Lambda),
    $$ 
    which implies from $A(\tilde \Lambda')=A(\tilde \Lambda)$ that
    $x(\tilde\Lambda')+y(\tilde\Lambda')-e(\tilde\Lambda')=x(\tilde\Lambda)+y(\tilde\Lambda)+e(\tilde\Lambda)$. Condition (iv) leads to 
    $0 \leq e(\tilde\Lambda)+e(\tilde\Lambda')\leq 1,$
    which is a contradiction. We conclude that $a\leq b$. 
    \end{proof}  

We now consider the problem of symplectically embedding a quadrilateral
toric domain $Q_{a,b,c,d}$ into either a ball or a specific family of
ellipsoids. The obvious inclusion
$
Q_{a,b,c,d}\hookrightarrow B(r)
$
exists whenever
$
r\geq\max\{a,b,c+d\}.
$
We now obtain lower bounds which apply to arbitrary symplectic
embeddings.
    
\begin{thm} \label{lem:firstbound}
Let $d\geq 1$ be a real positive number. Let $X_\Omega=Q_{1,b,1,d}$ where $b=d-\lfloor d\rfloor$ (if $d\not \in \Z$), and $b=1$ (if $d\in \Z$). If there exists a symplectic embedding $X_\Omega\rightarrow B(r)$, then 
$$
r\geq\sup_{k\in \Z_{>0}} \min\left\{d+1,\frac{3k+d-2}{k}, \frac{k+3}{2} \right\}.
$$

\end{thm}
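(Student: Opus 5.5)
The plan is to apply Theorem~\ref{thm:criterion_convex_convex}, part (2), with source $X_\Omega=Q_{1,b,1,d}$ and target the convex domain $X_{\Omega'}=B(r)$, and to test it against the triangular generators of the ball.

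\emph{Checking the hypotheses.} The upper boundary of $\Omega$ is the graph of the affine function $f(x)=b+(d-b)x$ on $[0,1]$. It is concave and increasing, with $f(1)=d>0$. Its slope $d-b$ is the integer $\lfloor d\rfloor$ if $d\notin\Z$, and $d-1$ if $d\in\Z$. So $f'(0)\le -m$ holds with the integer $m:=-(d-b)$, and part (2) applies with $(0,m)$-adapted semi-weakly convex generators $\Lambda$.

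\emph{Target generators.} For $B(r)$ I take $\Lambda'_k=e_{-1,1}^{k}$, the elliptic triangle with $x=y=k$. It has $I(\Lambda'_k)=k^2+3k$ and $\ell_{\Omega'}(\Lambda'_k)=kr$. It is the unique minimizer of $\ell_{\Omega'}$ in its index, with action $c_{(k^2+3k)/2}(B(r))=kr$, so it is minimal for $B(r)$. Its only factorizations are into smaller triangles: $\Lambda'_i=e_{-1,1}^{j_i}$ with $\sum_i d_ij_i=k$.

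Theorem~\ref{thm:criterion_convex_convex} then produces an adapted $\Lambda=\Lambda_1^{d_1}\cdots\Lambda_s^{d_s}$ with:
\begin{itemize}
\item[(a)] $I(\Lambda)=k^2+3k$;
\item[(b)] $\ell_\Omega(\Lambda_i)\le j_i r$ for every $i$;
\item[(c)] the index identities (iii) for all sub-products;
\item[(d)] from condition (iv), $x(\Lambda_i)+y(\Lambda_i)-h(\Lambda_i)/2\ge 3j_i-1$.
\end{itemize}

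\emph{Computing the $\Omega$-action.} For the quadrilateral, $\ell_\Omega$ of an edge $v=(v_1,v_2)$ is $p\times v$, where $p$ is one of the three vertices $(1,0)$, $(1,d)$, $(0,b)$. Concretely, $\ell_\Omega$ of a generator is a piecewise-linear expression in its edge vectors: about $y(\Lambda)$ from the right vertical side, plus terms $d\cdot(\text{horizontal displacement})$ and $b\cdot(\cdot)$ from the top edge. Since $b<1$ and the slope of the top edge is the adaptation slope, adaptedness forbids edges steeper than the top edge. This should make $\ell_\Omega$ computable in terms of $x(\Lambda)$, $y(\Lambda)$ and the vertical drop $y(\Lambda^-)$.

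\emph{Case split on the factorization.}
\begin{itemize}
\item \emph{Single factor with $d_1=1$.} Then $\Lambda$ itself is an adapted generator of index $k^2+3k$. The needed lower bound is
\[
\min\bigl\{\ell_\Omega(\Lambda):\ I(\Lambda)=k^2+3k\bigr\}\ \ge\ \min\{k(d+1),\,3k+d-2\}.
\]
This follows by minimizing with the signed Pick formula (Lemma~\ref{lem:signed-pick}). The two candidates are, roughly, the path hugging the right side and top edge, with action $k(d+1)$, and a path with one long horizontal-type edge of action $\approx d$ plus a staircase of length $\approx 3k-2$. Dividing by $k$ and using (b) gives $r\ge\min\{d+1,(3k+d-2)/k\}$.
\item \emph{Nontrivial factorization.} Some factor $\Lambda'_i$ is a triangle of degree $j_i\le k/2$, or a factor occurs with $d_i\ge2$. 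Here I use (c) exactly as in the proof of Proposition~\ref{app}. Comparing $I(\Lambda_i^{d})$ with $I(\Lambda_i'^{d})$ through the $d^2$-coefficient forces the signed areas of $\Lambda_i$ and $\Lambda'_i$ to agree, so $A(\Lambda_i)=j_i^2/2$. Combined with (d), this forces $x+y$ of $\Lambda_i$ to be at least about $3j_i-1$. Because the source has width $1$ and the right side contributes $y$ to the action, I expect a lower bound of the form $\ell_\Omega(\Lambda_i)\ge j_i(k+3)/2$ from the index and area constraints. Then (b) yields $r\ge (k+3)/2$.
\end{itemize}
Taking the worse of the two cases for each $k$, and then the supremum over $k$, gives the claimed bound.

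\emph{Main obstacle.} The hardest part is the lattice-point minimization in the single-factor case. I must identify the true minimizer of $\ell_\Omega$ among $(0,m)$-adapted semi-weakly convex generators of index $k^2+3k$, including those dipping below the $x$-axis where $B_\Lambda^-\neq\emptyset$, and verify the bound $\min\{k(d+1),3k+d-2\}$. The floor $\lfloor d\rfloor$ enters precisely through the adaptation constraint on edge slopes. I would first enumerate small $k$ by hand to pin down the extremal shapes, then prove the general bound by the signed Pick formula together with a convexity exchange argument on edges.
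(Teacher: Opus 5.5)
Your setup is right: the integer $m=-(d-b)$, the minimal target generator $e_{-1,1}^k$, and the translation of (i)--(iv) into (a)--(d). The case analysis, however, has a genuine gap.

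\emph{The single-factor bound is false.} You propose
\[
\min\bigl\{\ell_\Omega(\Lambda):\ I(\Lambda)=k^2+3k\bigr\}\ \ge\ \min\{k(d+1),\,3k+d-2\},
\]
but this ignores condition (d), and without (d) it fails. Two examples:
\begin{itemize}
\item The vertical generator $e_{0,1}^{k(k+3)/2}$ is $(0,m)$-adapted and has index $k(k+3)$. It is a single factor with $d_1=1$, and it satisfies (iv), since $(k-1)(k-2)\ge0$. Its action is $k(k+3)/2$, which for $k=1$ and $d>1$ equals $2<d+1$.
\item The bound fails even when $x(\Lambda)\ge1$. Take $d=3/2$, so $m=-1$. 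The elliptic path $(1,0)\to(1,1)\to(0,0)$ is $(0,-1)$-adapted, has $\mathcal L(B^+_\Lambda)=3$ and index $4$, and its action is $3/2<5/2$.
\end{itemize}
So no lattice-point minimization can give $\min\{d+1,(3k+d-2)/k\}$ in the single-factor case. In fact the term $(k+3)/2$ of the statement comes exactly from the single vertical factor, not from nontrivial factorizations as in your split.

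\emph{The nontrivial-factorization bound is unsupported.} Your bound $\ell_\Omega(\Lambda_i)\ge j_i(k+3)/2$ is only an expectation. It fails for a vertical factor $e_{0,1}^{j(j+3)/2}$ paired with $e_{-1,1}^{j}$ when $j<k$. Also, the area identity you borrow from Proposition~\ref{app} requires some $d_i\ge2$.

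\emph{What is missing.} The action is exactly linear. An adapted edge $(-p,q)$ satisfies $q\ge mp$; this is the fact you noticed, that no edge is steeper than the top edge. Hence $(1,d)$ is a support point for every edge, and
\[
\ell_\Omega(\Gamma)=y(\Gamma)+d\,x(\Gamma)
\]
for every $(0,m)$-adapted generator, with no dependence on $b$ or on $B^-_\Gamma$. With this formula, condition (iv) replaces any minimization, and the argument splits into two cases:
\begin{itemize}
\item Suppose some factor has $x(\Lambda_i)\ge1$. Then $j_i r\ge (x+y)+(d-1)x\ge 3j_i-1+(d-1)$. So $r\ge(3j_i+d-2)/j_i\ge\min\{d+1,(3k+d-2)/k\}$, because $j\mapsto(3j+d-2)/j$ is monotone and $j_i\le k$. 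Such a factor exists unless $\Lambda$ is a power of a single vertical edge. Indeed, vertical edges of an adapted generator are labeled $e$, so by disjointness at most one factor contains them.
\item Otherwise $\Lambda=(e_{0,1}^{k_1})^{d_1}$. Condition (iii) with $d_1'=1$ gives $2k_1=k_1'(k_1'+3)$. Comparing $2d_1k_1$ with $d_1k_1'(d_1k_1'+3)$ then forces $d_1=1$. Hence $k_1=k(k+3)/2$, and (ii) gives $kr\ge k(k+3)/2$.
\end{itemize}
This is the paper's three-case argument.
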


\begin{proof} Let $m$ be equal to $-\lfloor d\rfloor$ if $d$ is not an integer, and equal to $1-d$ if $d$ is an integer. Under the standing assumptions, $X_\Omega=Q_{1,b,1,d}$ is a semi-weakly convex toric domain with defining function $f:[0,a] \to [0,+\infty)$ satisfying $f(t) = b -m t, t\in [0,1]$. 
We see the $4$-ball $B(r)=X_{\triangle_{r,r}}, r>0,$ as a convex toric domain, and assume that there exists a symplectic embedding $X_\Omega \to B(r)$ for some $r>0$. 

Since the function $g(x):=\frac{3x+d-2}{x},x\in [1,+\infty),$ is monotone,  $g(1) = d+1$ and $g(k) =\frac{3k+d-2}{k}$, it is enough to prove that for every $k\in \Z_{> 0},$ we have
\begin{equation}
\label{eq:inequalities}
r\geq \displaystyle\frac{k+3}{2}\quad\text{or}\quad r\geq \min_{1\leq l\leq k}\left\{\displaystyle\frac{3l+d-2}{l}\right\}.
\end{equation}

Fix $k\in \Z_{> 0}$ and consider the convex generator $\Lambda'=e^k_{-1,1}$. By Pick's formula, $I(\Lambda')=k(k+3)$. By \cite[Lemma~2.1(a)]{hutchings2016beyond}, $\Lambda'$ is a minimal convex generator for $B(r)$, and hence it satisfies the hypothesis on the target generator in Theorem \ref{thm:criterion_convex_convex}-(2). Therefore there exist a $(0,m)$-adapted semi-weakly convex generator $\Lambda$ with $I(\Lambda)=k(k+3)$, a disjoint factorization $\Lambda=\prod_{i=1}^s\Lambda_i^{d_i}$ and a factorization $\Lambda'=\prod_{i=1}^s (\Lambda'_i)^{d_i} = \prod_{i=1}^s\big(e_{-1,1}^{k'_i}\big)^{d_i}$, so that conditions (i)-(iv) in Theorem \ref{thm:criterion_convex_convex}-(2) hold. 

We analyse three different cases:

\textit{Case 1: $s=1$ and $\Lambda_1=e_{0,1}^{k_1}$.}
We claim that $d_1=1$.  Condition~(iii) with $d'_1=1$ gives
$
I(e_{0,1}^{k_1})
=
I(e_{-1,1}^{k'_1}),
$
hence
$
2k_1=k'_1(k'_1+3).
$
If $d_1>1$, then
\[
\begin{aligned}
I\big((e_{0,1}^{k_1})^{d_1}\big)
&=2d_1k_1
=d_1k'_1(k'_1+3)\\
&<
d_1k'_1(d_1k'_1+3)
=
I\big((e_{-1,1}^{k'_1})^{d_1}\big),
\end{aligned}
\]
contradicting condition~(iii) with $d'_1=d_1$.  Thus $d_1=1$.
Since the total target generator is $e_{-1,1}^{k}$, this gives
$k'_1=k$, and therefore
$
k_1=k(k+3)/2.
$

Now using condition (ii), we obtain
 \begin{align*}
    kr=l_{\triangle_{r,r}}(\Lambda') &\geq l_\Omega(e_{0,1}^{k(k+3)/2})=\frac{k(k+3)}{2}\Rightarrow r \geq \frac{k+3}{2}.
\end{align*}  

\textit{Case 2:} $s=1$ and $\Lambda_1\not=e_{0,1}^{k_1}\Rightarrow x(\Lambda)>0$. Condition (iv) gives $x(\Lambda_1)+y(\Lambda_1)\geq x(\Lambda'_1) + y(\Lambda'_1) + m(\Lambda'_1)-1 =3k'_1-1$.   Using condition (ii), we thus obtain 
$$
\begin{aligned}
            k_1'r=l_{\triangle_{r,r}}(\Lambda_1') & \geq l_\Omega(\Lambda_1)=y(\Lambda_1)+dx(\Lambda_1)
             = x(\Lambda_1)+y(\Lambda_1)+(d-1)x(\Lambda_1)\\
            &\geq 3k'_1-1+(d-1)x(\Lambda_1)\\
            &\geq 3k'_1+d-2, \quad \text{since } x(\Lambda_1)\geq 1,
\end{aligned}
$$
where the identity
$
l_\Omega(\Lambda_1)=y(\Lambda_1)+d\,x(\Lambda_1)
$
holds since $\Lambda_1$ is $(0,m)$-adapted. We conclude that $r\geq \frac{3k'_1+d-2}{k_1'}$. Since $k_1'\leq k,$ we obtain $r\geq \min_{1\leq l\leq k} \left\{\frac{3l+d-2}{l} \right\}$. 

\textit{Case 3:} $s>1$. In this case, since the factorization of $\Lambda$ is disjoint and $\Lambda$ is $(0,m)$-adapted, there exists $\Lambda_i$ such that $x(\Lambda_i)>0$. We can now argue as in Case 2  to conclude as before that $r\geq \min_{1\leq l \leq k}\left\{\frac{3l+d-2}{l} \right\}$. \end{proof}

    \begin{cor}\label{thm: quadri into ball}
        Let the quadrilateral toric domain $X_\Omega=Q_{1,b,1,d}$ satisfy $1< d \leq 2$ and $b=d-1$. Then the inclusion $X_\Omega \to B(d+1)$ is sharp.
        
    \end{cor}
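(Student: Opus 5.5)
We need two inequalities. The inclusion gives the upper bound, and Theorem~\ref{lem:firstbound} gives the lower bound once we evaluate the supremum.

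\emph{Step 1: the inclusion.} For $1<d\le 2$ we have $\lfloor d\rfloor = 1$ when $d<2$, so $b=d-1$. When $d=2$ the convention of Theorem~\ref{lem:firstbound} gives $b=1=d-1$. Hence $Q_{1,d-1,1,d}$ is exactly the domain treated there. Since $a=c=1$, the region $\Omega_{1,d-1,1,d}$ has vertices $(0,0)$, $(1,0)$, $(1,d)$ and $(0,d-1)$. The function $x+y$ is maximised on $\Omega$ at the vertex $(1,d)$, where it equals $d+1$. Therefore $\Omega\subset\triangle_{d+1,d+1}$, and so $X_\Omega\subset B(d+1)$.

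\emph{Step 2: the lower bound.} Suppose $X_\Omega$ embeds symplectically into $B(r)$. By Theorem~\ref{lem:firstbound},
\[
r\ge \min\Bigl\{d+1,\ \frac{3k+d-2}{k},\ \frac{k+3}{2}\Bigr\}
\]
for every $k\in\Z_{>0}$. It suffices to exhibit one $k$ for which this minimum equals $d+1$. We treat the two other terms separately.

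\begin{itemize}
\item The third term satisfies $\frac{k+3}{2}\ge d+1$ as soon as $k\ge 2d-1$. Since $d\le 2$, any $k\ge 3$ works.
\item The second term satisfies $\frac{3k+d-2}{k}\ge d+1$ if and only if $3k+d-2\ge kd+k$. This rearranges to $(2-d)k\ge 2-d$, which holds for every $k\ge1$ because $d\le 2$.
\end{itemize}

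Taking $k=3$, the minimum is therefore exactly $d+1$, and so $r\ge d+1$.

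\emph{Step 3: conclusion.} Combining Steps 1 and 2, $B(d+1)$ is the smallest ball into which $X_\Omega$ embeds. The inclusion is therefore sharp.

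The only genuine work is already contained in Theorem~\ref{lem:firstbound}. The one step needing care is Step 1: checking that the hypotheses of that theorem apply, including the boundary case $d=2$, where the convention $b=1$ must agree with $b=d-1$. The rest is elementary arithmetic with the three terms.
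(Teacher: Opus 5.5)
Your proposal is correct and follows essentially the same route as the paper: apply Theorem~\ref{lem:firstbound}, observe that $\frac{3k+d-2}{k}\ge d+1$ for every $k$ because $d\le 2$ (the paper says $g(x)=\frac{3x+d-2}{x}$ is increasing with $g(1)=d+1$), and take $k\ge 3$ so that $\frac{k+3}{2}\ge d+1$. Your Step~1, which checks the inclusion into $B(d+1)$ and the convention $b=d-1$ at $d=2$, is a harmless addition that the paper leaves implicit.
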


    \begin{proof}
        Assume that there exists a symplectic embedding $X_\Omega \to B(r)$ for some $r>0$. Since $1< d\leq 2$, the function $g(x):=\frac{3x+d-2}{x},x\in [1,+\infty),$ is increasing. Since $g(1) = d+1$ and $g(k) =\frac{3k+d-2}{k}$, Theorem \ref{lem:firstbound} implies that  $r\geq \min\{d+1,(k+3)/2\}$ for every $k\in \Z_{>0}$. For $k\geq 3$, we obtain $r\geq d+1$.
    \end{proof}

The next theorem is about symplectic embeddings of quadrilateral toric domains into ellipsoids.

\begin{thm}\label{thm: quadrilateral ellipsoid1}
Let $d\in\mathbb R_{>0}$ and $b\in\mathbb Z_{>0}$. Then there exists a
symplectic embedding
$
Q_{1,1,1,2}\longrightarrow E(d,bd)
$
if and only if
$
\Omega_{1,1,1,2}\subset\triangle_{d,bd}.
$
\end{thm}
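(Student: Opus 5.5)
We prove the two directions separately; the converse direction is the substantive one and follows the pattern of the proof of Theorem~\ref{lem:firstbound}.

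\emph{Setup and the easy direction.} The region $\Omega_{1,1,1,2}$ is the convex hull of $(0,0),(1,0),(1,2),(0,1)$. Its upper boundary is the graph of $f(t)=1+t$ on $[0,1]$ followed by the vertical segment $\{1\}\times[0,2]$. Hence $Q_{1,1,1,2}$ is semi-weakly convex with $f(a)=2>0$ and $f'(0)=1\le -m$ for $m=-1$, so Theorem~\ref{thm:criterion_convex_convex}-(2) applies with $(0,-1)$-adapted generators. Among the vertices of $\Omega_{1,1,1,2}$, only $(1,2)$ can leave $\triangle_{d,bd}$. So the inclusion $\Omega_{1,1,1,2}\subset\triangle_{d,bd}$ is equivalent to $1/d+2/(bd)\le 1$, that is, $bd\ge b+2$. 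If this holds, the inclusion map is the required embedding. For the converse we assume an embedding $Q_{1,1,1,2}\to E(d,bd)$ exists and prove $bd\ge b+2$.

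\emph{Choice of target generator and the action estimate.} For $k\in\Z_{>0}$ take $\Lambda'=e^{k}_{-1,b}$, the single edge from $(k,0)$ to $(0,bk)$. By Pick's formula,
\[
I(\Lambda')=bk^2+(b+2)k,
\]
and choosing the support point $(d,0)$ gives $\ell_{\triangle_{d,bd}}(\Lambda')=bdk$. We use that $\Lambda'$ is minimal for $E(d,bd)$, which holds since $b$ is an integer and this edge is parallel to the boundary of the triangle. This should follow from \cite[Lemma~2.1]{hutchings2016beyond} in the same way as for the ball; I expect checking it to be the main point needing care. Theorem~\ref{thm:criterion_convex_convex}-(2) then gives $\Lambda=\prod\Lambda_i^{d_i}$ and $\Lambda'=\prod (e^{k_i}_{-1,b})^{d_i}$ satisfying (i)--(iv).

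For any edge $v=(-v_1,v_2)$, the $\Omega$-action is the maximum of $p\times v$ over $p\in\{(0,1),(1,2),(1,0)\}$. This is at least $(1,2)\times v=v_2+2v_1$. Summing over edges gives
\[
\ell_\Omega(\Lambda_i)\ge 2x(\Lambda_i)+y(\Lambda_i).
\]

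\emph{Case analysis.} Suppose some factor has $x(\Lambda_i)\ge1$. Condition (iv), with $x'+y'+m'=(b+2)k_i$ for the target factor, gives $x(\Lambda_i)+y(\Lambda_i)\ge (b+2)k_i-1$. Combining this with the action estimate and condition (ii),
\[
bdk_i\;\ge\; \ell_\Omega(\Lambda_i)\;\ge\; x(\Lambda_i)+(b+2)k_i-1\;\ge\;(b+2)k_i,
\]
so $bd\ge b+2$.

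Since the factorization of $\Lambda$ is disjoint and $\Lambda$ is $(0,-1)$-adapted, the only remaining case is $s=1$ with $\Lambda_1=e^{j}_{0,1}$ a vertical edge. Here $I(\Lambda_1^{d'})=2d'j$ is linear in $d'$, while $I\big((e^{k_1}_{-1,b})^{d'}\big)$ is quadratic in $d'$. Therefore condition (iii) forces $d_1=1$, exactly as in Case~1 of Theorem~\ref{lem:firstbound}. Then
\[
2j=bk^2+(b+2)k, \qquad \ell_\Omega(e^{j}_{0,1})=j,
\]
and condition (ii) gives $bdk\ge (bk^2+(b+2)k)/2$, i.e.\ $bd\ge (bk+b+2)/2$. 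Choosing $k=3$ gives $bk=3b\ge b+2$, so again $bd\ge b+2$. In every case $\Omega_{1,1,1,2}\subset\triangle_{d,bd}$.
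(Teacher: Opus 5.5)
Your proposal is correct. It uses the same mechanism as the paper, but organizes it more economically.

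\emph{The paper's route.} The paper handles $b=1$ separately by quoting Corollary~\ref{thm: quadri into ball}, which itself comes from Theorem~\ref{lem:firstbound} with $k\geq3$. For $b\geq2$ it applies Theorem~\ref{thm:criterion_convex_convex}-(2) twice.
\begin{enumerate}
\item It applies the criterion to $e_{-1,b}$ to pin down the negative generator as $e_{0,1}^{b+1}$.
\item It then applies the criterion to $(e_{-1,b})^2$ and enumerates the factorizations. Two distinct factors are excluded by disjointness, using the first step. A doubled factor is excluded by (iii).
\end{enumerate}
This yields $2bd\geq3b+2$, which contradicts $bd<b+2$ only when $b\geq2$.

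\emph{Your route.} You apply the criterion once, to $(e_{-1,b})^3$, and argue factor by factor. Since $\ell_{\triangle_{d,bd}}(\Lambda_i')=bdk_i$ for every factor, any factor with $x(\Lambda_i)\geq1$ gives $bd\geq b+2$ directly from (ii), (iv) and $\ell_\Omega\geq 2x+y$. Otherwise disjointness and (iii) force a single vertical factor with $d_1=1$, and this gives $bd\geq(bk+b+2)/2=2b+1$. This is the Case~1 versus Case~2--3 mechanism of Theorem~\ref{lem:firstbound}, extended to integer $b$; for $b=1$ your vertical bound is exactly $r\geq(k+3)/2$ from that proof.

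\emph{Comparison.} Your route covers $b=1$ without appealing to the ball corollary. It also makes the paper's preliminary step with $e_{-1,b}$ unnecessary; for $b\geq2$, taking $k=2$ in your scheme already suffices. The price is that you need minimality of $(e_{-1,b})^3$ rather than only of $e_{-1,b}$ and $(e_{-1,b})^2$.

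\emph{Points to tidy.}
\begin{enumerate}
\item \emph{Minimality for every $k$.} For a convex generator $\Gamma$ one has $\ell_{\triangle_{d,bd}}(\Gamma)=d\max_{\Gamma}(bx+y)$. So $\ell\leq bdk$ forces $B_\Gamma$ into the triangle with vertices $(0,0)$, $(k,0)$, $(0,bk)$. That triangle has exactly $K+1$ lattice points, where $2K=bk^2+(b+2)k$. Since $I(\Gamma)=2(\mathcal L(B_\Gamma)-1)-h(\Gamma)$, index $2K$ forces $h(\Gamma)=0$ and $B_\Gamma$ equal to the whole triangle, which gives unique minimality. Theorem~\ref{thm:convex_capacities} then gives $\ell=c_K$.
\item \emph{Trivial factors.} In the ``all factors vertical'' case, first rule out a trivial factor $\Lambda_i$ using (iv), whose right-hand side is $(b+2)k_i-1>0$. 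Only then do disjointness and $(0,-1)$-adaptedness give $s=1$ and $\Lambda_1=e^j_{0,1}$.
\end{enumerate}
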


\begin{proof}
If
$
\Omega_{1,1,1,2}\subset\triangle_{d,bd},
$
then the inclusion of moment regions gives the desired symplectic
embedding. We prove the converse.

The inclusion
$
\Omega_{1,1,1,2}\subset\triangle_{d,bd}
$
is equivalent to
$
bd\geq b+2.
$
Assume by contradiction that
$
bd<b+2
$
and that there exists a symplectic embedding
$
Q_{1,1,1,2}\to E(d,bd).
$
If $b=1$, the conclusion follows from
Corollary~\ref{thm: quadri into ball}, so we assume $b\geq2$.

The defining function of $Q_{1,1,1,2}$ is
$
f(x)=1+x
$
on $[0,1]$. Thus Theorem~\ref{thm:criterion_convex_convex}-(2) is the relevant criterion, with $m=-1$; it applies to each minimal convex generator chosen at the target. For every $(0,-1)$-adapted semi-weakly convex generator
$\Gamma$ with $y(\Gamma)\geq0$,
\begin{equation}
\label{eq:Q112-action}
\ell_\Omega(\Gamma)=2x(\Gamma)+y(\Gamma).
\end{equation}
Indeed, an edge $(-p,q)$ of a $(0,-1)$-adapted path satisfies
$q\geq-p$, and hence the supporting point for that edge is $(1,2)$.

First consider $\Lambda'=e_{-1,b}$. By \cite[Lemma~2.1(a)]{hutchings2016beyond}, this is a minimal convex generator for $E(d,bd)$: it is the maximal convex integral path tangent to the line of slope $-b$ through $(1,0)$ and $(0,b)$. Since $(-1,b)$ is primitive and $\Lambda'$ consists of a single
elliptic edge of multiplicity one, $\Lambda'$ admits no nontrivial
factorization. Hence the factorization supplied by
Theorem~\ref{thm:criterion_convex_convex}-(2) has a single nontrivial
factor with multiplicity one. Consequently, the corresponding
factorization of the negative generator $\Lambda$ also has a single
factor with multiplicity one, so item~(iv) applies directly to
$\Lambda$ and $\Lambda'$.

Theorem~\ref{thm:criterion_convex_convex}-(2) therefore gives a
$(0,-1)$-adapted semi-weakly convex generator $\Lambda$ satisfying
$
I(\Lambda)=I(\Lambda')=2b+2,
$
together with the inequalities in Theorem~\ref{thm:criterion_convex_convex}.
In particular,
\[
x(\Lambda)+y(\Lambda)-\frac{h(\Lambda)}2
\geq
x(\Lambda')+y(\Lambda')+m(\Lambda')-1
=
b+1.
\]
Hence
$
x(\Lambda)+y(\Lambda)\geq b+1.
$
If $x(\Lambda)\geq1$, then by
\eqref{eq:Q112-action},
$$
bd
=
\ell_{\triangle_{d,bd}}(e_{-1,b})
\geq
\ell_\Omega(\Lambda)
=
2x(\Lambda)+y(\Lambda)
\geq
b+2,
$$
contradicting $bd<b+2$. Therefore $x(\Lambda)=0$. The only possible
nontrivial $(0,-1)$-adapted generator with $x=0$ is a vertical
elliptic generator, and the index equality gives
$
\Lambda=e_{0,1}^{b+1}.
$

The generator $(e_{-1,b})^2$ is likewise a minimal convex generator for $E(d,bd)$ by \cite[Lemma~2.1(a)]{hutchings2016beyond}, now using the parallel tangent line through $(2,0)$ and $(0,2b)$. Apply Theorem~\ref{thm:criterion_convex_convex}-(2) to $\Lambda'=(e_{-1,b})^2$.
Any factorization of this one-edge generator consists
of factors in the same direction. If there were two distinct factors,
then applying the preceding argument to each factor would force the
corresponding two negative factors to be
$
e_{0,1}^{b+1},
$
contradicting the disjointness of the negative factorization.
There is also no factorization with one factor repeated twice:
condition~(iii) would give
$
I\big((e_{0,1}^{b+1})^2\big)
=
I\big((e_{-1,b})^2\big),
$
whereas
$
I\big((e_{0,1}^{b+1})^2\big)=4b+4
$
and
$
I\big((e_{-1,b})^2\big)=6b+4.
$
Consequently the factorization has a single factor with multiplicity
one.

For the resulting negative generator $\Lambda$, item~(iv) gives
\[
x(\Lambda)+y(\Lambda)-\frac{h(\Lambda)}2
\geq
x((e_{-1,b})^2)+y((e_{-1,b})^2)
+m((e_{-1,b})^2)-1
=
2b+3.
\]
Thus $x(\Lambda)+y(\Lambda)\geq2b+3$. If $x(\Lambda)\geq1$, then
\[
2bd
=
\ell_{\triangle_{d,bd}}\big((e_{-1,b})^2\big)
\geq
\ell_\Omega(\Lambda)
=
2x(\Lambda)+y(\Lambda)
\geq
2b+4,
\]
again contradicting $bd<b+2$. Hence $x(\Lambda)=0$. The index equality
then forces
$
\Lambda=e_{0,1}^{3b+2}.
$
Therefore
$
2bd
\geq
\ell_\Omega(\Lambda)
=
3b+2.
$
On the other hand $bd<b+2$ gives
$
2bd<2b+4.
$
Since $b\geq2$,
$
3b+2\geq2b+4,
$
which is a contradiction. Hence
$
bd\geq b+2,
$
as required.
\end{proof}

\begin{cor}
Let $1\leq a\leq2$, $b\in\mathbb Z_{>0}$ and $d\in\mathbb R_{>0}$.
Then there exists a symplectic embedding
$
Q_{1,a,1,2}\longrightarrow E(d,bd)
$
if and only if
$
\Omega_{1,a,1,2}\subset\triangle_{d,bd}.
$
\end{cor}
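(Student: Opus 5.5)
The plan is to reduce the corollary to Theorem~\ref{thm: quadrilateral ellipsoid1} by monotonicity of moment regions. The ``if'' direction is immediate: if $\Omega_{1,a,1,2}\subset\triangle_{d,bd}$, then the inclusion $X_{\Omega_{1,a,1,2}}\subset X_{\triangle_{d,bd}}=E(d,bd)$ is a symplectic embedding.

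For the ``only if'' direction, I first rewrite the inclusion condition explicitly. The region $\Omega_{1,a,1,2}$ is the convex hull of $(0,0),(1,0),(1,2),(0,a)$, and $\triangle_{d,bd}=\{x/d+y/(bd)\le 1\}$ is convex. Hence $\Omega_{1,a,1,2}\subset\triangle_{d,bd}$ holds if and only if each of these four vertices satisfies the defining inequality. This amounts to
\[
d\ge 1,\qquad 1/d+2/(bd)\le 1,\qquad a\le bd .
\]
The middle condition is equivalent to $bd\ge b+2$. It implies the other two, since $bd\ge b+2>2\ge a$ and $d\ge 1+2/b>1$. So the inclusion is equivalent to the single inequality $bd\ge b+2$. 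The same computation with $a=1$ shows that $\Omega_{1,1,1,2}\subset\triangle_{d,bd}$ is also equivalent to $bd\ge b+2$.

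Next, since $a\ge1$, the point $(0,1)$ lies on the segment from $(0,0)$ to $(0,a)$. Therefore $\Omega_{1,1,1,2}\subset\Omega_{1,a,1,2}$, and hence $Q_{1,1,1,2}\subset Q_{1,a,1,2}$. Composing this inclusion with a given symplectic embedding $Q_{1,a,1,2}\to E(d,bd)$ yields a symplectic embedding $Q_{1,1,1,2}\to E(d,bd)$. Theorem~\ref{thm: quadrilateral ellipsoid1} then gives $\Omega_{1,1,1,2}\subset\triangle_{d,bd}$, i.e.\ $bd\ge b+2$, which by the previous paragraph is exactly $\Omega_{1,a,1,2}\subset\triangle_{d,bd}$.

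There is no real obstacle here. The only points needing care are the boundary case $a=2$, where the hull degenerates to a rectangle (i.e.\ a polydisk $P(1,2)$), and the check that the vertex $(0,a)$ imposes no extra constraint. The monotonicity argument is insensitive to the degenerate case, and the constraint check is the inequality $a\le 2<b+2\le bd$ noted above.
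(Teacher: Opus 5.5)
Your proposal is correct and follows essentially the same route as the paper. You restrict a given embedding to $Q_{1,1,1,2}\subset Q_{1,a,1,2}$ and apply Theorem~\ref{thm: quadrilateral ellipsoid1} to get $bd\geq b+2$; then you check that the vertices $(1,0)$, $(1,2)$, $(0,a)$ lie in $\triangle_{d,bd}$ (using $a\leq 2<b+2\leq bd$) and conclude by convexity, which is the same verification the paper carries out.
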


\begin{proof}
If
$
\Omega_{1,a,1,2}\subset\triangle_{d,bd},
$
the inclusion gives a symplectic embedding. Conversely, if
$
Q_{1,a,1,2}\to E(d,bd)
$
symplectically, then
$
Q_{1,1,1,2}\subset Q_{1,a,1,2}
$
also symplectically embeds into $E(d,bd)$. By
Theorem~\ref{thm: quadrilateral ellipsoid1},
$
\Omega_{1,1,1,2}\subset\triangle_{d,bd}.
$
By Theorem~\ref{thm: quadrilateral ellipsoid1}, the inclusion
$
\Omega_{1,1,1,2}\subset\triangle_{d,bd}
$
is equivalent to
$
bd\geq b+2.
$
Since $b\geq1$, this implies $d>1$ and therefore
$
d>1, bd\geq b+2\geq3\geq a.
$
Hence the vertices $(1,0)$ and $(0,a)$ of
$\Omega_{1,a,1,2}$ belong to $\triangle_{d,bd}$. Moreover,
\[
\frac{1}{d}+\frac{2}{bd}
=
\frac{b+2}{bd}
\leq1,
\]
so the remaining nonzero vertex $(1,2)$ also belongs to
$\triangle_{d,bd}$. Since $\triangle_{d,bd}$ is convex, it follows
that
$
\Omega_{1,a,1,2}\subset\triangle_{d,bd}.
$
\end{proof}

Now we show that obstructions coming from ECH capacities are not enough
to obtain Theorem~\ref{thm: quadrilateral ellipsoid1}.

\begin{prop}
\label{prop:capacities-Q112}
The ECH capacities of $Q_{1,1,1,2}$ are given by
\[
c_k(Q_{1,1,1,2})
=
\min\left\{
2m+n:
m,n\in\mathbb Z_{\geq0},\
(m+1)(n+1)+\frac{m(m+1)}2\geq k+1
\right\}.
\]
\end{prop}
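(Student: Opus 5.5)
We apply Theorem~\ref{thm:semiweak_capacities}(ii) to $X_\Omega=Q_{1,1,1,2}$. The defining function is $f(x)=1+x$ on $[0,1]$. It is concave, differentiable at $0$ with $f'(0)=1\leq -m$ for $m=-1$, and satisfies $f(1)=2>0$. Hence
\[
c_k(Q_{1,1,1,2})=\min\{\ell_\Omega(\Lambda):\mathcal L(B_\Lambda^+)=k+1\},
\]
where $\Lambda$ ranges over $(0,-1)$-adapted semi-weakly convex integral paths with $y(\Lambda)\geq0$. As in \eqref{eq:Q112-action}, every such path has $\ell_\Omega(\Lambda)=2x(\Lambda)+y(\Lambda)$. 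So the problem reduces to a lattice-point count: for fixed endpoints $(x,0)$ and $(0,y)$ with $x=m$, $y=n$, we must find the maximum of $\mathcal L(B_\Lambda^+)$ over admissible paths. Since the action depends only on the endpoints, and the constraint is a lower bound on the lattice count, the minimum is $\min\{2m+n : \max_\Lambda \mathcal L(B^+_\Lambda)\geq k+1\}$. To justify replacing the equality $\mathcal L(B_\Lambda^+)=k+1$ by an inequality, we use monotonicity: points can be removed one at a time by cutting a corner of the path without changing the endpoints, or else the capacity formula is already monotone in $k$. This needs a short argument.

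The key step is to show that, for fixed $m,n\geq0$, the maximum of $\mathcal L(B_\Lambda^+)$ equals $(m+1)(n+1)+\tfrac{m(m+1)}2$. The $(0,-1)$-adaptedness says every edge $v=(-p,q)$ satisfies $v\times(-1,-1)\geq0$, that is, $q\geq -p$, so slopes are at least $-1$ in the appropriate sense. Together with convexity ($v_i\times v_{i+1}>0$), this shows the largest region is obtained as follows: start at $(m,0)$, go up vertically to $(m,m+n)$, then go along the edge of direction $(-1,-1)$ down to $(0,n)$. This is precisely the path $e_{0,1}^{m+n}$ followed by an edge parallel to $(-1,-1)$ of length $m$, mirroring the shape of $\Omega$. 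Any admissible path from $(m,0)$ to $(0,n)$ lies below this extremal path, because convexity forces the turning to be counterclockwise and the last edge has direction bounded by $(-1,-1)$. This containment is the main point to verify carefully, using the supporting-line description of convex paths. Since this path has $x_0=0$, $B_\Lambda^-=\emptyset$, and $B_\Lambda^+$ is the trapezoid with vertices $(0,0),(m,0),(m,m+n),(0,n)$.

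Counting lattice points column by column, column $x=j$ contributes $n+j+1$ points. The total is therefore
\[
\sum_{j=0}^m (n+1+j)=(m+1)(n+1)+\frac{m(m+1)}2.
\]
Combining this with the reduction in the first paragraph yields the stated formula.

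The main obstacle is the extremality claim: that every $(0,-1)$-adapted semi-weakly convex path with $y\geq0$ and the given endpoints satisfies $B_\Lambda^+\subseteq$ the trapezoid. This matters especially because some paths dip below the $x$-axis (the case $x_0>0$), and there $B_\Lambda^-$ only decreases the count. I would handle this by noting that each edge has slope constrained by adaptedness, so the $y$-coordinate along the path never exceeds $n+(m-x)$ at abscissa $x$. Moving leftwards, each edge raises $y$ by at most its horizontal length, except for the initial vertical edges. This gives the column bound directly.
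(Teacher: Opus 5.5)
Your strategy is the paper's. You apply Theorem~\ref{thm:semiweak_capacities}(ii) with $m=-1$, use $\ell_\Omega(\Lambda)=2x(\Lambda)+y(\Lambda)$, and compare every admissible path with the extremal path $e_{0,1}^{m+n}e_{-1,-1}^{m}$, whose region is the trapezoid with $(m+1)(n+1)+m(m+1)/2$ lattice points. For the upper bound, your fallback is cleaner than the paper's corner rounding. If $(m+1)(n+1)+m(m+1)/2=k'+1$ with $k'\geq k$, the extremal path is admissible for $k'$, so $c_k\leq c_{k'}\leq 2m+n$ by monotonicity of the capacity sequence. Your first suggestion, cutting corners with the endpoints fixed, is not always possible (e.g.\ for $m=0$); the paper's rounding only claims that the length does not increase.

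However, your justification of the key containment is stated backwards and, as written, is false.
\begin{itemize}
\item The bound $y\leq n+(m-x)$ is violated by your own extremal path at $x=m$, where it reaches height $m+n$.
\item Moving leftwards, an admissible edge $(-p,q)$ can raise $y$ arbitrarily, e.g.\ $(-1,5)$.
\end{itemize}
What adaptedness actually gives is $q\geq -p$. So, moving leftwards, each edge \emph{lowers} $y$ by at most its horizontal length. Sum over the part of $\Lambda$ from a point $(x,y)\in\Lambda$ to the endpoint $(0,n)$: this gives $n-y\geq -x$, i.e.\ $y\leq n+x$. Together with the strip condition $0\leq x\leq m$, this places $\Lambda$, and hence $B^+_\Lambda$, inside $\{0\leq x\leq m,\ 0\leq y\leq n+x\}$. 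This is the paper's statement that $\Lambda$ lies on or below $\Lambda_{m,n}$, and it agrees with your correct column count $n+j+1$.

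Finally, the paths dipping below the axis that you worry about do not occur. Along a semi-weakly convex path the ratios $q_i/p_i$ are nonincreasing, so once $y$ starts decreasing it keeps decreasing. Hence $y(\Lambda)\geq 0$ forces $\Lambda\subset\{y\geq 0\}$ and $B^-_\Lambda=\emptyset$, as noted in the proof of Theorem~\ref{thm:semiweak_capacities}. In any case the constraint involves only $\mathcal L(B^+_\Lambda)$, so $B^-_\Lambda$ never affects the count.
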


\begin{proof}
Let $A_k$ denote the right hand side.  The defining function of
$Q_{1,1,1,2}$ is $f(x)=1+x$, so, as in
\eqref{eq:Q112-action}, every $(0,-1)$-adapted path with endpoint
coordinates $(m,n)$ has
$
\ell_\Omega(\Lambda)=2m+n.
$

Fix $m,n\in\mathbb Z_{\geq0}$ and consider the elliptic
$(0,-1)$-adapted path
$
\Lambda_{m,n}
=
e_{0,1}^{m+n}e_{-1,-1}^{m}.
$
It starts at $(m,0)$, first follows the vertical direction to
$(m,m+n)$, and then follows the diagonal direction $(-1,-1)$ to
$(0,n)$.  A direct lattice-point count gives
\[
\mathcal L(B^+_{\Lambda_{m,n}})
=
(m+1)(n+1)+\frac{m(m+1)}2,
\]
and
$
\ell_\Omega(\Lambda_{m,n})=2m+n.
$

Suppose that
\[
(m+1)(n+1)+\frac{m(m+1)}2\geq k+1.
\]
If $m=0$, then the inequality says $n\geq k$.  We take
$
\widetilde\Lambda=e_{0,1}^{k}.
$
This is a $(0,-1)$-adapted semi-weakly convex path satisfying
$
\mathcal L(B^+_{\widetilde\Lambda})=k+1
$
and
$
\ell_\Omega(\widetilde\Lambda)=k\leq n=2m+n.
$
If $m>0$ and the inequality is strict, apply the corner-rounding
operation used in the proof of
Theorem~\ref{thm:semiweak_capacities}. Repeating it produces a
$(0,-1)$-adapted path $\widetilde\Lambda$ with
$
\mathcal L(B^+_{\widetilde\Lambda})=k+1
$
and
$
\ell_\Omega(\widetilde\Lambda)
\leq
\ell_\Omega(\Lambda_{m,n})
=
2m+n.
$
If $m>0$ and equality holds, simply take
$
\widetilde\Lambda=\Lambda_{m,n}.
$
In every case,
Theorem~\ref{thm:semiweak_capacities} gives
$
c_k(Q_{1,1,1,2})\leq2m+n.
$
Taking the minimum over all such pairs $(m,n)$ gives
$
c_k(Q_{1,1,1,2})\leq A_k.
$

Conversely, let $\Lambda$ realize the minimum in
Theorem~\ref{thm:semiweak_capacities}, so that
\[
\mathcal L(B^+_\Lambda)=k+1,
\qquad
\ell_\Omega(\Lambda)=c_k(Q_{1,1,1,2}).
\]
Set
$
m=x(\Lambda),
n=y(\Lambda).
$
Because $\Lambda$ is $(0,-1)$-adapted, every nonvertical edge
$(-p,q)$ satisfies $q\geq-p$. The path $\Lambda$ therefore lies on or
below the extremal path $\Lambda_{m,n}$ defined above. Hence
$
\mathcal L(B^+_{\Lambda_{m,n}})
\geq
\mathcal L(B^+_\Lambda)
=
k+1.
$
Thus the pair $(m,n)$ is admissible in the definition of $A_k$. Since
the action depends only on the endpoint coordinates in this case,
\[
A_k
\leq
2m+n
=
\ell_\Omega(\Lambda)
=
c_k(Q_{1,1,1,2}).
\]
Combining the two inequalities proves the proposition.
\end{proof}

Now suppose that $Q_{1,1,1,2}$ symplectically embeds into $B(r)$. The
first capacities are
\[
\{c_k(Q_{1,1,1,2})\}_{k\geq0}
=
\{0,1,2,3,3,4,4,5,5,6,6,6,7,7,7,\ldots\},
\]
whereas
\[
\{c_k(B(r))\}_{k\geq0}
=
\{0,r,r,2r,2r,2r,3r,3r,3r,3r,4r,\ldots\}.
\]
In fact ECH capacities give exactly the lower bound $r\geq2$ and no
stronger one. To see this, let
$
c_k(B(1))=d.
$
Then
\[
\frac{d(d+1)}2
\leq
k
\leq
\frac{(d+1)(d+2)}2-1.
\]
Taking $m=d$ and $n=0$ in
Proposition~\ref{prop:capacities-Q112} gives
\[
c_k(Q_{1,1,1,2})\leq2d=2c_k(B(1)).
\]
On the other hand,
$
c_2(Q_{1,1,1,2})=2
$
and
$
c_2(B(1))=1.
$
Thus
\[
\sup_{k\geq1}
\frac{c_k(Q_{1,1,1,2})}{c_k(B(1))}
=
2.
\]
Consequently ECH capacities alone imply only $r\geq2$, while
Corollary~\ref{thm: quadri into ball} gives the sharp obstruction
$r\geq3$.

\section{Foundations of Embedded Contact Homology} \label{ECHfoundations}

Let $(Y, \xi = \ker \lambda)$ be a  closed contact $3$-manifold with  contact form $\lambda$ and contact structure $\xi$. The Reeb vector field  of $\lambda$ is determined by $d \lambda(R_\lambda,\cdot)=0$ and $\lambda(R_\lambda)=1$. We denote the flow of $R_{\lambda}$ by $\phi_t, t\in \R,$ called the Reeb flow. A closed orbit of $\phi_t$ is called a Reeb orbit. A Reeb orbit $\gamma:\mathbb{R}/T\mathbb{Z}\rightarrow Y$ with period $T>0$ is nondegenerate if the linear map $P_\gamma:= d\phi_T|_\xi: \xi_{\gamma(0)}\rightarrow \xi_{\gamma(0)}$ has no eigenvalue $1$. The contact form $\lambda$ is nondegenerate if all Reeb orbits are nondegenerate. 
We say that $\gamma$ is elliptic if the eigenvalues of $P_\gamma$ lie on the unit circle. We say that a nondegenerate Reeb orbit $\gamma$ is positive (negative) hyperbolic if the eigenvalues of $P_\gamma$ are positive (negative).

An {\it orbit set} is a finite set $\alpha=\{(\alpha_i,m_i)\}$, where $\alpha_i$ are distinct embedded Reeb orbits on $Y$ and $m_i$ are positive integers. An admissible orbit set is an orbit set such that $m_i=1$ whenever $\alpha_i$ is hyperbolic. We denote the homology class of an orbit set $\alpha$ by 
$$[\alpha]=\displaystyle\sum m_i [\alpha_i]\in H_1(Y).$$
For a fixed $\Gamma\in H_1(Y)$, and a generic almost complex structure $J$ on $\mathbb{R}\times Y$ compatible with its symplectic structure, the chain complex $\text{ECC}_*(Y,\lambda,\Gamma,J)$ is the $\mathbb{Z}_2$-vector space generated by the admissible orbit sets in homology class $\Gamma$, and its differential counts certain $J$-holomorphic curves in $\mathbb{R}\times Y$, as explained below. This chain complex gives rise to the embedded contact homology $\text{ECH}_*(Y,\lambda,
\Gamma,J)$. Taubes proved in \cite{TaubesECHSW}  that $\text{ECH}_*(Y,\lambda,
\Gamma,J)$ is isomorphic to a version of Seiberg-Witten Floer cohomology $\widehat{HM}^{-*}(Y,\mathfrak{s}_\xi+PD(\Gamma))$. In particular, $\text{ECH}_*(Y,\lambda,\Gamma,J)$ does not depend on $\lambda$ or $J$, and so we just write $ECH_*(Y,\xi,\Gamma)$.

In the next sections we will need to consider relative homological classes that relate two different orbit sets $\alpha=\{(\alpha_i,m_i)\}$ and $\beta=\{(\beta_j,n_j)\}$ with the same homology $\Gamma = [\alpha]=[\beta]$. To be precise we denote by $H_2(Y,\alpha,\beta)$ the affine space over $H_2(Y)$ consisting of $2$-chains $\Sigma$ in $Y$ with 
$$\partial \Sigma=\displaystyle\sum_im_i\alpha_i-\displaystyle\sum_jn_j\beta_j$$
modulo boundaries of $3$-chains. We call $H_2(Y,\alpha,\beta)$ the \textit{relative second homology of $\alpha$ and $\beta$}.

Several of the definitions of ECH are a bit delicate. Because of that we dedicate some more  subsections to properly define the different parts that constitute 
 this homology.

\subsection{The \text{ECH} index}

Given orbit sets $\alpha = \{(\alpha_i, m_i)\}, \beta= \{(\beta_j, n_j)\}$ and $Z\in H_2(Y, \alpha,\beta)$, the ECH index $I(\alpha, \beta, Z)$ is an integer defined by
\begin{equation}
    \label{ECHindex}    I(\alpha,\beta,Z):=c_\tau(Z)+Q_\tau(Z)+\text{CZ}^I_\tau(\alpha)-\text{CZ}^I_\tau(\beta).
\end{equation}
Here, $\tau$ is a trivialization of $\xi$ over the Reeb orbits $\alpha_i$ and $\beta_j$, $c_\tau(Z)$ denotes the relative first Chern class of $\xi$ over $Z$ with respect to $\tau$, $Q_\tau(Z)$ denotes the relative self-intersection number of $Z$ with respect to $\tau$, and 
$$
\text{CZ}^I_\tau(\alpha):=\displaystyle\sum_i\displaystyle\sum_{k=1}^{m_i}\text{CZ}_\tau (\alpha_i^k)
$$
where $\text{CZ}_\tau$ denotes the Conley-Zehnder index with respect to $\tau$, and $\alpha^k_i$ denotes the $k$-fold cover of $\alpha_i$. For more details about the definition of the different summands of the ECH index see \cite[Sec.~3.4]{hutchings2014lecture}. The ECH index does not depend on the trivialization $\tau$. 

\subsection{Trivializations.} Since several of the quantities appearing in the ECH index depend individually on the choice of trivialization, while the ECH index itself does not, we briefly recall the relevant conventions regarding changes of trivialization.

Although the definition of the ECH index involves a choice of trivialization, the resulting integer is independent of this choice. Its individual components, however, do depend on the trivialization.

Let $\gamma$ be a Reeb orbit. Following \cite{hutchings2002index}, we denote by $\mathcal{T}(\gamma)$ the set
of homotopy classes of symplectic trivializations of $\xi|_\gamma$.
 If $\tau, \tau'\in\mathcal{T}(\gamma)$,  then $\tau' - \tau$ denotes the degree of $\tau \circ (\tau')^{-1}:S^1\rightarrow \text{Sp}(2,\mathbb{R})$ along $\gamma$.  Let $\mathcal{T}(\alpha,\beta)=
\prod_i\mathcal{T}(\alpha_i)\times\prod_j\mathcal{T}(\beta_j)$ where $\alpha=\{(\alpha_i,m_i)\}$ and $\beta=\{(\beta_i,m_i)\}$ be orbit sets.  If
$\tau=(\tau^+,\tau^-)\in\mathcal{T}(\alpha,\beta)$, we denote the corresponding elements of
$\mathcal{T}(\alpha_i)$ and $\mathcal{T}(\beta_j)$ by $\tau_i^+$ and $\tau_j^-$.

The next lemma records the transformation laws of the components of the ECH index under a change of trivialization.

\begin{lem}
\label{lem:changetrivialization}
Let $\{(\alpha_i,m_i)\}$ and $\{(\beta_j,n_j)\}$ be orbit sets, and let $\tau, \tau' \in \mathcal T(\alpha,\beta)$. Then 

\begin{equation}
\label{eqn:chernTriv}
c_\tau(Z)-c_{\tau'}(Z)=
\sum_im_i({\tau'_i}^+-\tau_i^+)-\sum_jn_j({\tau'_j}^--\tau_j^-).
\end{equation}

\begin{equation}
\label{eqn:QtauTriv}
Q_\tau(Z)-Q_{\tau'}(Z)=\sum_im_i^2({\tau'_i}^+-\tau_i^+) -
\sum_jn_j^2({\tau'_j}^--\tau_j^-).
\end{equation}

\begin{equation}
\label{eqn:CZTriv}
\text{CZ}^I_{\tau}(\alpha)-\text{CZ}^I_{\tau'}(\alpha)=\sum_i m_i(m_i+1)(\tau^+_i-\tau'^{+}_i).
\end{equation}
\end{lem}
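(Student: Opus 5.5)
Since $I(\alpha,\beta,Z)$ is independent of the trivialization, the plan is to treat each of the three summands $c_\tau$, $Q_\tau$ and $\text{CZ}^I_\tau$ separately and compare $\tau$ with $\tau'$ directly. All three formulas are linear in the differences $\tau'-\tau$ at each orbit. So it suffices to change the trivialization over a single orbit, say $\alpha_1$, by one unit, keep all other trivializations fixed, and then sum over orbits. Orbits of $\beta$ enter $\partial Z$ with a negative sign, which accounts for the minus signs in \eqref{eqn:chernTriv} and \eqref{eqn:QtauTriv}.

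For \eqref{eqn:chernTriv}, I would use the definition of $c_\tau(Z)$ as the signed count of zeros of a generic section $\psi$ of $\xi$ over a surface $S$ representing $Z$, with $\psi$ nonvanishing and constant with respect to $\tau$ on the boundary. Over the boundary component $\alpha_1$, which $S$ covers with multiplicity $m_1$, a $\tau$-constant section winds $-(\tau_1'^+-\tau_1^+)$ times relative to a $\tau'$-constant section, with the sign fixed by the convention that $\tau'-\tau$ is the degree of $\tau\circ(\tau')^{-1}$. Hence each covering boundary circle changes the zero count by this winding, and summing over the $m_1$ strands gives the factor $m_1$.

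For \eqref{eqn:QtauTriv}, I would use $Q_\tau(Z)=c_1(N,\tau)-w_\tau(S)$, or equivalently the intersection of $S$ with a $\tau$-pushoff $S'$ in $[-1,1]\times Y$. Near $\alpha_1$, the surface $S$ consists of $m_1$ strands forming a braid, and the pushoff $S'$ is obtained by displacing the braid in the $\tau$-direction. Replacing $\tau$ by $\tau'$ twists the pushoff once around the orbit relative to the original. Each of the $m_1$ strands of $S'$ then links each of the $m_1$ strands of $S$ once more, which produces $m_1^2$ additional intersections with the stated sign. This is the step I expect to be the main obstacle: getting the sign and the quadratic factor right means tracking the writhe and linking conventions carefully, in particular that the linking is counted over all ordered pairs of strands, both distinct strands and each strand with its own pushoff. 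I would follow \cite[Sec.~3]{hutchings2002index}, where this is done.

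For \eqref{eqn:CZTriv}, I would use the standard fact that $\text{CZ}_\tau(\gamma^k)-\text{CZ}_{\tau'}(\gamma^k)=2k(\tau'-\tau)$ up to the paper's sign convention. This holds because the rotation of the linearized flow over $k$ periods changes by $k$ full turns, and the Conley--Zehnder index counts twice the rotation. Summing over $k=1,\dots,m_i$ gives $2\sum_k k=m_i(m_i+1)$ times the difference. I would then fix the sign so that adding the three formulas gives $\Delta c+\Delta Q+\Delta\text{CZ}^I=0$ at each orbit, consistent with the invariance of $I$: with $\delta=\tau_i'^+-\tau_i^+$, the three changes are $m_i\delta$, $m_i^2\delta$ and $-m_i(m_i+1)\delta$, which sum to zero. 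This consistency check also confirms the sign chosen in \eqref{eqn:QtauTriv}.
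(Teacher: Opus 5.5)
Your decomposition is the same as the paper's. The paper cites \cite{hutchings2002index} for \eqref{eqn:chernTriv} and \eqref{eqn:QtauTriv}, and proves \eqref{eqn:CZTriv} exactly as you do, by summing a per-iterate shift of $2k$ over $k=1,\dots,m_i$. Your zero-count and pushoff pictures also give the right magnitudes $m_i$ and $m_i^2=m_i(m_i-1)+m_i$.

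The gap is in the signs, which are the real content of the lemma. You fix the Conley--Zehnder sign, and ``confirm'' the $Q_\tau$ sign, by demanding $\Delta c+\Delta Q+\Delta\text{CZ}^I=0$. This is circular, because the $\tau$-independence of $I$ is normally deduced from exactly these three laws. Even granting it, that requirement for all $m_i$ only forces the coefficients of $m_i$ and of $m_i^2$ to cancel separately. So it fixes the three signs relative to one another, but never the overall sign. The overall sign must come from one direct computation, and the one you sketch does not produce the stated sign.

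With the convention you quote, $\tau'-\tau=\deg(\tau\circ(\tau')^{-1})$, you correctly note that a $\tau$-trivial section $\psi$ winds $-({\tau'_1}^{+}-\tau_1^{+})$ times relative to a $\tau'$-trivial section $\psi'$. Now $\#\psi^{-1}(0)-\#\psi'^{-1}(0)$ is the winding of $\psi/\psi'$ along $\partial S=\sum_i m_i\alpha_i-\sum_j n_j\beta_j$. This gives $c_\tau(Z)-c_{\tau'}(Z)=-\sum_i m_i({\tau'_i}^{+}-\tau_i^{+})+\sum_j n_j({\tau'_j}^{-}-\tau_j^{-})$, which is the negative of \eqref{eqn:chernTriv}.

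Your ``standard fact'' $\text{CZ}_\tau(\gamma^k)-\text{CZ}_{\tau'}(\gamma^k)=2k(\tau'-\tau)$ is likewise the right one for this convention. If $\tau'_t=R(2\pi t/T)\tau_t$, then $\tau'-\tau=-1$. Meanwhile $\Phi_{\tau'}(t)=R(2\pi t/T)\Phi_\tau(t)$ has one extra full turn, so $\text{CZ}_{\tau'}(\gamma)=\text{CZ}_\tau(\gamma)+2$.

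So your own ingredients, combined consistently, prove all three identities with the opposite sign. The proposal reaches the stated ones only by inserting $+m_i\delta$ for the Chern term in the final check, instead of the value your winding argument gives. The displayed formulas are mutually compatible, as your check shows, but they hold for the normalization $\tau'-\tau:=\deg(\tau'\circ\tau^{-1})$. That is the reverse of the convention written before the lemma.

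To close the gap, fix that normalization explicitly and compute one absolute sign directly, for example from the rotation example above. Then obtain the other two signs either by the direct strand counts or, if you take the $\tau$-independence of $I$ as given, by your cancellation argument.
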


\begin{proof}
Equation \eqref{eqn:chernTriv} is equation (6) in Section 2.2 of
\cite{hutchings2002index}, while the relative self-intersection number is Lemma 2.5(b). To prove
\eqref{eqn:CZTriv}, equation (7) in Section 2.3 of
\cite{hutchings2002index} gives
\[
\text{CZ}_{\tau}(\alpha_i^k)-\text{CZ}_{\tau'}(\alpha_i^k)
=2k(\tau_i^+-{\tau_i'}^+),
\]
for every positive integer $k$. Therefore,
\[
\begin{aligned}
\text{CZ}^I_{\tau}(\alpha)-\text{CZ}^I_{\tau'}(\alpha)
&=\sum_i\sum_{k=1}^{m_i}
\left(
\text{CZ}_{\tau}(\alpha_i^k)-\text{CZ}_{\tau'}(\alpha_i^k)
\right)\\
&=\sum_i\sum_{k=1}^{m_i}
2k(\tau_i^+-{\tau_i'}^+)\\
&=\sum_i m_i(m_i+1)(\tau_i^+-{\tau_i'}^+),
\end{aligned}
\]
which proves \eqref{eqn:CZTriv}.
\end{proof}

\subsection{Holomorphic Curves and the Index Inequality}

Let $s$ denote the $\mathbb{R}$-coordinate.  An almost complex structure $J$ on $\mathbb{R}\times Y$ is $\lambda$-compatible if $J(\partial_s)=R_{\lambda}$, $J$ preserves $\xi$, $d \lambda(v,Jv)> 0$ for $0\neq v\in \xi$, and $J$ is $\mathbb{R}$-invariant. Let us fix a $\lambda$-compatible almost complex structure $J$. A $J$-holomorphic curve from the orbit set  $\alpha=\{(\alpha_i, m_i)\}$ to $\beta=\{(\beta_j,n_j)\}$ is a $J$-holomorphic curve in $\mathbb{R}\times Y$, where the domain is a possibly disconnected punctured compact Riemann surface, with positive ends asymptotic to covers $\alpha_i^{q_{i,k}}$ with total multiplicity $\sum_k q_{i,k}=m_i$, and negative ends asymptotic to covers $\beta_j^{q_{j,l}}$ with total multiplicity $\sum_l q_{j,l}=n_j$. For a more formal definition, see \cite{hutchings2014lecture}. A holomorphic curve $u$ as above determines a homology class $[u]\in H_2(Y,\alpha,\beta)$.

The Fredholm index of $u$ is defined as 
\begin{align}
    \label{FredholmIndex}
    \text{ind}(u):=-\chi(u)+2 c_\tau(u)+\text{CZ}_\tau^\text{ind}(u),
\end{align}
where $\chi(u)$ denotes the Euler characteristic of the domain of $u$, $\tau$ is a trivialization of $\xi$ over the orbit sets $\alpha$ and $\beta$, $c_\tau(u)$ is shorthand for $c_\tau([u])$, and

$$\text{CZ}_\tau^\text{ind}(u):=\sum_i\displaystyle\sum_k \text{CZ}_\tau (\alpha_i^{q_{i,k}})-\displaystyle\sum_j\displaystyle\sum_l \text{CZ}_\tau (\beta_j^{q_{j,l}}).$$

If $J$ is a generic almost complex structure and $u$ has no multiply covered components, then the moduli space of $J$-holomorphic curves from $\alpha$ to $\beta$ is a manifold near $u$ of dimension $\text{ind}(u)$, see \cite[Proposition 3.1]{hutchings2014lecture}. Also, if $u$ has no multiply covered components, then without any genericity assumption on $J$, we have the index inequality
\begin{equation}
    \label{boundsFred}
    \text{ind}(u)\leq I(u).    
\end{equation}
Here, we write $I(u)$ as a shorthand for $I(\alpha, \beta, [u])$. A proof of this inequality can be found in \cite[Sec. 3.4]{hutchings2014lecture}.

\subsection{Holomorphic Currents}

Let $\alpha$ and $\beta$ be orbit sets. A \textit{$J$-holomorphic current} from $\alpha$ to $\beta$ is a formal sum of $J$-holomorphic curves $\mathcal{C}=\sum_k d_k C_k$ where the $C_k$ are distinct, irreducible, somewhere injective $J$-holomorphic curves, such that if $C_k$ is a holomorphic curve from the orbit set $\alpha_k$ to the orbit set $\beta_k$, then  $\alpha=\prod_k\alpha_k^{d_k}$ and $\beta=\prod_k\beta_k^{d_k}$. Here, the product of two orbit sets is obtained by summing up the multiplicities of Reeb orbits. The curves $C_k$ are the components of the holomorphic current $\mathcal{C}$, and the integers $d_k$ are the \textit{multiplicities} of the components. 

Let $\mathcal{M}^J(\alpha,\beta)$ denote the set of $J$-holomorphic currents from $\alpha$ to $\beta$. Notice that $\mathbb{R}$ acts on this set by translation of the $\mathbb{R}$-coordinate on $\mathbb{R}\times Y$. Also, each $\mathcal{C}\in \mathcal{M}^J(\alpha,\beta)$ determines a homology class $[\mathcal{C}]\in H_2(Y,\alpha,\beta)$. Define the \text{ECH} index $I(\mathcal{C}):=I(\alpha, \beta, [\mathcal{C}])$.

\begin{prop}[Hutchings {\cite[Proposition 3.1]{hutchings2014lecture}}]
\label{BasedDiff}
Assume that $J$ is a generic $\lambda$-compatible almost complex structure on $\R \times Y$. Let $\alpha$ and $\beta$ be orbit sets and let $\mathcal{C}\in \mathcal{M}^J(\alpha, \beta)$ be a $J$-holomorphic current in $\mathbb{R}\times Y$, not necessarily somewhere injective. Then
\begin{enumerate}
    \item[(i)] The ECH index is nonnegative and is zero if and only if the current $\mathcal{C}$ is a union of trivial cylinders with multiplicities.
    
    \item[(ii)] If $I(\mathcal{C})=1$, then we have the decomposition $\mathcal{C}=\mathcal{C}_0\sqcup C_1$, where the component $\mathcal{C}_0$ has ECH index equal to zero, while the component $\mathcal{C}_1$ has ECH index equal to one.
    
    \item[(iii)] If $I(\mathcal{C})=2$, and $\alpha$ and $\beta$ are chain complex generators, then we have the decomposition $\mathcal{C}=\mathcal{C}_0\sqcup C_2$, where the component $\mathcal{C}_0$ has ECH index equal to zero, while the component $\mathcal{C}_2$ has ECH index equal to two.
\end{enumerate}
\end{prop}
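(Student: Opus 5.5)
The plan is to reduce everything to Hutchings' general index inequality for holomorphic currents, combined with the index inequality \eqref{boundsFred} and genericity of $J$. Write $\mathcal{C}=\sum_k d_kC_k$, where $C_k$ are distinct irreducible somewhere injective curves. Separate off the trivial cylinders: $\mathcal{C}=\mathcal{C}_0+\sum_{k} d_kC_k$, where $\mathcal{C}_0$ is a sum of trivial cylinders with multiplicities and the remaining $C_k$ are nontrivial.

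\textbf{Step 1 (somewhere injective components).} For each nontrivial $C_k$, genericity of $J$ gives $\mathrm{ind}(C_k)\geq 0$. Since $\mathbb{R}$ acts freely on the moduli space near a non-$\mathbb{R}$-invariant curve, in fact $\mathrm{ind}(C_k)\geq1$. The index inequality \eqref{boundsFred} then gives $I(C_k)\geq\mathrm{ind}(C_k)\geq1$. Trivial cylinders have $I=0$.

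\textbf{Step 2 (superadditivity).} I would use the relative intersection pairing $Z\cdot Z'$, defined via $Q_\tau$ and asymptotic linking, and prove
\[
I(\mathcal{A}+\mathcal{B})\geq I(\mathcal{A})+I(\mathcal{B})+2\,\mathcal{A}\cdot\mathcal{B}
\]
for currents with no common component. Here $\mathcal{A}\cdot\mathcal{B}\geq0$ by positivity of intersections together with the asymptotic winding bounds, which compare the linking of ends with the Conley--Zehnder terms in $\mathrm{CZ}^I_\tau$. For a single nontrivial somewhere injective $C$ I would also prove
\[
I(dC)\geq d\,I(C)+(d^2-d)\,C\cdot C
\]
with a lower bound on the self-intersection term, using the relative adjunction formula and the writhe bound. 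Combining these,
\[
I(\mathcal{C})\geq\sum_k d_k I(C_k)\geq\sum_k d_k\,\mathrm{ind}(C_k)\geq0,
\]
with equality only if there are no nontrivial components. This proves (i). The independence of $I$ from $\tau$ is guaranteed by Lemma~\ref{lem:changetrivialization}, so every computation may be done in one convenient trivialization.

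\textbf{Step 3 (low index).} If $I(\mathcal{C})=1$, the inequality forces exactly one nontrivial component $C_1$, with $d_1=1$ and $I(C_1)=1$. Equality in the superadditivity inequality then forces $C_1\cdot\mathcal{C}_0=0$, so $C_1$ does not intersect the trivial cylinders; this gives (ii). If $I(\mathcal{C})=2$, either the nontrivial part has total index $2$ with all $d_k=1$, which gives (iii), or there is a single nontrivial $C$ with $I(C)=1$ and $d=2$. This last case is the main obstacle. I would exclude it by showing $I(2C)>2$ whenever $\alpha$ and $\beta$ are admissible, i.e.\ hyperbolic orbits have multiplicity one. An index-one $C$ is embedded, and its ends satisfy the ECH partition conditions. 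Doubling it would either place multiplicity two on a hyperbolic orbit, which is forbidden for chain complex generators, or, at elliptic ends, make the extra winding and linking terms force $C\cdot C\geq1$, hence $I(2C)\geq 2+2=4$.
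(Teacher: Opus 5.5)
The paper gives no proof of this proposition; it quotes Hutchings. Your outline is the standard argument behind that citation: genericity and the free $\mathbb{R}$-action give $I(C_k)\geq\operatorname{ind}(C_k)\geq 1$ for nontrivial components, then superadditivity of $I$ with nonnegative intersection and self-intersection terms, then a low-index case analysis. Parts (i) and (ii) go through as you describe.

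The gap is in (iii). From $2=I(\mathcal{C})\geq\sum_k d_kI(C_k)$ you only treat $\sum_k d_kI(C_k)=2$. The value $\sum_k d_kI(C_k)=1$ is also allowed. In that case $\mathcal{C}=\mathcal{C}_0+C_1$ with a single nontrivial $C_1$, $d_1=1$, $I(C_1)=1$, and the inequality $I(\mathcal{C}_0+C_1)\geq I(\mathcal{C}_0)+I(C_1)+2\,\mathcal{C}_0\cdot C_1$ is strict by one.

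Nothing in your proposal rules this out. There the nontrivial part has index one, so the decomposition you are proving fails. (You take $\mathcal{C}_0$ to be the trivial cylinders; read literally with $I(\emptyset)=0$, the statement would be satisfied by $\mathcal{C}_0=\emptyset$.)

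Admissibility is needed here too, not only for the doubled curve. Suppose $C_1$ has an end at a negative hyperbolic orbit $h$ and $\mathcal{C}_0$ contains $\mathbb{R}\times h$. Then the Conley--Zehnder discrepancy is $\pm\operatorname{CZ}_\tau(h)$, which is odd. So superadditivity alone cannot exclude a defect of one.

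The missing ingredient is a parity count. Write $Z_0,Z_1$ for the classes of $\mathcal{C}_0$ and $C_1$, and use $c_\tau(Z_0)=Q_\tau(Z_0)=0$. Then
\[
I(\mathcal{C})-I(C_1)=2Q_\tau(Z_0,Z_1)+\sum_\gamma\Bigl(\sum_{k=b_\gamma+1}^{b_\gamma+a_\gamma}\operatorname{CZ}_\tau(\gamma^k)-\sum_{k=b'_\gamma+1}^{b'_\gamma+a_\gamma}\operatorname{CZ}_\tau(\gamma^k)\Bigr),
\]
where $a_\gamma$ is the multiplicity of $\gamma$ in $\mathcal{C}_0$, and $b_\gamma,b'_\gamma$ are its multiplicities in the positive and negative orbit sets of $C_1$.

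For elliptic $\gamma$, each bracket is a difference of two sums of $a_\gamma$ odd integers, hence even. For hyperbolic $\gamma$ with $a_\gamma>0$, admissibility of $\alpha$ and $\beta$ forces $a_\gamma=1$ and $b_\gamma=b'_\gamma=0$, so the bracket vanishes. Hence $I(\mathcal{C})\equiv I(C_1)\pmod 2$, which excludes the missing case.

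Parity also cleans up your doubled case. $\operatorname{ind}(C)$ is congruent mod $2$ to the number of ends at positive hyperbolic orbits or at even covers of negative hyperbolic orbits. So an index-one curve always has a hyperbolic end, and $2C$ always puts multiplicity at least two on a hyperbolic orbit. The elliptic branch, where you assert $C\cdot C\geq 1$ without justification, therefore never occurs.
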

In the definition above, $\mathcal C_0$ is possibly the empty set and we follow the convention that $I(\emptyset)=0$.

\subsection{The Differential}
Let $\alpha$ and $\beta$ be admissible orbit sets with $[\alpha]=[\beta]=\Gamma\in H_1(Y)$, and let $Z \in H_2(Y, \alpha,\beta)$. For a generic $\lambda$-compatible $J$, the number of currents $\mathcal{C}\in \mathcal{M}^J(\alpha,\beta)$ modulo $\R$-translations, with $[\mathcal C]=Z$ and $I(\mathcal C)=1$, is finite. The space of such equivalence classes is denoted by $\widetilde{\mathcal{M}}^J(\alpha,\beta,Z)$. We define the differential $\partial$ by
$$
\partial \alpha :=\displaystyle\sum_\beta \displaystyle\sum_{I(\alpha,\beta,Z)=1}\#\widetilde{\mathcal{M}}^J(\alpha,\beta,Z)\beta. 
$$
In \cite[Section 3.5]{hutchings2014lecture}, it is proven that this differential is well-defined and in \cite[Section 7]{hutchings2007gluing}, it is proven that $\partial^2=0$. We denote the homology of the chain complex $\text{ECC}(Y,\lambda,\Gamma,J)$ by $\text{ECH}(Y,\lambda, \Gamma, J)$.

If $\Gamma=0$ and $H_2(Y)=0$ then the chain complex $\text{ECC}(Y,\lambda,0,J)$ has a canonical $\mathbb{Z}$-grading, in which the grading of an admissible orbit set $\alpha$ is defined by
\begin{equation}
    \label{ourindex}
    I(\alpha)=I(\alpha,\emptyset,Z),
\end{equation}
where $Z$ is the unique element in $H_2(Y,\alpha,\emptyset)$. In this case, we use similar notation for the summands of $I(\alpha)$, that is $c_\tau(\alpha):=c_\tau(Z)$ and $Q_\tau(\alpha):=Q_\tau(Z)$.

It follows from a theorem of Taubes \cite{TaubesECHSW}, that by identifying $\text{ECH}$  with a version of Seiberg-Witten Floer cohomology,   $\text{ECH}(Y,\lambda,\Gamma,J)$ depends only on $Y$, $\xi=\ker \lambda$ and $\Gamma$. This invariance of $\text{ECH}$ currently cannot be proved directly by counting holomorphic curves.

An important case is when $Y$ is diffeomorphic to a lens space, and $\xi = \ker \lambda$ is the standard contact structure. It is proved in \cite[Cor. 3.9]{trejos2024symplecticembeddingstoricdomains} that

\begin{equation}
    \label{ECHlensspace}
    \text{ECH}_*(Y,\lambda,0,J)=\begin{cases}
        \mathbb{Z}/2, & *=0,2,4,6,\dots\\
        0, & \text{otherwise.}
    \end{cases}
\end{equation}

\subsection{Topological Complexity.}
\label{sec:topcom}
Notice from \eqref{boundsFred} that the ECH index bounds the Fredholm index from above which indeed leads toward the fact that the ECH differential is well-defined. Besides the ECH index there is another quantity denoted by $J_0$ which in a way bounds the topological complexity of the $J$-holomorphic currents. 

Let $\alpha=\{(\alpha_i,m_i)\}$ and $\beta=\{(\beta_j,n_j)\}$ be admissible orbit sets with $[\alpha]=[\beta]\in H_1(Y)$, and let $Z\in H_2(Y,\alpha,\beta)$. We define

\begin{equation}
    \label{eq:topcomind}
    J_0(\alpha,\beta,Z):=-c_\tau(Z)+Q_\tau(Z)+\displaystyle\sum_i\displaystyle\sum_{k=1}^{m_i-1} \text{CZ}_\tau (\alpha_i^k)-\displaystyle\sum_j\displaystyle\sum_{k=1}^{n_j-1} \text{CZ}_\tau (\beta_j^k).
\end{equation}

Note that if we compute the difference between the ECH index and the $J_0$-index we obtain
$$
I(\alpha,\beta,Z)-J_0(\alpha,\beta,Z)=2c_{\tau}(Z)+\sum_i CZ_{\tau}(\alpha_i^{m_i})-\sum_j CZ_{\tau}(\beta_j^{n_j}).
$$

As in the case of the ECH index, if $\mathcal{C}\in \mathcal{M}^J(\alpha,\beta)$, we write $J_0(\mathcal{C})=J_0(\alpha,\beta,[\mathcal{C}])$.

Let $C$ be a somewhere injective and irreducible $J$-holomorphic curve from $\alpha$ to $\beta$. We denote by $n_i^+$ the number of positive ends of $C$ at covers of $\alpha_i$, and by $n_j^-$ the number of negative ends of $C$ at covers of $\beta_j$. We can now state precisely how  $J_0$ controls the topological complexity of $C$. 

\begin{prop}[Hutchings {\cite[Prop.~3.2]{hutchings2016beyond}}]
    \label{pro:topcomp}
    Let $\alpha$, $\beta$ and $C$ be as above. Then
    \begin{equation}
        \label{TopComB}
        2g(C)-2+\sum_i(2n_i^+ -1) + \sum_j(2n_j^- -1)\leq J_0(C)
    \end{equation}
    where $g(C)$ denotes the genus of $C$.
\end{prop}
\begin{rem}
    In the case of a contact manifold $Y$ diffeomorphic to $S^3$ the index $J_0$ only depends on the orbit sets $\alpha$ and $\beta$ since $H_2(Y)=0$. Furthermore, since $H_1(Y)=0$ we can  write $J_0(\alpha):=J_0(\alpha,\emptyset,Z)$ where $Z$ is the only element in $Z\in H_2(Y,\alpha,\emptyset)$.
\end{rem}
\subsection{Filtered ECH}
There exists a filtration on ECH which allows us to compute the embedded contact homology via succesive approximations, see \cite[Theorem 2.17]{nelson2022embedded}. The symplectic action (or the lenght) of an orbit set $\alpha=\{(\alpha_i,m_i)\}$ is defined by
$$
    \mathcal{A}(\alpha):=\sum_i m_i\int_{\alpha_i} \lambda.
$$

If $J$ is a $\lambda$-compatible almost complex structure and there exists a $J$-holomorphic current from $\alpha$ to $\beta$, then $\mathcal{A}(\alpha)\geq \mathcal{A}(\beta)$ by Stokes theorem. Indeed, $d \lambda$ is a non-negative form along $J$-holomorphic curves. Since the differential $\partial$ counts $J$-holomorphic currents, the symplectic action decreases under $\partial$, that is, 
\begin{align}
\label{Stokes}   \langle\partial\alpha,\beta\rangle\not=0 \Rightarrow \mathcal{A}(\alpha)\geq \mathcal{A}(\beta).
\end{align}

Given $L>0$, let $\text{ECC}_*^L(Y,\lambda,\Gamma,J)$ denote the subgroup of $\text{ECC}_*(Y,\lambda,\Gamma,J)$ generated by the orbit sets of symplectic action less that $L$. Because $\partial$ decreases action, $\text{ECC}_*^L(Y,\lambda,\Gamma,J)$ is a subcomplex of $\text{ECC}_*(Y,\lambda,\gamma,J)$. We denote the homology of $\text{ECC}^L(Y,\lambda,\Gamma,J)$ by 
$\text{ECH}^L_*(Y,\lambda,\Gamma,J)$, and we call it the filtered $\text{ECH}$ homology.
Given $L<L'$, there exists a homomorphism 
$$
\iota^{L,L'}:\text{ECH}_*^L(Y,\lambda,\Gamma)\rightarrow\text{ECH}_*^{L'}(Y,\lambda,\Gamma),   
$$
induced by the inclusion map $\text{ECC}_*^L(Y,\lambda,\Gamma)\hookrightarrow\text{ECC}_*^{L'}(Y,\lambda,\Gamma)$, which is independent of $J$.  Notice that 
\begin{align}
\label{directlimit}
\text{ECH}_*(Y,\lambda,\Gamma)=H_*\left(\varinjlim_{L\rightarrow \infty} \text{ECC}_*^L(Y,\lambda,\Gamma,J)\right)=\varinjlim_{L\rightarrow\infty}\text{ECH}^L_*(Y,\lambda,\Gamma),   
\end{align}
which allows us to define objects over $\text{ECH}_*(Y,\lambda,\Gamma)$ by defining them over $\text{ECH}_*^L(Y,\lambda,\Gamma)$.

\subsection{Symplectic Cobordisms.}
\label{symcob}

A central construction in the proof of Theorems \ref{thm: criterion_convex_concave}, \ref{thm:criterion_convex_convex} and \ref{thm:criterion_concave_concave} is the ECH cobordism map induced by a strong symplectic cobordism between two contact manifolds $(Y_+,\lambda_+)$ and $(Y_-,\lambda_-)$. 

\begin{defi} A strong symplectic cobordism from $(Y_+,\lambda_+)$ to $(Y_-,\lambda_-)$ is a compact symplectic manifold $(X,\omega)$ with contact-type boundary $\partial X=Y_+-Y_-$ such that $\omega|_{Y_\pm}=d \lambda_{\pm}$. 
\end{defi}

Let $(X,\omega)$ be a strong symplectic cobordism from $Y^+$ to
$Y^-$. From the definition, there are
collar neighborhoods of the two boundary components on which the
symplectic form takes the standard cylindrical form. More precisely,
\begin{itemize}
    \item For some $\epsilon>0$, a neighborhood $N_-$ of $Y_-$ can be identified
with $[0,\epsilon)\times Y_-$ in such a way that $\omega=d (e^s\lambda_-)$, where $s$ denotes the coordinate on $[0,\epsilon)$.

    \item At the positive
boundary, we similarly identify a neighborhood $N_+$ of $Y_+$ with
$(-\epsilon,0]\times Y_+$, with $\omega=d(e^s\lambda_+)$.
\end{itemize}

We can therefore attach cylindrical ends to both boundary components.
The resulting symplectic manifold, called the completion of $(X,\omega)$,
is
$$
\bar{X}
=
(((-\infty,\epsilon]\times Y_-)\cup X
\cup([-\epsilon,+\infty)\times Y_+))/\sim,
$$
where the equivalence relation $\sim$ is given by the identifications
of $N_-$ and $N_+$ with the corresponding collar neighborhoods in
$(-\infty,\epsilon]\times Y_-$ and $[-\epsilon,+\infty)\times Y_+$,
respectively.

Fix an almost complex structure $J$ on $\bar{X}$ satisfying the
following conditions. On $X$, the almost complex structure $J$ is
required to be $\omega$-compatible, while on the cylindrical ends it
is required to coincide with $\lambda_\pm$-compatible almost complex
structures $J_\pm$ on $(-\infty,0]\times Y_-$ and
$[0,\infty)\times Y_+$, respectively. We refer to such a choice of
$J$ as admissible.

For orbit sets $\alpha_\pm$ in $Y_\pm$, let
$\mathcal{M}^J(\alpha_+,\alpha_-)$ denote the moduli space of
$J$-holomorphic currents in $\bar{X}$ connecting $\alpha_+$ to
$\alpha_-$. The definition is the same as that given previously for
$J$-holomorphic currents in $\mathbb{R}\times Y$. The estimates
\eqref{boundsFred} and \eqref{TopComB} continue to apply in this
setting, provided that the $J$-holomorphic currents have no multiply
covered components; see \cite[Sec. 3.9]{hutchings2016beyond}.

The definitions of ${\rm ind}$, $I$, and $J_0$ are unchanged from the
case of $\mathbb{R}\times Y$, with one modification concerning the
relative first Chern class. Namely, the term $c_\tau$ is now computed
using the relative first Chern class of $T\bar{X}$.

The $J$-holomorphic currents that appears in the ECH cobordism map are more complex than the ones in the definition of the differential. Indeed, they deserve a new definition.

\begin{defi}
\label{J-current}
Let $J$ be an admissible almost complex structure on $\bar{X}$ which restricts to $\lambda_\pm$-compatible almost complex structures $J_\pm$ on the ends. Let $\alpha_\pm$ be orbit sets in $Y_\pm$. We define a broken $J$-holomorphic current from $\alpha_+$ to $\alpha_-$ as a tuple $B=(\mathcal{C}_{N_-},\mathcal{C}_{N_-+1}\dots,\mathcal{C}_{N_+})$, with $N_-\leq 0 \leq N_+$ so that
\begin{enumerate}
    \item $\alpha_{-,i}, i=N_-, \ldots 0,$ are orbit sets in $Y_-$, with $\alpha_{-,N_-}=\alpha_-$.
    
    \item $\alpha_{+,i},  i=0, \ldots, N_+,$ are orbit sets in $Y_+$, with $\alpha_{+,N_+} = \alpha_+$.

    \item $\mathcal{C}_i\in \mathcal{M}^{J_-}(\alpha_{-,i+1},\alpha_{-,i})/ \mathbb{R}$ for every $i= N_-,\dots, -1$.
    
    \item $\mathcal{C}_0\in \mathcal{M}^J(\alpha_{+,0},\alpha_{-,0})$.
    
    \item $\mathcal{C}_i\in \mathcal{M}^{J_+}(\alpha_{+,i},\alpha_{+,i-1})/ \mathbb{R}$ for $i= 1,\dots, N_+$.
    
    \item If $i\not=0$, then one of the components of $\mathcal{C}_i$ is not a trivial cylinder. 
\end{enumerate}
 
    The $J$-holomorphic currents $\mathcal{C}_i$ are called levels of $B$. The ECH index of $B$ is defined as the sum of the ECH indices of its levels
    $$ I(B):=\sum^{N_+}_{i=N_-}I(\mathcal{C}_i).
    $$
\end{defi}

\subsection{Cobordism Maps}
The particular cobordisms that we use are called \textit{weakly exact}. The relevant explanation about how this kind of cobordism induces an ECH map can be found in \cite[Theo.~1.9]{hutchings2013proof}.

\begin{defi}
    We call the strong symplectic cobordism $(X,\omega)$ a \textit{weakly exact} cobordism if there exists a $1$-form $\lambda$ on $X$ such that $d \lambda =\omega$. 
\end{defi}

\begin{thm}[Hutchings {\cite[Theorem 3.5]{hutchings2016beyond}}]
\label{CoborMap}
    Let $(Y_+,\lambda_+)$ and $(Y_-,\lambda_-)$ be closed and co-oriented contact three-manifolds, and let $(X,\omega)$ be a weakly exact cobordism from $(Y_+,\lambda_+)$ to $(Y_-,\lambda_-)$. Then there exist canonical maps
    $$\Phi^L(X,\omega):\text{ECH}^L(Y_+,\lambda_+,0)\rightarrow \text{ECH}^L(Y_-,\lambda_-,0)$$
    for each $L>0$ with the following properties:

    \begin{enumerate}
        \item[(i)] If $L<L'$ then the diagram 
        \begin{center}
            \begin{tikzcd}
                \text{ECH}^L(Y_+,\lambda_+,0) \arrow[r, "\Phi^L"] \arrow[swap,d,"\iota^{L,L'}"]
                & \text{ECH}^{L}(Y_-,\lambda_-,0) \arrow[d,"\iota^{L,L'}"] \\
                \text{ECH}^{L'}(Y_+,\lambda_+,0) \arrow[r,, "\Phi^{L'}" ]
                &  \text{ECH}^{L'}(Y_-,\lambda_-,0)
            \end{tikzcd}
        \end{center}
        commutes. In particular

        $$\Phi(X,\omega)=\lim_{L\rightarrow \infty} \Phi^L(X,\omega):\text{ECH}(Y_+,\lambda_+,0)\rightarrow \text{ECH}(Y_-,\lambda_-,0)$$
        is well-defined.

        \item[(ii)] If X is diffeomorphic to a product $[0,1]\times Y$, then $\Phi(X,\omega)$ is an isomorphism. 

        \item[(iii)] If $J$ is any admissible almost complex structure on $\overline X$, restricting the generic $\lambda_\pm$-compatible almost complex structure $J_\pm$ on the ends and $L>0$, then $\Phi^L(X,\omega)$ is induced by a (noncanonical) chain map
        $$\phi^L:\text{ECC}^L(Y_+,\lambda_+,0,J_+)\rightarrow \text{ECC}^L(Y_-,\lambda_-,0,J_-)$$

        such that if $\alpha_\pm$ are admissible orbit sets for $\lambda_\pm$ with $[\alpha_\pm]=0$, $\mathcal{A}(\alpha_\pm)<L$, and  $\langle\phi^L\alpha_+,\alpha_-\rangle\not =0$, then there exists a broken $J$-holomorphic current $B$ from $\alpha_+$ to $\alpha_-$ with $I(B)=0$.
        
    \end{enumerate}
\end{thm}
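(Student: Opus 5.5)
The statement immediately above is Theorem~\ref{CoborMap}, which is quoted from Hutchings \cite[Theorem 3.5]{hutchings2016beyond}. The plan is therefore to assemble it from the established construction of ECH cobordism maps via Seiberg--Witten theory. I do not intend to reprove Taubes' isomorphism or the holomorphic-curve axioms.

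\textbf{Filtered maps and property (i).} Taubes' isomorphism identifies $\text{ECH}^L(Y_\pm,\lambda_\pm,0)$ with filtered Seiberg--Witten Floer cohomology, defined using perturbations of the Seiberg--Witten equations by $r\,d\lambda_\pm$ with $r$ large. Since $X$ is weakly exact, $\omega=d\lambda$ globally. This gives an a priori energy bound: solutions on the completion $\bar X$ with large perturbation parameter decrease a filtration compatible with action. Counting such solutions yields maps $\Phi^L$ on filtered Seiberg--Witten cohomology, following \cite[Theorem~1.9]{hutchings2013proof}. These maps are independent of auxiliary choices by standard continuation arguments, run in the filtered setting. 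Compatibility with the inclusions $\iota^{L,L'}$ holds because the same count defines both maps on the subcomplex. Passing to the direct limit \eqref{directlimit} defines $\Phi(X,\omega)$.

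\textbf{Property (ii).} When $X\cong[0,1]\times Y$, the unfiltered map agrees with the Seiberg--Witten cobordism map of a product, which is an isomorphism. Alternatively, deform the cobordism through exact product cobordisms and use invariance together with composition.

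\textbf{Property (iii).} This is the holomorphic-curve axiom and is the main obstacle. Choose an admissible $J$ and take $r\to\infty$. Taubes' convergence theorem shows that any Seiberg--Witten instanton contributing to $\langle\phi^L\alpha_+,\alpha_-\rangle$ has curvature concentrating along a broken $J$-holomorphic current $B$ from $\alpha_+$ to $\alpha_-$. The ECH index is additive over the levels of $B$, and it equals the Seiberg--Witten grading difference, which is zero for a degree-preserving map; hence $I(B)=0$. The difficulty is that the chain map $\phi^L$ is only well-defined up to chain homotopy and need not literally count curves. What one gets is the existence statement: nonzero matrix entries force nonempty moduli of broken currents. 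Here I would rely on the compactness and action-bound arguments of \cite[Sec.~5]{hutchings2013proof}, together with Proposition~\ref{BasedDiff} to control the symplectization levels.
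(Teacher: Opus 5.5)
Your proposal matches how the paper handles this statement: the paper gives no argument of its own and imports Theorem~\ref{CoborMap} verbatim from Hutchings \cite[Theorem~3.5]{hutchings2016beyond}, which rests on the Seiberg--Witten construction of Hutchings--Taubes \cite[Theorem~1.9]{hutchings2013proof}, the same source your sketch defers to. The one step you state loosely is $I(B)=0$: what is really used is that the chain map counts rigid (index-zero) instantons for large $r$ and that this index equals the ECH index of the relative class of the limiting broken current, which in the paper's setting ($H_2(X)=0$, $Y_\pm\cong S^3$) reduces to your grading-preservation remark.
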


It is interesting to note that the this map was calculated only indirectly by using the isomorphism between ECH and Seiberg-Witten Floer Homology as in \cite{TaubesECHSW}. It is an open problem to define this map without this isomorphism.

\subsection{\texorpdfstring{$L$-tame cobordism}{L-tame cobordism}}

There are certain cases in which the symplectic cobordism induces an ECH cobordism map that is easier to understand than the ECH cobordism map in general. We are interested in a special kind of cobordisms introduced by Hutchings \cite{hutchings2016beyond} that are called $L$-tame cobordisms. We briefly explain this kind of cobordism and the related results we need about them.

Let $(Y,\lambda)$ be a contact three-manifold whose Reeb flow is nondegenerate. If $\gamma$ is an elliptic Reeb orbit, then the linearized return map $P_\lambda$ conjugates to a rotation by an angle $2\pi \theta$ for some irrational $\theta\in \mathbb{R}/\mathbb{Z}$, called the \textit{rotation angle} of $\gamma$. 

\begin{defi}
    \label{L-orbits}
    Let $L>0$ and let $\gamma$ be an embedded elliptic Reeb orbit with action $\mathcal{A}(\gamma)<L$. 
    \begin{itemize}
        \item[(i)] We say that $\gamma$ is $L$-positive if its rotation angle $\theta\in (0,\mathcal{A}(\gamma)/L) \text{ mod 1}$. 
        
        \item[(ii)] We say that $\gamma$ is $L$-negative if its rotation angle $\theta\in (-\mathcal{A}(\gamma)/L,0) \text{ mod 1}.$ 
    \end{itemize}
\end{defi}
Notice that $\gamma$ may be both $L$-positive and $L$-negative. However, we shall consider small Morse-Bott perturbations of contact forms induced by toric domains in a way that the relevant Reeb orbits have rotation angle arbitrarily close to $0 \mod 1$. Also, we will take $L>0$ large. In all the cases studied here  $\gamma$ is either $L$-positive or $L$-negative.

Now let $(X,\omega)$ be a strong symplectic cobordism from $(Y_+,\lambda_+)$ to $(Y_-,\lambda_-)$. 

\begin{defi}
    If $\alpha_\pm$ are orbit sets for $\lambda_\pm$, define $e_L(\alpha_+,\alpha_-)$ to be the total multiplicity of all elliptic orbits in $\alpha_+$ that are $L$-negative, plus the total multiplicities of all elliptic orbits in $\alpha_-$ that are $L$-positive. If $\mathcal{C}\in\mathcal{M}^J(\alpha_+,\alpha_-)$, we write $e_L(\mathcal{C})=e_L(\alpha_+,\alpha_-)$.
\end{defi}

Let $J$ be an admissible almost complex structure on $\overline X$ as defined in Section \ref{symcob}, and let $L>0$. Given an irreducible $J$-holomorphic curve $C$ from $\alpha_+$ to $\alpha_-$, denote by $g(C)$ the genus of $C$, and let $h(C)$ denote the number of ends of $C$ at hyperbolic Reeb orbits. 

\begin{defi} 
    Let $(X,\omega)$ be a symplectic cobordism from $(Y_+, \lambda_+)$ to $(Y_-, \lambda_-)$. We say that $(X,\omega)$ is $L$-tame with respect to the almost complex structure $J$ on $\bar X$ if whenever $C$ is an embedded irreducible $J$-holomorphic curve in $\mathcal{M}^J(\alpha_+,\alpha_-)$ and there exists a positive integer $d$ such that $\mathcal{A}(\alpha_\pm)<L/d$ and $I(d C)\leq 0$, then 
    \begin{equation}
    \label{L-tame inequality}
        2g(C)-2+\text{ind} (C) + h(C)+2 e_L(C)\geq 0.
    \end{equation}
\end{defi}

The importance of $L$-tame cobordisms is the following proposition analogous to Proposition \ref{BasedDiff}.

\begin{prop}
    \label{L-tameJcurves}
    Suppose that $J$ is generic and $(X,\omega)$ is  $L$-tame with respect to $J$. Let $\mathcal{C}=\sum_k d_k C_k \in \mathcal{M}^J(\alpha_+,\alpha_-)$
    be a $J$-holomorphic current on  $\bar{X}$ with $\mathcal{A}(\alpha_\pm)<L$.Then
    \begin{enumerate}
        \item[(i)] $I(\mathcal{C})\geq 0$. 
        
        \item[(ii)] If $I(\mathcal{C})=0$, then 
        \begin{itemize}
            \item $I(C_k)=0$  for each $k$.
            \item If $i\not= j$, then $C_i$ and $C_j$ do not have positive ends at covers of the same $L$-negative orbit. Moreover, $C_i$ and $C_j$ do not have negative ends at covers of the same $L$-positive orbit. 
            
            \item If $0\leq d_k' \leq d_k$ are integers, then 
            $
            I\left(\displaystyle\sum_k d_k' C_k\right)=0.
            $
        \end{itemize}
    \end{enumerate}
\end{prop}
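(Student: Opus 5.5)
This is Hutchings' Proposition on $L$-tame cobordisms (the analogue of \cite[Prop.~3.7]{hutchings2016beyond}), and I would follow his argument. Write $\mathcal C=\sum_k d_kC_k$ with the $C_k$ distinct, irreducible and somewhere injective. The ECH index is not additive over components. Instead, for currents $\mathcal C_1,\mathcal C_2$ one has
\[
I(\mathcal C_1+\mathcal C_2)=I(\mathcal C_1)+I(\mathcal C_2)+2\,\mathcal C_1\cdot\mathcal C_2,
\]
where the product $\mathcal C_1\cdot\mathcal C_2$ is the count of intersections plus an asymptotic linking term at common orbits. The first step is to show that each component $C_k$ is embedded and that for every component and every $d$ with $\mathcal A(\alpha_\pm)<L/d$ one has $I(dC_k)\geq 0$. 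I would do this by contradiction. Suppose $I(dC)<0$ for an embedded $C$. Then the $L$-tame inequality applies. Combining it with the index inequality $\operatorname{ind}(C)\le I(C)$ and Hutchings' formula for $I(dC)$ gives
\[
I(dC)=dI(C)+(d^2-d)\,C\cdot C.
\]
Here $C\cdot C\geq \tfrac12\bigl(2g-2+\operatorname{ind}+h+2e_L\bigr)$, up to the asymptotic corrections governed by $L$-positivity/negativity.

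The core computation is a bound of the self-intersection $C\cdot C$ from below in terms of $2g(C)-2+\operatorname{ind}(C)+h(C)+2e_L(C)$. Using the relative adjunction formula and the writhe bounds, the rotation angles of $L$-positive and $L$-negative orbits control the writhe of the ends of multiplicity $<L/\mathcal A$. In this regime the asymptotic winding numbers are extremal (at most $0$ at positive ends of $L$-negative orbits, at least $0$ at negative ends of $L$-positive orbits). This yields
\[
2\,C\cdot C\ \geq\ 2g-2+\operatorname{ind}(C)+h(C)+2e_L(C)-\text{(correction)},
\]
and the correction is absorbed exactly when distinct components do not share such ends. Genericity of $J$ gives $\operatorname{ind}(C)\ge0$ for somewhere injective $C$, so $I(C)\geq0$. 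With $L$-tameness, $I(dC)\ge0$ follows. Non-embedded somewhere injective curves satisfy $I>\operatorname{ind}\geq0$ strictly, by the singularity term in the index inequality.

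Next I would treat cross terms. For distinct components, $C_i\cdot C_j\ge0$ by positivity of intersections, together with the asymptotic linking analysis. This term is strictly positive if $C_i,C_j$ have positive ends at covers of the same $L$-negative orbit, or negative ends at covers of the same $L$-positive orbit. The reason is that the winding bounds force the linking number of the two braids to be positive, which is the same argument as in the ECH partition conditions. Hence
\[
I(\mathcal C)=\sum_k I(d_kC_k)+2\sum_{i<j}d_id_j\,C_i\cdot C_j\geq 0,
\]
which proves (i).

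For (ii), if $I(\mathcal C)=0$ then every summand vanishes. So $I(d_kC_k)=0$, and every cross term vanishes, which excludes shared $L$-negative positive ends and shared $L$-positive negative ends. From $I(d_kC_k)=d_kI(C_k)+(d_k^2-d_k)C_k\cdot C_k=0$ with both terms nonnegative, we get $I(C_k)=0$ (and $C_k\cdot C_k=0$ when $d_k>1$). The formula then gives $I(d'C_k)=0$ for all $0\le d'\le d_k$. Recomputing $I(\sum d'_kC_k)$ with the same decomposition, all terms vanish, which gives the third bullet.

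The main obstacle is the asymptotic writhe/linking estimate. It requires the precise winding-number bounds for ends at $L$-positive and $L$-negative elliptic orbits, where multiplicities are below $L/\mathcal A(\gamma)$. I would import these directly from \cite[Sec.~3–5]{hutchings2016beyond} rather than reprove them.
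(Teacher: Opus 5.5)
The paper gives no proof of its own here; it refers to \cite[Sec.~4.3]{hutchings2016beyond}, and your outline is the standard argument given there. In this action window, choose trivializations in which the relevant elliptic iterates have $\mathrm{CZ}=\pm1$ and the (positive) hyperbolic orbits have $\mathrm{CZ}=0$; then $\mathrm{CZ}^I$ is additive and $I$ splits bilinearly. This gives $I(dC)=dI(C)+(d^2-d)\,C\cdot C$. $L$-tameness controls embedded components, and the cross terms are nonnegative, strictly positive at shared $L$-negative positive ends or shared $L$-positive negative ends.

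As written, however, there is a gap: your lower bound for $2\,C\cdot C$ drops the singularity term, so non-embedded components with $d\geq2$ are not covered. $L$-tameness says nothing about non-embedded curves, and $I(C)\geq\operatorname{ind}(C)+2\delta(C)>0$ only settles $d=1$. From $2\,C\cdot C\geq 2g-2+\operatorname{ind}+h+2e_L\geq-2$ you get only $I(dC)\geq 2d-(d^2-d)$, which is negative for $d\geq4$. The repair is to keep the $+2\delta(C)$ in the relative adjunction formula $Q_\tau(C)=c_\tau(C)-\chi(C)-w_\tau(C)+2\delta(C)$, which gives
\[
2\,C\cdot C\geq 2g(C)-2+\operatorname{ind}(C)+h(C)+2e_L(C)+4\delta(C).
\]
When $\delta(C)>0$ the right side is at least $2$, so $I(dC)>0$ for every $d$ with $\mathcal A(\alpha_\pm)<L/d$, without using $L$-tameness. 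You need the same term in (ii) to know that $(d_k^2-d_k)\,C_k\cdot C_k\geq0$ when $C_k$ is not embedded. For embedded $C_k$ this follows from $L$-tameness with $I(d_kC_k)=0\leq0$. Relatedly, showing that every component is embedded cannot be the first step of (i). That is false for general currents and only follows once $I(\mathcal C)=0$.

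Second, the self-intersection bound for a single curve has no correction depending on other components, and it must not. If it held only when distinct components do not share ends, your proof of (i) would presuppose part of the conclusion of (ii). For ends of multiplicity $q$ with $q\mathcal A(\gamma)<L$, the winding is $\leq\lfloor q\theta\rfloor$ at positive ends and $\geq\lceil q\theta\rceil$ at negative ends, and linking between distinct ends of the same curve only helps. The writhe bounds then show:
\begin{itemize}
\item ends at $L$-negative orbits on the positive side, and at $L$-positive orbits on the negative side, contribute at least twice their total multiplicity, which is exactly the $2e_L(C)$ term;
\item hyperbolic ends contribute at least $1$ each;
\item all other ends contribute at least $0$.
\end{itemize}
All of this holds unconditionally. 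Sharing of ends between distinct components enters only through the asymptotic linking in $C_i\cdot C_j$ for $i\neq j$. With the usual conventions, the linking number of the two positive-end braids at an $L$-negative orbit is at most $-\min(q,q')<0$ (not positive, as you wrote) and enters $C_i\cdot C_j$ with a minus sign. With these two repairs, your deduction of (i) and of all three bullets of (ii) goes through as you describe.
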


For a proof of Proposition \ref{L-tameJcurves} see \cite[Sec. 4.3]{hutchings2016beyond}.

\subsection{ECH Capacities}
\label{subsec:capacities}

We briefly recall the definition of ECH capacities in the setting relevant
to this paper. Let $X\subset \mathbb{R}^4$ be a compact smooth
star-shaped domain, equipped with the standard symplectic form
$\omega=\sum_i dx_i\wedge dy_i$. Thus, if $Y=\partial X$, 
the standard Liouville form
\begin{equation}
    \label{standcontact}
    \lambda_{\mathrm{std}}
    =\frac{1}{2}\sum_{i=1}^2(x_i\,dy_i-y_i\,dx_i)
\end{equation}
induces a contact form $\lambda=\lambda_{\mathrm{std}}|_Y$ on $Y$.

Assume first that $\lambda$ is nondegenerate. For every non-negative
integer $k$, write $\text{ECH}_{2k}(Y,\lambda)$ for
$\text{ECH}_{2k}(Y,\lambda,0)$. This group is generated by an element
which we denote by $\zeta_k$; see \eqref{ECHlensspace}.
The $k$-th ECH spectral invariant is given by
\begin{equation}
    c_k(Y,\lambda)
    :=
    \inf\left\{
    L>0 \,\middle|\,
    \zeta_k \text{ is represented by a cycle in }
    \text{ECC}^L_*(Y,\lambda,J)
    \right\}.
\end{equation}
Here, $\text{ECC}^L_*(Y,\lambda,J)$ denotes the filtered ECH chain
complex generated by orbit sets of symplectic action at most $L$.
By \cite[Theo.~1.3]{hutchings2013proof}, the resulting number is
independent of the choice of $J$. We refer to the sequence
$\{c_k(Y,\lambda)\}_{k\geq 0}$ as the ECH spectrum of $(Y,\lambda)$.
Moreover, the spectrum is monotone in the index:
\begin{equation}
    c_k(Y,\lambda)\leq c_{k'}(Y,\lambda),
    \qquad k<k'.
\end{equation}

For a degenerate contact form, the above definition is obtained by
approximation. More precisely, choose positive functions
$f_n:Y\rightarrow\mathbb{R}_{>0}$ such that $f_n\lambda$ is
nondegenerate for every $n$ and $f_n\rightarrow 1$ in the $C^0$ topology.
We then set
\begin{equation}
    c_k(Y,\lambda)
    :=
    \lim_{n\rightarrow\infty}c_k(Y,f_n\lambda).
\end{equation}
The existence and independence of this limit follow from
\cite[Section 2.5]{hutchings2014lecture}. Finally, the ECH capacities of
the star-shaped domain $X$ are defined by
\begin{equation}
    c_k(X):=c_k(\partial X,\lambda),
    \qquad k\geq 0.
\end{equation}

\subsection{Partition Conditions}
\label{Partitions}

Let $\alpha=\{(\alpha_i,m_i)\}$ and $\beta=\{(\beta_j,n_j)\}$ be admissible orbit sets, and let $C\in \mathcal{M}^J(\alpha,\beta)$ be an irreducible $J$-holomorphic curve, where $J$ is an admissible almost complex structure either in the symplectization $\R \times Y$ or in the symplectic cobordism $\bar X$. The curve $C$ has positive ends at covers of $\alpha_i$ with total multiplicity $m_i$. Similarly, the curve $C$ has negative ends at covers of $\beta_j$ with total multiplicity $n_j$. However, the relation between the ends of $C$ and the multiplicities of $\alpha_i$ and $\beta_j$ at each end is not simple. According to \cite[Sec. 3.9]{hutchings2014lecture} this depends on the rotation angles of $\alpha_i$ and $\beta_j$ and is combinatorially described. These are called \textit{partition conditions}. Since we are dealing with orbits with very small rotation angles, the partition conditions simplify significantly, we describe this case below. 

At the positive ends we have  
    \begin{itemize}
        \item[(i)] If $\alpha_i$ is $L$-positive then precisely $m_i$ positive ends of $C$ are asymptotic to  $\alpha_i$.
        
        \item[(ii)] If $\alpha_i$ is $L$-negative then precisely one positive end of $C$ is asymptotic to $\alpha_i^{m_i}$.
    \end{itemize}

At the negative ends we have  
\begin{itemize}
        \item[(i)] If $\beta_j$ is $L$-positive then precisely one negative end of $C$ is asymptotic to $\beta_j^{n_j}$.
        
        \item[(ii)] If $\beta_j$ is $L$-negative then precisely $n_j$ negative ends of $C$ are asymptotic to  $\beta_j$.
    \end{itemize}

    The orbits considered above are elliptic. Recall that hyperbolic orbits in the orbit sets have multiplicity $1$.

\section{Boundary of a toric domain.}
\label{boundaryofatoricdomain}

In this section we want to describe the contact dynamics of the boundary of a toric domain. We explain that for a concave and a convex toric domain there are suitable perturbations that allow us to identify the closed Reeb orbits of the boundary with convex (concave) generators.

\subsection{Morse-Bott toric domains}
\label{sec:PerturbationSection}
 Let $X_{\Omega}$ be a  toric domain such that the boundary $Y=\partial X_{\Omega}$ is described by a smooth function $f:[0,a]\rightarrow [0,+\infty)$ with $f(0)=b>0$ and $f(a)=0$. We also assume that $(1,f'(x))\times (x,f(x))=f(x)-x\,f'(x)>0$ for every $x\in [0,a]$, that the derivative is constant and irrational near $0$ and near $a$, and that $f''\neq 0$ except for small intervals around $0$ and $a$. Therefore,  $Y$ is a contact manifold with contact form $\lambda=\lambda_{\text{std}}|_Y$. We call this kind of toric domain a Morse-Bott toric domain. Let us describe the closed Reeb orbits that appear in the contact manifold $Y=\partial X_\Omega$. Similar to \cite[Sec. 3.3]{hutchings2016beyond} and \cite[Sec. 5.3]{ConcaveCapacities}, we have 

\begin{itemize}
    \item The circle $\gamma_1=\{z\in\partial X_\Omega|z_2=0\}$ is an embedded elliptic Reeb orbit with action $\mathcal{A}(\gamma_1)=a$.
    
    \item The circle $\gamma_2=\{z\in\partial X_\Omega|z_1=0\}$ is an embedded elliptic Reeb orbit with action $\mathcal{A}(\gamma_2)=b$.
    
    \item For each $x\in (0,a)$ such that $f'(x)=-q/p$ is rational and $p$ and $q$ are relatively prime, the torus
    \begin{equation}
        \label{eq:Morse-Bott}
        T_{-p,q}:=\{z\in \partial X_\Omega| \pi (|z_1|^2,|z_2|^2)=(x,f(x))\}
    \end{equation}
    
    is foliated by an $S^1$-family of Reeb orbits each with homology $(q,p)\in H_1(T_{-p,q})$ in the torus, and each Reeb orbit has action $\mathcal{A}(T_{-p,q}):=(x,f(x))\times (-p,q)=qx +pf(x)$. The torus $T_{-p,q}$ is called  a Morse-Bott torus.
    \item $\gamma_1$ and $\gamma_2$ are called \textit{exceptional orbits}.
\end{itemize}

\begin{lem}
\label{lem:finiteMorse-Bott}
For every $L>0$, there are only finitely many Morse-Bott tori
$T_{-p,q}\subset\partial X_\Omega$ satisfying
$
\mathcal A(T_{-p,q})<L.
$
Equivalently, there are only finitely many relatively prime pairs
$p,q\in\mathbb Z_{>0}$ for which there exists $x\in(0,a)$ satisfying
\[
f'(x)=-\frac{q}{p},
\qquad
qx+pf(x)<L.
\]
\end{lem}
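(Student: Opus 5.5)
The idea is to bound $p$ and $q$ separately by a constant depending only on $L$ and on $\Omega$. This uses the positivity assumption $f(x)-xf'(x)>0$ on $[0,a]$ together with the facts that $f(0)=b>0$ and $f(a)=0$. Suppose $x\in(0,a)$ satisfies $f'(x)=-q/p$ with $p,q\in\mathbb Z_{>0}$ coprime and $qx+pf(x)<L$.

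\emph{Rewriting the action.} We have
\[
qx+pf(x)=p\bigl(f(x)-xf'(x)\bigr).
\]
The function $g(x):=f(x)-xf'(x)$ is continuous on the compact interval $[0,a]$, since $f$ is smooth, and it is positive there by hypothesis. Hence $g\geq c$ for some constant $c>0$. It follows that $p<L/c$, so $p$ takes only finitely many values.

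\emph{Bounding $q$.} We use the symmetric identity
\[
qx+pf(x)=q\Bigl(x-\frac{f(x)}{f'(x)}\Bigr)=q\,\frac{g(x)}{-f'(x)},
\]
valid because $f'(x)=-q/p<0$. The derivative $f'$ is continuous on $[0,a]$, so $|f'|\leq M$ for some $M$. Therefore
\[
L>qx+pf(x)\geq q\,\frac{c}{M},
\]
which gives $q<LM/c$. Alternatively, one can observe directly that $q/p=|f'(x)|\leq M$, so $q\leq Mp<ML/c$.

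\emph{Counting tori.} There are therefore finitely many coprime pairs $(p,q)$ with action below $L$. For each such pair, the set $\{x: f'(x)=-q/p\}$ is a single point. Indeed, the irrationality of $f'$ near $0$ and near $a$ excludes those intervals, and $f''\neq0$ elsewhere makes $f'$ strictly monotone on the complementary region, so $f'$ is injective there. Hence each pair determines at most one torus $T_{-p,q}$, which proves the claimed equivalence.

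The only mildly delicate point is the last one. We must make sure that the intervals where $f''$ may vanish carry no rational slopes; this holds because $f'$ is constant and irrational there. Even without injectivity, finiteness of the pairs suffices for the second formulation of the lemma.
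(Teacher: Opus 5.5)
Your argument is correct and takes the same approach as the paper. The paper's proof consists only of the remark that the relevant $p$ and $q$ are uniformly bounded for fixed $L$, and you supply the explicit bounds $p<L/c$ with $c=\min_{[0,a]}(f-xf')>0$ and $q\le p\max_{[0,a]}|f'|$, together with the uniqueness of the point with a given rational slope that identifies tori with pairs (the bound $|q|\le p\max_{[0,a]}|f'|$ also covers pairs with $q\le 0$, which occur for the semi-weakly convex approximations where $f'$ is positive near $0$).
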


 The proof of Lemma \ref{lem:finiteMorse-Bott} relies on the fact that such values of $q,p$ for a given $L>0$ are uniformly bounded. 

   \begin{lem}
   \label{lem:czspecialorbits}
       Suppose that $X_\Omega$ is a Morse-Bott toric domain with defining function $f$. Let $\gamma_1=\partial X_\Omega\cap\{z_2=0\}$ and $\gamma_2=\partial X_\Omega\cap\{z_1=0\}$. Denote by
$\tau_1$ the symplectic trivialization of $\xi|_{\gamma_1}$ given by the symplectic frame $\{\partial_{x_2},
\partial_{y_2}\}$, and by $\tau_2$ the symplectic trivialization of $\xi|_{\gamma_2}$ given by the symplectic frame 
$\{\partial_{x_1},\partial_{y_1}\}$.
Then, for every
$k\in\mathbb Z_{>0}$,
$$\text{CZ}_{\tau_1}(\gamma_1^k)
=2\left\lfloor-\frac{k}{f'(a)}\right\rfloor+1,
\qquad
\text{CZ}_{\tau_2}(\gamma_2^k)
=2\left\lfloor-kf'(0)\right\rfloor+1.
$$
   \end{lem}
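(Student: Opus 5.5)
The plan is to compute the linearized Reeb flow along the two exceptional orbits directly, using the explicit form of the Reeb vector field on the boundary of a toric domain. Near $\gamma_1$, use coordinates $z_1=r_1e^{i\theta_1}$, $z_2=r_2e^{i\theta_2}$, with moment coordinates $\mu_j=\pi r_j^2$. On $Y=\partial X_\Omega$, written as $\{\mu_2=f(\mu_1)\}$, the Reeb vector field is a linear combination of $\partial_{\theta_1}$ and $\partial_{\theta_2}$. Its coefficients are determined by $\lambda(R)=1$ and by the requirement that $R$ lie in the kernel of $d\lambda|_Y$. A standard computation gives
\[
R=\frac{2\pi}{\mu_2-\mu_1 f'(\mu_1)}\bigl(-f'(\mu_1)\,\partial_{\theta_1}+\partial_{\theta_2}\bigr).
\]
I would double-check the normalization: at $\gamma_1$ we have $\mu_2=0$ and $\mu_1=a$, so the $\theta_1$-speed is $2\pi/a$, which matches the action $a$. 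The flow is a torus action, so it preserves every moment level.

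Next, the linearization along $\gamma_1$. The contact plane $\xi$ along $\gamma_1$ is the $z_2$-plane, framed by $\tau_1=\{\partial_{x_2},\partial_{y_2}\}$. By hypothesis $f'$ is constant near $a$, so in a neighborhood of $\gamma_1$ the flow in the normal direction is exactly a rotation of $z_2$. Its angular speed is the $\partial_{\theta_2}$ coefficient, namely $2\pi/(-a f'(a))$ evaluated at $\mu_2=0$, and it is independent of $\mu_2$ because $f'$ is constant there. Over one period $T=a$ of $\gamma_1$ the total normal rotation is therefore
\[
2\pi\cdot\frac{1}{-f'(a)}.
\]
So in the frame $\tau_1$ the linearized return map is the rotation by angle $2\pi\theta$ with $\theta=-1/f'(a)$, and this number is irrational by assumption. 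In this normal direction the frame $\tau_1$ agrees with the $\xi$-frame up to the negligible $\partial_{\theta_1}$ correction, since $\xi$ along $\gamma_1$ is exactly $\mathbb C_{z_2}$.

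For a nondegenerate elliptic orbit whose linearized flow in the frame $\tau$ is a rotation path through total angle $2\pi\theta$ with $\theta$ irrational, the Conley--Zehnder index of the $k$-th iterate is $2\lfloor k\theta\rfloor+1$. Applying this with $\theta=-1/f'(a)$ gives $\text{CZ}_{\tau_1}(\gamma_1^k)=2\lfloor -k/f'(a)\rfloor+1$. The symmetric computation along $\gamma_2$ uses $\mu_1=0$ and the frame $\{\partial_{x_1},\partial_{y_1}\}$. There the period is $b$ and the normal angular speed is $-2\pi f'(0)/b$, so $\theta=-f'(0)$, and the formula gives $\text{CZ}_{\tau_2}(\gamma_2^k)=2\lfloor -kf'(0)\rfloor+1$.

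The main obstacle is bookkeeping. One has to confirm that the linearized flow is a genuine rotation path, monotone and starting at the identity, rather than just having the correct endpoint, so that the floor formula applies with no winding ambiguity. One also has to get the orientation and sign conventions right, in particular that the $-f'$ sign gives positive rotation for a decreasing $f$. Both points are handled by the fact that $f'$ is constant near the endpoints, which makes the local flow an honest linear torus rotation. The same computation appears in \cite[Sec.~3.3]{hutchings2016beyond} and can be cited for the normalization.
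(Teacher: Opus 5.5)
Your proposal is correct and follows the same route as the paper, which delegates this computation to Step 4 of the proof of Lemma 3.3 in \cite{ConcaveCapacities}: write down the Reeb field on the toric boundary, note that near each exceptional orbit (where $f'$ is constant) the flow is a linear rotation of the normal disk with rotation numbers $-1/f'(a)$ and $-f'(0)$ in the frames $\tau_1,\tau_2$, and apply $\mathrm{CZ}=2\lfloor k\theta\rfloor+1$ for irrational $\theta$. One small cleanup: along $\gamma_1$ the contact plane is exactly $\mathbb{C}_{z_2}$, and along $\gamma_2$ it is exactly $\mathbb{C}_{z_1}$, so there is no $\partial_{\theta_1}$ correction to the frame at all.
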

   \begin{proof} This is Step 4 of the proof of Lemma 3.3 in \cite[Sec.~ 3.3]{ConcaveCapacities}.
   \end{proof}

   Now we define the notion of toric set $\mathcal{T}=\{(T_{-p_i,q_i},m_i)\}$ which consists of pairs $(T_{-p_i,q_i},m_i)$ where $T_{-p_i,q_i}$ is Morse-Bott torus in $X_\Omega$ and $m_i$ is a positive integer. Naturally, we can define the action of these toric sets as 
   $\mathcal{A}(\mathcal{T}):=\sum_i m_i\, \mathcal{A}(T_{-p,q}).$
   
   In the following, we describe suitable perturbations over convex toric domain, weakly convex toric domains and concave toric domains that allow us to study these symplectic manifolds using embedded contact homology techniques. In the case of concave toric domains and convex toric domains these perturbations were described in \cite{ConcaveCapacities} and \cite{hutchings2016beyond}. Also, the perturbations described in Sections  \ref{sec:concavepert} and  \ref{sec:convexpert} are small modifications of \cite[Lem.~3.3]{ConcaveCapacities} and \cite[Lem.~4.5]{hutchings2016beyond}, respectively.
   
   \subsection{Perturbations of Morse-Bott tori}
   \label{sec: Morse-Bott perturbation}
   We describe the perturbation near a Morse-Bott torus of action less than $L$.

   Let $X_\Omega$ be a Morse-Bott toric domain with a smooth defining function $f:[0,a]\to[0,+\infty)$. Its boundary is the contact manifold $(Y=\partial X_\Omega,\lambda)$. Let $T_{-p,q}\subset \partial X_\Omega$ be a Morse-Bott torus as explained in Section \ref{sec:PerturbationSection}. There is a standard perturbation we can do over the Morse-Bott torus $T_{-p,q}$. Here, we explain how it works, see \cite[Sec.~4.2]{hutchings2014lecture} for details. Let $L>0$ satisfy $\mathcal A(T_{-p,q}) < L$. Then the contact form $\lambda$ can be perturbed on an arbitrarily small neighborhood of $T_{-p,q}$ so that the new Reeb flow obtained after the perturbation has on the torus $T_{-p,q}$ exactly two embedded orbits of nearly the same action $\mathcal A(T_{-p,q})$ and no other Reeb orbit of action less than $L$. Furthermore, the newly created orbits are reparametrizations of two Reeb orbits in the foliation of $T_{-p,q}$ so they have the same homology. To describe the behaviour of these two periodic orbits we define the notion of \textit{convexity} of the torus $T_{-p,q}$.

   \begin{defi}
       Let $T_{-p,q}\subset \partial X_\Omega=Y$ be a Morse-Bott torus and let $x\in (0,a)$ satisfy $T_{-p,q}=\{(z_1,z_2):(\pi|z_1|^2,\pi|z_2|^2)=(x,f(x))\}$. We say that $T_{-p,q}$ is convex if $f''(x)<0$ and we say that $T_{-p,q}$ is concave if $f''(x)>0$.
   \end{defi}

    Now we can describe the relevant information about the two orbits for our discussion. After perturbation, the torus $T_{-p,q}$ contains exactly two Reeb orbits, one elliptic $e_{-p,q}$ and one hyperbolic $h_{-p,q}$. Let $\tau = \tau_{-p,q}$ denote the Morse-Bott trivialization of $\xi$ along its Reeb orbits. More precisely, choose a nonvanishing section $v$ of the line bundle $\xi\cap TT_{-p,q},$ and complete it to the positive symplectic frame $(v,Jv)$, where $J$ is compatible with $d\lambda|_\xi$. We use the same notation $\tau$ for the induced trivializations along the perturbed orbits $e_{-p,q}$ and $h_{-p,q}$, and along their iterates. We have the following properties: 

    \begin{itemize}
    \item If $T_{-p,q}$ is convex then $e_{-p,q}$ is $L$-positive and $\text{CZ}_{\tau}(e_{-p,q}^k)=1$ for  every $k\in \Z_{>0}$ satisfying $\mathcal{A}(e_{-p,q}^k)<L$. Moreover, $\text{CZ}_{\tau}(h_{-p,q}^k)=0$ for every $k\in \Z_{>0}$. 
    \item If $T_{-p,q}$ is concave then $e_{-p,q}$ is $L$-negative and $\text{CZ}_{\tau}(e_{-p,q}^k)=-1$ for every $k\in \Z_{>0}$ satisfying $\mathcal{A}(e_{-p,q}^k)<L$. Moreover, $\text{CZ}_{\tau}(h_{-p,q}^k)=0$ for  every $k\in \Z_{>0}$.
    \item The perturbation can be done so that $\mathcal{A}(e_{-p,q})$ and $\mathcal{A}(h_{-p,q})$ are $(1/L)$-close to $\mathcal{A}(T_{-p,q})$. 
    \end{itemize}


   \begin{defi}[Multiplicative notation for admissible orbit sets]
Let $\alpha$ be an admissible orbit set supported on the exceptional
orbits and on the orbits arising from the Morse--Bott perturbations.
Then
\[
\alpha
=\gamma_1^m
 \left(\prod_{i=1}^r
 e_{-p_i,q_i}^{a_i}h_{-p_i,q_i}^{\epsilon_i}\right)
 \gamma_2^n,
\]
where $m,n,a_i\in\mathbb Z_{\geq0}$,
$\epsilon_i\in\{0,1\}$, and $a_i+\epsilon_i>0$.

Setting $m_i=a_i+\epsilon_i$, we use the compressed notation
\[
s_{-p_i,q_i}^{m_i}:=
\begin{cases}
e_{-p_i,q_i}^{m_i}, & \epsilon_i=0,\\
e_{-p_i,q_i}^{m_i-1}h_{-p_i,q_i}, & \epsilon_i=1.
\end{cases}
\]
We order the factors so that
$
-\frac{q_i}{p_i}>-\frac{q_{i+1}}{p_{i+1}}
$
in the concave case, and
$
-\frac{q_i}{p_i}<-\frac{q_{i+1}}{p_{i+1}}
$
in the convex or semi-weakly convex case.
\end{defi}




\subsection{Approximations of concave toric domains by Morse-Bott concave toric domains}
   \label{sec:concaveaprox}

    Let $X_\Omega$ be a concave toric domain with defining function $f:[0,a] \to [0,b]$ as in Definition \ref{defi_convex_concave_toric_domain}-(2). Then there exists a sequence of convex smooth functions $f_l:[0,a] \to \R_{\geq 0}$ so that the following conditions hold:
    \begin{itemize}
        \item The concave toric domain $X_{\Omega_l}$ determined by $f_l$ is Morse-Bott.
        \item $f'_l(0)\leq -l$, $f_l(a)=0$ and $-1/l\leq f'_l(a)< 0$ as $l\to \infty$.
        \item $\sup_{x\in [0,a]} |f(x)-f_l(x)| \to 0$ as $l \to \infty$. 
    \end{itemize}

    Notice that $X_{\Omega_l}$ is a $C^0$-approximation of $X_\Omega$ by a Morse-Bott concave toric domain. Notice that in the case that $X_\Omega$ is itself a Morse-Bott toric domain then the family of contact manifolds $(Y_l=\partial X_{\Omega_l},\lambda_l)$ approximate $C^0$ the contact manifold $(Y=\partial X_\Omega,\lambda)$.

\begin{lem} \label{lem:actionaprox}
Let $X_\Omega$ be a concave toric domain, and let
$X_{\Omega_l}$ be a sequence of Morse--Bott concave toric domains
as above. Fix relatively prime positive integers $p$ and $q$.
For every sufficiently large $l$, there exists a unique
$x_l\in(0,a)$ such that
$
f_l'(x_l)=-q/p.
$
Let
$
T_{-p,q}^{(l)}
:=
\mu^{-1}\bigl(x_l,f_l(x_l)\bigr)
\subset\partial X_{\Omega_l}.
$
If $\Lambda_{-p,q}$ denotes the one-edge concave generator
with edge $(-p,q)$, labeled either $e$ or $h$, then
\[
\lim_{l\to\infty}
\mathcal A_l\bigl(T_{-p,q}^{(l)}\bigr)
=
\ell_\Omega(\Lambda_{-p,q}).
\]
\end{lem}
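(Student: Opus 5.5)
Existence and uniqueness of $x_l$ come first, using only the listed properties of $f_l$; then we identify the limit of the actions with the $\Omega$-length by a supporting-line argument.

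\textbf{Existence and uniqueness.} Each $f_l$ is smooth and convex with $f_l'(0)\le -l$ and $-1/l\le f_l'(a)<0$. Once $l>\max\{q/p,\,p/q\}$ we have
\[
f_l'(0)\le -l<-q/p<-1/l\le f_l'(a).
\]
The intermediate value theorem then gives some $x_l\in(0,a)$ with $f_l'(x_l)=-q/p$. For uniqueness, recall that in the Morse--Bott setting $f_l''\neq0$ except on small intervals near $0$ and $a$, where $f_l'$ is constant and irrational. Hence $f_l'$ is strictly increasing on the region where it is non-constant, and a rational value can only be attained there, at a single point. With $x_l$ in hand, the action is
\[
\mathcal A_l\bigl(T^{(l)}_{-p,q}\bigr)=q\,x_l+p\,f_l(x_l)=(x_l,f_l(x_l))\times(-p,q),
\]
as computed in Section~\ref{sec:PerturbationSection}.

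\textbf{Supporting-line characterization.} For the concave domain $\Omega$, the length $\ell_\Omega(\Lambda_{-p,q})=p_\Omega\times(-p,q)=q x+p y$, where $(x,y)\in\partial^+\Omega$ is a point such that $\partial^+\Omega$ lies to the right of the oriented line through it with direction $(-p,q)$. Equivalently, $\ell_\Omega(\Lambda_{-p,q})=\min_{(x,y)\in\partial^+\Omega}(qx+py)$: the linear functional $qx+py$ on the convex graph is minimized exactly at such a tangency point. This holds for any point of $\partial^+\Omega$, including the endpoints $(0,b)$ and $(a,0)$ when the slope $-q/p$ is not attained in the interior.

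Similarly, since $f_l$ is convex and $f_l'(x_l)=-q/p$, the point $(x_l,f_l(x_l))$ minimizes $qx+pf_l(x)$ over $[0,a]$. So
\[
\mathcal A_l\bigl(T^{(l)}_{-p,q}\bigr)=\min_{x\in[0,a]}\bigl(qx+pf_l(x)\bigr).
\]

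\textbf{Passing to the limit.} For each $x$,
\[
\bigl|\bigl(qx+pf_l(x)\bigr)-\bigl(qx+pf(x)\bigr)\bigr|\le p\,\sup|f_l-f|.
\]
Minima over $[0,a]$ are $1$-Lipschitz with respect to the sup norm, so
\[
\bigl|\mathcal A_l\bigl(T^{(l)}_{-p,q}\bigr)-\min_{[0,a]}(qx+pf)\bigr|\le p\,\|f_l-f\|_\infty\to0.
\]
Since $f(a)=0$, the graph of $f$ is all of $\partial^+\Omega$, so $\min_{[0,a]}(qx+pf)=\ell_\Omega(\Lambda_{-p,q})$. The label $e$ or $h$ does not affect the length, which finishes the argument.

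The only delicate point is the supporting-line characterization of $\ell_\Omega$ for a non-smooth convex $f$. There the supporting point must be chosen via subgradients, and the endpoint cases, where the slope $-q/p$ lies outside the range of $f'$, have to be handled. The minimization formulation covers all of these uniformly.
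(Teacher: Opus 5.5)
Your proposal is correct and follows essentially the same route as the paper: existence of $x_l$ from the endpoint slope bounds, uniqueness from the Morse--Bott assumptions, and the identification of both $\mathcal A_l\bigl(T^{(l)}_{-p,q}\bigr)$ and $\ell_\Omega(\Lambda_{-p,q})$ as minima of $qx+py$ over the respective graphs, whose difference is bounded by $p\|f_l-f\|_{C^0}\to0$. Your explicit threshold $l>\max\{q/p,\,p/q\}$ and your check that the right-half-plane condition amounts to minimizing $qx+py$ over $\partial^+\Omega$ are details the paper leaves implicit.
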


\begin{proof}
For $l$ sufficiently large,
$
f_l'(0)<-q/p<f_l'(a),
$
and the Morse--Bott assumptions imply the existence and uniqueness
of $x_l$. Since $f_l$ is convex,
\[
\mathcal A_l\bigl(T_{-p,q}^{(l)}\bigr)
=
qx_l+pf_l(x_l)
=
\min_{x\in[0,a]}\bigl(qx+pf_l(x)\bigr).
\]
On the other hand,
\[
\ell_\Omega(\Lambda_{-p,q})
=
\min_{x\in[0,a]}\bigl(qx+pf(x)\bigr).
\]
Consequently,
\[
\left|
\mathcal A_l\bigl(T_{-p,q}^{(l)}\bigr)
-
\ell_\Omega(\Lambda_{-p,q})
\right|
\le
p\|f_l-f\|_{C^0},
\]
which converges to zero.
\end{proof}

\begin{rem}
        \label{rem:exceporbCZ}
        By Lemma \ref{lem:czspecialorbits} we deduce that the exceptional orbits $\gamma_1$ and $\gamma_2$ and all their iterates have Conley-Zehnder index strictly greater than $2l$ in $X_{\Omega_l}$.
    \end{rem}

\subsection{L-nice perturbations of concave toric domains}
\label{sec:concavepert}

Suppose that $X_\Omega$ is a concave toric domain. Let
$X_{\Omega_l}$, $l\in\mathbb{Z}_{>0}$, be a sequence of Morse--Bott
concave toric domains approximating $X_\Omega$ as in Section \ref{sec:concaveaprox}. Write
$
Y_l=\partial X_{\Omega_l},
$ $
\lambda_l=\lambda_{\mathrm{std}}|_{Y_l},
$
and denote by $\mathcal{A}_l$ the action associated with $\lambda_l$.

Fix $L>0$. For each $l$, consider the Morse--Bott tori
$T_{-p,q}^{(l)}\subset Y_l$ satisfying
$
\mathcal{A}_l\bigl(T_{-p,q}^{(l)}\bigr)<L.
$
There are only finitely many such tori. We perturb $\lambda_l$ in
pairwise disjoint neighborhoods of these tori, as described in
Section \ref{sec: Morse-Bott perturbation}. The perturbation can be chosen so that the resulting
contact form
$
\lambda_{l,L}=g_{l,L}\lambda_l
$
is $L$-nondegenerate and arbitrarily close to $\lambda_l$ in the
$C^\infty$ topology. Moreover, every torus $T_{-p,q}^{(l)}$ of action
less than $L$ gives rise to exactly two embedded Reeb orbits of action
less than $L$,
$
e_{-p,q}^{(l)}
$ and $
h_{-p,q}^{(l)},
$
where $e_{-p,q}^{(l)}$ is elliptic and $h_{-p,q}^{(l)}$ is positive
hyperbolic. Their actions can be chosen arbitrarily close to
$\mathcal{A}_l(T_{-p,q}^{(l)})$, and no other Reeb orbit of action less
than $L$ is created by the perturbation.

Let $\tau_{-p,q}^{(l)}$ be the Morse--Bott trivialization introduced in
Section \ref{sec: Morse-Bott perturbation}. Since $X_{\Omega_l}$ is concave, the perturbation can be
chosen so that every elliptic orbit $e_{-p,q}^{(l)}$ of action less
than $L$ is $L$-negative. Consequently,

$$
\operatorname{CZ}_{\tau_{-p,q}^{(l)}}
\left(\left(e_{-p,q}^{(l)}\right)^k\right)=-1
\quad \mbox{whenever}
\quad
\mathcal{A}_{l,L}
\left(\left(e_{-p,q}^{(l)}\right)^k\right)<L,
$$

and

$$
\operatorname{CZ}_{\tau_{-p,q}^{(l)}}
\left(h_{-p,q}^{(l)}\right)=0.
$$

Moving $Y_l$ a sufficiently small amount along the Liouville flow, we
can realize $\lambda_{l,L}$ as the restriction of
$\lambda_{\mathrm{std}}$ to a smooth star-shaped hypersurface
$
Y_{l,L}=\partial X_{\Omega_{l,L}}
$
which is arbitrarily close to $Y_l$ in the $C^\infty$ topology. The
domain $X_{\Omega_{l,L}}$ need not be toric. We call
$(Y_{l,L},\lambda_{l,L})$ an $L$-nice perturbation of
$(Y_l,\lambda_l)$.

We now describe the correspondence between admissible orbit sets and
concave generators. Let $\alpha$ be an admissible orbit set which does
not contain an exceptional orbit. Then $\alpha$ can be written as

$$
\alpha=
\prod_{i=1}^r
\left(e_{-p_i,q_i}^{(l)}\right)^{a_i}
\left(h_{-p_i,q_i}^{(l)}\right)^{\epsilon_i},
$$
where
$
a_i\in\mathbb{Z}_{\geq 0},$ $
\epsilon_i\in\{0,1\}$, $
a_i+\epsilon_i>0,
$
and the pairs $(p_i,q_i)$ are distinct pairs of relatively prime
positive integers. We order the factors so that

$$
\frac{q_1}{p_1}<\frac{q_2}{p_2}<\cdots<\frac{q_r}{p_r}.
$$

Set $m_i=a_i+\epsilon_i$. The orbit set $\alpha$ determines a
concave generator $\Lambda_\alpha$ whose $i$-th edge is
$
v_i=m_i(-p_i,q_i).
$
The edge $v_i$ is labeled $e$ if $\epsilon_i=0$, and it is labeled
$h$ if $\epsilon_i=1$. Thus, an edge of multiplicity $m_i$ labeled
$h$ corresponds to the factor
$
\left(e_{-p_i,q_i}^{(l)}\right)^{m_i-1}
 h_{-p_i,q_i}^{(l)}.
$

For a concave generator $\Lambda$, define
$$
\nu_e(\Lambda)
:=
\#\{\text{edges of $\Lambda$ having positive elliptic multiplicity}\}.
$$

Equivalently, $\nu_e(\Lambda_\alpha)$ is the number of distinct
elliptic Reeb orbits which occur in $\alpha$ with positive
multiplicity.

Given $k_0\in\mathbb{Z}_{>0}$, there exists $L_0>0$ with the following
property. For every $L\geq L_0$, there exists
$l_0=l_0(L,k_0)>0$ such that, for every $l\geq l_0$, the $L$-nice
perturbation can be chosen so that the following assertions hold for
every $0\leq k\leq k_0$.

\begin{enumerate}

\item Every admissible orbit set generator of
$ECC_k^L(Y_{l,L},\lambda_{l,L},0)$ contains no exceptional orbit.

\item The correspondence
$
\alpha\longmapsto\Lambda_\alpha
$
defines a bijection between the orbit set generators of
$ECC_k^L(Y_{l,L},\lambda_{l,L},0)$ and the concave generators
$\Lambda$ satisfying $I(\Lambda)=k$.

\item Under this correspondence,
$
I(\alpha)=I(\Lambda_\alpha)
$
and
$
\left|
\mathcal{A}_{l,L}(\alpha)-\ell_\Omega(\Lambda_\alpha)
\right|<1/L.
$

\item Every nonexceptional embedded elliptic Reeb orbit of action less
than $L$ is $L$-negative.

\item Let $\tau$ denote the collection of Morse--Bott trivializations
along the Reeb orbits appearing in $\alpha$. Then

$
c_\tau(\alpha)
=x(\Lambda_\alpha)+y(\Lambda_\alpha),
$
$
Q_\tau(\alpha)=2A(\Lambda_\alpha),
$
and
$
\text{CZ}^{I}_\tau(\alpha)=-e(\Lambda_\alpha).
$
Consequently,

$$
I(\alpha)
=2A(\Lambda_\alpha)
+x(\Lambda_\alpha)
+y(\Lambda_\alpha)
-e(\Lambda_\alpha)
=I(\Lambda_\alpha).
$$

\item The $J_0$-index is given by

$$
J_0(\alpha)
=I(\Lambda_\alpha)
-2x(\Lambda_\alpha)
-2y(\Lambda_\alpha)
+\nu_e(\Lambda_\alpha).
$$

\end{enumerate}

We briefly explain these assertions. For fixed $k_0$, there are only
finitely many concave generators $\Lambda$ satisfying
$I(\Lambda)\leq k_0$. Hence only finitely many primitive vectors
$(-p,q)$ and finitely many multiplicities occur among their edges. By
Lemma \ref{lem:actionaprox}, after choosing $l$ sufficiently large, the actions of the
corresponding Morse--Bott tori are uniformly close to the
$\Omega$-lengths of the associated edges. The perturbation can then be
chosen sufficiently small so that the total action error for every
orbit set of index at most $k_0$ is less than $1/L$. Taking $L_0$
larger than the $\Omega$-length of every concave generator of index at
most $k_0$ gives the bijection in item (2).

By Remark \ref{rem:exceporbCZ}, after increasing $l_0$ if necessary, every orbit set
containing an exceptional orbit has ECH index greater than $k_0$. This
proves item (1).

Finally, with respect to the Morse--Bott trivializations, every
relevant iterate of an elliptic orbit has Conley-Zehnder index $-1$,
while every hyperbolic orbit has Conley--Zehnder index zero. Therefore,
$
\text{CZ}^{I}_\tau(\alpha)=-e(\Lambda_\alpha).
$
The standard toric computation gives
$
c_\tau(\alpha)=x(\Lambda_\alpha)+y(\Lambda_\alpha)
$
and
$
Q_\tau(\alpha)=2A(\Lambda_\alpha).
$
This proves the formula for the ECH index. If an elliptic orbit occurs
with multiplicity $m>0$, then its contribution to
$\text{CZ}^{I}_\tau$ is $-m$, whereas its contribution to
$\text{CZ}^{J}_\tau$ is $-(m-1)$. Hence
$
\text{CZ}^{J}_\tau(\alpha)
=-e(\Lambda_\alpha)+\nu_e(\Lambda_\alpha).
$
Using the definition of $J_0$, we obtain
$$
\begin{aligned}
J_0(\alpha)
&=-c_\tau(\alpha)+Q_\tau(\alpha)
  +\text{CZ}^{J}_\tau(\alpha)\\
&=-x(\Lambda_\alpha)-y(\Lambda_\alpha)
  +2A(\Lambda_\alpha)-e(\Lambda_\alpha)
  +\nu_e(\Lambda_\alpha)\\
&=I(\Lambda_\alpha)-2x(\Lambda_\alpha)
  -2y(\Lambda_\alpha)+\nu_e(\Lambda_\alpha).
\end{aligned}
$$

\subsection{Approximations of convex and semi-weakly convex toric domains by Morse-Bott toric domains}

Let $X_\Omega$ be a convex or semi-weakly convex toric domain, and let
$f:[0,a]\to[0,+\infty)$ be the concave function defining $\Omega$ as in
Definition \ref{defi_convex_concave_toric_domain}. We assume that $f$ is differentiable at $0$. If $f(a)=0$,
we also assume that $f$ is differentiable at $a$.

Choose $m\in\mathbb Z$ such that
$
f'(0)\leq -m.
$
If $f(a)=0$, choose $n\in\mathbb Z_{\geq 0}$ such that
$
nf'(a)\geq -1
$ and $mn<1$.
The meaning of these two integers is the following. The condition that an
edge $(-p,q)$ be vertically $m$-adapted is
$
q\geq mp,
$
while the condition that it be horizontally $n$-adapted is
$
p\geq nq.
$

We now construct smooth concave functions $f_l:[0,a]\to[0,+\infty)$,
$l\in\mathbb Z_{>0}$, such that the associated toric domains
$X_{\Omega_l}$ are Morse--Bott and approximate $X_\Omega$.

Suppose first that $f(a)=0$. We choose $f_l$ so that
$
f_l(0)=f(0),
f_l(a)=0,
$
and
$$
\lim_{l\to\infty}
\sup_{x\in[0,a]}|f_l(x)-f(x)|=0.
$$

We also require that $f_l'(0)$ is irrational for every $l$ and that
$
f_l'(0)<-m,
f_l'(0)\to -m
$ as $l\to \infty$.
If $n>0$, we require that $f_l'(a)$ is irrational for every $l$ and that
$
f_l'(a)>-\frac{1}{n},
f_l'(a)\to -\frac{1}{n},
$ as $l\to \infty$.
If $n=0$, we require instead that $f_l'(a)$ is irrational for every $l$ and
$
f_l'(a)\to -\infty,
$ as $l\to \infty$.

Suppose now that $f(a)>0$. In this case we set $n=0$. We choose $f_l$ so
that
$
f_l(0)=f(0),
f_l(a)=0,
$
and, for every $a'\in(0,a)$,
$$
\lim_{l\to\infty}
\sup_{x\in[0,a']}|f_l(x)-f(x)|=0.
$$

We require again that $f_l'(0)$ is irrational and satisfies
$
f_l'(0)<-m,
f_l'(0)\to -m,
$ as $l\to \infty$,
and that $f_l'(a)$ is irrational and satisfies
$
f_l'(a)\to -\infty.
$ as $l\to \infty$.
The functions $f_l$ can be chosen so that the compact regions $\Omega_l$
converge to $\Omega$ in the Hausdorff topology. When $f(a)>0$, the part of
the graph of $f_l$ near $x=a$, whose slope tends to $-\infty$, approximates
the vertical segment
$
\{a\}\times[0,f(a)]
$
contained in $\partial^+\Omega$.

Such a sequence can be obtained by smoothing the graph of $f$, modifying it
in intervals whose lengths tend to zero near the endpoints, and then making
an arbitrarily small perturbation which preserves concavity and makes the
resulting toric domains Morse-Bott.

The next lemma describes the convergence of the actions of the interior
Morse-Bott tori.

\begin{lem}\label{lem_approx_action_weak_convex}
Suppose that $X_\Omega$ is a convex or semi-weakly convex toric domain, and
let $X_{\Omega_l}$ be a sequence of Morse-Bott semi-weakly convex toric
domains as above. Let $p\in\mathbb Z_{>0}$ and $q\in\mathbb Z$ be relatively
prime and satisfy
$
q>mp,
p>nq.
$
Then, for every sufficiently large $l$, there exists a unique
$x_l\in(0,a)$ such that
$
f_l'(x_l)=-\frac{q}{p}.
$
Let $T_{-p,q}^{(l)}$ be the Morse--Bott torus lying over
$(x_l,f_l(x_l))$. Let $\Lambda_{-p,q}$ be the one-edge convex or
semi-weakly convex generator with edge $(-p,q)$, labeled either $e$ or $h$.
Then
$
\lim_{l\to\infty}
\mathcal A_l\bigl(T_{-p,q}^{(l)}\bigr)
=
\ell_\Omega(\Lambda_{-p,q}),
$
where $\mathcal A_l$ denotes the action on $\partial X_{\Omega_l}$.
\end{lem}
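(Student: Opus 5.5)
The plan mirrors the proof of Lemma~\ref{lem:actionaprox}, with concavity of $f_l$ replacing convexity: the action of the torus is a maximum of a linear functional over the graph, and the $\Omega$-length of the one-edge generator is the corresponding supremum over $\partial^+\Omega$.

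\emph{Existence and uniqueness of $x_l$.} Since $f_l$ is concave and Morse--Bott, $f_l''<0$ away from small neighborhoods of the endpoints, where $f_l'$ is constant. Hence $f_l'$ is strictly decreasing on the region where it is not constant. The hypotheses $q>mp$ and $p>nq$ translate into
\[
-\frac{1}{n}<-\frac{q}{p}<-m
\]
(with the left inequality vacuous when $n=0$). Because $f_l'(0)\to -m$ from below and $f_l'(a)\to -1/n$ from above, or to $-\infty$ when $n=0$, we get $f_l'(a)<-q/p<f_l'(0)$ for $l$ large. The intermediate value theorem then gives some $x_l$. It is unique because the constant values of $f_l'$ near the endpoints are irrational, hence different from $-q/p$, and $f_l'$ is strictly monotone elsewhere.

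\emph{Action as a maximum.} By concavity,
\[
\mathcal A_l\bigl(T^{(l)}_{-p,q}\bigr)=qx_l+pf_l(x_l)=\max_{x\in[0,a]}\bigl(qx+pf_l(x)\bigr),
\]
since $x_l$ is a critical point of this concave function of $x$. On the other side, the supporting point $p_\Omega$ for the edge $(-p,q)$ in the definition of $\ell_\Omega$ has $\partial^+\Omega$ to the left of the line through $p_\Omega$ parallel to $(-p,q)$, so
\[
\ell_\Omega(\Lambda_{-p,q})=\max_{(x,y)\in\partial^+\Omega}(qx+py).
\]
This quantity does not depend on the label $e$ or $h$.

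\emph{Case $f(a)=0$.} Here $\partial^+\Omega$ is the graph of $f$ and $p>0$, so the same estimate as in the concave case applies:
\[
\Bigl|\max_x\bigl(qx+pf_l(x)\bigr)-\max_x\bigl(qx+pf(x)\bigr)\Bigr|\le p\,\|f_l-f\|_{C^0}\to 0.
\]

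\emph{Case $f(a)>0$: the main obstacle.} Now the convergence $f_l\to f$ is only uniform on $[0,a']$ for each $a'<a$, and $\partial^+\Omega$ also contains the vertical segment $\{a\}\times[0,f(a)]$. The plan is to use Hausdorff convergence $\Omega_l\to\Omega$ instead of sup-norm convergence. The functional $(x,y)\mapsto qx+py$ is linear, so its maximum over $\partial^+\Omega_l$ equals its maximum over $\Omega_l$, because $p>0$ makes it increase in $y$. That maximum is Lipschitz with respect to Hausdorff distance, with constant $|(q,p)|$. Passing to the maximum over $\Omega$, which is attained on $\partial^+\Omega$, gives the limit.

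One more point needs checking in this case: the maximum of $qx+pf_l(x)$ over $[0,a]$ equals its maximum over $\Omega_l$, including when $q<0$. This follows from concavity and $p>0$, since the maximum over $\Omega_l$ is attained on the upper graph. It also covers the possibility that the maximizer for $\Omega$ lies on the vertical segment: that happens when the slope $-q/p$ is below every slope of $f$, and then $x_l\to a$.
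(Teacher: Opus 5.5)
Your proposal is correct and follows essentially the paper's argument. It uses the same endpoint-slope comparison for existence and uniqueness of $x_l$, identifies $\mathcal A_l(T^{(l)}_{-p,q})$ as the maximum of the concave function $qx+pf_l(x)$, and identifies $\ell_\Omega(\Lambda_{-p,q})$ as $\max_{\Omega}(qx+py)$. The only difference is that the paper invokes Hausdorff convergence $\Omega_l\to\Omega$ uniformly in both cases, while you handle $f(a)=0$ separately via the sup-norm bound $p\|f_l-f\|_{C^0}$; that is equally valid.
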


\begin{proof}
The inequality $q>mp$ is equivalent to
$
-\frac{q}{p}<-m.
$
Since $f_l'(0)\to-m$, it follows that
$
-\frac{q}{p}<f_l'(0)
$
for every sufficiently large $l$.
Suppose that $n>0$. The inequality $p>nq$ is equivalent to
$
-\frac{q}{p}>-\frac{1}{n}.
$
Since $f_l'(a)\to-1/n$ and $f_l'(a)>-1/n$, it follows that
$
f_l'(a)<-\frac{q}{p}
$
for every sufficiently large $l$. If $n=0$, the same conclusion follows
from $f_l'(a)\to-\infty$. Therefore, for every sufficiently large $l$,
$
f_l'(a)<-\frac{q}{p}<f_l'(0).
$
Since $f_l'$ is continuous and nonincreasing, there exists
$x_l\in(0,a)$ such that
$
f_l'(x_l)=-\frac{q}{p}.
$
This point is unique. Indeed, otherwise $f_l'$ would be constant and equal
to the rational number $-q/p$ on a nontrivial interval, contrary to the
Morse--Bott assumptions on $X_{\Omega_l}$.

Define
$
\Phi_l(x):=qx+pf_l(x).
$
Since $f_l$ is concave and $p>0$, the function $\Phi_l$ is concave. Moreover,
$
\Phi_l'(x_l)=q+pf_l'(x_l)=0.
$
Hence $x_l$ is a point of maximum of $\Phi_l$, and therefore
$
\mathcal A_l\bigl(T_{-p,q}^{(l)}\bigr)
=
qx_l+pf_l(x_l)
=
\max_{x\in[0,a]}\bigl(qx+pf_l(x)\bigr).
$

By the definition of the $\Omega$-length,
$
\ell_\Omega(\Lambda_{-p,q})
=
\max_{(x,y)\in\Omega}(qx+py).
$
Similarly,
$$
\mathcal A_l\bigl(T_{-p,q}^{(l)}\bigr)
=
\max_{(x,y)\in\Omega_l}(qx+py).
$$

Since $\Omega_l\to\Omega$ in the Hausdorff topology, the maxima of the fixed
linear function $(x,y)\mapsto qx+py$ converge. Thus
$
\lim_{l\to\infty}
\mathcal A_l\bigl(T_{-p,q}^{(l)}\bigr)
=
\ell_\Omega(\Lambda_{-p,q}).
$
\end{proof}

\begin{rem}
We can choose $m=0$ if and only if $X_\Omega$ is a convex toric domain.
Indeed, if $m=0$, then $f'(0)\leq0$. Since $f$ is concave, its derivative is
nonincreasing, and hence $f$ is nonincreasing on $[0,a]$. Thus $X_\Omega$ is
convex. Conversely, if $X_\Omega$ is convex, then $f$ is nonincreasing, so
$f'(0)\leq0$, and we may choose $m=0$.
\end{rem}

\subsection{L-nice perturbations of convex and semi-weakly convex toric domains}
\label{sec:convexpert}

Let $X_\Omega$ be a convex or semi-weakly convex toric domain, and let
$f:[0,a]\to[0,+\infty)$ be the concave function defining $\Omega$.
Choose $m\in\mathbb Z$ and $n\in\mathbb Z_{\geq 0}$ as in
Section~5.5. Thus,
$
f'(0)\leq -m,
$
and, when $f(a)=0$,
$
nf'(a)\geq  -1.
$
When $f(a)>0$, we set $n=0$. If $n>0$ we require $m<1/n.$

Let $X_{\Omega_l}$ be a sequence of Morse--Bott semi-weakly convex
toric domains approximating $X_\Omega$ as in Section~5.5. We use the
following endpoint conventions for the defining functions $f_l$:
$
f_l'(0)<-m,$ and $f_l'(0)\to -m
$ as $l\to \infty$.
If $f(a)=0$ and $n>0$, we require
$
f_l'(a)>-\frac{1}{n}$ and $
f_l'(a)\to -\frac{1}{n}
$ as $l\to \infty$.
If $n=0$, we require
$
f_l'(a)\to -\infty.
$ as $l\to \infty$.

The endpoint slopes are chosen irrational. Notice that, when $n>0$,
the sequence $f_l'(a)$ approaches $-1/n$ from above. This convention is
needed in order that the exceptional orbit over the right endpoint be
$L$-positive after the change of trivialization below.

Write
$
Y_l=\partial X_{\Omega_l},
\lambda_l=\lambda_{\mathrm{std}}|_{Y_l},
$
and denote the corresponding action by $\mathcal A_l$.
Fix $L>0$. For each $l$, perturb $\lambda_l$ in pairwise disjoint
neighborhoods of all Morse--Bott tori of action less than $L$.
The perturbation can be chosen in the form
$
\lambda_{l,L}=g_{l,L}\lambda_l,
$
where $g_{l,L}>0$ is arbitrarily close to $1$ in the
$C^\infty$ topology. We choose $g_{l,L}$ so that
$\lambda_{l,L}$ is $L$-nondegenerate.

Since $\lambda_{l,L}=g_{l,L}\lambda_l$, the contact form
$\lambda_{l,L}$ can be realized as the restriction of
$\lambda_{\mathrm{std}}$ to a star-shaped hypersurface
$Y_{l,L}$ obtained by moving $Y_l$ along the Liouville flow.

We now describe the orbit sets of $\lambda_{l,L}$. Let $\gamma_1^{(l)}$
and $\gamma_2^{(l)}$ denote the exceptional orbits over the right and
left endpoints, respectively. An admissible orbit set of action less than $L$ can be written as
$$
\alpha
=
\left(\gamma_1^{(l)}\right)^{a_0}
\prod_{i=1}^{r}
\left(e_{-p_i,q_i}^{(l)}\right)^{a_i}
\left(h_{-p_i,q_i}^{(l)}\right)^{\epsilon_i}
\left(\gamma_2^{(l)}\right)^{a_{r+1}},
$$
where
$
a_0,a_{r+1},a_i\in\mathbb Z_{\geq0},
\epsilon_i\in\{0,1\}$ and $
a_i+\epsilon_i>0.
$
The pairs $(p_i,q_i)$ are distinct, $p_i>0$, $q_i\in\mathbb Z$, and
$\gcd(p_i,q_i)=1$. We order them so that
$$
\frac{q_1}{p_1}
>
\frac{q_2}{p_2}
>
\cdots
>
\frac{q_r}{p_r}.
$$
Set
$
m_i=a_i+\epsilon_i.
$

The orbit set $\alpha$ determines a semi-weakly convex generator
$\Lambda_\alpha$ as follows. The factor
$\left(\gamma_1^{(l)}\right)^{a_0}$ determines the edge
$
a_0(-n,1),
$
which is labeled $e$. The factor
$$
\left(e_{-p_i,q_i}^{(l)}\right)^{a_i}
\left(h_{-p_i,q_i}^{(l)}\right)^{\epsilon_i}
$$
determines the edge
$
m_i(-p_i,q_i),
$
which is labeled $e$ when $\epsilon_i=0$ and labeled $h$ when
$\epsilon_i=1$. Finally,
$\left(\gamma_2^{(l)}\right)^{a_{r+1}}$ determines the edge
$
a_{r+1}(-1,m),
$
which is labeled $e$. Edges of zero multiplicity are omitted.

The generator $\Lambda_\alpha$ is $(n,m)$-adapted. Indeed, every
interior Morse--Bott torus satisfies
$
m<\frac{q_i}{p_i}<\frac{1}{n}
$
when $n>0$, while for $n=0$ only the first inequality is relevant. The
exceptional orbits correspond to the two boundary directions
$(-n,1)$ and $(-1,m)$.

For any labeled generator $\Lambda$, define
$$
\nu_e(\Lambda)
=
\#\{\text{edges of $\Lambda$ having positive elliptic multiplicity}\}.
$$

Thus $\nu_e(\Lambda_\alpha)$ is the number of distinct elliptic Reeb
orbits appearing in $\alpha$ with positive multiplicity.

We next record the precise form of the correspondence needed below.
Fix $k_0\in\mathbb Z_{\geq0}$. Choose $L>0$ which is not the
$\Omega$-length of an $(n,m)$-adapted semi-weakly convex generator of
ECH index at most $k_0$. Then, for every sufficiently small
$\epsilon>0$, there exists $l_0=l_0(L,k_0,\epsilon)$ such that, for
$l\geq l_0$, the perturbation can be chosen with the following
properties for every $0\leq k\leq k_0$:

\begin{enumerate}

\item
The map
$
\alpha\mapsto \Lambda_\alpha
$
is a bijection between the orbit-set generators of
$
ECC_k^L(Y_{l,L},\lambda_{l,L},0)
$
and the $(n,m)$-adapted semi-weakly convex generators $\Lambda$
satisfying
$
I(\Lambda)=k$ and $
\ell_\Omega(\Lambda)<L.
$

\item
Under this correspondence,
$
I(\alpha)=I(\Lambda_\alpha)
$
and
$
\left|
\mathcal A_{l,L}(\alpha)-\ell_\Omega(\Lambda_\alpha)
\right|
<\epsilon.
$

\item
Every embedded elliptic Reeb orbit of $\lambda_{l,L}$ having action
less than $L$ is $L$-positive.

\item
With respect to the trivializations specified below,
$$
J_0(\alpha)
=
I(\Lambda_\alpha)
-2x(\Lambda_\alpha)
-2y(\Lambda_\alpha)
-\nu_e(\Lambda_\alpha).
$$

\end{enumerate}

We call such a perturbation an $L$-nice perturbation of $X_\Omega$ with
respect to the grading bound $k_0$.

We now verify the index formulas. Let $\tau_1$ and $\tau_2$ be the
trivializations along the exceptional orbits from Lemma \ref{lem:czspecialorbits}. Define
new trivializations $\overline\tau_1$ and $\overline\tau_2$ by
$
\overline\tau_1-\tau_1=-n$ and $\overline\tau_2-\tau_2=-m.
$

For these trivializations, representatives of the rotation angles of the exceptional orbits are
$
\theta_{1,l}
=
-\frac{1}{f_l'(a)}-n
$
and
$
\theta_{2,l}
=
-f_l'(0)-m.
$

Suppose first that $n>0$. Since
$
f_l'(a)>-\frac{1}{n}
$
and
$
f_l'(a)\to-\frac{1}{n},
$
we have
$
-\frac{1}{f_l'(a)}>n
$
and consequently
$
\theta_{1,l}>0,
\theta_{1,l}\to 0.
$
If $n=0$, the same conclusion follows from
$
f_l'(a)\to-\infty.
$
Moreover, since
$
f_l'(0)<-m
$
and
$
f_l'(0)\to-m,
$
we have
$
\theta_{2,l}>0,
\theta_{2,l}\to0.
$

The actions of $\gamma_1^{(l)}$ and $\gamma_2^{(l)}$ converge respectively to $a$ and $f(0)$. In particular, these actions are bounded away from zero for all sufficiently large $l$. Therefore, after increasing $l$ if necessary, depending on $L$, we may assume that
$
0
<
\theta_{1,l}
<
\frac{
\mathcal{A}_{l,L}\bigl(\gamma_1^{(l)}\bigr)
}{L}
$
and
$
0
<
\theta_{2,l}
<
\frac{
\mathcal{A}_{l,L}\bigl(\gamma_2^{(l)}\bigr)
}{L}.
$
It follows from Definition \ref{L-orbits} that both exceptional orbits are $L$-positive.

Moreover, if $i\in\{1,2\}$ and $j\in\mathbb{Z}_{>0}$ satisfy
$
\mathcal{A}_{l,L}
\left(
\left(\gamma_i^{(l)}\right)^j
\right)
<L,
$
then
$
j\,
\mathcal{A}_{l,L}\bigl(\gamma_i^{(l)}\bigr)
<L.
$
Hence
$
j
<
\frac{L}{
\mathcal{A}_{l,L}\bigl(\gamma_i^{(l)}\bigr)
}.
$
Together with the inequalities above, this gives
$
0<j\theta_{i,l}<1.
$
Therefore,
$
\text{CZ}_{\overline{\tau}_1}
\left(
\left(\gamma_1^{(l)}\right)^j
\right)
=
2\left\lfloor j\theta_{1,l}\right\rfloor+1
=
1
$
and
$
\text{CZ}_{\overline{\tau}_2}
\left(
\left(\gamma_2^{(l)}\right)^j
\right)
=
2\left\lfloor j\theta_{2,l}\right\rfloor+1
=
1
$
for every iterate having action less than $L$.

Together with the Morse-Bott perturbations of the interior tori, this shows that every embedded elliptic Reeb orbit of action less than $L$ is $L$-positive.

Lemma \ref{lem:changetrivialization} gives

$$
c_{\overline\tau_1}
\left(\left(\gamma_1^{(l)}\right)^j\right)
=j(n+1),
\qquad
Q_{\overline\tau_1}
\left(\left(\gamma_1^{(l)}\right)^j\right)
=j^2n,
$$

and

$$
c_{\overline\tau_2}
\left(\left(\gamma_2^{(l)}\right)^j\right)
=j(m+1),
\qquad
Q_{\overline\tau_2}
\left(\left(\gamma_2^{(l)}\right)^j\right)
=j^2m.
$$

Along the nonexceptional orbits we use the Morse--Bott
trivializations. For $s_{-p,q}^{(l)}$ equal to either
$e_{-p,q}^{(l)}$ or $h_{-p,q}^{(l)}$, the standard toric computation
gives

$$
c_\tau(s_{-p,q}^{(l)})=p+q,
\qquad
Q_\tau(s_{-p,q}^{(l)})=pq.
$$

If both orbits produced from the same Morse--Bott torus occur, then
$
\text{link}
\left(e_{-p,q}^{(l)},h_{-p,q}^{(l)}\right)=pq.
$
The remaining linking numbers are
$$
\text{link}(\gamma_1^{(l)},\gamma_2^{(l)})=1,
\quad
\text{link}(\gamma_1^{(l)},s_{-p,q}^{(l)})=p,
\quad
\text{link}(\gamma_2^{(l)},s_{-p,q}^{(l)})=q,
$$
and, for distinct pairs $(p,q)$ and $(p',q')$,
$
\text{link}(s_{-p,q}^{(l)},s_{-p',q'}^{(l)})
=
\max\{p'q,pq'\}.
$

For an orbit set $\alpha=\{(\alpha_i,d_i)\}$, the relative
intersection pairing is
$$
Q_\tau(\alpha)
=
\sum_i d_i^2Q_\tau(\alpha_i)
+
\sum_{i\neq j}d_id_j
\operatorname{link}(\alpha_i,\alpha_j).
$$

Substituting the formulas above and collecting the terms according to
the edges of $\Lambda_\alpha$, we obtain
$
c_\tau(\alpha)
=
x(\Lambda_\alpha)+y(\Lambda_\alpha),
$
and
$
Q_\tau(\alpha)
=
2\left(
\text{Area}(B^+_{\Lambda_\alpha})
-
\text{Area}(B^-_{\Lambda_\alpha})
\right).
$

Since all relevant elliptic orbits are $L$-positive and the
hyperbolic orbits have Conley--Zehnder index zero,
$
\text{CZ}^I_\tau(\alpha)
=e(\Lambda_\alpha).
$
Consequently,
$$
\begin{aligned}
I(\alpha)
&=c_\tau(\alpha)+Q_\tau(\alpha)
  +\text{CZ}^I_\tau(\alpha)\\
&=2\left(
\text{Area}(B^+_{\Lambda_\alpha})
-
\text{Area}(B^-_{\Lambda_\alpha})
\right)
+x(\Lambda_\alpha)+y(\Lambda_\alpha)
+e(\Lambda_\alpha)\\
&=2\left(
\mathcal L(B^+_{\Lambda_\alpha})
-
\mathcal L(B^-_{\Lambda_\alpha})
-1
\right)
-h(\Lambda_\alpha)\\
&=I(\Lambda_\alpha).
\end{aligned}
$$

The third equality follows from the signed version of Pick's formula
for semi-weakly convex integral paths:
$$
2\left(
\text{Area}(B_\Lambda^+)
-
\text{Area}(B_\Lambda^-)
\right)
+x(\Lambda)+y(\Lambda)+m(\Lambda)
=
2\left(
\mathcal L(B_\Lambda^+)-\mathcal L(B_\Lambda^-)-1
\right).
$$

For an elliptic orbit appearing with multiplicity $d>0$, its
contribution to $\text{CZ}^I_\tau$ is $d$, whereas its
contribution to $\text{CZ}^J_\tau$ is $d-1$. Therefore,
$
\text{CZ}^J_\tau(\alpha)
=e(\Lambda_\alpha)-\nu_e(\Lambda_\alpha).
$

Using the definition of $J_0$, we obtain
$$
\begin{aligned}
J_0(\alpha)
&=-c_\tau(\alpha)+Q_\tau(\alpha)
  +\text{CZ}^J_\tau(\alpha)\\
&=I(\Lambda_\alpha)
  -2x(\Lambda_\alpha)
  -2y(\Lambda_\alpha)
  -\nu_e(\Lambda_\alpha).
\end{aligned}
$$

It remains to justify the action statement and the bijection. We first prove the required finiteness statement. Let

$$
\mathcal{G}_{k_0,L+1}
=
\left\{
\Lambda:
\begin{array}{l}
\Lambda \text{ is an }(n,m)\text{-adapted semi-weakly convex generator},\\
0\leq I(\Lambda)\leq k_0,\quad
\ell_\Omega(\Lambda)\leq L+1
\end{array}
\right\}.
$$

We claim that $\mathcal{G}_{k_0,L+1}$ is finite. Consider a primitive nonvertical edge
$
v=(-p,q),
p\in\mathbb{Z}_{>0},
q\in\mathbb{Z}.
$
Since $(0,f(0))\in\Omega$, the definition of the $\Omega$-length gives

$$
\ell_\Omega(v)
=
\max_{(x,y)\in\Omega}(qx+py)
\geq p f(0).
$$
Consequently, the condition $\ell_\Omega(\Lambda)\leq L+1$ gives a uniform bound on $p$ for every edge of $\Lambda$. If $q\geq0$, then $(a,0)\in\Omega$ and hence
$
\ell_\Omega(v)\geq qa,
$
which gives a uniform upper bound on $q$. If $q<0$, the vertical $m$-adaptedness condition gives
$
q\geq mp.
$
Since $p$ is already bounded, there are again only finitely many possible values of $q$. The only possible vertical direction is the exceptional direction $(0,1)$, which occurs when $n=0$. It follows that only finitely many primitive edge directions can occur.

The $\Omega$-length of every primitive edge which can occur is positive. Since the $\Omega$-length is additive under concatenation of edges, the multiplicity of every edge is bounded by the condition
$
\ell_\Omega(\Lambda)\leq L+1.
$

There are only two possible labels for each edge. This proves that $\mathcal{G}_{k_0,L+1}$ is finite.

Since $\mathcal{G}_{k_0,L+1}$ is finite, we may increase $l$
so that every primitive nonboundary edge direction occurring in
a generator in $\mathcal{G}_{k_0,L+1}$ is realized by a
Morse-Bott torus, by Lemma~\ref{lem_approx_action_weak_convex}. Moreover, if $\ell_\Omega(\Lambda)<L$, then the
$\Omega$-length of each edge of $\Lambda$ is less than $L$.
Since only finitely many such edges occur, after increasing $l$
once more, Lemma~\ref{lem_approx_action_weak_convex} implies that
the corresponding Morse--Bott tori all have action less than $L$.
Hence all of them are included in the perturbation defining
$\lambda_{l,L}$. 

The two boundary directions
$(-n,1)$ and $(-1,m)$ are realized by the exceptional orbits.
Consequently, every generator
$\Lambda\in\mathcal{G}_{k_0,L+1}$ satisfying
$
\ell_\Omega(\Lambda)<L
$
determines a unique admissible orbit set: an edge of multiplicity $d$ labeled $e$ corresponds to
$
\left(e_{-p,q}^{(l)}\right)^d,
$
while an edge of multiplicity $d$ labeled $h$ corresponds to
$
\left(e_{-p,q}^{(l)}\right)^{d-1}h_{-p,q}^{(l)}.
$
The boundary edges are necessarily labeled $e$ and correspond to
the appropriate powers of the exceptional orbits.

We next show that it is enough to consider generators in this finite collection. For $l$ sufficiently large, any orbit-set generator $\alpha$ with
$
0\leq I(\alpha)\leq k_0
$
and
$
\mathcal{A}_{l,L}(\alpha)<L
$
corresponds to a generator $\Lambda_\alpha$ satisfying
$
\ell_\Omega(\Lambda_\alpha)<L+1.
$
Indeed, every simple orbit appearing in $\alpha$ has action less than $L$. The action bounds above, applied to the approximating domains $\Omega_l$, give uniform bounds on the corresponding integers $p$ and $q$ and on their multiplicities. Thus only finitely many orbit directions and multiplicities can occur. Lemma~\ref{lem_approx_action_weak_convex}, together with the convergence of the actions of the exceptional orbits and the fact that the Morse--Bott perturbation can be chosen arbitrarily small, then implies
$
\left|
\mathcal{A}_{l,L}(\alpha)
-
\ell_\Omega(\Lambda_\alpha)
\right|
<1
$
for every such orbit set, after increasing $l$ if necessary. Hence $\ell_\Omega(\Lambda_\alpha)<L+1$.

Since $L$ is not the $\Omega$-length of any $(n,m)$-adapted semi-weakly convex generator of ECH index at most $k_0$, the number
$
\delta
=
\min_{\Lambda\in\mathcal{G}_{k_0,L+1}}
\left|
\ell_\Omega(\Lambda)-L
\right|
$
is positive. Choose
$
0<\eta<
\min\left\{
\epsilon,
\frac{\delta}{2},
1
\right\}.
$
Because $\mathcal{G}_{k_0,L+1}$ is finite, Lemma \ref{lem_approx_action_weak_convex} and the convergence of the exceptional actions imply that, after increasing $l$ and choosing the Morse--Bott perturbation sufficiently small,
$
\left|
\mathcal{A}_{l,L}(\alpha)
-
\ell_\Omega(\Lambda_\alpha)
\right|
<\eta
$
for every orbit set corresponding to a generator in $\mathcal{G}_{k_0,L+1}$.

It follows that
$
\mathcal{A}_{l,L}(\alpha)<L
\Leftrightarrow
\ell_\Omega(\Lambda_\alpha)<L.
$
Indeed, if $\ell_\Omega(\Lambda_\alpha)<L$, then
$
\ell_\Omega(\Lambda_\alpha)\leq L-\delta,
$
and therefore
$
\mathcal{A}_{l,L}(\alpha)
<
L-\delta+\eta
<
L.
$

Conversely, if $\ell_\Omega(\Lambda_\alpha)>L$, then
$
\ell_\Omega(\Lambda_\alpha)\geq L+\delta,
$
and hence
$
\mathcal{A}_{l,L}(\alpha)
>
L+\delta-\eta
>
L.
$

Thus the action cutoff is preserved. This proves the bijection in assertion~(1) and the action estimate in assertion~(2).

\begin{proof}[Proof of Theorem~\ref{thm:semiweak_capacities}]
The compact region $\Omega$ is convex because it is the hypograph of the
concave function $f$. Hence Corollary~A.12 of \cite{cristofaro2019symplectic} applies and gives
$$
c_k(X_\Omega)
=
\min\left\{
\ell_\Omega(\Lambda):
\mathcal L(B_\Lambda)=k+1
\right\}.
$$
where the minimum is over convex lattice paths in the sense of \cite{cristofaro2019symplectic}. We
use the orientation convention of Definition \ref{def_integral_path_generators}, which is the reverse
of the convention in \cite{cristofaro2019symplectic}.

We claim that the minimum can be restricted to $(n,m)$-adapted
semi-weakly convex integral paths satisfying $y(\Lambda)\geq0$.

First, every such adapted path is a convex lattice path in the sense
of \cite{cristofaro2019symplectic}. Moreover, a semi-weakly convex path with $y(\Lambda)\geq0$
cannot cross below the $x$-axis. Indeed, the quantities $q_i/p_i$ for
its nonvertical edges $(-p_i,q_i)$ are nonincreasing along the path.
Once one of the vertical components becomes negative, all subsequent
vertical components are negative. Thus crossing below the axis would
force the terminal value $y(\Lambda)$ to be negative. Consequently,
$
B^-_\Lambda=\emptyset,
$
and $B^+_\Lambda$ is precisely the region enclosed by $\Lambda$ and the
coordinate axes. It follows that
$$
c_k(X_\Omega)
\leq
\min\left\{
\ell_\Omega(\Lambda):
\mathcal L(B^+_\Lambda)=k+1
\right\},
$$
where the minimum on the right is over the adapted paths in the
statement of the theorem.

We prove the opposite inequality. Let $\Lambda$ be a convex lattice path
in the sense of~\cite{cristofaro2019symplectic}, written in the orientation of Definition \ref{def_integral_path_generators}.
Consider first the initial consecutive edges which are not horizontally
$n$-adapted. If $n=0$, there are no such edges. If $n>0$, write the sum
of these edges as
$
(-P_+,Q_+).
$

The failure of horizontal $n$-adaptedness implies
$
P_+<nQ_+.
$
Replace this entire initial part by the single edge
$
Q_+(-n,1),
$
and translate the initial point of the path to the right by
$nQ_+-P_+$. 

The endpoint of the replaced part is unchanged. We claim that the support
point of $\Omega$ for every discarded direction is $(a,0)$. Indeed, let
$(-p,q)$ be one of the discarded edge directions. Since this direction
is not horizontally $n$-adapted, we have
$
p<nq.
$
Since $n>0$ and $nf'(a)\geq -1$, it follows that
$
q+pf'(a)
\geq
q-\frac{p}{n}
>0.
$
The function
$
x\mapsto qx+pf(x)
$
is concave. Since its left derivative at $a$ is positive, the function is
increasing on $[0,a]$. Hence its maximum is attained at $x=a$. Thus the
support point is $(a,f(a))=(a,0)$. Moreover, since
$
1+nf'(a)\geq0,
$
the point $(a,0)$ is also a support point for the direction
$(-n,1)$. Consequently, replacing the discarded
edges by $Q_+(-n,1)$ preserves the $\Omega$-length. If the new edge is parallel to the following edge, we combine
the two parallel edges into a single edge.

Next, write the sum of the final consecutive edges which are not
vertically $m$-adapted as
$
(-P_-,Q_-).
$
The failure of vertical $m$-adaptedness implies
$
Q_-<mP_-.
$
Replace this final part by the single edge
$
P_-(-1,m).
$

The initial point of the replaced part is unchanged, while the terminal
point on the $y$-axis is moved upward by $mP_--Q_-$. We claim that the
support point of $\Omega$ for every discarded direction is $(0,f(0))$.
Indeed, let $(-p,q)$ be one of the discarded edge directions. Since this
direction is not vertically $m$-adapted, we have
$
q<mp.
$
Since $f'(0)\leq-m$, it follows that
$
q+pf'(0)
\leq
q-mp
<0.
$
The function
$
x\mapsto qx+pf(x)
$
is concave. Since its right derivative at $0$ is negative, it is
decreasing on $[0,a]$, and hence its maximum is attained at $x=0$.
Thus the support point is $(0,f(0))$. Moreover, since
$
m+f'(0)\leq0,
$
the point $(0,f(0))$ is also a support point for the direction
$(-1,m)$.Consequently, replacing the
discarded edges by $P_-(-1,m)$ preserves the $\Omega$-length. If the new edge is parallel to the preceding edge, we combine
the two parallel edges into a single edge.

Denote the resulting path by $\Lambda^{\mathrm a}$. It is
$(n,m)$-adapted, satisfies
$
y(\Lambda^{\mathrm a})\geq y(\Lambda)\geq0,
$
and obeys
$
\ell_\Omega(\Lambda^{\mathrm a})
=
\ell_\Omega(\Lambda).
$
The region $B^+_{\Lambda^{\mathrm a}}$ contains the region enclosed by
$\Lambda$ and the coordinate axes. Hence
$
\mathcal L(B^+_{\Lambda^{\mathrm a}})
\geq
\mathcal L(B_\Lambda).
$
If this inequality is strict, apply the corner-rounding operation used
in the proof of Corollary~A.12 in \cite{cristofaro2019symplectic}. Each corner rounding removes one
lattice point and does not increase the $\Omega$-length. It also
preserves $(n,m)$-adaptedness: every new edge is the sum of consecutive
old edges, and the set
$$
\mathcal C_{n,m}
=
\{v\in\mathbb R^2:
(-n,1)\mathbin{\times}v\geq0,
\quad
v\mathbin{\times}(-1,m)\geq0\}
$$
is a convex cone. The operation also preserves the condition
$y(\Lambda)\geq0$.

Repeating the operation, we obtain an $(n,m)$-adapted semi-weakly
convex integral path $\widetilde\Lambda$ such that
$
\mathcal L(B^+_{\widetilde\Lambda})
=
\mathcal L(B_\Lambda),
$
and
$
\ell_\Omega(\widetilde\Lambda)
\leq
\ell_\Omega(\Lambda).
$

Taking $\mathcal L(B_\Lambda)=k+1$ proves the reverse inequality and hence part
(i) of the theorem. When $f(a)>0$, we have $n=0$, so the horizontal
modification is absent; the same argument proves part~(ii).
\end{proof}

\subsection{Finiteness of Convex and Concave Generators with fixed Index. }
\label{sec:nonzero-ech-classes}

In this subsection, $X_\Omega$ is either a convex or a concave toric
domain.  We work throughout with coefficients in
$
\mathbb F_2=\mathbb Z/2.
$

For each fixed $K\in\mathbb Z_{\geq0}$, there are only finitely many
convex generators and only finitely many concave generators of ECH
index at most $K$.  Indeed, if $\Lambda$ is convex, then Pick's
formula gives
$
x(\Lambda)+y(\Lambda)\leq I(\Lambda)\leq K.
$
If $\Lambda$ is concave, the lattice points on the coordinate axes
which belong to $B_\Lambda$ give
\[
x(\Lambda)+y(\Lambda)-1
\leq
\mathcal L(B_\Lambda)
\leq
\frac{I(\Lambda)}2
\leq
\frac K2.
\]
Thus, in either case, all vertices of $\Lambda$ belong to a fixed
finite subset of $\mathbb Z^2$.  Since the paths are injective and
each edge has one of two labels, only finitely many generators can
occur.

Consequently, if $X_\Omega$ is convex, the number
\[
A_K^{\rm cvx}(\Omega)
=
\max
\{
\ell_\Omega(\Lambda):
\Lambda\text{ is convex and }0\leq I(\Lambda)\leq K
\}
\]
is finite, while if $X_\Omega$ is concave, the number
\[
A_K^{\rm ccv}(\Omega)
=
\max
\{
\ell_\Omega(\Lambda):
\Lambda\text{ is concave and }0\leq I(\Lambda)\leq K
\}
\]
is finite.

Fix $K\in\mathbb Z_{\geq0}$.  In the convex case choose
$
L>A_K^{\rm cvx}(\Omega),
$
and in the concave case choose
$
L>A_K^{\rm ccv}(\Omega),
$
with $L$ sufficiently large for the perturbations below.  We then
choose the approximation parameter $l$ sufficiently large and take an
$L$-nice perturbation
$
(Y_{l,L},\lambda_{l,L})
=
(\partial X_{\Omega_{l,L}},\lambda_{l,L})
$
as in Section~\ref{sec:convexpert}, with $m=n=0$, in the convex case,
and as in Section~\ref{sec:concavepert} in the concave case.  We also
fix a generic $\lambda_{l,L}$-compatible almost complex structure
$J$.

By the choice of $L$, every convex or concave generator of ECH index
at most $K$ has $\Omega$-length strictly less than $L$.  The action
estimates and the orbit-set--generator correspondences in
Sections~\ref{sec:convexpert} and~\ref{sec:concavepert} therefore
imply, after increasing $l$ and making the Morse--Bott perturbation
sufficiently small, that the orbit-set generators in this grading
range are in grading-preserving bijection with the corresponding
convex or concave generators.

\section{A Morse--Bott argument for $L$-nice perturbations}
\label{sec:morse-bott-differential}

In this section we show that the differential decreases the number of
hyperbolic labels by one at a convex toric boundary and increases it by
one at a concave toric boundary.

We use the following standing convention.  Fix an action cutoff $L>0$
and a grading bound $K\in\mathbb Z_{>0}$.  At a convex boundary, choose
an $L$-nice perturbation with respect to the grading bound $K$ as in
Section~\ref{sec:convexpert}.  At a concave boundary, first increase the
approximation parameter $l$ as required in
Section~\ref{sec:concavepert}, and then choose an $L$-nice perturbation.
All orbit sets and generators considered below have action less than $L$
and grading between $0$ and $K$.  Thus ``$L$-nice perturbation'' refers
to the perturbations already defined in Section~5; no additional
perturbation notion is being introduced.  This agrees with the usual
formulation of results for $L$-nice perturbations, for example
\cite[Proposition~3.7]{hutchingsroyweileryao2026}.

The arguments through
Lemma~\ref{lem:concave-fredholm-dichotomy} use only the $L$-nice
properties and a generic compatible almost complex structure.  The
exclusion of double rounding and double corounding requires one additional
choice: within the arbitrarily small neighborhood allowed by the
$L$-nice construction, the perturbation is taken sufficiently close to
the Morse--Bott contact form, and the compatible almost complex structure
is chosen in a generic family converging to the standard toric almost
complex structure on each relevant product region.

The global Morse--Bott input is Yao's description of the ECH differential
for convex and concave toric boundaries by tree-like index-one cascades;
see \cite[Theorem~1.4 and Sections~8--10]{YaoMorseBottECH} and the
cascade--curve correspondence of \cite{YaoCascades}.  The local geometric
input is the polygonal and zero-area analysis of Hutchings--Sullivan.  The
positive Morse--Bott model which occurs at a convex toric boundary is the
local model used in the proof of
\cite[Theorem~11.11]{HutchingsSullivan2006}, while the negative
Morse--Bott model which occurs at a concave toric boundary is the model of
\cite[Lemma~A.1]{HutchingsSullivan2005}.

We work over $\mathbb F_2$.  This is useful below because the two local
Morse trajectories, which would change the label of an unchanged
Morse--Bott edge from $e$ to $h$ cancel modulo two; compare Step~2 in the
proof of \cite[Theorem~11.11]{HutchingsSullivan2006}.

\subsection{Index-one currents and the polygonal order}
\label{subsec:index-one-localization}

We first record the standard reduction of an index-one current to its
unique nontrivial component.

\begin{lem}
\label{lem:index-one-localization}
Let $\lambda$ be one of the $L$-nice perturbations fixed above, and let
$J$ be generic and compatible with $\lambda$.  Suppose that $\alpha_+$
and $\alpha_-$ are admissible orbit sets in the associated action and
grading window and
$
\left\langle
\partial_J\alpha_+,\alpha_-
\right\rangle\neq0.
$
Then, a current contributing to this coefficient has the form
$
\mathcal C
=
\mathcal C_{\mathrm{triv}}\sqcup C,
$
where $\mathcal C_{\mathrm{triv}}$ is a union of trivial cylinders and
$C$ is embedded, irreducible, and satisfies
\begin{equation}
\label{eq:mb-ind-one-component}
I(C)=\operatorname{ind}(C)=1.
\end{equation}
After deleting the common factors represented by
$\mathcal C_{\mathrm{triv}}$, the polygonal difference of the two
remaining generators is connected.
\end{lem}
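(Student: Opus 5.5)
The plan has three parts: decompose the current with Proposition~\ref{BasedDiff}(ii), then show the nontrivial component is embedded, irreducible and has $\operatorname{ind}(C)=1$, and finally show that the polygonal difference of the reduced generators is connected.

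\textbf{Decomposition.} Let $\mathcal C\in\mathcal M^J(\alpha_+,\alpha_-)$ be a current counted in $\langle\partial_J\alpha_+,\alpha_-\rangle$, so $I(\mathcal C)=1$. By Proposition~\ref{BasedDiff}(ii), $\mathcal C=\mathcal C_0\sqcup C_1$ with $I(\mathcal C_0)=0$ and $I(C_1)=1$. By Proposition~\ref{BasedDiff}(i), $\mathcal C_0$ is a union of trivial cylinders with multiplicities; this is $\mathcal C_{\mathrm{triv}}$.

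\textbf{The component $C=C_1$.} For generic $J$, the standard ECH argument \cite[Sec.~3.5]{hutchings2014lecture} shows that the nontrivial part of an index-one current is embedded, irreducible and not multiply covered. Since it is somewhere injective, the index inequality \eqref{boundsFred} gives $\operatorname{ind}(C)\le I(C)=1$. Genericity forces $\operatorname{ind}(C)\ge1$, because $C$ is not $\mathbb R$-invariant and therefore moves in an $\mathbb R$-family. Hence \eqref{eq:mb-ind-one-component} holds. Since all orbits lie in the action and grading window, the $L$-nice correspondence of Sections~\ref{sec:concavepert} and \ref{sec:convexpert} identifies the reduced orbit sets $\alpha_\pm'$ (obtained from $\alpha_\pm$ by removing the orbits covered by $\mathcal C_{\mathrm{triv}}$) with generators $\Lambda_{\alpha_\pm'}$.

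\textbf{Connectedness of the polygonal difference.} I would argue by contradiction. Suppose the region between $\Lambda_{\alpha_+'}$ and $\Lambda_{\alpha_-'}$ splits into pieces meeting only at common vertices. Each piece then corresponds to a sub-collection of ends of $C$ whose homology classes add up separately. I would compare $I(C)$ with the sum of the ECH indices of these pieces, using the relative intersection pairing written piecewise. The index should be additive, up to nonnegative linking corrections between pieces, and each piece should have index at least $1$. This would give $I(C)\ge2$, a contradiction.

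Alternatively, I would use $J_0$ together with Proposition~\ref{pro:topcomp} and the formulas for $J_0$ in items~(6) and (4) to show that an irreducible $C$ with a disconnected difference would need $J_0(C)$ too large.

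\textbf{Expected obstacle.} The main difficulty is this last step: making the index additivity over the pieces rigorous when they share vertices or orbits. I expect to need the precise linking formulas listed in Section~\ref{sec:convexpert} and to check that the cross terms are nonnegative.
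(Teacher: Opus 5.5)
Your decomposition via Proposition~\ref{BasedDiff} and your argument that $\operatorname{ind}(C)=I(C)=1$ agree with the paper. The gap is in the connectedness step, and it is not the technical one you anticipate.

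\textbf{Additivity is exact; positivity is what fails.} For the reduced generators $\Lambda_\pm$ (convex case), the index difference is $2(\mathcal L(B_{\Lambda_+})-\mathcal L(B_{\Lambda_-}))-(h(\Lambda_+)-h(\Lambda_-))$. If the difference were pinched into pieces, this would split exactly as $\sum_j(2d_j-\Delta h_j)$, with no cross term. Here $d_j$ is the number of lattice points removed in the $j$-th piece and $\Delta h_j$ the number of $h$-labels lost there. The problem is your claim that each piece has index at least $1$.
\begin{itemize}
\item Positivity of the ECH index, Proposition~\ref{BasedDiff}(i), holds only for classes represented by holomorphic currents.
\item Since $C$ is irreducible, neither piece is represented by anything.
\item Saying that ``each piece corresponds to a sub-collection of ends of $C$'' presupposes that $C$ splits, which is exactly what must be proved.
\end{itemize}

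\textbf{The bound is false combinatorially.} Replace the two $h$-labeled edges $(-1,2),(-1,1)$ by the single $e$-labeled edge $(-2,3)$. This removes exactly one lattice point, $(1,2)$, and loses two $h$-labels, so $2d_j-\Delta h_j=0$. Put this next to an ordinary index-one rounding elsewhere on the path. The result is a pinched difference of total index one ($d=2$, $\Delta h=3$) that is consistent with every index identity available to you.

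Your $J_0$ alternative has the same defect. $J_0(C)$ is determined by the endpoint generators, and Proposition~\ref{pro:topcomp} only bounds the genus and the numbers of ends; neither sees a pinch.

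\textbf{The missing ingredient is geometric.} The paper uses the toric slicing, or local-energy, argument of Yao and Hutchings--Sullivan.
\begin{itemize}
\item For a regular value $x$ of the toric coordinate, the slice of $C$ over $F_x=\{x\}\times T^2$ has a homology class $(p,q)$.
\item Positivity of $d\lambda$ on $C$, via Stokes' theorem applied between nearby slices, gives a local inequality on this slice class.
\item That inequality yields the nesting and no-crossing of the two polygonal paths.
\item The same argument shows that two disjoint pieces of the polygonal difference would force at least two nontrivial holomorphic components.
\end{itemize}
This contradicts the first part of the lemma, where $C$ is the single nontrivial component. Without an argument of this kind, which uses the holomorphic curve itself rather than index bookkeeping at its ends, the last assertion of the lemma does not follow.
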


\begin{proof}
The first assertion follows from Proposition~\ref{BasedDiff}-(ii).  The
ECH-index-one current consists of an ECH-index-zero part, hence a union of
trivial cylinders, and one embedded nontrivial component of ECH index one.
For a generic almost complex structure the nontrivial component is
transversely cut out, and the ECH index inequality gives
$\operatorname{ind}(C)\leq I(C)=1$.  Since the differential
counts rigid curves modulo the $\mathbb R$-translation action,
\eqref{eq:mb-ind-one-component} follows.

For the last assertion, use the local-energy inequality on the toric slices.
For a regular value $x$ of the toric coordinate let
$F_x=\{x\}\times T^2$.  If
$[C\cap F_x]=(p,q)$, positivity of the $d\lambda$-energy gives the local
inequality used in Sections~9--10 of \cite{YaoMorseBottECH}.  It follows
by applying Stokes' theorem to the portion of the curve between two nearby
slices.  The resulting nesting and noncrossing argument implies that two
disjoint components of the polygonal difference would force at least two
nontrivial holomorphic components.  This contradicts the first part of the
lemma.  Compare the connectedness argument in the proof of
\cite[Theorem~11.11]{HutchingsSullivan2006}.
\end{proof}

The same local-energy argument also gives the ordering of the positive and
negative polygonal paths.  We will use it together with the following local
label-matching observation.

\begin{lem}
\label{lem:toric-nesting-label-matching}
The $L$-nice perturbations can be chosen so that the following holds in
the associated action and grading window.  If
$
\left\langle
\partial_J\alpha_{\Lambda_+},
\alpha_{\Lambda_-}
\right\rangle\neq0,
$
then the negative polygonal path is nested on the differential side of the
positive polygonal path.  Moreover, an edge which agrees on the two paths
has the same $e/h$ label on both sides.  The analogous statement holds for
partially agreeing edges in the sense needed to delete matching factors.
Consequently, after deleting all matching factors, either the reduced
polygonal paths are different, or the differential coefficient is zero.
\end{lem}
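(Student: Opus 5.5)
We will prove Lemma~\ref{lem:toric-nesting-label-matching} by reducing to a single nontrivial component and then using the Morse--Bott degeneration of the $L$-nice perturbation. By Lemma~\ref{lem:index-one-localization}, any current contributing to $\langle\partial_J\alpha_{\Lambda_+},\alpha_{\Lambda_-}\rangle$ is $\mathcal C_{\mathrm{triv}}\sqcup C$ with $C$ embedded, irreducible and $I(C)=\operatorname{ind}(C)=1$. The first step is nesting. For each regular toric level $x$, write $[C\cap F_x]=(p(x),q(x))$. Then apply Stokes' theorem to the part of $C$ between two nearby slices, using $d\lambda\ge0$ on $C$. This shows that the class $(p(x),q(x))$ is the difference of the homology carried by the positive path and the negative path up to level $x$. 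The sign of the local $d\lambda$-energy forces this difference to lie in the half-plane dictated by the convexity of $f_l$. Integrating in $x$ puts $\Lambda_-$ on the differential side of $\Lambda_+$: inside for a convex boundary, outside for a concave one. This is the same argument as in Sections~9--10 of \cite{YaoMorseBottECH} and in \cite[Thm.~11.11]{HutchingsSullivan2006}.

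The second step is label matching. Take an edge direction $(-p,q)$ that occurs with the same multiplicity in $\Lambda_+$ and $\Lambda_-$. After deleting $\mathcal C_{\mathrm{triv}}$, the nested paths touch along this edge, so $C$ has no ends there. Otherwise the slice classes near $T_{-p,q}$ would be nonzero, contradicting the fact that the paths agree on both sides. The only remaining possibility is that $\mathcal C_{\mathrm{triv}}$ is covered by trivial cylinders over $e_{-p,q}$ or $h_{-p,q}$, but possibly with differing labels. We exclude this using the Morse--Bott degeneration. As the perturbation tends to the Morse--Bott form, Yao's cascade description \cite[Thm.~1.4]{YaoMorseBottECH}, \cite{YaoCascades} converts $C$ into a tree-like cascade. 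A label change $e\leftrightarrow h$ on an otherwise unchanged edge corresponds to a flow line of the perfect Morse function on the $S^1$ family. There are exactly two such flow lines, and they cancel over $\mathbb F_2$, as in Step~2 of \cite[Thm.~11.11]{HutchingsSullivan2006} for the convex model and \cite[Lemma~A.1]{HutchingsSullivan2005} for the concave model. For partially agreeing edges, where the multiplicities differ, the same count is applied to the common part of the multiplicity: the matched factor $e^{\min}$ or $h$ splits off as trivial cylinders with the same label, and only the excess contributes to $C$.

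The final step is the dichotomy. Suppose that after deleting all matching factors the reduced paths coincide. Then $C$ would have the same positive and negative ends and zero slice classes at every level. Hence $\int_C d\lambda=0$, so $C$ is a union of trivial cylinders. This contradicts $I(C)=1$ together with Proposition~\ref{BasedDiff}(i), so the coefficient is zero.

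The main obstacle is the second step: making the cascade correspondence rigorous at finite perturbation, uniformly over the finite action and grading window. We will handle it by taking the perturbation in a family converging to the Morse--Bott form and choosing $J$ generic in a family converging to the toric $J$, so that the mod-$2$ counts stabilize.
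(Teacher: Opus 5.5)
Your plan follows the same route as the paper. Nesting comes from the toric local-energy/no-crossing argument of Sections~9--10 of \cite{YaoMorseBottECH}; at a convex boundary the slice classes must also include the intersections with the exceptional trivial cylinders, which your Step~1 omits. Label matching comes from the mod-two cancellation of the two Morse--Bott cylinders given by the two gradient trajectories on the circle of orbits, as in Step~2 of \cite[Theorem~11.11]{HutchingsSullivan2006}. The paper treats label matching as a purely local statement near one torus, so it does not need your degenerating family or Yao's cascades here; those are used only later, to exclude double (co)rounding.

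There is, however, a step in your label-matching paragraph that fails as written. You claim that if an edge agrees on the two paths then $C$ has no ends at $T_{-p,q}$, ``otherwise the slice classes near $T_{-p,q}$ would be nonzero.'' This is false exactly in the case the lemma is about. The local Morse--Bott cylinder from $e_{-p,q}$ to $h_{-p,q}$ at a convex torus (from $h_{-p,q}$ to $e_{-p,q}$ at a concave one) has one positive and one negative end in the same class $(q,p)$. So the slice classes on both sides of $T_{-p,q}$ vanish, yet $C$ has ends there. Your next sentence, which attributes a possible label change to trivial cylinders, cannot be right either: a trivial cylinder has the same orbit at both ends, so the label change is carried by the nontrivial component $C$.

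What the vanishing of the slice classes really gives is confinement. If the agreeing edge has both endpoints as common vertices of $\Lambda_+$ and $\Lambda_-$, the slice class vanishes on the end-free slabs adjacent to $T_{-p,q}$. Since $\lambda$ restricts to a closed form on each slice, the $d\lambda$-energy of $C$ in these slabs is zero, so a nontrivial $C$ cannot enter them. As $C$ is connected, if it has an end at $T_{-p,q}$ then all of its ends are there, so $\Lambda_\pm$ differ only by that label. Only in this situation is the coefficient the count of the two local cylinders, which is zero mod two. With this replacement your argument agrees with the paper's, including your final dichotomy, which presupposes that labels already match.
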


\begin{proof}
The nesting statement is the no-crossing consequence of the toric
local-energy inequality.  For concave and convex toric boundaries this is
proved in Sections~9 and~10 of \cite{YaoMorseBottECH}; in the convex case
Yao's argument includes the two nondegenerate exceptional orbits by using
the intersection inequalities with their trivial cylinders.

The label-matching statement is local near one Morse--Bott torus.  If the
underlying edge is unchanged and the two generators differ only by changing
an $e$ label to an $h$ label, the only local index-one pieces are the two
Morse--Bott cylinders corresponding to the two gradient trajectories on the
circle of Reeb orbits.  Their mod-two count is zero.  This is the local
argument in Step~2 of the proof of
\cite[Theorem~11.11]{HutchingsSullivan2006}.  It takes place in a
neighborhood of a single Morse--Bott torus and therefore applies unchanged
to the toric $S^3$.  The exceptional orbits do not create an additional
label-matching issue, since they are elliptic and carry no $h$ label.
\end{proof}

\subsection{Fredholm index and the possible change of $h$}
\label{subsec:fredholm-local-transitions}

We next show that, before excluding double rounding, the change of the
number of hyperbolic labels is either one or three.  The convex case
requires a little care because the two exceptional elliptic orbits can
occur in the $L$-nice grading window.

\begin{lem}
\label{lem:convex-fredholm-dichotomy}
Let $\Lambda_+$ and $\Lambda_-$ be convex generators in the $L$-nice
action and grading window, and suppose that
$
\left\langle
\partial_J\alpha_{\Lambda_+},
\alpha_{\Lambda_-}
\right\rangle\neq0.
$
Then
$
h(\Lambda_+)-h(\Lambda_-)=1$ or 
$h(\Lambda_+)-h(\Lambda_-)=3.
$
For the reduced nontrivial component $C$, with the trivializations from
Section~\ref{sec:convexpert},
\begin{equation}
\label{eq:positive-relative-c1}
c_\tau(C)
=x(\Lambda_+)-x(\Lambda_-)
 +y(\Lambda_+)-y(\Lambda_-)
\in\{0,1\}.
\end{equation}
If $c_\tau(C)=1$, only the first alternative can occur.
If the second alternative occurs, then the reduced nontrivial component
has neither punctures at nor intersections with the trivial cylinders over
the exceptional orbits.  In particular it is contained in an interior
product region $I\times T^2$, its relative Chern number is zero, and the
reduced polygonal transition is the double-rounding configuration.
\end{lem}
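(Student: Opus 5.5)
The plan is to work with the reduced nontrivial component $C$ supplied by Lemma~\ref{lem:index-one-localization}. The lemma says $C$ is embedded and irreducible, with $I(C)=\operatorname{ind}(C)=1$. We then compare two expressions: the ECH index difference $I(\Lambda_+)-I(\Lambda_-)=1$, computed combinatorially, and the Fredholm index $\operatorname{ind}(C)=1$, computed from the end structure of $C$. By Lemma~\ref{lem:toric-nesting-label-matching}, after deleting matching factors the $h$-labels of $\Lambda_\pm$ are exactly the hyperbolic ends of $C$. Hence $h(\Lambda_+)-h(\Lambda_-)=h^+-h^-$, where $h^\pm$ is the number of positive or negative hyperbolic ends of $C$.

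\textbf{Parity.} Use $I(\Lambda)=2(\mathcal L(B_\Lambda)-1)-h(\Lambda)$. The equation $I(\Lambda_+)-I(\Lambda_-)=1$ becomes
\[
2\bigl(\mathcal L(B_{\Lambda_+})-\mathcal L(B_{\Lambda_-})\bigr)-(h^+-h^-)=1,
\]
so $h^+-h^-$ is odd.

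\textbf{Fredholm index.} All relevant elliptic orbits are $L$-positive, by item (3) of the $L$-nice properties. The partition conditions of Section~\ref{Partitions} then give the following, with $e^+_C$ the total elliptic multiplicity of the positive ends and $\nu^-_C$ the number of distinct negative elliptic orbits of $C$:
\begin{itemize}
\item a positive elliptic orbit of multiplicity $m$ contributes $m$ simple ends, each with $\text{CZ}_\tau=1$;
\item a negative elliptic orbit contributes a single end, with $\text{CZ}_\tau=1$;
\item hyperbolic ends have $\text{CZ}_\tau=0$ (Section~\ref{sec: Morse-Bott perturbation}).
\end{itemize}
Writing $\chi(C)=2-2g-(e^+_C+\nu^-_C+h^++h^-)$, the negative elliptic contributions cancel and
\[
\operatorname{ind}(C)=2g-2+2e^+_C+h^++h^-+2c_\tau(C).
\]
Setting this equal to $1$ gives
\[
2g+2e^+_C+h^++h^-+2c_\tau(C)=3 .
\]

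\textbf{Chern class.} Additivity of $c_\tau$ over orbit sets, together with $c_\tau(\alpha)=x(\Lambda_\alpha)+y(\Lambda_\alpha)$ from Section~\ref{sec:convexpert} (trivial cylinders contribute zero), gives \eqref{eq:positive-relative-c1}. The displayed identity forces $c_\tau(C)\le 1$. Nesting, from Lemma~\ref{lem:toric-nesting-label-matching}, gives $x(\Lambda_-)\le x(\Lambda_+)$ and $y(\Lambda_-)\le y(\Lambda_+)$, hence $c_\tau(C)\ge 0$.

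\textbf{Sign of the difference.} It remains to show $h^+-h^-\ge 1$; combined with $h^++h^-\le 3-2c_\tau(C)$ and the parity, this yields exactly $\{1,3\}$, with $3$ only when $c_\tau(C)=0$. The case to rule out is $h^->h^+$, for instance $h^+=1$, $h^-=2$, $e^+_C=0$, $g=0$. I expect this sign step to be the main obstacle. I would first try the $J_0$ bound of Proposition~\ref{pro:topcomp}, using the formula $J_0(\alpha)=I-2x-2y-\nu_e$ from item (4), to show that such a curve violates the end-count inequality. If that fails, I would fall back on the action/nesting picture: a curve with only hyperbolic positive ends would have to lower a single edge while adding two $h$ labels. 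The local Morse--Bott analysis (the mod-two cancellation in Step~2 of \cite[Theorem~11.11]{HutchingsSullivan2006}) should exclude this.

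\textbf{The case $h(\Lambda_+)-h(\Lambda_-)=3$.} Here $c_\tau(C)=0$, so $\Delta x=\Delta y=0$. If $C$ had an end at an exceptional orbit or met the trivial cylinders over $\gamma_1^{(l)}$ or $\gamma_2^{(l)}$, then positivity of intersections would produce a positive contribution to the relative Chern number, exactly as in Yao's treatment of the exceptional orbits. This contradicts $c_\tau(C)=0$. So $C$ avoids these cylinders and lies in the complement, which is an interior product region $I\times T^2$.

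Finally, in this case $g=0$ and $e^+_C=0$, so the identity forces $h^+=3$, $h^-=0$. A connected polygonal difference (Lemma~\ref{lem:index-one-localization}) with fixed endpoints, three $h$-edges on the positive side and all-elliptic negative side, and $\Delta\mathcal L=1$ is then identified combinatorially with the double-rounding configuration.
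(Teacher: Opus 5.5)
\textbf{There is a genuine gap at the step you flag yourself: $h^+-h^-\ge 1$.} Most of your outline matches the paper. The parity from $I(\Lambda)=2(\mathcal L(B_\Lambda)-1)-h(\Lambda)$, the Fredholm identity $2g+2e^+_C+h^++h^-+2c_\tau(C)=3$, the identification $c_\tau(C)=\Delta x+\Delta y$, and the treatment of the three-label case all agree with the paper's proof. But neither of your proposed routes closes the sign step.

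The $J_0$ route cannot work. Insert $J_0(\alpha)=I-2x-2y-\nu_e$ and the partition conditions: a positive elliptic orbit of multiplicity $m$ contributes $2m-1$, every other orbit contributes $1$. Then Proposition~\ref{pro:topcomp} becomes exactly $2g+2e^+_C+h^++h^-+2c_\tau(C)\le 3$. That is your Fredholm identity again, and it says nothing about the sign of $h^+-h^-$. Index bookkeeping alone allows $\Delta\mathcal L=0$ and $h^+-h^-=-1$, e.g.\ $h^+=0$, $h^-=1$, $g+e^+_C+c_\tau(C)=1$. The fallback also fails. The Hutchings--Sullivan mod-two cancellation concerns only relabeling an unchanged edge. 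It is already encoded in the label-matching part of Lemma~\ref{lem:toric-nesting-label-matching}, and it does not exclude curves whose reduced paths differ.

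The missing idea is to apply the nesting you already invoke to lattice counts, not just to the endpoint coordinates. After deleting matching factors, Lemma~\ref{lem:toric-nesting-label-matching} gives $B_{\Lambda_-}\subseteq B_{\Lambda_+}$ with the reduced generators different. Suppose $d=\mathcal L(B_{\Lambda_+})-\mathcal L(B_{\Lambda_-})$ were $0$. Then every lattice point of $B_{\Lambda_+}$, in particular every vertex, lies in the convex set $B_{\Lambda_-}$. This forces $B_{\Lambda_+}=B_{\Lambda_-}$, hence equal paths, hence equal generators by label matching, a contradiction. So $d\ge 1$, and your parity equation gives $h^+-h^-=2d-1\ge 1$ directly; this is exactly how the paper argues.

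With this step inserted, the rest of your argument goes through. In the three-label case, it is cleaner to exclude exceptional punctures and intersections through Yao's regrouped Fredholm identity, where $H=3$ forces all endpoint corrections to vanish, as the paper does. Phrasing it as a "positive contribution to $c_\tau(C)$" is less precise, because a negative end at an exceptional orbit enters $c_\tau(C)$ with a negative sign.
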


\begin{proof}
By Lemma~\ref{lem:toric-nesting-label-matching}, after deleting matching
factors the negative convex path is strictly nested inside the positive
one.  Hence
$
d:=
\mathcal L(B_{\Lambda_+})-
\mathcal L(B_{\Lambda_-})
$
is a positive integer.  Since the ECH differential decreases the ECH
index by one and
$
I(\Lambda)=2(\mathcal L(B_\Lambda)-1)-h(\Lambda),
$
we obtain
$
1
=
2d-h(\Lambda_+)+h(\Lambda_-).
$
Thus
\begin{equation}
\label{eq:convex-h-change-index}
h(\Lambda_+)-h(\Lambda_-)=2d-1,
\end{equation}
which proves that the difference is positive and odd.

The relative first Chern number is the difference of the absolute Chern
terms computed in Section~\ref{sec:convexpert}, and this proves the
equality in \eqref{eq:positive-relative-c1}.  The endpoint-intersection
inequalities identify the two coordinate differences with nonnegative
intersection quantities; in particular $c_\tau(C)\geq0$.

Let $e_+(C)$ denote the number of positive elliptic punctures, including
punctures at exceptional orbits, and let $h_+(C)$ and $h_-(C)$ denote
the numbers of positive and negative hyperbolic punctures.  Put
$H(C)=h_+(C)+h_-(C)$.  Every relevant elliptic orbit is $L$-positive,
so every iterate which can occur has Conley--Zehnder index $1$.  The
Fredholm formula and $\operatorname{ind}(C)=1$ therefore give
\begin{equation}
\label{eq:positive-fredholm-c1}
2g(C)+2e_+(C)+H(C)+2c_\tau(C)=3.
\end{equation}
After the matching hyperbolic factors have been deleted, the difference
in \eqref{eq:convex-h-change-index} is the difference between the numbers
of positive and negative hyperbolic punctures of $C$.  Hence
\[
0<
h(\Lambda_+)-h(\Lambda_-)
\leq
H(C)
\leq3.
\]
Together with oddness, this gives the two stated alternatives.
Equation~\eqref{eq:positive-fredholm-c1} also gives
$c_\tau(C)\leq1$, proving the last part of
\eqref{eq:positive-relative-c1}.  If $c_\tau(C)=1$, then the same
equation and the strict positivity of the change in $h$ force
$H(C)=1$; thus the three-label alternative is impossible.

Suppose now that the difference is three.  Every inequality above is then
an equality.  In particular,
\[
h_+(C)=3,
\qquad
h_-(C)=0,
\qquad
g(C)=0,
\qquad
e_+(C)=0,
\qquad
c_\tau(C)=0.
\]

To control exceptional punctures on the negative side as well, keep the
endpoint terms visible in the Fredholm calculation from the proof of
\cite[Theorem~10.3]{YaoMorseBottECH}.  Write $r_{\rm exc}(C)\geq0$ for
the exceptional-puncture correction, which vanishes exactly when $C$ has
no puncture at either exceptional orbit, and write
$\iota_1(C),\iota_2(C)\geq0$ for the intersection corrections with the
corresponding exceptional trivial cylinders.  Regrouping Yao's formula
gives
\begin{equation}
\label{eq:positive-fredholm-with-endpoints}
2g(C)+2e_+^{\rm int}(C)+H(C)
+2r_{\rm exc}(C)+2\iota_1(C)+2\iota_2(C)=3.
\end{equation}
Since $H(C)=3$, equation~\eqref{eq:positive-fredholm-with-endpoints}
forces
$r_{\rm exc}(C)=\iota_1(C)=\iota_2(C)=0$.  Thus $C$ has no exceptional
punctures and does not intersect the exceptional trivial cylinders.  It is
therefore supported between interior Morse--Bott tori in a product region
$I\times T^2$.

The local-energy and connectedness argument now gives precisely the
three-edge configuration called double rounding in
\cite[Theorem~11.11, Step~3]{HutchingsSullivan2006}.  This proves the
last assertion.
\end{proof}

\begin{lem}
\label{lem:concave-fredholm-dichotomy}
Increase the approximation parameter $l$ so that no orbit set in the
grading window of the $L$-nice correspondence contains an exceptional
orbit.  If $\Lambda_+$ and $\Lambda_-$ are concave generators in this
window and
$
\left\langle
\partial_J\alpha_{\Lambda_+},
\alpha_{\Lambda_-}
\right\rangle\neq0,
$
then
$
h(\Lambda_-)-h(\Lambda_+)=1
$ or $
h(\Lambda_-)-h(\Lambda_+)=3.
$
For the reduced nontrivial component $C$,
\begin{equation}
\label{eq:concave-relative-c1}
c_{\tau_{\rm MB}}(C)
=x(\Lambda_+)-x(\Lambda_-)
 +y(\Lambda_+)-y(\Lambda_-)
\in\{0,1\}.
\end{equation}
If $c_{\tau_{\rm MB}}(C)=1$, only the first alternative can occur.
In the second case, the reduced transition is a double corounding: its
positive exterior ends are elliptic, and its negative exterior ends contain
exactly three hyperbolic ends.  Moreover,
$c_{\tau_{\rm MB}}(C)=0$, and $C$ is disjoint from the exceptional
trivial cylinders and contained in an interior product region
$I\times T^2$.
\end{lem}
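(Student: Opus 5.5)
The proof will mirror Lemma~\ref{lem:convex-fredholm-dichotomy}, with the signs reversed for the concave geometry. First, by Lemma~\ref{lem:index-one-localization}, a contributing current is a union of trivial cylinders together with one embedded irreducible component $C$ satisfying $I(C)=\operatorname{ind}(C)=1$. We then use Lemma~\ref{lem:toric-nesting-label-matching} to delete matching factors. After this deletion the reduced negative concave path is strictly nested on the differential side of the positive one. For concave paths this means $B_{\Lambda_-}\supsetneq B_{\Lambda_+}$, because the concave index $I(\Lambda)=2\mathcal L(B_\Lambda)+h(\Lambda)$ must drop by one. So if $d:=\mathcal L(B_{\Lambda_+})-\mathcal L(B_{\Lambda_-})$, then $1=2d+h(\Lambda_+)-h(\Lambda_-)$. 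Nesting gives $d\le 0$, and we get
\[
h(\Lambda_-)-h(\Lambda_+)=1-2d,
\]
which is positive and odd. I expect the correct direction of nesting (the region grows toward the negative end) to come out of the local-energy inequality of \cite{YaoMorseBottECH}, exactly as in the convex case.

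Next, the Chern term. The item (1) in Section~\ref{sec:concavepert} excludes exceptional orbits. Then the Morse--Bott formula $c_\tau(\alpha)=x(\Lambda_\alpha)+y(\Lambda_\alpha)$ gives \eqref{eq:concave-relative-c1} as a difference of absolute terms, and nonnegativity again follows from the endpoint/intersection argument. For the Fredholm index, every relevant elliptic orbit is now $L$-negative with $\mathrm{CZ}=-1$ and every hyperbolic orbit has $\mathrm{CZ}=0$. By the partition conditions of Section~\ref{Partitions}, each elliptic orbit at a positive end gives one end with $\mathrm{CZ}=-1$, and each elliptic orbit at a negative end gives several ends with $\mathrm{CZ}=-1$.

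Let $e_-(C)$ be the number of negative elliptic ends and $e_+(C)$ the number of positive elliptic ends, and let $H(C)$ be the number of hyperbolic ends. Then $\operatorname{ind}(C)=1$ becomes $2g-2+e_++e_-+H+2c_\tau-e_++e_-=1$, that is,
\[
2g(C)+2e_-(C)+H(C)+2c_\tau(C)=3.
\]
This is the analogue of \eqref{eq:positive-fredholm-c1}, with the roles of the positive and negative ends swapped. It gives $0<h(\Lambda_-)-h(\Lambda_+)\le H(C)\le 3$, so the change in $h$ is $1$ or $3$, and $c_\tau(C)\in\{0,1\}$. If $c_\tau(C)=1$, the identity forces $H(C)=1$, which rules out the value $3$.

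Finally, suppose the change is $3$. Then every inequality is an equality: $g=0$, $e_-=0$, $c_\tau=0$, $h_-=3$ and $h_+=0$. So all negative ends are the three hyperbolic ends and all positive ends are elliptic. Disjointness from the exceptional trivial cylinders should come from positivity of intersections with them, together with $c_\tau(C)=0$ and the absence of exceptional orbits. Since $c_\tau$ equals the total intersection correction with those cylinders, as in \eqref{eq:positive-fredholm-with-endpoints}, each correction vanishes, and so $C$ lies in an interior product region $I\times T^2$. The identification with the double corounding configuration then follows from the local-energy and connectedness argument, using the negative Morse--Bott model of \cite[Lemma~A.1]{HutchingsSullivan2005}.

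I expect the main obstacle to be this last step: pinning down the polygonal shape and confirming that the concave Fredholm bookkeeping with partition conditions is correct. In particular, one has to check that each positive elliptic orbit contributes a single end, which is what makes the $e_+$ terms cancel.
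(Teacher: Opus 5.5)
Your first step has a genuine error, masked by an algebra slip. From your own (correct) identity $1=2d+h(\Lambda_+)-h(\Lambda_-)$, with $d=\mathcal L(B_{\Lambda_+})-\mathcal L(B_{\Lambda_-})$, one gets $h(\Lambda_-)-h(\Lambda_+)=2d-1$, not $1-2d$. You assert the nesting $B_{\Lambda_-}\supsetneq B_{\Lambda_+}$, hence $d\le 0$. Combined with the correct formula, this gives $h(\Lambda_-)-h(\Lambda_+)\le -1$, so the differential would \emph{lose} hyperbolic labels, which is the opposite of the lemma. Your conclusion ``positive and odd'' comes out only because the sign slip cancels a wrong nesting direction.

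In the concave case the negative region is the \emph{smaller} one, $d\ge 1$. This is what the paper extracts from the local-energy and no-crossing inequalities, and it gives $h(\Lambda_-)-h(\Lambda_+)=2d-1\ge 1$. Your justification (``because the concave index must drop by one'') cannot decide the direction. Since $2\mathcal L$ changes by an even amount, a drop of one in $I$ is compatible both with the region shrinking while $h$ increases and with the region growing while $h$ decreases; only the geometric input distinguishes them.

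A quick sanity check: in gradings $2,3,4$ the only concave generators are $e_{-1,1}$, $h_{-1,1}$, $e_{-1,2}$ and $e_{-2,1}$. For the homology to be that of $S^3$, $h_{-1,1}$ must be a cycle (since $ECH_2\neq 0$) and a boundary (since $ECH_3=0$). So it appears in $\partial e_{-1,2}$ or $\partial e_{-2,1}$, and in that transition the lattice count drops from $2$ to $1$ while $h$ rises from $0$ to $1$.

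With the nesting reversed, the rest of your argument matches the paper's proof. The Chern term is identified with the endpoint intersection corrections $\iota_1(C)+\iota_2(C)\ge 0$. The Fredholm identity is $2g(C)+2e_-(C)+H(C)+2c_{\tau_{\mathrm{MB}}}(C)=3$. The equality analysis in the three-label case gives disjointness from the exceptional cylinders and the double corounding.

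The obstacle you flag at the end is not actually one. The $e_+$ terms cancel for any partition: each positive elliptic end, whatever its covering multiplicity, contributes $+1$ to $-\chi(C)$ and $-1$ to $\operatorname{CZ}^{\operatorname{ind}}_{\tau_{\mathrm{MB}}}(C)$. The paper's computation uses only this, not the partition conditions.
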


\begin{proof}
There are no exceptional ends in the $L$-nice grading window.  After
deleting the matching factors, the local-energy and no-crossing inequalities give the
concave nesting relation.  In particular,
$
d:=
\mathcal L(B_{\Lambda_+})-
\mathcal L(B_{\Lambda_-})
$
is a positive integer.  Since
$
I(\Lambda)=2\mathcal L(B_\Lambda)+h(\Lambda)
$
and the differential lowers $I$ by one, we obtain
$
1
=
2d+h(\Lambda_+)-h(\Lambda_-),
$
and therefore
\begin{equation}
\label{eq:concave-h-change-index}
h(\Lambda_-)-h(\Lambda_+)=2d-1.
\end{equation}
Thus the increase in the number of hyperbolic labels is a positive odd
integer.

Use the Morse--Bott trivializations along the interior orbits.  The toric
Chern-class calculation gives
\[
c_{\tau_{\rm MB}}(C)
=x(\Lambda_+)-x(\Lambda_-)
 +y(\Lambda_+)-y(\Lambda_-).
\]
Although there are no exceptional punctures, the curve can intersect the
trivial cylinders over the exceptional orbits.  The two endpoint
intersection formulas identify the last expression with
$\iota_1(C)+\iota_2(C)$, where
$\iota_1(C),\iota_2(C)\geq0$ by positivity of intersections.  Thus
$c_{\tau_{\rm MB}}(C)\geq0$; it need not vanish for an arbitrary
index-one differential curve.

For a concave Morse--Bott torus the elliptic orbit is $L$-negative, so
\[
\operatorname{CZ}_{\tau_{\rm MB}}(e^m)=-1,
\qquad
\operatorname{CZ}_{\tau_{\rm MB}}(h)=0.
\]
Let $e_\pm$ and $h_\pm$ denote the numbers of positive and negative
elliptic and hyperbolic ends, respectively, and put $h=h_++h_-$.  If $g$
is the genus, then
$
\chi(C)=2-2g-e_+-e_--h
$
and
$
\operatorname{CZ}^{\operatorname{ind}}_{\tau_{\rm MB}}(C)
=-e_++e_-.
$
The Fredholm formula therefore gives
\[
\begin{aligned}
1
&=
-\chi(C)
+2c_{\tau_{\rm MB}}(C)
+\operatorname{CZ}^{\operatorname{ind}}_{\tau_{\rm MB}}(C)\\
&=
2g-2+2e_-+h+2c_{\tau_{\rm MB}}(C),
\end{aligned}
\]
so
\begin{equation}
\label{eq:concave-fredholm-three}
2g+2e_-+h+2c_{\tau_{\rm MB}}(C)=3.
\end{equation}
The change in \eqref{eq:concave-h-change-index} is at most
$h_++h_-=h$.  Equation~\eqref{eq:concave-fredholm-three} therefore gives
$h\leq3$ and $c_{\tau_{\rm MB}}(C)\leq1$.  This proves
\eqref{eq:concave-relative-c1} and the two alternatives.  If
$c_{\tau_{\rm MB}}(C)=1$, then the same equation forces $h=1$, so only
the one-label alternative occurs.

If it is three, then equality in the preceding inequalities gives
$h_+=0$, $h_-=3$, $g=0$, $e_-=0$, and
$c_{\tau_{\rm MB}}(C)=0$.  Since
$c_{\tau_{\rm MB}}(C)=\iota_1(C)+\iota_2(C)$, both endpoint
intersection numbers vanish.  Positivity of intersections then places
$C$ in an interior product region.  The connected polygonal difference
is therefore the double-corounding configuration.
\end{proof}

\subsection{The zero-area constraint and Morse--Bott compactness}
\label{subsec:mb-degeneration-endpoint}

The remaining task is to exclude the alternatives in which the number of
hyperbolic labels changes by three.  We first fix the coordinates on the
circles of Reeb orbits which enter the zero-area argument.

Let $T_{-p,q}$ be an interior Morse--Bott torus.  In the torus coordinates
$(t_1,t_2)\in(\mathbb R/\mathbb Z)^2$, the Reeb orbits on
$T_{-p,q}$ have primitive homology class $(q,p)$.  We identify the circle
of such Reeb orbits with $\mathbb R/\mathbb Z$ by the affine coordinate
\begin{equation}
\label{eq:toric-affine-coordinate}
\vartheta_{-p,q}(\gamma)
=
q t_2-p t_1+\frac{pq}{2}
\qquad\text{in }\mathbb R/\mathbb Z,
\end{equation}
where $(t_1,t_2)$ is any point on $\gamma$.  The first two terms are
constant along $\gamma$, and the affine normalization is the toric analogue
of \cite[(57)]{HutchingsSullivan2005} and \cite[(99)]{HutchingsSullivan2006}.
Changing all of these coordinates by the simultaneous convention induced
by the torus coordinates does not change the endpoint relation below.

We choose the unperturbed almost complex structure $J_0$ on each product
region to agree with the standard toric almost complex structure used in
the local Morse--Bott model.  We now use the additional choice announced
at the beginning of the section.  Choose a family of $L$-nice contact
forms $\lambda_\delta$, $\delta>0$, in the fixed action and grading
window, converging in $C^\infty$ to the Morse--Bott form $\lambda_0$.
Choose a generic family of compatible almost complex structures
$J_\delta$ converging to $J_0$, with the prescribed standard form near
the relevant Morse--Bott tori.  The $L$-nice orbit-set correspondence is
unchanged after making $\delta$ smaller.

\begin{lem}
\label{lem:mb-endpoint-constraint}
Let $C$ be a nontrivial $J_0$-holomorphic component in
$\mathbb R\times I\times T^2$.  Suppose that its positive ends have
multiplicities $m_i$ and asymptotic orbits $\gamma_i^+$, and that its
negative ends have multiplicities $n_j$ and asymptotic orbits
$\gamma_j^-$.  Then
\begin{equation}
\label{eq:mb-endpoint-constraint}
\sum_i m_i\vartheta(\gamma_i^+)
-
\sum_j n_j\vartheta(\gamma_j^-)
=0
\qquad\text{in }\mathbb R/\mathbb Z.
\end{equation}
Here the coordinate on each Morse--Bott circle is the affine coordinate
\eqref{eq:toric-affine-coordinate} corresponding to its slope.
\end{lem}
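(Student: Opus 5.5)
The plan is to derive \eqref{eq:mb-endpoint-constraint} from Stokes' theorem, applied to a closed complex $2$-form which vanishes identically on $J_0$-holomorphic curves. This is the toric analogue of the zero-area argument in \cite[Sec.~11]{HutchingsSullivan2006} and \cite[Lemma~A.1]{HutchingsSullivan2005}.

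\textbf{Step 1: holomorphic coordinates.} On the product region $\mathbb R\times I\times T^2$ with coordinates $(s,x,t_1,t_2)$, the standard toric almost complex structure $J_0$ is $T^2$-invariant. It is integrable, with $\mathbb C^*$-valued holomorphic coordinates
\[
w_j=\exp\bigl(\sigma_j(s,x)+2\pi i\,t_j\bigr),\qquad j=1,2,
\]
where $\sigma_1,\sigma_2$ are functions of $(s,x)$ determined by $f$ and the Liouville coordinate. I will check this first, by writing $J_0$ in the model of Section~\ref{sec: Morse-Bott perturbation}.

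\textbf{Step 2: the vanishing form.} Set
\[
\Theta=\frac{dw_1}{w_1}\wedge\frac{dw_2}{w_2}=(d\sigma_1+2\pi i\,dt_1)\wedge(d\sigma_2+2\pi i\,dt_2).
\]
This is a closed holomorphic $(2,0)$-form. Since a $(2,0)$-form restricts to zero on any complex curve, its real part
\[
\operatorname{Re}\Theta=d\sigma_1\wedge d\sigma_2-4\pi^2\,dt_1\wedge dt_2
\]
vanishes on $C$.

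\textbf{Step 3: Stokes on a truncation.} Truncate $C$ at $|s|=R$ to obtain a compact surface $C_R$. Locally one has $\operatorname{Re}\Theta=d\rho$ with
\[
\rho=\sigma_1\,d\sigma_2-4\pi^2\,t_1\,dt_2 .
\]
This primitive is only defined modulo $4\pi^2\,\mathbb Z\,dt_2$, because $t_1$ is circle-valued. To handle this, cut $C_R$ along finitely many arcs on which $t_1$ jumps by an integer. Stokes' theorem then expresses
\[
0=\int_{C_R}\operatorname{Re}\Theta
\]
as the sum of $\int\rho$ over the boundary circles plus integer multiples of $4\pi^2$ coming from the cuts. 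Dividing by $4\pi^2$, the cut contributions vanish in $\mathbb R/\mathbb Z$.

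\textbf{Step 4: the end contributions.} As $R\to\infty$, each end converges to an $m$-fold cover of a Reeb orbit of class $(q,p)$ on $T_{-p,q}$. Along that orbit $\sigma_2$ is asymptotically constant, so the $d\sigma_2$ term tends to zero. The remaining term $-\int t_1\,dt_2$ over the asymptotic circle, parametrized as $t\mapsto(t_1^0+qt,\,t_2^0+pt)$ for $t\in[0,m]$, equals
\[
-m\Bigl(p\,t_1^0+\frac{pq}{2}\Bigr)\pmod{\mathbb Z}.
\]
The companion form, with the roles of $t_1$ and $t_2$ exchanged (equivalently, using the symmetric primitive $\tfrac12(t_2\,dt_1-t_1\,dt_2)$ together with the imaginary part to symmetrize), should produce exactly the combination $m\,\vartheta_{-p,q}(\gamma)=m\bigl(qt_2-pt_1+\tfrac{pq}{2}\bigr)$. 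Summing with signs over positive and negative ends then gives \eqref{eq:mb-endpoint-constraint}.

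\textbf{Main obstacle.} The delicate point is the bookkeeping of the multivalued primitive. One must show that the cut contributions are integers and that the affine normalization $\tfrac{pq}{2}$ arises consistently for every slope, so that the answer does not depend on the chosen $(t_1^0,t_2^0)$. This needs the symmetric choice of primitive, which I would fix first. I would also verify convergence of the $\sigma$-terms using exponential convergence of the ends to Morse--Bott orbits.
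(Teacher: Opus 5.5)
\textbf{There is a genuine gap.} Your overall strategy matches the paper's: a closed form that vanishes on $J_0$-holomorphic curves, Stokes on a truncation, and evaluation mod $\mathbb{Z}$ at the ends. But two steps fail as written.

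\textbf{Step 1 is false, and Steps 2--4 rest on it.} The $J_0$ of the lemma is a $\lambda_0$-compatible, $\mathbb{R}$- and $T^2$-invariant structure on the symplectization. So $J_0\partial_s=R(x)$ is tangent to the torus factor, and $J_0$ preserves $\xi\ni\partial_x$. Near a Morse--Bott torus, $R(x)$ must rotate with $x$; this is exactly the Morse--Bott condition $f''\neq0$. Then $[\partial_s-iR,\,\partial_x-iJ_0\partial_x]$ is a nonzero multiple of $R'(x)$, a torus-direction vector that is not in $T^{1,0}$, so $J_0$ is not integrable. Concretely, with $V=J_0\partial_x$ tangent to the torus, requiring $d\sigma_j+2\pi i\,dt_j$ to be of type $(1,0)$ forces $\partial_s\sigma_j=2\pi R^j(x)$ and $\partial_x\sigma_j=2\pi V^j(x)$. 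These are incompatible when $R'\neq0$. The integrable structure you have in mind, $\log z_j$ on $\mathbb{C}^2\setminus\{0\}$, sends the Liouville direction $\partial_s$ to a multiple of $\partial_{\theta_1}+\partial_{\theta_2}$, not to $R_{\lambda_0}$. So $\Theta$ and $\rho$ do not exist.

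\textbf{How to repair Step 1.} Integrability is not needed; a closed real $2$-form vanishing pointwise on $J_0$-complex lines suffices, and this is what the paper uses. Normalize $J_0\partial_x$ against $R$. Then the form $dt_1\wedge dt_2-ds\wedge dx$ (up to a sign fixed by orientation conventions) vanishes on every complex line. Moreover $ds\wedge dx=d(-x\,ds)$ has a global primitive that vanishes identically on the truncation circles $\{s=\pm R\}$. Hence $\int_C dt_1\wedge dt_2=0$ exactly, with no cuts and no asymptotics of $\sigma$-terms. Your construction can be rescued this way: the pointwise $(1,0)$-forms $\beta_j=2\pi(R^j\,ds+V^j\,dx)+2\pi i\,dt_j$ are not closed, but $\operatorname{Re}(\beta_1\wedge\beta_2)$ is a constant multiple of that form.

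\textbf{The bookkeeping in Steps 3--4 also fails.} An end of class $m(q,p)$ with $q\neq0$ has $t_1$ winding $mq$ times. So any cut that makes $t_1$ single-valued must meet the boundary circles. A cut arc $\kappa$ ending on $\partial C_R$ contributes $\pm\int_\kappa dt_2$, which is not an integer in general. These endpoint terms are exactly where the $qt_2$-dependence of $\vartheta$ comes from. Your end value $-m(pt_1^0+pq/2)$ for the primitive $-t_1\,dt_2$ has no such dependence. Neither a ``companion'' primitive nor $\operatorname{Im}\Theta$ supplies it: changing primitive only moves terms between circles and cuts, and $\operatorname{Im}\Theta$ is an unrelated form.

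\textbf{How to finish instead.} Once $\int_C dt_1\wedge dt_2=0$ is known, the rest is topological.
Since $H_2(T^2)=\mathbb{Z}$ and $T^2$ has total area $1$, the integral mod $\mathbb{Z}$ equals the area of any $2$-chain in $T^2$ bounded by the projected asymptotic cycle.
Translating each end to the parallel line through the origin sweeps signed area $\pm m(qt_2^0-pt_1^0)$.
The resulting reference cycle is the projection of the closed lattice polygon with edges $\pm m_k(q_k,p_k)$. Its signed area is $\equiv\tfrac12\sum_k m_k^2p_kq_k\equiv\sum_k\tfrac12 m_kp_kq_k$ mod $\mathbb{Z}$.
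This gives \eqref{eq:mb-endpoint-constraint} with the normalization $pq/2$ of \eqref{eq:toric-affine-coordinate}. The relative sign of the two contributions does not matter, because $mpq\in\mathbb{Z}$.
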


\begin{proof}
This is the toric zero-area calculation.  In the standard product
coordinates $(s,x,t_1,t_2)$, $J_0$ can be normalized so that the closed
$2$-form
$
dt_1\wedge dt_2-ds\wedge dx
$
vanishes on every $J_0$-complex tangent plane.  Consequently
$
\int_Cdt_1\wedge dt_2
=
\int_Cds\wedge dx.
$
Since
\[
ds\wedge dx=d(-x\,ds),
\]
Stokes' theorem on the truncation obtained by intersecting $C$ with
$[-R,R]\times I\times T^2$ gives
$
\int_{C_R}ds\wedge dx
=
\int_{\partial C_R}-x\,ds.
$
The boundary circles of the truncation lie in the hypersurfaces
$s=\pm R$, and hence $ds$ vanishes on them.  Thus the boundary integral
vanishes, and therefore
\begin{equation}
\label{eq:toric-zero-area}
\int_Cdt_1\wedge dt_2=0.
\end{equation}
Modulo $\mathbb Z$, the left hand side is the signed area in the torus
between the asymptotic orbit cycles.  With the affine normalization
\eqref{eq:toric-affine-coordinate}, this area is exactly
$
\sum_i m_i\vartheta(\gamma_i^+)
-
\sum_j n_j\vartheta(\gamma_j^-).
$
This proves \eqref{eq:mb-endpoint-constraint}.  Compare
\cite[Lemma~A.2]{HutchingsSullivan2005} and
\cite[Proposition~10.16]{HutchingsSullivan2006}; the computation above is
the same computation in the local toric coordinates.
\end{proof}

We now choose the Morse functions on the finitely many relevant
Morse--Bott circles.  Fix an irrational number
$
\zeta\in\left(\frac23,1\right).
$
At a negative Morse--Bott torus, which is the concave case, choose the
Morse function so that the elliptic critical point is at
$\vartheta=0$ and is the minimum, while the hyperbolic critical point is
at $\vartheta=\zeta$ and is the maximum, as in
\cite[Lemma~A.1(a)]{HutchingsSullivan2005}.  At a positive Morse--Bott
torus, which is the convex case, use the corresponding positive
Morse--Bott choice in Step~5 of the proof of
\cite[Theorem~11.11]{HutchingsSullivan2006}.  Equivalently, one uses the
same two marked points but reverses the maximum/minimum assignment, as
required by the fact that the elliptic orbit has Conley--Zehnder index
$1$ and the hyperbolic orbit has Conley--Zehnder index $0$.

\begin{lem}
\label{lem:mb-limit-double-transition}
Suppose that convex double-rounding curves or concave double-corounding
curves exist for arbitrarily small $L$-nice perturbations in the family
above.  Then there are
$\delta_\nu\to0$ and corresponding genus-zero index-one curves $C_\nu$
which converge, after passing to a subsequence, to a reduced tree-like
index-one Morse--Bott cascade with the same exterior asymptotic data.
Moreover, after trivial cylinders and branched covers of trivial cylinders
are discarded, such a double-transition cascade has at most two
nontrivial holomorphic components.
\end{lem}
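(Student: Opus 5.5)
We prove Lemma~\ref{lem:mb-limit-double-transition} by Gromov--Hofmann--Wysocki/SFT compactness in the Morse--Bott degeneration, in the form established by Yao. We then bound the number of nontrivial levels using index additivity and the Fredholm constraints of Lemmas~\ref{lem:convex-fredholm-dichotomy} and \ref{lem:concave-fredholm-dichotomy}.

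\emph{Step 1: extract the sequence.} Take $\delta_\nu\to0$ and curves $C_\nu$ that are $J_{\delta_\nu}$-holomorphic, rigid, and realize a double rounding (convex case) or a double corounding (concave case). By the three-label alternatives in Lemmas~\ref{lem:convex-fredholm-dichotomy} and \ref{lem:concave-fredholm-dichotomy}, each $C_\nu$ has genus zero and $c_\tau(C_\nu)=0$. Each $C_\nu$ is disjoint from the exceptional trivial cylinders and lies in a fixed compact product region $\mathbb R\times I\times T^2$. Its exterior asymptotic data are fixed: three hyperbolic ends on one side, elliptic ends on the other, with slopes independent of $\nu$. The action is bounded by $L$ and the genus and number of punctures are bounded, so Yao's Morse--Bott compactness \cite{YaoCascades,YaoMorseBottECH} applies. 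After passing to a subsequence, $C_\nu$ converges to a cascade: finitely many $J_0$-holomorphic levels joined by finite or semi-infinite gradient flow segments on the Morse--Bott circles, with the exterior limits as prescribed. The cascade is tree-like because the domains have genus zero.

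\emph{Step 2: index-one and reduction.} Yao's cascade--curve correspondence \cite[Theorem~1.4]{YaoMorseBottECH} says that for a generic family the limit of rigid curves has Morse--Bott (cascade) index one. Discard the trivial cylinders and the branched covers of trivial cylinders. These have zero energy and contribute zero Morse--Bott index, so what remains is a reduced index-one cascade with the same exterior data.

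\emph{Step 3: counting nontrivial components.} This is the main obstacle. Each nontrivial $J_0$-component $D$ in the product region has genus zero. It satisfies the zero-area constraint of Lemma~\ref{lem:mb-endpoint-constraint}, and its Morse--Bott Fredholm index satisfies $\operatorname{ind}_{\rm MB}(D)\geq 1$ after quotienting by the $\mathbb R$-action; the $T^2$-symmetry adds a further free translation. The total index splits as the sum of the component indices, minus one for each matching condition at an intermediate Morse--Bott circle (a flow segment), plus the Morse indices of the endpoints. Apply the local polygonal model of Hutchings--Sullivan for each case: \cite[Theorem~11.11]{HutchingsSullivan2006} in the convex case and \cite[Lemma~A.1]{HutchingsSullivan2005} in the concave case. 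Each nontrivial component must change the polygonal path by at least one rounding or corounding, i.e. it accounts for at least one hyperbolic end. A tree with $k$ nontrivial components has $k-1$ internal edges, and the total index is one. These two facts together force $k\le 2$: with three or more components the three exterior hyperbolic ends would each have to be split off with excess index, contradicting index one. The delicate point is the bookkeeping of Morse indices at intermediate circles. Each interior matching circle carries either a minimum or a maximum condition (the $e$/$h$ assignment, reversed between the convex and concave cases). I would check with formula \eqref{eq:concave-fredholm-three} and its convex analogue that each interior flow segment costs exactly one unit of index, so $\sum_D \operatorname{ind}(D)-(k-1)=1$ together with $\operatorname{ind}(D)\geq1$ gives $k\leq 2$.
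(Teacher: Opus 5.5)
\textbf{Gap in Step~3.} Your Steps~1 and~2 agree with the paper's proof, which also just invokes Morse--Bott SFT compactness together with Yao's tree-like cascade description. The problem is Step~3, which is the only substantive part of the lemma.

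The deduction you end with does not work. From $\sum_D\operatorname{ind}(D)-(k-1)=1$ and $\operatorname{ind}(D)\geq 1$ for each of the $k$ components, you only get $1\geq k-(k-1)=1$. That forces $\operatorname{ind}(D)=1$ for every $D$, and it holds for every value of $k$, so it gives no bound.

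Your other heuristic, that each nontrivial component carries at least one hyperbolic end, gives at best $k\leq 3$, since there are three exterior hyperbolic ends.

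The bookkeeping itself is also not settled. You note that the $T^2$-symmetry gives each nontrivial component a further free parameter. If you used the resulting bound $\operatorname{ind}(D)\geq 2$ in the same count, you would get $k\leq 0$ and exclude every limit. So the claim that each interior flow segment costs exactly one unit of index cannot be right as stated, and you leave it as something you ``would check''.

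\textbf{What is missing.} You need a quantity that every nontrivial component must consume and of which a double transition contains only two units. The paper does not get this from Fredholm indices. It uses the polygonal order instead, as in the proof of \cite[Lemma~A.1(a)]{HutchingsSullivan2005}: the difference region of the three-hyperbolic configuration is connected, and a nontrivial component of the limiting building cannot cross the polygonal path determined by an adjacent level.

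A natural way to make this count quantitative is through lattice points. By \eqref{eq:convex-h-change-index} and \eqref{eq:concave-h-change-index}, a transition that changes $h$ by three has $d=\mathcal L(B_{\Lambda_+})-\mathcal L(B_{\Lambda_-})=2$. The no-crossing property should force each nontrivial component to produce a strictly nested lattice path, and hence to account for at least one of these two lattice points. Some argument of this kind is needed before the two-component reduction used in Lemmas~\ref{lem:no-convex-double-rounding} and~\ref{lem:no-concave-double-corounding} is justified.
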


\begin{proof}
The action bound gives a uniform energy bound
$
E(C_\nu)
=
\mathcal A_{\delta_\nu}(\alpha_+)
-
\mathcal A_{\delta_\nu}(\alpha_-)
<L.
$
There is no nonconstant closed holomorphic component in the
symplectization because its symplectic form is exact.  Morse--Bott
SFT/Gromov compactness therefore gives a broken $J_0$-holomorphic
building.  In the Morse--Bott description, adjacent levels are connected
by gradient trajectories on the circles of Reeb orbits.  For the present
toric boundaries Yao's tree-like compactification identifies the limit
with a reduced tree-like index-one cascade; see
\cite[Theorem~1.4 and Sections~8--10]{YaoMorseBottECH} and
\cite{YaoCascades}.

For the double transition, the local no-crossing argument gives the
stronger component bound.  This is the argument used in the proof of
\cite[Lemma~A.1(a)]{HutchingsSullivan2005}: the three-hyperbolic
polygonal configuration has a connected difference region, and a
nontrivial component cannot cross the polygonal path determined by the
adjacent level.  It follows that the entire limiting building contains a
total of at most two components which are not trivial cylinders or
branched covers thereof.  This argument depends only on the local
polygonal order and hence applies in the product region obtained above.
\end{proof}

\subsection{Excluding double rounding and double corounding}
\label{subsec:ruling-out-double-rounding}

\begin{lem}
\label{lem:no-convex-double-rounding}
For all sufficiently small regular values of $\delta$ in the $L$-nice
family chosen above, a convex double-rounding index-one curve does not
occur in the fixed action and grading window.
\end{lem}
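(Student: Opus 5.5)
We argue by contradiction, combining the Morse--Bott limit of Lemma~\ref{lem:mb-limit-double-transition} with the zero-area constraint of Lemma~\ref{lem:mb-endpoint-constraint}. Suppose that for a sequence $\delta_\nu\to0$ of regular values there are convex double-rounding index-one curves $C_\nu$ in the fixed action and grading window. By Lemma~\ref{lem:convex-fredholm-dichotomy}, each $C_\nu$ has genus zero, exactly three positive hyperbolic ends, no negative hyperbolic ends, no positive elliptic ends and $c_\tau(C_\nu)=0$. It is contained in an interior product region $I\times T^2$, and its reduced polygonal transition is the three-edge double-rounding configuration. Since only finitely many generators lie in the window, we pass to a subsequence on which the exterior asymptotic data (the Morse--Bott tori and multiplicities) are fixed. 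Lemma~\ref{lem:mb-limit-double-transition} then produces a reduced tree-like index-one cascade with the same exterior data, having at most two nontrivial $J_0$-holomorphic components joined by gradient flow segments on the Morse--Bott circles.

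The key step is to evaluate the endpoint constraint \eqref{eq:mb-endpoint-constraint} along the cascade. The exterior positive ends sit at the hyperbolic critical points, at $\vartheta=\zeta$ in the affine coordinates \eqref{eq:toric-affine-coordinate}. The exterior negative ends are elliptic, at $\vartheta=0$. Intermediate ends, where a flow segment joins two components, lie on a gradient trajectory; with the positive Morse--Bott choice, the flow runs from the hyperbolic point toward the elliptic one, so each intermediate coordinate is confined to an arc determined by the orientation. Summing \eqref{eq:mb-endpoint-constraint} over the components cancels the intermediate terms up to the flow displacement. The result is a relation of the form $\sum_{i=1}^3 m_i\zeta\equiv(\text{flow displacements})\pmod 1$, with the displacements having a definite sign and bounded size.

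When there is a single nontrivial component (no intermediate flow), the relation becomes $(m_1+m_2+m_3)\zeta\in\mathbb Z$, which is impossible because $\zeta$ is irrational. When there are two components, the intermediate coordinate $\vartheta_*$ satisfies two relations, one from each component. Each relation expresses $\vartheta_*$, or its multiple by the edge multiplicity, as an integer combination of $\zeta$ modulo $1$. The sign condition from the gradient flow together with $\zeta\in(2/3,1)$ should then force one of these combinations to lie outside the admissible arc; this mirrors Step~5 of \cite[Theorem~11.11]{HutchingsSullivan2006}. Hence no limit cascade exists, contradicting compactness, and for small $\delta$ no such curve occurs.

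The main obstacle is the bookkeeping in the two-component case. There one must determine exactly which of the three hyperbolic ends attach to which component, allow for multiplicities $m_i>1$ of the edges in the toric (rather than $T^3$) setting, and check that the inequality derived from $\zeta>2/3$ really rules out every distribution. I would first work this out carefully for primitive edges, following Hutchings--Sullivan's case analysis. I would then reduce the general case to that one by noting that the constraint involves only the sums $m_i\vartheta$, so that multiplicity scales the flow arc correspondingly.
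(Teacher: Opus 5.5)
Your proposal is correct and matches the paper's proof step for step. You argue by contradiction and pass to the limiting cascade of Lemma~\ref{lem:mb-limit-double-transition}. You exclude a single nontrivial component because Lemma~\ref{lem:mb-endpoint-constraint} would force $3\zeta\in\mathbb Z$; the exterior hyperbolic ends are simple, since hyperbolic orbits have multiplicity one in admissible orbit sets. You exclude two components by the Hutchings--Sullivan argument of Step~5 of \cite[Theorem~11.11]{HutchingsSullivan2006}, run as in \cite[Lemma~A.1(a)]{HutchingsSullivan2005}, which the paper also cites rather than reproves. Drop your proposed reduction of non-primitive data by ``scaling the flow arc'': an intermediate end of multiplicity $\mu$ fixes $\vartheta$ only modulo $1/\mu$, so the gradient-arc condition does not rescale, and you should apply the Hutchings--Sullivan two-component analysis directly, as the paper does.
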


\begin{proof}
By Lemma~\ref{lem:convex-fredholm-dichotomy}, a convex transition which
loses three hyperbolic labels has no exceptional punctures or
intersections.  Thus the entire
reduced double transition is contained in an interior product region
$I\times T^2$ and is exactly the positive Morse--Bott double-rounding
configuration.

Suppose that such curves existed for a sequence $\delta_\nu\to0$.
Lemma~\ref{lem:mb-limit-double-transition} would give a limiting
Morse--Bott cascade with at most two nontrivial components.  If there were
only one nontrivial component, all three exterior hyperbolic critical
points would enter the endpoint relation on the same side, while the
exterior elliptic critical points contribute the opposite marked value.
The endpoint constraint would force a nonzero multiple $3\zeta$ to vanish
in $\mathbb R/\mathbb Z$, contradicting the irrationality of $\zeta$.
Thus there are exactly two nontrivial components.

The remaining two-component configuration is precisely the local
positive Morse--Bott configuration treated in Step~5 of the proof of
\cite[Theorem~11.11]{HutchingsSullivan2006}.  There
\cite[Proposition~10.16]{HutchingsSullivan2006}, together with the
special choice of the two Morse critical points, is used exactly as in
\cite[Lemma~A.1(a)]{HutchingsSullivan2005} to show that the two
intermediate points on the common Morse--Bott circle cannot be joined by
the gradient trajectory required by the limiting cascade.  Hence the
limiting cascade cannot exist.

Compactness now gives the desired statement for all sufficiently small
regular values in the chosen $L$-nice family.
\end{proof}

\begin{lem}
\label{lem:no-concave-double-corounding}
For all sufficiently small regular values of $\delta$ in the $L$-nice
family chosen above, a concave double-corounding index-one curve does not
occur in the fixed action and grading window.
\end{lem}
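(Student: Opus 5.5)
The plan is to mirror the proof of Lemma~\ref{lem:no-convex-double-rounding}, with the negative (concave) Morse--Bott model of \cite[Lemma~A.1(a)]{HutchingsSullivan2005} replacing the positive one.

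\emph{Step 1: localization.} Lemma~\ref{lem:concave-fredholm-dichotomy} already localizes the problem. A concave transition gaining three hyperbolic labels has a reduced nontrivial component $C$ with
\[
g=0,\qquad h_+=0,\qquad h_-=3,\qquad e_-=0,\qquad c_{\tau_{\rm MB}}(C)=0 .
\]
Moreover $C$ is disjoint from the exceptional trivial cylinders. Hence $C$ lies in an interior product region $I\times T^2$ and realizes exactly the negative Morse--Bott double-corounding configuration: the positive ends are elliptic and the negative ends are three hyperbolic ends.

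\emph{Step 2: compactness.} Argue by contradiction. Suppose such curves exist for a sequence $\delta_\nu\to0$ of regular values in the chosen family $(\lambda_\delta,J_\delta)$. Lemma~\ref{lem:mb-limit-double-transition} then gives a limiting reduced tree-like index-one cascade, with the same exterior asymptotic data. After trivial cylinders and their branched covers are discarded, this cascade has at most two nontrivial $J_0$-holomorphic components.

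\emph{Step 3: excluding one component.} Suppose the cascade has a single nontrivial component. Apply Lemma~\ref{lem:mb-endpoint-constraint} to it, with the Morse functions chosen as above: the elliptic critical point (the minimum) sits at $\vartheta=0$, and the hyperbolic critical point (the maximum) sits at $\vartheta=\zeta$.
\begin{itemize}
\item The positive ends are all at elliptic points, so they contribute $0$.
\item The three negative ends are at hyperbolic points, so they contribute $\zeta$ times a nonzero combination of their multiplicities.
\end{itemize}
The endpoint relation then forces $3\zeta\in\mathbb Z$, or more generally $k\zeta\in\mathbb Z$ with $k\neq0$. This contradicts the irrationality of $\zeta$. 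Therefore exactly two nontrivial components occur, joined by a gradient flow line on a common Morse--Bott circle.

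\emph{Step 4: excluding two components (the main obstacle).} In the two-component case, apply Lemma~\ref{lem:mb-endpoint-constraint} separately to each component. Each endpoint relation determines the $\vartheta$-coordinate of the intermediate point on the shared circle in terms of the exterior data. That data consists of elliptic points at $0$ and hyperbolic points at $\zeta$, weighted by multiplicities, the intermediate point entering with its own multiplicity.

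The key fact is the cascade direction at a concave torus. Gradient trajectories run from the maximum $\zeta$ toward the minimum $0$. So the upper component's negative end must lie on the flow line that the lower component's positive end descends along.

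The goal is to show that, using $\zeta\in(2/3,1)$, the two computed intermediate values cannot be connected by such a downward flow segment. Concretely, one value would have to lie in an arc of the circle that the flow does not reach. This is exactly the inequality bookkeeping of \cite[Lemma~A.1(a)]{HutchingsSullivan2005}, transported to the toric coordinates \eqref{eq:toric-affine-coordinate}. It is the step I expect to require care: the signs and multiplicities of the three hyperbolic ends must be tracked through both the two-plus-one and one-plus-two splittings.

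\emph{Conclusion.} Having ruled out both the one- and two-component limits, no limiting cascade exists. Compactness then yields a $\delta_0>0$ such that, for all regular $\delta<\delta_0$, no double-corounding curve exists in the action and grading window.
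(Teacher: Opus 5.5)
Your outline matches the paper's proof step for step: localization via Lemma~\ref{lem:concave-fredholm-dichotomy}, the two-component bound of Lemma~\ref{lem:mb-limit-double-transition}, the one-component case excluded by $3\zeta\in\mathbb Z$, and the two-component case excluded by the Hutchings--Sullivan bookkeeping. The gap is Step~4. There the contradiction is only announced, not derived.

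Moreover, the one ingredient you do state, the direction of the connecting trajectory, is worded so that it can be read backwards. The phrase ``the flow line that the lower component's positive end descends along'' places the upper end downstream of the lower one. The correct direction is the reverse: the trajectory descends from the upper component's intermediate negative end $x$ to the lower component's intermediate positive end $y$. Two facts fix this. Action decreases down a cascade. And at a concave torus the hyperbolic point $\zeta$ is the maximum and the elliptic point $0$ the minimum, consistent with the concave differential turning an $h$-label into an $e$-label. The paper spends a sentence reconciling Hutchings--Sullivan's level ordering with Yao's for exactly this reason. This is not cosmetic, as the computation shows.

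Suppose the upper component carries one exterior hyperbolic end, and take all ends in the relations simple, as the paper does. Lemma~\ref{lem:mb-endpoint-constraint} gives $\zeta+\vartheta(x)\equiv0$, so $\vartheta(x)=1-\zeta\in(0,\tfrac13)$. The lower component gives $\vartheta(y)\equiv2\zeta\equiv2\zeta-1\in(\tfrac13,\zeta)$. These are exactly the ranges in \eqref{eq:mb-intermediate-ranges}. The Morse function increases on $(0,\zeta)$, so no downward trajectory runs from $x$ up to $y$. Under the reversed reading, however, the downward flow from $y$ passes through $x$, and nothing is contradicted.

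The other splitting, which you rightly flag and the paper does not write out, is immediate. The lower component forces $\vartheta(y')=\zeta$, the maximum itself. Meanwhile $\vartheta(x')=2-2\zeta\neq\zeta$ since $\zeta>\tfrac23$. A nonstationary downward trajectory never ends at the maximum, so this case is excluded as well.
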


\begin{proof}
By Lemma~\ref{lem:concave-fredholm-dichotomy}, the reduced double
corounding has only elliptic exterior ends at the positive side and exactly
three hyperbolic exterior ends at the negative side.  This is the negative
Morse--Bott sign convention of \cite[Lemma~A.1(a)]{HutchingsSullivan2005}.  Indeed, in Yao's
notation the concave tori are negative Morse--Bott tori: the elliptic orbit
has Conley--Zehnder index $-1$ and the hyperbolic orbit has
Conley--Zehnder index $0$; see
\cite[Definition~5.17]{YaoMorseBottECH}.

Assume that double-corounding curves exist for $\delta_\nu\to0$.
By Lemma~\ref{lem:mb-limit-double-transition}, the limit has at most two
nontrivial holomorphic components.  A one-component limit is impossible,
because Lemma~\ref{lem:mb-endpoint-constraint} gives
$
3\zeta=0
$ in $\R/\mathbb Z
$
up to an overall sign, contradicting the irrationality of $\zeta$.
Hence there are exactly two nontrivial components.

Let $x$ and $y$ denote their two intermediate asymptotic orbits on the
common Morse--Bott circle.  Applying
Lemma~\ref{lem:mb-endpoint-constraint} to the component which sees one
exterior hyperbolic end and to the component which sees the other two gives
\[
\vartheta(x)=-\zeta=1-\zeta
\qquad\text{and}\qquad
\vartheta(y)=2\zeta=2\zeta-1
\quad\text{in }\mathbb R/\mathbb Z.
\]
Therefore
\begin{equation}
\label{eq:mb-intermediate-ranges}
0<\vartheta(x)<\frac13,
\qquad
\frac13<\vartheta(y)<\zeta.
\end{equation}
This is exactly equation~(58) in the proof of
\cite[Lemma~A.1(a)]{HutchingsSullivan2005}.

Hutchings--Sullivan index the levels in the opposite direction from the
cascade convention used in \cite{YaoMorseBottECH}.  Their required
downward gradient trajectory from $x$ to $y$ is therefore the same
unparametrized connecting trajectory as Yao's upward gradient trajectory
with the endpoints read in the opposite level order.  Since the Morse
function has its minimum at $0$ and its maximum at $\zeta$, the two points
in \eqref{eq:mb-intermediate-ranges} cannot be connected by the required
trajectory.  This is the final contradiction in
\cite[Lemma~A.1(a)]{HutchingsSullivan2005}.

Thus no limiting double-corounding cascade exists, and compactness excludes
double-corounding curves for all sufficiently small regular values in the
chosen $L$-nice family.
\end{proof}

We can now state the consequence of the $L$-nice construction which will
be used below.

\begin{prop}
\label{prop:hyperbolic-label-differential}
Fix an action cutoff $L>0$ and a grading bound $K\in\mathbb Z_{>0}$ for
which the $L$-nice correspondences of
Sections~\ref{sec:convexpert} and~\ref{sec:concavepert} are defined.  The
$L$-nice perturbations can be chosen arbitrarily $C^\infty$-close to the
corresponding Morse--Bott forms, together with generic compatible almost
complex structures as above, so that the following statements hold for
orbit sets of action less than $L$ and grading between $0$ and $K$:
\begin{enumerate}
\item[(i)] If $\Lambda_+$ and $\Lambda_-$ are convex generators
corresponding to orbit sets in this $L$-nice window and
$
\left\langle
\partial_J\alpha_{\Lambda_+},
\alpha_{\Lambda_-}
\right\rangle\neq0,
$
then
$
h(\Lambda_-)=h(\Lambda_+)-1.
$

\item[(ii)]  If $\Lambda_+$ and $\Lambda_-$ are concave generators
corresponding to orbit sets in the window and
$
\left\langle
\partial_J\alpha_{\Lambda_+},
\alpha_{\Lambda_-}
\right\rangle\neq0,
$
then
$
h(\Lambda_-)=h(\Lambda_+)+1.
$
\end{enumerate}
\end{prop}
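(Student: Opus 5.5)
The plan is to assemble the lemmas of this section in order. Choose the $L$-nice perturbations from the families $\lambda_\delta$, $J_\delta$ fixed in Section~\ref{subsec:mb-degeneration-endpoint}. Take $\delta$ small enough that both Lemma~\ref{lem:no-convex-double-rounding} and Lemma~\ref{lem:no-concave-double-corounding} apply in the fixed action and grading window. These lemmas involve only finitely many orbit sets, since the window contains finitely many generators by Section~\ref{sec:nonzero-ech-classes}, so one $\delta$ works for all pairs. Every such perturbation is $C^\infty$-close to the Morse--Bott form and keeps the $L$-nice orbit-set correspondence. This gives the "arbitrarily close" clause of the statement.

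For part (i), suppose $\langle\partial_J\alpha_{\Lambda_+},\alpha_{\Lambda_-}\rangle\neq0$. By Lemma~\ref{lem:index-one-localization}, a contributing current is a union of trivial cylinders together with a single embedded component $C$ with $I(C)=\operatorname{ind}(C)=1$. Lemma~\ref{lem:toric-nesting-label-matching} lets us delete the matching factors without changing the difference $h(\Lambda_+)-h(\Lambda_-)$, because matched edges carry equal labels. Lemma~\ref{lem:convex-fredholm-dichotomy} then gives $h(\Lambda_+)-h(\Lambda_-)\in\{1,3\}$. It also shows that in the case $3$ the reduced component is a double-rounding curve lying in an interior product region. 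Lemma~\ref{lem:no-convex-double-rounding} rules this case out for our choice of $\delta$, so $h(\Lambda_-)=h(\Lambda_+)-1$.

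Part (ii) follows the same pattern. First take $l$ large, as required in Lemma~\ref{lem:concave-fredholm-dichotomy}, so that no exceptional orbits occur in the window. Then localize and delete matching factors as before. Lemma~\ref{lem:concave-fredholm-dichotomy} gives $h(\Lambda_-)-h(\Lambda_+)\in\{1,3\}$, and in the case $3$ the transition is a double corounding. Lemma~\ref{lem:no-concave-double-corounding} excludes this case, which leaves $h(\Lambda_-)=h(\Lambda_+)+1$.

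The main point to check is compatibility of the choices. The order is: first fix $L$ and $K$, then $l$, then the family $(\lambda_\delta,J_\delta)$, and finally $\delta$. We must make sure that shrinking $\delta$ preserves the genericity used in Lemma~\ref{lem:index-one-localization}. Taking $\delta$ to be a regular value in the generic family handles this. A coefficient being nonzero for one generic $J_\delta$ is all the argument uses, so no invariance of the differential across $\delta$ is needed.
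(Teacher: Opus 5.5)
Your proposal is correct and is essentially the paper's proof. Lemmas~\ref{lem:convex-fredholm-dichotomy} and~\ref{lem:concave-fredholm-dichotomy} give a change in $h$ of one or three, and the three-label alternative (double rounding, resp.\ double corounding) is excluded for small regular $\delta$ by Lemmas~\ref{lem:no-convex-double-rounding} and~\ref{lem:no-concave-double-corounding}. Your explicit localization and matching-deletion step, and your ordering of choices ($L,K$, then $l$, then the family $(\lambda_\delta,J_\delta)$, then $\delta$), are already built into the hypotheses of those lemmas, so they make the argument more transparent without changing it.
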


\begin{proof}
In the convex case, Lemma~\ref{lem:convex-fredholm-dichotomy} shows that
the number of hyperbolic labels decreases by either one or three.  The
three-label alternative has no exceptional punctures or intersections and
is a local double rounding, which is excluded by
Lemma~\ref{lem:no-convex-double-rounding}.  Hence the decrease is exactly
one.

In the concave case,
Lemma~\ref{lem:concave-fredholm-dichotomy} shows that the number of
hyperbolic labels increases by either one or three.  The three-label
alternative is the double corounding excluded by
Lemma~\ref{lem:no-concave-double-corounding}.  Hence the increase is
exactly one.
\end{proof}

\begin{rem}
\label{rem:mb-h-filtration}
Since the ECH differential decreases the ECH index by one,
Proposition~\ref{prop:hyperbolic-label-differential} implies that the
convex differential preserves
$
I-h,
$
while the concave differential preserves
$
I+h
$
in the corresponding $L$-nice action and grading windows.
\end{rem}

\section{Proofs of Theorems \ref{thm: criterion_convex_concave}, \ref{thm:criterion_convex_convex} and \ref{thm:criterion_concave_concave}}
\label{sec:proofs}

The proofs of Theorems \ref{thm: criterion_convex_concave},
\ref{thm:criterion_convex_convex} and
\ref{thm:criterion_concave_concave} are adaptations of the proof of
Theorem~1.20 in \cite{hutchings2016beyond} to the corresponding settings.
The order is intentional. Theorems~3.1 and~3.2 are proved without using the
unpublished \cite{UserGuide}.  In Theorem~3.2, the minimality hypothesis on
the prescribed convex generator is used only to invoke
\cite[Lemma~5.5]{hutchings2016beyond}, which produces the nonzero filtered
ECH class from which the cobordism argument starts.  The new Theorem~3.3,
namely Theorem~\ref{thm:criterion_concave_concave}, depends on one finite
filtered statement about the concave elliptic sector from \cite{UserGuide}.
We record that statement explicitly immediately before the proof and use it
as a black box.  In particular, we do not use a dual Hutchings--Sullivan
corner-rounding description, and we do not transfer a chain-level calculation
from $T^3$ to $S^3$.

\begin{proof}[Proof of Theorem~\ref{thm: criterion_convex_concave}]
Let
$\varphi:X_\Omega\longrightarrow X_{\Omega'}$
be a symplectic embedding from the convex toric domain $X_\Omega$ into
the concave toric domain $X_{\Omega'}$, and fix
$k\in\mathbb Z_{>0}$. We divide the proof into six steps.

\medskip
\noindent
\textit{Step 1: Geometric setup and the filtered cobordism map.}
By shrinking $X_\Omega$ and enlarging $X_{\Omega'}$ by arbitrarily
small radial factors, and then using a limiting argument at the end of
the proof, we may assume that
$\varphi(X_\Omega)\subset\operatorname{int}(X_{\Omega'})$
and that the relevant boundaries are smooth. For the argument at a
fixed approximation parameter, we relabel these slightly modified
domains again by $X_\Omega$ and $X_{\Omega'}$.

Fix an integer $K\geq2k+1$ and a number $\varepsilon>0$. By the
finiteness result of Section~\ref{sec:nonzero-ech-classes}, choose $L>0$
strictly larger, with a fixed positive margin, than the combinatorial
length of every convex and every concave generator whose ECH index lies
between $0$ and $K$. Enlarge $L$, if necessary, so that
\cite[Proposition~3.2]{hutchingsroyweileryao2026} applies to the convex
generator $e_{-1,k-1}$, and so that a filtered cycle representing the
unique nonzero total ECH class in degree $2k$ exists at the concave end
and \cite[Proposition~3.7]{hutchingsroyweileryao2026} applies to the
one-edge generator $e_{-1,k}$.

Choose the approximation parameter $l$ sufficiently large and the
Morse--Bott perturbations sufficiently small so that the $L$-nice
orbit-set--generator correspondences of
Sections~\ref{sec:concavepert} and~\ref{sec:convexpert} hold in every
grading at most $K$, all the corresponding orbit sets have action
strictly less than $L$, and all action errors for the generators and
factors occurring below are smaller than $\varepsilon$. Thus, in
particular, in the three gradings $2k-1,2k,2k+1$ the filtered chain
groups contain the orbit sets corresponding to every combinatorial
generator in that grading.

We make one additional choice for the differential in this $L$-nice
action and grading window. On the finitely many Morse--Bott tori which occur below $L$, we
choose the Morse functions as in
Hutchings--Sullivan~\cite[Lemma~A.1(a)]{HutchingsSullivan2005}. More
precisely, after fixing the angular coordinate on each circle of
Morse--Bott orbits, the elliptic critical point is placed at $0$ and the
hyperbolic critical point at the same fixed irrational
$\zeta\in(2/3,1)$. These choices can be made simultaneously because only
finitely many tori occur in the $L$-nice action window. We
then choose generic compatible almost complex structures on both ends
for which the Morse--Bott cascade description of
\cite[Theorem~1.4 and Sections~9--10]{YaoMorseBottECH}, together with
the cascade--curve correspondence of \cite{YaoCascades}, applies. At a
concave end we use the same Morse functions after reversing the
transverse coordinate in the toric Morse--Bott model. The role of the
Hutchings--Sullivan choice will be explained in Step~3.

Let
$(Y_+,\lambda_+)=(Y'_{l,L},\lambda'_{l,L}), (Y_-,\lambda_-)=(Y_{l,L},\lambda_{l,L})$
be the chosen nondegenerate perturbations of
$\partial X_{\Omega'}$ and $\partial X_\Omega$, respectively. At the
concave positive end every relevant elliptic orbit is chosen
$L$-negative, while at the convex negative end every relevant elliptic
orbit is $L$-positive. All relevant hyperbolic orbits are positive
hyperbolic.

For sufficiently small perturbations, the complement of the embedded
source gives a weakly exact symplectic cobordism
$(X_{l,L},\omega_{l,L}):(Y_+,\lambda_+)\to(Y_-,\lambda_-),$
whose underlying manifold is diffeomorphic to $[0,1]\times S^3$.
Let $\overline X_{l,L}$ denote its completion. Choose the generic
compatible almost complex structures $J_+$ and $J_-$ on the positive
and negative symplectization ends as above, and choose a generic
cobordism-admissible almost complex structure $J$ on
$\overline X_{l,L}$ restricting to $J_\pm$.

Theorem~\ref{CoborMap} provides a chain map
$\phi^L:ECC^L(Y_+,\lambda_+,0,J_+)\to ECC^L(Y_-,\lambda_-,0,J_-)$
which induces the filtered ECH cobordism map. Since the cobordism is
diffeomorphic to a product, its map on total ECH is an isomorphism.
Moreover,
$H_1(Y_\pm)=H_2(Y_\pm)=H_2(X_{l,L})=0.$
Under these homological assumptions, whenever a coefficient of the
chain map is nonzero, Theorem~\ref{CoborMap}-(iii) supplies a broken
current of ECH index zero, and the relative ECH index is the difference
of the absolute gradings of its asymptotic orbit sets; see
\cite[Proposition~2.25]{hutchingsroyweileryao2026}. We use this
index-zero statement directly and do not impose any grading property
on the chosen chain-level representative $\phi^L$.

We also fix a primitive adapted to the boundary contact forms. Since
$H^2(X_{l,L};\mathbb R)=0$, choose a one-form $\eta$ with
$d\eta=\omega_{l,L}.$
On each boundary component,
$\eta|_{Y_\pm}-\lambda_\pm$ is closed. Since
$H^1(S^3;\mathbb R)=0$, there are functions $f_\pm$ such that
$\eta|_{Y_\pm}-\lambda_\pm=df_\pm.$
After extending $f_\pm$ to the cobordism and changing $\eta$ by an
exact form, we may assume
\begin{equation}
\label{eq:primitive-matches-convex-concave-strong}
\eta|_{Y_+}=\lambda_+,
\qquad
\eta|_{Y_-}=\lambda_-.
\end{equation}

\medskip
\noindent
\textit{Step 2: The cobordism is $L$-tame.}
Let $d\in\mathbb Z_{>0}$, let $\alpha_\pm$ be orbit sets satisfying
$\mathcal A(\alpha_\pm)<\frac{L}{d},$
and let
$C\in\mathcal M^J(\alpha_+,\alpha_-)$ be an embedded irreducible
$J$-holomorphic curve such that $I(dC)\leq0$. We must prove
$2g(C)-2+\operatorname{ind}(C)+h(C)+2e_L(C)\geq0.$

Suppose this inequality fails. Then
$2g(C)+\operatorname{ind}(C)+h(C)+2e_L(C)\leq1.$
Genericity gives $\operatorname{ind}(C)\geq0$. Since every relevant
hyperbolic orbit is positive hyperbolic, the parity of
$\operatorname{ind}(C)$ agrees with the parity of $h(C)$. Hence
$g(C)=\operatorname{ind}(C)=h(C)=e_L(C)=0.$
Every elliptic orbit at the positive end is $L$-negative, and every
elliptic orbit at the negative end is $L$-positive. By the definition
of $e_L$, the equality $e_L(C)=0$ therefore excludes every elliptic
orbit from both asymptotic orbit sets. The equality $h(C)=0$ excludes
every hyperbolic orbit. Thus
$\alpha_+=\alpha_-=\emptyset.$
The curve $C$ would then be a nonconstant closed $J$-holomorphic curve
in the exact symplectic manifold $(X_{l,L},\omega_{l,L})$, which is
impossible because its symplectic area would be both positive and zero.
Thus $(X_{l,L},\omega_{l,L},J)$ is $L$-tame.

\medskip
\noindent
\textit{Step 3: Elliptic support and a nonzero cobordism coefficient.}
By Proposition~\ref{prop:hyperbolic-label-differential}, the
$L$-nice perturbations and almost complex structures can be chosen so
that, throughout the $L$-nice action and grading window fixed in Step~1,
\begin{equation}
\label{eq:single-transition-h-T31}
 h(\widetilde\Gamma)=h(\Gamma)+1
 \quad\text{at a concave end},
 \qquad
 h(\widetilde\Gamma)=h(\Gamma)-1
 \quad\text{at a convex end}
\end{equation}
whenever
$\langle\partial_J\alpha_\Gamma,
\alpha_{\widetilde\Gamma}\rangle\ne0$.

We will also use the following elementary consequence of the same
$L$-nice Morse--Bott description at the convex negative end:
\begin{equation}
\label{eq:filtered-ECH-one-dimensional-T31}
ECH^L_{2k}(Y_-,\lambda_-,0)\simeq\mathbb F_2.
\end{equation}
Indeed, by the choice of $L$, every convex combinatorial generator in
gradings $2k-1$, $2k$, and $2k+1$ occurs below the cutoff. Because $L$
was chosen with a positive margin above their combinatorial lengths,
and because the actions of the perturbed asymptotic orbit sets converge
to the Morse--Bott actions, the cascade--curve correspondence identifies
this portion of the Morse--Bott complex with the corresponding portion
of $ECC^L(Y_-,\lambda_-,0,J_-)$. Since the ECH differential lowers the
grading by one, the homology in grading $2k$ is unchanged if one passes
from these three complete grading pieces to the full Morse--Bott
complex. By \cite[Theorem~1.4]{YaoMorseBottECH} and
\cite{YaoCascades}, the latter computes the ECH of the convex toric
boundary, which in grading $2k$ is $\mathbb F_2$.

Let
$z_+\in ECC^L_{2k}(Y_+,\lambda_+,0,J_+)$
be a cycle whose image in total ECH is the unique nonzero class of
degree $2k$. For a concave generator $\Gamma$, set
$q_+(\Gamma):=I(\Gamma)+h(\Gamma).$
Since the ECH differential decreases $I$ by one, the concave identity
in \eqref{eq:single-transition-h-T31} implies that the differential
preserves $q_+$ in the filtered range under consideration. Since every
generator occurring in $z_+$ has ECH index $2k$, its $q_+=2k$
component is exactly its elliptic part. Denote this component by $x_+$.
Then $x_+$ is a cycle.

The one-edge elliptic concave generator $e_{-1,k}$ has ECH index $2k$.
By \cite[Proposition~3.7]{hutchingsroyweileryao2026}, it occurs as a
summand of every filtered cycle representing the nonzero total class in
this grading. Hence $e_{-1,k}$ occurs in $z_+$ and therefore
$x_+\neq0$.

We claim that the image of $[x_+]$ in total ECH is nonzero. Otherwise
$z_++x_+$ would be a cycle representing the same nonzero total class as
$z_+$ but containing no elliptic summand, and in particular no
$e_{-1,k}$, contradicting again
\cite[Proposition~3.7]{hutchingsroyweileryao2026}. Thus $x_+$
represents the nonzero total class in degree $2k$.

Set
$y_-:=\phi^L(x_+).$
Compatibility of the filtered and total cobordism maps, together with
the fact that the total cobordism map is an isomorphism, implies that
$y_-$ is a cycle whose image in total ECH is nonzero. In particular,
$[y_-]\ne0$ in filtered ECH.

We first verify the grading of every generator in the support of
$y_-$ without assuming that the chosen chain map $\phi^L$ preserves
the grading. Let $\alpha_\Gamma$ occur in $y_-$ with nonzero
coefficient. Since $y_-=\phi^L(x_+)$ and the coefficient of
$\alpha_\Gamma$ in $y_-$ is one, expanding $x_+$ over $\mathbb F_2$
gives
\[
1=
\left\langle y_-,\alpha_\Gamma\right\rangle
=
\sum_{\Gamma'\in\operatorname{supp}(x_+)}
\left\langle
\phi^L(\alpha_{\Gamma'}),\alpha_\Gamma
\right\rangle.
\]
Hence at least one elliptic concave generator $\Gamma'$ in the support
of $x_+$ satisfies
$\left\langle \phi^L(\alpha_{\Gamma'}),\alpha_\Gamma \right\rangle=1.$
Theorem~\ref{CoborMap}-(iii) gives an index-zero broken current from
$\alpha_{\Gamma'}$ to $\alpha_\Gamma$. Since
$H_2(X_{l,L})=0$, the relative ECH index is the difference of the
absolute gradings. Hence
$I(\Gamma)=I(\Gamma')=2k.$
Thus $y_-\in ECC^L_{2k}$.

We next prove that the support of $y_-$ contains an elliptic convex
generator. Set $E_k:=e_{-1,k-1}$. This is an elliptic convex generator
of ECH index $2k$: indeed, $A(E_k)=(k-1)/2$,
$x(E_k)=1$, $y(E_k)=k-1$, and $e(E_k)=1$, so the convex Pick formula
gives $I(E_k)=2k$. In the notation of
\cite{hutchingsroyweileryao2026}, this is the simple orbit $e_{k-1,1}$.
By \cite[Proposition~3.2]{hutchingsroyweileryao2026}, our cutoff can be
chosen so that $\alpha_{E_k}\in ECC^L_{2k}(Y_-,\lambda_-,0,J_-)$ is a
cycle whose image in total ECH is nonzero. Hence
$[\alpha_{E_k}]\ne0$ in filtered ECH. By
\eqref{eq:filtered-ECH-one-dimensional-T31},
$[y_-]=[\alpha_{E_k}]$ in $ECH^L_{2k}$. Therefore there exists
$w\in ECC^L_{2k+1}(Y_-,\lambda_-,0,J_-)$ such that
\[
y_-+\alpha_{E_k}=\partial_{J_-}w.
\]

Suppose, for contradiction, that $y_-$ has no elliptic summand. Every
convex generator of odd degree has an odd number of $h$-labels because
$I(\Gamma)=2(\mathcal L(B_\Gamma)-1)-h(\Gamma)$. Write
$w=w_1+w_{\geq3}$, where $w_1$ is the sum of the terms with exactly one
$h$-label and $w_{\geq3}$ is the sum of the remaining terms. By the
convex identity in \eqref{eq:single-transition-h-T31}, the differential
decreases the number of $h$-labels by exactly one. Hence
$\partial_{J_-}w_1$ is elliptic, while every term of
$\partial_{J_-}w_{\geq3}$ still has at least one $h$-label. Taking the
elliptic part of the preceding identity gives
$\partial_{J_-}w_1=\alpha_{E_k}$, contradicting the fact that
$\alpha_{E_k}$ has nonzero image in total ECH. Thus some elliptic convex
generator $\Lambda$ occurs in $y_-$, and therefore
$\langle y_-,\alpha_\Lambda\rangle=
\langle\phi^L(x_+),\alpha_\Lambda\rangle=1$.

Write
$x_+=\sum_{\Lambda'\in\mathcal S}\alpha_{\Lambda'},$
where $\mathcal S$ is a finite set of elliptic concave generators of
ECH index $2k$. Then
\[
1=
\sum_{\Lambda'\in\mathcal S}
\left\langle
\phi^L(\alpha_{\Lambda'}),\alpha_\Lambda
\right\rangle
\quad\text{in }\mathbb F_2.
\]
Consequently, there exists an elliptic concave generator
$\Lambda'\in\mathcal S$ such that
$\left\langle \phi^L(\alpha_{\Lambda'}),\alpha_\Lambda \right\rangle=1.$

By Theorem~\ref{CoborMap}-(iii), this nonzero coefficient gives a broken
$J$-holomorphic current
$B=(\mathcal C_{N_-},\ldots,\mathcal C_0,\ldots,\mathcal C_{N_+})$
from $\alpha_{\Lambda'}$ to $\alpha_\Lambda$ satisfying $I(B)=0$.
Every nontrivial symplectization level has strictly positive ECH index
by Proposition~\ref{BasedDiff}-(i), while the central cobordism level
has nonnegative ECH index by Step~2 and
Proposition~\ref{L-tameJcurves}-(i). Since the total index is zero,
there are no nontrivial symplectization levels. After deleting possible
trivial-cylinder levels, the building reduces to a single unbroken
current
$\mathcal C\in\mathcal M^J(\alpha_{\Lambda'},\alpha_\Lambda)$. This current satisfies $I(\mathcal C)=0$.

\medskip
\noindent
\textit{Step 4: Factorizations and index identities.}
Write
$\mathcal C=\sum_{i=1}^r d_iC_i,$
where the $C_i$ are distinct irreducible somewhere injective
$J$-holomorphic curves and $d_i\in\mathbb Z_{>0}$. Let
$\alpha_{\Lambda_i'}$ and $\alpha_{\Lambda_i}$ denote the positive and
negative orbit sets of $C_i$. Since both total orbit sets
$\alpha_{\Lambda'}$ and $\alpha_\Lambda$ are elliptic, every
$\Lambda_i'$ is an elliptic concave generator and every $\Lambda_i$ is
an elliptic convex generator. We obtain factorizations
$\Lambda'=(\Lambda_1')^{d_1}\cdots(\Lambda_r')^{d_r}, \qquad \Lambda=\Lambda_1^{d_1}\cdots\Lambda_r^{d_r}.$

Proposition~\ref{L-tameJcurves}-(ii) gives $I(C_i)=0$ for every $i$.
Since $H_2(X_{l,L})=0$, the relative homology class is unique and the
relative ECH index is the difference of the absolute gradings of the
asymptotic orbit sets. Hence
$I(\Lambda_i')=I(\Lambda_i)$
for every $i$. For the complete current,
$I(\Lambda')=I(\Lambda)=2k.$
This proves item~(i).

The second assertion of Proposition~\ref{L-tameJcurves}-(ii) says that
two distinct components cannot have positive ends at covers of the same
$L$-negative elliptic orbit and cannot have negative ends at covers of
the same $L$-positive elliptic orbit. Under the
orbit-set--generator correspondence these are precisely the
disjointness conditions for the factorizations of $\Lambda'$ and
$\Lambda$. Thus both factorizations are disjoint.

Now let $S\subset\{1,\ldots,r\}$ and choose integers
$0\leq d_i'\leq d_i$ for $i\in S$. The third assertion of
Proposition~\ref{L-tameJcurves}-(ii) gives
$I\left(\sum_{i\in S}d_i'C_i\right)=0.$
The positive and negative orbit sets of this subcurrent correspond to
$\prod_{i\in S}(\Lambda_i')^{d_i'} \qquad\text{and}\qquad \prod_{i\in S}(\Lambda_i)^{d_i'},$
respectively. Consequently
$I\left(\prod_{i\in S}(\Lambda_i')^{d_i'}\right) = I\left(\prod_{i\in S}(\Lambda_i)^{d_i'}\right),$
which proves item~(iii).

\medskip
\noindent
\textit{Step 5: Action inequalities.}
For each $i$, truncate the cylindrical ends of $C_i$ and apply Stokes'
theorem using the primitive from
\eqref{eq:primitive-matches-convex-concave-strong}. We obtain
\[
0\leq\int_{C_i}\omega_{l,L}
=
\mathcal A_+(\alpha_{\Lambda_i'})-
\mathcal A_-(\alpha_{\Lambda_i}),
\]
and therefore
$\mathcal A_-(\alpha_{\Lambda_i}) \leq \mathcal A_+(\alpha_{\Lambda_i'}).$
The action estimates for the $L$-nice perturbations imply
\[
\left|\mathcal A_+(\alpha_{\Lambda_i'})-
\ell_{\Omega'}(\Lambda_i')\right|<\varepsilon,
\qquad
\left|\mathcal A_-(\alpha_{\Lambda_i})-
\ell_\Omega(\Lambda_i)\right|<\varepsilon.
\]
Thus
\begin{equation}
\label{eq:convex-concave-action-error-strong}
\ell_\Omega(\Lambda_i)
\leq
\ell_{\Omega'}(\Lambda_i')+2\varepsilon
\end{equation}
for every $i$.

\medskip
\noindent
\textit{Step 6: Topological complexity and removal of the action error.}
Fix $i$. Since $C_i$ is irreducible and somewhere injective,
Proposition~\ref{pro:topcomp} gives
\[
2g(C_i)-2
+\sum_a(2n_a^+-1)
+\sum_b(2n_b^--1)
\leq J_0(C_i).
\]
Set
$\nu_e(\Gamma) := \#\{\text{edges of $\Gamma$ with positive elliptic multiplicity}\}.$
The $J_0$ formulas from Sections~\ref{sec:concavepert} and
\ref{sec:convexpert}, together with
$I(\Lambda_i')=I(\Lambda_i)$, give
\begin{equation}
\label{eq:J0-convex-concave-component-strong}
\begin{aligned}
J_0(C_i)
&=J_0(\alpha_{\Lambda_i'})-J_0(\alpha_{\Lambda_i})\\
&=-2x(\Lambda_i')-2y(\Lambda_i')+\nu_e(\Lambda_i')\\
&\qquad+2x(\Lambda_i)+2y(\Lambda_i)+\nu_e(\Lambda_i).
\end{aligned}
\end{equation}

Since $I(C_i)=0$ and $J$ is generic, the ECH index inequality gives
$0\leq\operatorname{ind}(C_i)+2\delta(C_i)\leq I(C_i)=0.$
Thus equality holds in the ECH index inequality and the ECH partition
conditions apply to all ends.

At the positive end, every elliptic orbit is $L$-negative and
$\Lambda_i'$ is elliptic. Hence the outgoing partition gives one
positive end at the full cover of each distinct elliptic orbit, so
$\sum_a(2n_a^+-1)=\nu_e(\Lambda_i').$
At the negative end, every elliptic orbit is $L$-positive and
$\Lambda_i$ is elliptic. Hence the incoming partition gives one
negative end at the full cover of each distinct elliptic orbit, so
$\sum_b(2n_b^--1)=\nu_e(\Lambda_i).$
Using $g(C_i)\geq0$ in Proposition~\ref{pro:topcomp}, we obtain
$-2+\nu_e(\Lambda_i')+\nu_e(\Lambda_i)\leq J_0(C_i).$
Substituting
\eqref{eq:J0-convex-concave-component-strong} and cancelling the
$\nu_e$-terms yields
$-2\leq -2x(\Lambda_i')-2y(\Lambda_i') +2x(\Lambda_i)+2y(\Lambda_i).$
Equivalently,
$x(\Lambda_i')+y(\Lambda_i')-1 \leq x(\Lambda_i)+y(\Lambda_i),$
which is item~(iv).

It remains to remove the $2\varepsilon$ error in
\eqref{eq:convex-concave-action-error-strong} and the radial
modifications introduced in Step~1. Repeat the construction for a
sequence $\varepsilon_n\to0$, while simultaneously letting the radial
shrinking factor of $X_\Omega$ and the radial enlarging factor of
$X_{\Omega'}$ tend to one. For fixed ECH index $2k$, there are only
finitely many elliptic convex and elliptic concave generators and only
finitely many possible disjoint factorizations. After passing to a
subsequence, we may therefore assume that
$\Lambda',\ \Lambda,\ \Lambda_i',\ \Lambda_i,\ d_i$
are independent of $n$. Passing to the limit in
\eqref{eq:convex-concave-action-error-strong} gives
$\ell_\Omega(\Lambda_i) \leq \ell_{\Omega'}(\Lambda_i')$
for every $i$. This proves item~(ii) for the original domains and
completes the proof.
\end{proof}

\begin{proof}[Proof of Theorem~\ref{thm:criterion_convex_convex}]
We first prove part~(1).  Recall the definition of a minimal convex generator given immediately after Theorem~\ref{thm:convex_capacities}.
The prescribed target generator $\Lambda'$ is assumed to be minimal for
$X_{\Omega'}$.  We will use this hypothesis in only two closely related
places: in Step~1 to keep $\Lambda'$ minimal under the arbitrarily small
geometric modifications, and in Step~3 to apply
\cite[Lemma~5.5]{hutchings2016beyond}.  All subsequent holomorphic-current
and factorization arguments are independent of minimality.

Thus $X_\Omega$ is the semi-weakly convex
toric domain defined by a concave function
$
f:[0,a]\to[0,+\infty)
$
such that
$f(a)=0,
f'(0)\leq-m,
nf'(a)\geq-1,
mn<1,
$
where
$
m\in\mathbb Z,
n\in\mathbb Z_{\geq0}.
$
Let $X_{\Omega'}$ be a convex toric domain, and suppose that there is a
symplectic embedding
$
\varphi:X_\Omega\longrightarrow X_{\Omega'}.
$

Fix $k\in\mathbb Z_{\geq0}$ and a convex generator $\Lambda'$
which is minimal for $X_{\Omega'}$ and satisfies $I(\Lambda')=2k$.
By the definition of minimality, every edge of
$\Lambda'$ is labeled $e$, so $\Lambda'$ is elliptic.

If $k=0$, then $\Lambda'$ is the trivial generator. We take $\Lambda$
to be the trivial $(n,m)$-adapted generator and use the empty
factorizations. All the conclusions are then immediate. Hence, in the
remainder of the proof, we assume that
$
k>0.
$

We divide the proof into six steps.

\medskip
\noindent
\textit{Step 1: Geometric setup.}
By shrinking $X_\Omega$ and enlarging $X_{\Omega'}$ by arbitrarily
small radial factors, and using a limiting argument at the end, we may
assume that
$
\varphi(X_\Omega)\subset\operatorname{int}(X_{\Omega'})
$
and that the relevant boundaries are smooth. For the remainder of the
argument at a fixed approximation parameter, we relabel these slightly
modified domains again by $X_\Omega$ and $X_{\Omega'}$.

Fix an integer $K\geq2k+1$ and a number $\varepsilon>0$. Choose
$L>\ell_{\Omega'}(\Lambda')$ sufficiently large so that the $L$-nice
perturbations at both ends are valid in all gradings at most $K$. We also need the prescribed generator to remain minimal under the harmless
geometric modifications made in this step.  This is precisely where
condition~(c) in the definition of minimality is useful.
There are only finitely many convex generators of index $2k$, and uniqueness
of the minimizer gives a positive gap $\delta$ between
$\ell_{\Omega'}(\Lambda')$ and the length of every other such generator.
Radial scaling multiplies all these lengths by the same factor, while a
sufficiently small smoothing changes all of these finitely many lengths by
less than $\delta/3$. Hence $\Lambda'$ remains the unique minimizer of index $2k$, and its
all-elliptic labeling is unchanged.  By the convex capacity formula,
Theorem~\ref{thm:convex_capacities}, the action of this unique all-elliptic
minimizer is $c_k$ of the modified target.  Thus $\Lambda'$ is still
minimal for the modified convex target. We therefore choose the positive-end
perturbation so that
\cite[Lemma~5.5]{hutchings2016beyond} applies to $\Lambda'$. Finally, choose
$L$ away from the finitely many relevant generator lengths, as required in
Section~\ref{sec:convexpert}.
Let
$
(Y_+,\lambda_+)
=
(Y'_{l,L},\lambda'_{l,L})
$
be an $L$-nice perturbation of $\partial X_{\Omega'}$ as in
Section~\ref{sec:convexpert}, with the choice $m=n=0$. Let
$
(Y_-,\lambda_-)
=
(Y_{l,L},\lambda_{l,L})
$
be the $(n,m)$-adapted $L$-nice perturbation of
$\partial X_\Omega$ described in Section~\ref{sec:convexpert}.

We choose $l$ sufficiently large and the perturbations sufficiently
small so that the following properties hold.

\begin{enumerate}

\item
The orbit sets of action less than $L$ at the positive end correspond
to convex generators, while those at the negative end correspond to
$(n,m)$-adapted semi-weakly convex generators.

\item
Under these correspondences, the absolute ECH grading agrees with the
combinatorial ECH index.

\item
For every orbit set occurring below, the difference between its
symplectic action and the corresponding combinatorial length is less
than $\varepsilon$.

\item
Every embedded elliptic Reeb orbit of action less than $L$ at either
end is $L$-positive, and every hyperbolic orbit of action less than
$L$ is positive hyperbolic.

\item
For every relevant convex or adapted semi-weakly convex generator
$\Gamma$,
\[
J_0(\alpha_\Gamma)
=
I(\Gamma)
-
2x(\Gamma)
-
2y(\Gamma)
-
\nu_e(\Gamma),
\]
where
$\nu_e(\Gamma) = \#\{\text{edges of $\Gamma$ having positive elliptic multiplicity}\}.$

\end{enumerate}

After a further arbitrarily small perturbation, fixed near all Reeb
orbits of action below a slightly larger cutoff, we may assume that
$\lambda_+$ and $\lambda_-$ are globally nondegenerate without
changing any of the properties above.

For sufficiently small perturbations, the complement of the embedded
source defines a weakly exact symplectic cobordism
$
(X_{l,L},\omega_{l,L})
:
(Y_+,\lambda_+)
\longrightarrow
(Y_-,\lambda_-).
$
The manifold $X_{l,L}$ is diffeomorphic to
$
[0,1]\times S^3.
$

Let $\overline X_{l,L}$ denote the completion of this cobordism.
Choose $J_+$ to be a generic compatible almost complex structure as in
\cite[Lemma~5.5]{hutchings2016beyond}, choose a
generic compatible almost complex structure $J_-$ on the negative
symplectization end, and choose a generic cobordism-admissible almost
complex structure $J$ on $\overline X_{l,L}$ which restricts to
$J_\pm$ on the ends.

Because $H_1(Y_\pm)=H_2(Y_\pm)=H_2(X_{l,L})=0$, the relative ECH
index of a current is determined by the absolute gradings of its asymptotic
orbit sets. We use this only after obtaining a nonzero chain-map coefficient;
no grading-preservation property of the chosen chain-level representative is
needed.

We also choose the primitive of the weakly exact cobordism in a form
adapted to the boundary contact forms. Let $\eta$ satisfy
$
d\eta=\omega_{l,L}.
$
On each boundary component,
$
\eta|_{Y_\pm}-\lambda_\pm
$
is closed. Since $H^1(S^3;\mathbb R)=0$, there are functions $f_\pm$
on $Y_\pm$ such that
$
\eta|_{Y_\pm}-\lambda_\pm=df_\pm.
$
After extending $f_\pm$ to a function on the cobordism and modifying
$\eta$ by an exact form, we may assume
\begin{equation}
\label{eq:primitive-matches-semiweak-convex}
\eta|_{Y_+}=\lambda_+,
\qquad
\eta|_{Y_-}=\lambda_-.
\end{equation}

\medskip
\noindent
\textit{Step 2: The cobordism is $L$-tame.} Let $d\in\mathbb Z_{>0}$. Suppose that $\alpha_+$ and $\alpha_-$ are
orbit sets satisfying
$
\mathcal A(\alpha_\pm)<\frac{L}{d},
$
and let
$
C\in\mathcal M^J(\alpha_+,\alpha_-)
$
be an embedded irreducible $J$-holomorphic curve satisfying
$
I(dC)\leq0.
$
We must prove $2g(C)-2+\operatorname{ind}(C)+h(C)+2e_L(C)\geq0$.

Suppose, by contradiction, that this inequality fails. Since all the
terms other than $-2$ are integers, we then have
\begin{equation}
\label{eq:L-tame-semiweak-failure}
2g(C)
+
\operatorname{ind}(C)
+
h(C)
+
2e_L(C)
\leq1.
\end{equation}
Because $J$ is generic,
$
\operatorname{ind}(C)\geq0.
$
Moreover, the parity formula for the Fredholm index implies that the
parity of $\operatorname{ind}(C)$ agrees with the parity of the number
of ends at positive hyperbolic orbits. Since there are no negative
hyperbolic orbits in the perturbations, this is the parity of $h(C)$.

It follows from
\eqref{eq:L-tame-semiweak-failure} that
$
g(C)
=
\operatorname{ind}(C)
=
h(C)
=
e_L(C)
=
0.
$

Every elliptic orbit at the negative end is $L$-positive. Since
$e_L(C)=0$, the negative orbit set $\alpha_-$ contains no elliptic
orbit. Since $h(C)=0$, it also contains no hyperbolic orbit. Hence
$
\alpha_-=\emptyset.
$
Consequently,
$
I(dC)=I(\alpha_+^d).
$
If $\alpha_+=\emptyset$, then $C$ is a nonconstant closed
$J$-holomorphic curve in the exact symplectic cobordism. By
\eqref{eq:primitive-matches-semiweak-convex},
$
\int_C\omega_{l,L}=0,
$
which is impossible because a nonconstant $J$-holomorphic curve has
positive symplectic area. Hence
$
\alpha_+\neq\emptyset.
$

The orbit set $\alpha_+^d$ corresponds to a nontrivial convex generator
at the positive end. The convex index formula implies
$
I(\alpha_+^d)>0.
$
Therefore
$
I(dC)=I(\alpha_+^d)>0,
$
contradicting the assumption that $I(dC)\leq0$.

Therefore,
$
(X_{l,L},\omega_{l,L},J)
$
is $L$-tame.

\medskip
\noindent
\textit{Step 3: Existence of an unbroken index-zero current.} Let
$
\alpha_{\Lambda'}
\in
\operatorname{ECC}_{2k}^{L}
(Y_+,\lambda_+,0,J_+)
$
be the orbit set corresponding to the prescribed minimal convex
generator $\Lambda'$.
Since $\Lambda'$ is minimal, \cite[Lemma~5.5]{hutchings2016beyond}
implies that $\alpha_{\Lambda'}$ is a cycle in $ECC^L$ and represents a
class whose image in total ECH is nonzero.  This is the decisive use of
minimality in the proof: it allows us to start from the \emph{prescribed}
target generator $\Lambda'$ rather than from an unspecified representative of
the degree-$2k$ ECH class.  No property of minimal generators is used after
this point.

The weakly exact cobordism gives a filtered cobordism map
$$
\Phi^L:
\operatorname{ECH}_{2k}^{L}
(Y_+,\lambda_+,0)
\to
\operatorname{ECH}_{2k}^{L}
(Y_-,\lambda_-,0).
$$
Since $X_{l,L}$ is diffeomorphic to $[0,1]\times S^3$, the induced
map on total ECH is an isomorphism. Compatibility of the filtered and
total cobordism maps therefore implies
$
\Phi^L([\alpha_{\Lambda'}])\neq0.
$

Choose a chain map
$
\phi^L:
\operatorname{ECC}^{L}
(Y_+,\lambda_+,0,J_+)
\to
\operatorname{ECC}^{L}
(Y_-,\lambda_-,0,J_-)
$
which induces $\Phi^L$ and has the holomorphic-current property from
Theorem~\ref{CoborMap}. Since
$
[\phi^L(\alpha_{\Lambda'})]\neq0,
$
the chain $\phi^L(\alpha_{\Lambda'})$ is nonzero. Hence there exists an
admissible orbit set $\alpha_\Lambda$ at the negative end such that
$
\left\langle
\phi^L(\alpha_{\Lambda'}),\alpha_\Lambda
\right\rangle
=
1.
$
The orbit set $\alpha_\Lambda$ corresponds to an $(n,m)$-adapted
semi-weakly convex generator $\Lambda$. By Theorem~\ref{CoborMap}-(iii),
the nonzero coefficient above gives a broken $J$-holomorphic current
$B=(\mathcal C_{N_-},\ldots,\mathcal C_0,\ldots,\mathcal C_{N_+})$ from
$\alpha_{\Lambda'}$ to $\alpha_\Lambda$ with $I(B)=0$. Since
$H_2(X_{l,L})=0$, this relative ECH index is the difference of the absolute
gradings of the two asymptotic orbit sets. Therefore
$I(\Lambda)=I(\Lambda')=2k$.

Every nontrivial symplectization level $\mathcal C_j$, with $j\neq0$,
has strictly positive ECH index by
Proposition~\ref{BasedDiff}-(i). The central cobordism level satisfies
$
I(\mathcal C_0)\geq0
$
by Step~2 and Proposition~\ref{L-tameJcurves}-(i). Since
$
0
=
I(B)
=
\sum_{j=N_-}^{N_+}I(\mathcal C_j),
$
there can be no nontrivial symplectization level. It follows that
$
N_-=N_+=0
$
and
$
I(\mathcal C_0)=0.
$
Thus $B$ consists of a single unbroken $J$-holomorphic current
$
\mathcal C
:=
\mathcal C_0
\in
\mathcal M^J
(\alpha_{\Lambda'},\alpha_\Lambda)
$
with
$
I(\mathcal C)=0.
$

\medskip
\noindent
\textit{Step 4: Factorizations, indices, and disjointness.} Write
$
\mathcal C
=
\sum_{i=1}^{r}d_iC_i,
$
where the $C_i$ are distinct irreducible somewhere injective
$J$-holomorphic curves and
$
d_i\in\mathbb Z_{>0}.
$
Let $\alpha_{\Lambda_i'}$ and $\alpha_{\Lambda_i}$ denote the positive
and negative orbit sets of $C_i$, respectively. Thus
$
C_i
\in
\mathcal M^J
(\alpha_{\Lambda_i'},\alpha_{\Lambda_i}).
$

The total positive orbit set is elliptic. Consequently, each
$\Lambda_i'$ is an elliptic convex generator. At the negative end,
each $\Lambda_i$ is an $(n,m)$-adapted semi-weakly convex generator.
The decomposition of the current gives factorizations
$
\Lambda'
=
(\Lambda_1')^{d_1}\cdots(\Lambda_r')^{d_r}
$
and
$
\Lambda
=
\Lambda_1^{d_1}\cdots\Lambda_r^{d_r}.
$

By Proposition~\ref{L-tameJcurves}-(ii),
$
I(C_i)=0
$
for every $i$. Since $H_2(X_{l,L})=0$, the relative homology class is
unique, and therefore
$
I(\Lambda_i')=I(\Lambda_i)
$
for every $i$. For the full current, we recover
$
I(\Lambda)=I(\Lambda')=2k,
$
which proves item~(i).

All elliptic orbits at the negative end are $L$-positive.
Proposition~\ref{L-tameJcurves}-(ii) implies that two distinct
components $C_i$ and $C_j$ cannot both have negative ends at covers of
the same elliptic orbit. Under the orbit-set--generator
correspondence, this says exactly that
$
\Lambda
=
\Lambda_1^{d_1}\cdots\Lambda_r^{d_r}
$
is a disjoint factorization. Notice that no disjointness assertion is
needed for the factorization of $\Lambda'$, in agreement with the
statement of the theorem.

Now let
$
S\subset\{1,\ldots,r\}
$ and choose integers
$
0\leq d_i'\leq d_i, i\in S.
$
The third conclusion in
Proposition~\ref{L-tameJcurves}-(ii) gives
$
I\left(
\sum_{i\in S}d_i'C_i
\right)
=
0.
$
The positive and negative orbit sets of this subcurrent correspond,
respectively, to
$
\prod_{i\in S}(\Lambda_i')^{d_i'}
$ and $
\prod_{i\in S}(\Lambda_i)^{d_i'}.
$
Therefore,
$
I\left(
\prod_{i\in S}(\Lambda_i')^{d_i'}
\right)
=
I\left(
\prod_{i\in S}(\Lambda_i)^{d_i'}
\right).
$
This proves item~(iii).

\medskip
\noindent
\textit{Step 5: Action and topological-complexity inequalities.}
Fix $i\in\{1,\ldots,r\}$. Write
$A_i^+=\mathcal A_+(\alpha_{\Lambda_i'})$ and
$A_i^-=\mathcal A_-(\alpha_{\Lambda_i})$. Using the primitive chosen in
\eqref{eq:primitive-matches-semiweak-convex}, truncate the cylindrical ends
of $C_i$ and apply Stokes' theorem. This gives $A_i^-\leq A_i^+$. The action
estimates give $|A_i^+-\ell_{\Omega'}(\Lambda_i')|<\varepsilon$ and
$|A_i^--\ell_{\Omega}(\Lambda_i)|<\varepsilon$. Consequently,
\begin{equation}
\label{eq:semiweak-convex-action-error}
\ell_\Omega(\Lambda_i)
\leq
\ell_{\Omega'}(\Lambda_i')
+
2\varepsilon.
\end{equation}

We now prove item~(iv). By the formula for $J_0$ at the two ends and
the equality
$
I(\Lambda_i')=I(\Lambda_i),
$
we have
\begin{equation}
\label{eq:J0-semiweak-convex-component}
\begin{aligned}
J_0(C_i)
&=
J_0(\alpha_{\Lambda_i'})
-
J_0(\alpha_{\Lambda_i})\\
&=
-2x(\Lambda_i')
-2y(\Lambda_i')
-\nu_e(\Lambda_i')\\
&\qquad
+2x(\Lambda_i)
+2y(\Lambda_i)
+\nu_e(\Lambda_i).
\end{aligned}
\end{equation}

We next determine the numbers of ends which occur in
Proposition~\ref{pro:topcomp}. Since $J$ is generic and
$
I(C_i)=0,
$
the ECH index inequality gives
$
0\leq \operatorname{ind}(C_i)+2\delta(C_i)\leq I(C_i)=0.
$
Thus
$
\operatorname{ind}(C_i)=0,
\delta(C_i)=0,
$
and equality holds in the ECH index inequality. Hence the ECH
partition conditions apply to all ends of $C_i$.

Because $\Lambda_i'$ is elliptic and all elliptic orbits at the
positive end are $L$-positive, an elliptic orbit occurring with total
multiplicity $q$ gives $q$ positive ends, each at the simple orbit.
Therefore,
\begin{equation}
\label{eq:positive-ends-semiweak-convex}
\sum_a(2n_a^+-1)
=
2m(\Lambda_i')
-
\nu_e(\Lambda_i').
\end{equation}

At the negative end, every elliptic orbit is also $L$-positive. An
elliptic orbit occurring with any positive multiplicity therefore
gives one negative end at the full cover. Every hyperbolic edge gives
one negative end. Hence
\begin{equation}
\label{eq:negative-ends-semiweak-convex}
\sum_b(2n_b^--1)
=
\nu_e(\Lambda_i)
+
h(\Lambda_i).
\end{equation}

Applying Proposition~\ref{pro:topcomp} in the form
\[
2g(C_i)-2
+
\sum_a(2n_a^+-1)
+
\sum_b(2n_b^--1)
\leq
J_0(C_i),
\]
and using $g(C_i)\geq0$, equations
\eqref{eq:positive-ends-semiweak-convex} and
\eqref{eq:negative-ends-semiweak-convex} give
\[
\begin{aligned}
J_0(C_i)
\geq{}&
-2
+
2m(\Lambda_i')
-
\nu_e(\Lambda_i')\\
&\quad
+
\nu_e(\Lambda_i)
+
h(\Lambda_i).
\end{aligned}
\]
Combining this with
\eqref{eq:J0-semiweak-convex-component} and cancelling the
$\nu_e$-terms yields
\[
\begin{aligned}
-2
+
2m(\Lambda_i')
+
h(\Lambda_i)
\leq{}&
-2x(\Lambda_i')
-
2y(\Lambda_i')\\
&\quad
+
2x(\Lambda_i)
+
2y(\Lambda_i).
\end{aligned}
\]
After dividing by two and rearranging, we obtain
\[
x(\Lambda_i)
+
y(\Lambda_i)
-
\frac{h(\Lambda_i)}{2}
\geq
x(\Lambda_i')
+
y(\Lambda_i')
+
m(\Lambda_i')
-
1.
\]
This is item~(iv).

\medskip
\noindent
\textit{Step 6: Removal of the action error and of the radial
modifications.}
Repeat the construction for a sequence
$
\varepsilon_j\to0,
$
while simultaneously letting the radial shrinking factor of
$X_\Omega$ and the radial enlarging factor of $X_{\Omega'}$ tend to
$1$.

The prescribed positive generator $\Lambda'$ is fixed and has only
finitely many factorizations into convex generators. Moreover,
\eqref{eq:semiweak-convex-action-error} gives a uniform action bound
for every negative factor $\Lambda_i$. The finiteness result proved in
Section~\ref{sec:convexpert} for adapted semi-weakly convex generators
then implies that only finitely many generators $\Lambda$ and
factorizations can occur.

After passing to a subsequence, we may therefore assume that
$
\Lambda,
\Lambda_i,
\Lambda_i',
d_i
$
are independent of $j$. Passing to the limit in
\eqref{eq:semiweak-convex-action-error}, and using the convergence of
the radially modified domains to the original domains, gives
$
\ell_\Omega(\Lambda_i)
\leq
\ell_{\Omega'}(\Lambda_i')
$
for every $i$. This proves item~(ii) for the original domains and
completes the proof of part~(1).

For part~(2), one uses the $(0,m)$-adapted perturbation from
Section~\ref{sec:convexpert}. The horizontal exceptional direction is
absent, but all the properties used above remain unchanged: the
orbit-set correspondence, the index and $J_0$ formulas, the action
estimates, the $L$-positivity of every relevant elliptic orbit, the
primitive-matching argument, and the finiteness needed in the limiting
step. Repeating Steps~1--6 with $n=0$ gives a $(0,m)$-adapted
semi-weakly convex generator $\Lambda$ and the required factorizations
and inequalities.
\end{proof}

To prove Theorem~\ref{thm:criterion_concave_concave}, we shall need the following statement about elliptic concave generators from \cite{UserGuide}.

\begin{prop}[{\cite{UserGuide}}]
\label{prop:concave-elliptic-sector}
Suppose that $X_\Omega$ is a concave toric domain. Let
$k\in\mathbb Z_{>0}$ and choose an integer $K\geq2k+1$. Choose $L$ larger
than the combinatorial lengths of all concave generators of ECH index at
most $K$, then choose the approximation parameter $l$ sufficiently large
and the Morse--Bott perturbation sufficiently small so that the $L$-nice
orbit-set--generator correspondence of Section~\ref{sec:concavepert} holds
throughout this $L$-nice action and grading window. The compatible generic
almost complex structure $J$ may be chosen so that the following statements
hold.

Let
$\mathcal E_k^{\mathrm{ccv}}(\Omega)
=\{\Gamma:\Gamma\text{ is an elliptic concave generator and }I(\Gamma)=2k\}$
and set
\begin{equation}
\label{eq:concave-elliptic-sum-userguide}
x_k^\Omega
:=
\sum_{\Gamma\in\mathcal E_k^{\mathrm{ccv}}(\Omega)}
\alpha_\Gamma
\in ECC^L_{2k}(Y_{l,L},\lambda_{l,L},0,J).
\end{equation}
Then:
\begin{enumerate}
\item[(i)] $x_k^\Omega$ is a cycle.
\item[(ii)] If a degree-$2k$ cycle is supported entirely on elliptic
concave generators, then it is either $0$ or $x_k^\Omega$.
\item[(iii)] The filtered group is one-dimensional,
$ECH^L_{2k}(Y_{l,L},\lambda_{l,L},0)\simeq\mathbb F_2$, and
$x_k^\Omega$ represents its unique nonzero class. Under the natural map to
total ECH, $[x_k^\Omega]$ maps to the unique nonzero class in degree $2k$.
\end{enumerate}
\end{prop}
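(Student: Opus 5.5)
The proof uses the $L$-nice Morse--Bott description of the concave differential, Proposition~\ref{prop:hyperbolic-label-differential}(ii). Together with Remark~\ref{rem:mb-h-filtration}, it says that in the window the differential preserves $q(\Gamma):=I(\Gamma)+h(\Gamma)$ and raises $h$ by exactly one. Elliptic generators of degree $2k$ are exactly those with $q=2k$ and $h=0$. Generators of degree $2k+1$ have $h$ odd, and those of degree $2k-1$ have $h$ odd as well.

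For item (i), $\partial x_k^\Omega$ is a combination of degree-$2k-1$ generators with $q=2k$, i.e.\ $h=1$ and $2\mathcal L=2k-2$. I would show its coefficients vanish mod two. The natural approach is to note that for each $\Gamma_-$ with one $h$-label, the elliptic $\Gamma_+$ with $\langle\partial\alpha_{\Gamma_+},\alpha_{\Gamma_-}\rangle\neq0$ come in pairs. By Lemmas~\ref{lem:index-one-localization}--\ref{lem:concave-fredholm-dichotomy}, such a transition has $d=1$ and is a single local ``corounding'' of the polygon: one lattice point of $B_{\Gamma_+}$ is removed, and the resulting new edge is hyperbolic. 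The point removed from $B_{\Gamma_-}$'s complement can be undone in exactly two ways, by rounding at either end of the hyperbolic edge, or equivalently the two Morse--Bott cascades cancel, as in the mod-two cancellation of Step~2 of \cite[Theorem~11.11]{HutchingsSullivan2006}. I would verify this two-to-one count with Yao's cascade description, and I expect this verification to be the main obstacle. I would try it first on the explicit local model at one Morse--Bott torus and then globalize using the nesting of Lemma~\ref{lem:toric-nesting-label-matching}.

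For (ii), let $y$ be an elliptic cycle of degree $2k$. Each elliptic $\Gamma$ maps under $\partial$ to a nonzero sum of one-$h$ generators; for instance, coround at any interior lattice vertex. The incidence graph between elliptic degree-$2k$ generators and one-$h$ degree-$(2k-1)$ generators has every column of even weight by (i). I would show this graph is connected, since any two concave elliptic paths with the same $\mathcal L$ can be connected by a chain of moves through shared one-$h$ neighbors. It would then follow that the kernel restricted to elliptic chains is spanned by the all-ones vector $x_k^\Omega$.

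For (iii), the differential lands in degree $2k$ from degree $2k+1$ on generators with $h\ge1$ at $q\ge 2k+2$, so no elliptic chain is a boundary. Because $q$ is preserved, the degree-$2k$ homology splits by $h$. The part with $h\geq2$ should be acyclic by the same pairing argument applied one level up, giving a contracting homotopy that adds or removes a hyperbolic label. Combined with (ii), the homology is $\mathbb F_2$ generated by $[x_k^\Omega]$. Finally, since $L$ exceeds all lengths in the window, the filtered group agrees with total ECH in degree $2k$, which is $\mathbb F_2$ by \eqref{ECHlensspace}. The inclusion then maps $[x_k^\Omega]$ to the nonzero class.
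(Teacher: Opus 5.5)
The paper gives no proof of this proposition. It is imported from \cite{UserGuide} as a black box, and the authors say explicitly that they avoid any dual (corounding) Hutchings--Sullivan description of the concave differential. Your argument therefore has to supply exactly what the paper outsources, and it does not.

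Your reduction is fine as far as it goes. By Remark~\ref{rem:mb-h-filtration}, $\partial$ preserves $I+h$. Hence no boundary lands in the elliptic degree-$2k$ sector, and $\partial x_k^\Omega$ lives on one-$h$ generators of degree $2k-1$. The combinatorial half of your claim is also believable: with $d=1$, nesting and label matching, the only candidates $\Gamma_+$ for a given one-$h$ generator $\Gamma_-$ come from adding back one of the two endpoints of its $h$-edge.

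The missing step is analytic. You need $\langle\partial_J\alpha_{\Gamma_+},\alpha_{\Gamma_-}\rangle=1$ for \emph{both} candidates. Nothing in Section~\ref{sec:morse-bott-differential} gives this. Lemmas~\ref{lem:index-one-localization}--\ref{lem:concave-fredholm-dichotomy} and Proposition~\ref{prop:hyperbolic-label-differential} only restrict which coefficients can be nonzero; they never show that any coefficient is nonzero. You defer precisely this count as ``the main obstacle'', so (i) and (ii) remain unproved.

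Item (ii) also needs a strong form of the count: every one-$h$ generator must have exactly two elliptic neighbours. Even column weight plus connectivity of an incidence hypergraph does not force a one-dimensional kernel. Moreover, connectivity is itself a statement about nonzero coefficients.

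The mechanism you propose for the count points the wrong way. The mod-two cancellation in Step~2 of \cite[Theorem~11.11]{HutchingsSullivan2006}, as used in Lemma~\ref{lem:toric-nesting-label-matching}, concerns two Morse--Bott cylinders contributing to the \emph{same} coefficient, and it makes that coefficient zero. Here the two contributions to $\langle\partial x_k^\Omega,\alpha_{\Gamma_-}\rangle$ come from two \emph{different} elliptic generators. What you need is that each coefficient equals one, so that only their sum vanishes. If the local contributions cancelled in the Hutchings--Sullivan sense, every elliptic generator would be a cycle. For $k\geq2$ the distinct elliptic cycles $\alpha_{e_{-1,k}}$ and $\alpha_{e_{-k,1}}$ would then contradict (ii).

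In (iii), the ``contracting homotopy'' on the $h\geq2$ sectors is unsupported. It becomes unnecessary once you know $ECH^L_{2k}\to ECH_{2k}$ is an isomorphism, but that identification needs an argument, since $\lambda_{l,L}$ is only $L$-nondegenerate. One option is the argument the paper gives for \eqref{eq:filtered-ECH-one-dimensional-T31}: all Morse--Bott generators of gradings $2k-1,2k,2k+1$ lie below $L$, exceptional orbits are excluded by Remark~\ref{rem:exceporbCZ}, and Yao's theorem computes total ECH.
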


We emphasize that only the four conclusions in
Proposition~\ref{prop:concave-elliptic-sector} are used below.

\begin{proof}[Proof of Theorem~\ref{thm:criterion_concave_concave}]
Let
$
\varphi:X_\Omega\longrightarrow X_{\Omega'}
$
be a symplectic embedding from the concave toric domain $X_\Omega$
into the concave toric domain $X_{\Omega'}$. Fix
$k\in\mathbb Z_{>0}$ and fix an elliptic concave generator $\Lambda$
for $X_\Omega$ satisfying
$
I(\Lambda)=2k.
$
We prove the theorem in six steps.

\medskip
\noindent
\textit{Step 1: Geometric setup and the filtered cobordism map.}
By shrinking $X_\Omega$ and enlarging $X_{\Omega'}$ by arbitrarily
small radial factors, and then applying a limiting argument at the end
of the proof, we may assume that
$
\varphi(X_\Omega)\subset\operatorname{int}(X_{\Omega'}).
$
For the rest of the proof at a fixed approximation parameter, we
relabel these slightly modified domains again by $X_\Omega$ and
$X_{\Omega'}$; the original domains are recovered in the final
limiting argument.

Fix an integer $K\geq2k+1$ and a number $\varepsilon>0$. By the
finiteness statement in Section~\ref{sec:nonzero-ech-classes}, choose
$L>0$, simultaneously for $\Omega$ and $\Omega'$, larger than the
combinatorial lengths of every concave generator of ECH index at most $K$.
Increase $L$ further if necessary so that
Proposition~\ref{prop:concave-elliptic-sector} applies at both ends. Then
choose the approximation parameter $l$ sufficiently large and the
perturbations sufficiently small so that the $L$-nice concave models of
Section~\ref{sec:concavepert} are valid in the required finite action and
grading range, no orbit set in this range contains an exceptional orbit,
and all action errors for the generators and factors occurring below are
smaller than $\varepsilon$.

Let
$
(Y_+,\lambda_+)
=
(Y'_{l,L},\lambda'_{l,L})
$
be a nondegenerate $L$-nice perturbation of
$\partial X_{\Omega'}$, and let
$
(Y_-,\lambda_-)
=
(Y_{l,L},\lambda_{l,L})
$
be a nondegenerate $L$-nice perturbation of
$\partial X_\Omega$.

We choose the endpoint slopes in the concave approximations so that
every embedded elliptic Reeb orbit of action less than $L$, including
the exceptional elliptic orbits, is $L$-negative. For the two
exceptional orbits, the rotation numbers in the endpoint
trivializations are
$\theta_1=-\frac{1}{f_l'(a)}, \qquad \theta_2=-f_l'(0),$
as follows from Lemma~\ref{lem:czspecialorbits}. Since in the concave
approximation
$f_l'(0)\leq-l, \qquad -\frac1l\leq f_l'(a)<0,$
we may choose the irrational endpoint slopes so that
$
-\frac{1}{f_l'(a)}=N_1-\delta_1,
-f_l'(0)=N_2-\delta_2,
$
where $N_1,N_2\in\mathbb Z_{>0}$ are large and
$
0<\delta_j<
\frac{\mathcal A(\gamma_j)}{L},
 j=1,2.
$
Thus
$
\theta_j\equiv-\delta_j\pmod1,
$
which is precisely the $L$-negative condition for the exceptional
elliptic orbit $\gamma_j$. The nonexceptional elliptic orbits are
already $L$-negative by the $L$-nice concave perturbation.

For $l$ sufficiently large and for sufficiently small perturbations,
there is a weakly exact symplectic cobordism
$
(X_{l,L},\omega_{l,L})
:
(Y_+,\lambda_+)
\longrightarrow
(Y_-,\lambda_-).
$
The underlying manifold $X_{l,L}$ is diffeomorphic to
$[0,1]\times S^3$.

Let $\overline X_{l,L}$ denote its completion. Choose the generic
compatible almost complex structures $J_+$ and $J_-$ on the positive and
negative symplectization ends as in
Proposition~\ref{prop:concave-elliptic-sector}, and choose a generic
cobordism-admissible almost complex structure $J$ on
$\overline X_{l,L}$ restricting to $J_\pm$ on the ends.

By Theorem~\ref{CoborMap}, there is a chain map
\[
\phi^L:
\operatorname{ECC}^{L}
(Y_+,\lambda_+,0,J_+)
\longrightarrow
\operatorname{ECC}^{L}
(Y_-,\lambda_-,0,J_-)
\]
which induces the filtered ECH cobordism map. Since $X_{l,L}$ is
diffeomorphic to a product, Theorem~\ref{CoborMap} also implies that
the induced map on total ECH is an isomorphism.

Because
$H_1(Y_\pm)=H_2(Y_\pm)=H_2(X_{l,L})=0,$
the relative ECH index of a broken current is the difference of the
absolute gradings of its asymptotic orbit sets; see
\cite[Proposition~2.25]{hutchingsroyweileryao2026}. We use this
directly whenever a coefficient of the chain map is nonzero; no
grading-preservation property of the chosen chain-level representative
$\phi^L$ is needed.

We will use the following elementary consequence of the topology of
the cobordism. Since $H^2(X_{l,L};\mathbb R)=0$, the symplectic form is
exact; choose a primitive $\eta$ with
$
d\eta=\omega_{l,L}.
$
On each boundary component, the difference between the restriction
of $\eta$ and the corresponding contact form is closed. Since
$H^1(S^3;\mathbb R)=0$, there are functions $f_\pm$ on $Y_\pm$ such
that
$
\eta|_{Y_\pm}-\lambda_\pm=df_\pm.
$
After extending $f_\pm$ to a function on the cobordism and modifying
$\eta$ by an exact form, we may and do assume that
\begin{equation}
\label{eq:primitive-matches-concave-concave}
\eta|_{Y_+}=\lambda_+,
\qquad
\eta|_{Y_-}=\lambda_-.
\end{equation}

\medskip
\noindent
\textit{Step 2: The cobordism is $L$-tame.}
Let $d\in\mathbb Z_{>0}$, let $\alpha_\pm$ be orbit sets satisfying
$
\mathcal A(\alpha_\pm)<\frac{L}{d},
$
and let
$
C\in\mathcal M^J(\alpha_+,\alpha_-)
$
be an embedded irreducible $J$-holomorphic curve such that
$
I(dC)\leq0.
$
We must prove
$2g(C)-2+\operatorname{ind}(C)+h(C)+2e_L(C)\geq0.$

Suppose, by contradiction, that this inequality fails. Then
$2g(C)+\operatorname{ind}(C)+h(C)+2e_L(C)\leq1.$
Since $J$ is generic,
$
\operatorname{ind}(C)\geq0.
$
All hyperbolic orbits in the perturbations are positive hyperbolic.
Consequently, the parity of $\operatorname{ind}(C)$ agrees with the
parity of the number $h(C)$ of ends asymptotic to hyperbolic orbits.
It follows that
\[
g(C)=0,
\qquad
\operatorname{ind}(C)=0,
\qquad
h(C)=0,
\qquad
e_L(C)=0.
\]

Every elliptic orbit at the positive end is $L$-negative. Since
$e_L(C)=0$, the orbit set $\alpha_+$ contains no elliptic orbit.
Since $h(C)=0$, it contains no hyperbolic orbit either. Thus
$
\alpha_+=\emptyset.
$

The equality $h(C)=0$ also implies that $\alpha_-$ contains no
hyperbolic orbit. It may a priori contain $L$-negative elliptic
orbits, because negative $L$-negative elliptic orbits do not
contribute to $e_L(C)$. We now use
\eqref{eq:primitive-matches-concave-concave}. Truncating the
cylindrical ends of $C$ and applying Stokes' theorem gives
$$
0 \leq
\int_C\omega_{l,L}
=
\mathcal A(\alpha_+)-\mathcal A(\alpha_-)
=
-\mathcal A(\alpha_-).
$$
Therefore
$
\alpha_-=\emptyset
$
and
$
\int_C\omega_{l,L}=0.
$
This is impossible for a nonconstant $J$-holomorphic curve.
Consequently $(X_{l,L},\omega_{l,L},J)$ is $L$-tame.

\medskip
\noindent
\textit{Step 3: A current ending at the prescribed generator
$\Lambda$.}
At the positive end define
\[
x_+
=
\sum_{\Gamma'\in\mathcal E_k^{\mathrm{ccv}}(\Omega')}
\alpha_{\Gamma'}.
\]
By Proposition~\ref{prop:concave-elliptic-sector}-(i),(iii), $x_+$ is a
cycle, its class is the unique nonzero element of filtered ECH in degree
$2k$, and its image in total ECH is nonzero.

Set $y_-:=\phi^L(x_+)$. Compatibility of the filtered and total cobordism
maps, together with the fact that the total cobordism map is an
isomorphism, implies that $y_-$ is a cycle whose image in total ECH is
nonzero. Consequently the filtered class $[y_-]$ is also nonzero.

We first verify the grading of every generator occurring in $y_-$ without
assuming that the chosen chain map $\phi^L$ preserves the grading. Let
$\alpha_\Gamma$ occur in $y_-$ with nonzero coefficient. Expanding $x_+$
over $\mathbb F_2$, there exists
$\Gamma'\in\mathcal E_k^{\mathrm{ccv}}(\Omega')$ such that
$\langle\phi^L(\alpha_{\Gamma'}),\alpha_\Gamma\rangle=1$.
Theorem~\ref{CoborMap}-(iii) supplies a broken $J$-holomorphic current from
$\alpha_{\Gamma'}$ to $\alpha_\Gamma$ of ECH index zero. Since
$H_2(X_{l,L})=0$, the relative ECH index is the difference of the absolute
gradings of its asymptotic orbit sets. Hence
$0=I(\Gamma')-I(\Gamma)$, and therefore $I(\Gamma)=2k$. Thus every term of
$y_-$ has ECH grading $2k$.

At the negative end set
\[
x_-
=
\sum_{\Gamma\in\mathcal E_k^{\mathrm{ccv}}(\Omega)}
\alpha_\Gamma.
\]
By Proposition~\ref{prop:concave-elliptic-sector}-(i),(iii), $x_-$ is a
cycle and represents the unique nonzero filtered ECH class in degree $2k$.
Since $[y_-]$ is also nonzero, we have $[y_-]=[x_-]$ in filtered ECH.
Hence there exists a filtered chain $w$ of degree $2k+1$ such that
$
y_-+x_- = \partial_{J_-}w.
$

The prescribed generator $\Lambda$ is elliptic and has $I(\Lambda)=2k$,
so its coefficient in $x_-$, and therefore in $y_-$, is one:
$\langle\phi^L(x_+),\alpha_\Lambda\rangle=1$. Expanding $x_+$ gives
\[
1
=
\sum_{\Gamma'\in\mathcal E_k^{\mathrm{ccv}}(\Omega')}
\left\langle
\phi^L(\alpha_{\Gamma'}),\alpha_\Lambda
\right\rangle
\quad\text{in }\mathbb F_2.
\]
Consequently there exists an elliptic concave generator
$\Lambda'\in\mathcal E_k^{\mathrm{ccv}}(\Omega')$ such that
$\langle\phi^L(\alpha_{\Lambda'}),\alpha_\Lambda\rangle=1$. In particular,
$I(\Lambda')=I(\Lambda)=2k$.

By Theorem~\ref{CoborMap}-(iii), this coefficient gives an index-zero
broken $J$-holomorphic current $B$ from $\alpha_{\Lambda'}$ to
$\alpha_\Lambda$. Every nontrivial symplectization level has strictly
positive ECH index by Proposition~\ref{BasedDiff}-(i), while the central
cobordism level has nonnegative ECH index by Step~2 and
Proposition~\ref{L-tameJcurves}-(i). Since $I(B)=0$, there are no
nontrivial symplectization levels. After deleting possible trivial-cylinder
levels, $B$ reduces to a single unbroken current $\mathcal C$ from
$\alpha_{\Lambda'}$ to $\alpha_\Lambda$, with $I(\mathcal C)=0$.

\medskip
\noindent
\textit{Step 4: Factorizations and index identities.}
Write
$
\mathcal C
=
\sum_{i=1}^{r}d_iC_i,
$
where the $C_i$ are distinct irreducible somewhere injective
$J$-holomorphic curves and
$
d_i\in\mathbb Z_{>0}.
$
Let $\alpha_{\Lambda_i'}$ and $\alpha_{\Lambda_i}$ be the positive and
negative orbit sets of $C_i$, respectively. Thus
$
C_i
\in
\mathcal M^J
(\alpha_{\Lambda_i'},\alpha_{\Lambda_i}).
$

Since both total orbit sets $\alpha_{\Lambda'}$ and $\alpha_\Lambda$
are elliptic, every factor $\Lambda_i'$ and every factor $\Lambda_i$
is an elliptic concave generator. Hence we obtain factorizations
\[
\Lambda'
=
(\Lambda_1')^{d_1}\cdots(\Lambda_r')^{d_r},
\qquad
\Lambda
=
\Lambda_1^{d_1}\cdots\Lambda_r^{d_r}.
\]

By Proposition~\ref{L-tameJcurves}-(ii),
$
I(C_i)=0
$
for every $i$. Since
$
H_2([0,1]\times S^3)=0,
$
there is no ambiguity in the relative homology class, and the relative
ECH index of $C_i$ agrees with the difference of the absolute ECH
gradings of its positive and negative orbit sets. Therefore
$
I(\Lambda_i')=I(\Lambda_i)
$
for every $i$. For the complete current,
$
I(\Lambda')=I(\Lambda)=2k,
$
as already proved in Step~3. This proves item~(i).

The positive ends lie at $L$-negative elliptic orbits.
Proposition~\ref{L-tameJcurves}-(ii) implies that two distinct
components $C_i$ and $C_j$ cannot both have positive ends at covers of
the same elliptic orbit. Under the orbit-set--generator
correspondence, this is exactly the assertion that
$\Lambda' = (\Lambda_1')^{d_1}\cdots(\Lambda_r')^{d_r}$
is a disjoint factorization. Notice that no analogous disjointness
assertion follows at the negative end, because the negative elliptic
orbits are $L$-negative, whereas the negative-end disjointness
conclusion in Proposition~\ref{L-tameJcurves}-(ii) concerns
$L$-positive elliptic orbits. This is consistent with the statement
of the theorem, which does not require the factorization of $\Lambda$
to be disjoint.

Now let
$
S\subset\{1,\ldots,r\},
0\leq d_i'\leq d_i, i\in S.
$
The third assertion in Proposition~\ref{L-tameJcurves}-(ii) gives
\[
I\left(
\sum_{i\in S}d_i'C_i
\right)
=
0.
\]
The positive orbit set of this subcurrent corresponds to
$\prod_{i\in S}(\Lambda_i')^{d_i'}$, while the negative orbit set
corresponds to $\prod_{i\in S}(\Lambda_i)^{d_i'}$. Hence
\[
I\left(
\prod_{i\in S}(\Lambda_i')^{d_i'}
\right)
=
I\left(
\prod_{i\in S}(\Lambda_i)^{d_i'}
\right).
\]
This proves item~(iii).

\medskip
\noindent
\textit{Step 5: Action inequalities.}
We use the primitive chosen in
\eqref{eq:primitive-matches-concave-concave}. For each $i$, truncate
the cylindrical ends of $C_i$ and apply Stokes' theorem. We obtain
\[
0
\leq
\int_{C_i}\omega_{l,L}
=
\mathcal A_+(\alpha_{\Lambda_i'})
-
\mathcal A_-(\alpha_{\Lambda_i}),
\]
and therefore
$\mathcal A_-(\alpha_{\Lambda_i}) \leq \mathcal A_+(\alpha_{\Lambda_i'}).$

The $L$-nice perturbations were chosen so that
\[
\left|
\mathcal A_+(\alpha_{\Lambda_i'})
-
\ell_{\Omega'}(\Lambda_i')
\right|
<
\varepsilon
\]
and
\[
\left|
\mathcal A_-(\alpha_{\Lambda_i})
-
\ell_{\Omega}(\Lambda_i)
\right|
<
\varepsilon.
\]
Consequently,
\begin{equation}
\label{eq:concave-concave-action-error-revised}
\ell_\Omega(\Lambda_i)
\leq
\ell_{\Omega'}(\Lambda_i')
+
2\varepsilon
\end{equation}
for every $i$.

\medskip
\noindent
\textit{Step 6: Topological complexity and removal of the action
error.}
Fix $i\in\{1,\ldots,r\}$. For a concave generator $\Gamma$, recall
that
$\nu_e(\Gamma) = \#\{\text{edges of $\Gamma$ having positive elliptic multiplicity}\}.$
Since $\Lambda_i'$ and $\Lambda_i$ are elliptic,
$
m(\Lambda_i)=e(\Lambda_i).
$

For a concave orbit set associated to a generator $\Gamma$, the
$J_0$-formula from Section~\ref{sec:concavepert} is
\[
J_0(\alpha_\Gamma)
=
I(\Gamma)
-
2x(\Gamma)
-
2y(\Gamma)
+
\nu_e(\Gamma).
\]
Since
$
I(\Lambda_i')=I(\Lambda_i),
$
and since $H_2(X_{l,L})=0$, the relative homology class is unique and
we obtain
\begin{equation}
\label{eq:J0-concave-concave-component-revised}
\begin{aligned}
J_0(C_i)
&=
J_0(\alpha_{\Lambda_i'})
-
J_0(\alpha_{\Lambda_i})\\
&=
-2x(\Lambda_i')
-2y(\Lambda_i')
+\nu_e(\Lambda_i')\\
&\qquad
+2x(\Lambda_i)
+2y(\Lambda_i)
-\nu_e(\Lambda_i).
\end{aligned}
\end{equation}

Since $I(C_i)=0$ and $J$ is generic, the ECH index inequality gives
\[
0
\leq
\operatorname{ind}(C_i)+2\delta(C_i)
\leq
I(C_i)
=
0.
\]
Thus
$
\operatorname{ind}(C_i)=0,
\delta(C_i)=0,
$
and equality holds in the ECH index inequality. Consequently the ECH
partition conditions apply to all ends of $C_i$. In the notation of
Proposition~\ref{pro:topcomp}, for each distinct asymptotic orbit the
quantity $n^+$ or $n^-$ denotes the number of ends of $C_i$ at covers of
that orbit (not the covering multiplicity of a single end).

All elliptic orbits at the positive end are $L$-negative. Hence the
outgoing partition condition gives exactly one positive end at the
full cover of each distinct elliptic orbit appearing in
$\alpha_{\Lambda_i'}$. Therefore
\begin{equation}
\label{eq:positive-ends-concave-concave-revised}
\sum_a(2n_a^+-1)
=
\nu_e(\Lambda_i').
\end{equation}

All elliptic orbits at the negative end are also $L$-negative. If
such an orbit occurs with total multiplicity $m$, the incoming partition is
the partition into $m$ simple ends. Thus the number of negative ends at
covers of this orbit is $n^-=m$, and its contribution to
$\sum_b(2n_b^--1)$ is $2m-1$. Summing over all distinct elliptic
orbits appearing in $\alpha_{\Lambda_i}$ gives
\begin{equation}
\label{eq:negative-ends-concave-concave-revised}
\sum_b(2n_b^--1)
=
2e(\Lambda_i)-\nu_e(\Lambda_i).
\end{equation}

Proposition~\ref{pro:topcomp} gives
\[
2g(C_i)-2
+
\sum_a(2n_a^+-1)
+
\sum_b(2n_b^--1)
\leq
J_0(C_i).
\]
Since $g(C_i)\geq0$, equations
\eqref{eq:positive-ends-concave-concave-revised} and
\eqref{eq:negative-ends-concave-concave-revised} imply
\[
-2
+
\nu_e(\Lambda_i')
+
2e(\Lambda_i)
-
\nu_e(\Lambda_i)
\leq
J_0(C_i).
\]
Substituting
\eqref{eq:J0-concave-concave-component-revised} and cancelling the
$\nu_e$-terms gives
\[
-2+2e(\Lambda_i)
\leq
-2x(\Lambda_i')
-2y(\Lambda_i')
+
2x(\Lambda_i)
+
2y(\Lambda_i).
\]
Dividing by two and using
$
e(\Lambda_i)=m(\Lambda_i),
$
we obtain
$
x(\Lambda_i')
+
y(\Lambda_i')
-
1
\leq
x(\Lambda_i)
+
y(\Lambda_i)
-
m(\Lambda_i).
$
This proves item~(iv).

It remains to remove the $2\varepsilon$ error in
\eqref{eq:concave-concave-action-error-revised} and the radial
modifications introduced in Step~1. Repeat the construction for a
sequence $\varepsilon_n\to0$, while simultaneously letting the radial
shrinking factor of $X_\Omega$ and the radial enlarging factor of
$X_{\Omega'}$ tend to $1$.

For fixed ECH index $2k$, there are only finitely many concave
generators which can occur as the total generator $\Lambda'$, and
each such generator has only finitely many factorizations. The
prescribed generator $\Lambda$ is fixed and likewise has only
finitely many factorizations. Therefore, after passing to a
subsequence, we may assume that
$
\Lambda',
\Lambda_i',
\Lambda_i,
d_i
$
are independent of $n$. Passing to the limit in
$
\ell_\Omega(\Lambda_i)
\leq
\ell_{\Omega'}(\Lambda_i')
+
2\varepsilon_n
$
gives
$
\ell_\Omega(\Lambda_i)
\leq
\ell_{\Omega'}(\Lambda_i')
$
for every $i$. This proves item~(ii) for the original domains and
completes the proof.
\end{proof}

\hfill\newline
\noindent{\bf Acknowledgments.}
MM is supported by the S\~ao Paulo Research Foundation (FAPESP) under grant No.~2025/21147-7. PS is partially supported by the National Natural Science Foundation of China (Grant number: w2431007). PS and JT thank the support of the Shenzhen International Center for Mathematics -- SUSTech.

\end{document}